\documentclass[12pt,twosides]{amsart}
\usepackage{amssymb,amsmath,amsthm, amscd, enumerate, mathrsfs}
\usepackage{graphicx, hhline}
\usepackage[all]{xy}
\usepackage{braket}
\usepackage[usenames]{color}
\usepackage{hyperref}
\usepackage{fancyhdr}
\usepackage{url}
\usepackage[top=30truemm,bottom=30truemm,left=30truemm,right=30truemm]{geometry}

\hypersetup{colorlinks=true}

\title[MMP for lc pairs on complex analytic spaces]{Minimal model program for log canonical pairs on complex analytic spaces}
\author{Makoto Enokizono and Kenta Hashizume}
\date{2025/12/09}
\keywords{minimal model program, log canonical pair, complex analytic space}
\subjclass[2020]{14E30}
\address{Department of Mathematics, College of Science, Rikkyo University, Tokyo 171-8501, Japan}
\email{enokizono@rikkyo.ac.jp}
\address{Department of 
Mathematics, Faculty of Science, Niigata University, Niigata 950-2181, Japan}
\address{Institute for Research Administration, Niigata University, Niigata 950-2181, Japan}
\email{hkenta@math.sc.niigata-u.ac.jp}



\newtheorem{thm}{Theorem}[section]

\newtheorem{lem}[thm]{Lemma}
\newtheorem{cor}[thm]{Corollary}
\newtheorem{prop}[thm]{Proposition}

\theoremstyle{definition}
\newtheorem{defn}[thm]{Definition}
\newtheorem{rem}[thm]{Remark}

\newtheorem{exam}[thm]{Example}
\newtheorem*{ack}{Acknowledgments} 
 
\newtheorem*{b-divisor}{b-divisors} 
 
\newtheorem*{g-pair}{Generalized pairs} 
\newtheorem*{adj-g-pair}{Divisorial adjunction for generalized pairs} 
\newtheorem*{mmp-g-pair}{MMP for generalized pairs}

\newtheorem{step1}{Step}
\newtheorem{step2}{Step}
\newtheorem{step3}{Step}

\newtheorem*{claim*}{Claim}
\begin{document}

\begin{abstract}
We study the minimal model program for lc pairs on projective morphism between complex analytic spaces. 
More precisely, we generalize the results by Birkar \cite{birkar-flip} and the second author \cite{has-finite} to the setup by Fujino \cite{fujino-analytic-bchm}. 

\end{abstract}

\maketitle

\tableofcontents

\section{Introduction}

In \cite{fujino-analytic-bchm}, Fujino studied the minimal model theory for projective morphisms between complex analytic spaces. 
More precisely, he generalized the minimal model theory for Kawamata log terminal (klt, for short) pairs by Birkar, Cascini, Hacon, and M\textsuperscript{c}Kernan \cite{bchm} to complex analytic setting. 
Compared to the algebraic case \cite{bchm}, the Noetherian property of the structure sheaves does not hold in general, and some fundamental notions, for example, the canonical divisor and the $\mathbb{Q}$-factoriality, is more delicate in the analytic setting. 
These difficulties make the setup of the minimal model theory in the complex analytic case more complicated. 
By considering the minimal model theory over a Stein neighborhood of an appropriate Stein compact set, Fujino \cite{fujino-analytic-bchm} established the first step of the minimal model theory for projective morphisms between complex analytic spaces. 
For other generalizations of the minimal model theory in the complex analytic setting,  see \cite{hp-MFS}, \cite{hp-minmodel}, and \cite{chp} by Campana, H\"{o}ring, and Peternell, and \cite{lyu--murayama-mmp} by Lyu and Murayama. 

The goal of this paper is to develop the minimal model theory for log canonical (lc, for short) pairs in the complex analytic setting. 
In this paper, we will follow the setup in \cite{fujino-analytic-bchm}. 
As in \cite{fujino-analytic-bchm} or Definition \ref{defn--property(P)}, we say that a projective morphism $\pi \colon X \to Y$ of complex analytic spaces and a compact subset $W \subset Y$ satisfies the property (P) if the following conditions hold:
\begin{itemize}
\item[(P1)]
$X$ is a normal complex variety,
\item[(P2)]
$Y$ is a Stein space,
\item[(P3)]
$W$ is a Stein compact subset of $Y$, and
\item[(P4)]
$W \cap Z$ has only finitely many connected components for any analytic subset $Z$
which is defined over an open neighborhood of $W$. 
\end{itemize}
Let us quickly explain these properties. 
The property (P2) corresponds to the affine property of the base scheme in the algebraic case. 
The property (P3) says that $W$ has a fundamental system of Stein open neighborhood. 
This property is used to shrink $Y$ around $W$ in each step of the minimal model program (MMP, for short) keeping (P2). 
The property (P4) is used to suppose the Noetherian property of ``germ of $\mathcal{O}_{Y}$ around $W$'' as in \cite[Theorem 2.10]{fujino-analytic-bchm} and the finiteness of the relative Picard rank of $\pi$ over $W$.

The following theorems are the main results of this paper.

\begin{thm}[= Theorem \ref{thm--mmp-ample+eff}, cf.~{\cite{hashizumehu}}]\label{thm--polarizationtype}
Let $\pi \colon X \to Y$ be a projective morphism from a normal analytic variety $X$ to a Stein space $Y$, and let $W \subset Y$ be a compact subset such that $\pi$ and $W$ satisfy (P). 
Let $(X,B+A)$ be an lc pair, where $B$ is an effective $\mathbb{R}$-divisor on $X$ and $A$ is an effective $\pi$-ample $\mathbb{R}$-divisor on $X$.  
Let $f \colon (\tilde{X},\tilde{B}) \to (X,B)$ be a dlt blow-up of $(X,B)$, and we put $\tilde{\Gamma}=\tilde{B}+f^{*}A$. 
Let $\tilde{H}$ be a $(\pi \circ f)$-ample $\mathbb{R}$-divisor on $\tilde{X}$ such that $(\tilde{X},\tilde{\Gamma}+\tilde{H})$ is lc and $K_{\tilde{X}}+\tilde{\Gamma}+\tilde{H}$ is nef over $W$. 
Then there exists a sequence of steps of a $(K_{\tilde{X}}+\tilde{\Gamma})$-MMP over $Y$ around $W$ with scaling of $\tilde{H}$
$$(\tilde{X}_{0}:=\tilde{X},\tilde{\Gamma}_{0}:=\tilde{\Gamma}) \dashrightarrow (\tilde{X}_{1},\tilde{\Gamma}_{1}) \dashrightarrow\cdots \dashrightarrow (\tilde{X}_{m},\tilde{\Gamma}_{m})$$
such that after shrinking $Y$ around $W$ the lc pair $(\tilde{X}_{m},\tilde{\Gamma}_{m})$ is a log minimal model or a Mori fiber space of $(X,B+A)$ over $Y$ around $W$.  
\end{thm}

\begin{thm}[= Theorem \ref{thm--mmp-negativetrivial}, cf.~{\cite{birkar-flip}}, {\cite[Theorem 1.6]{haconxu-lcc}}, {\cite[Theorem 1.1]{has-mmp}}]\label{thm--lcfliptype}
Let $\pi \colon X \to Y$ be a projective morphism from a normal analytic variety $X$ to a Stein space $Y$, and let $W \subset Y$ be a compact subset such that $\pi$ and $W$ satisfy (P). 
Let $(X,B)$ be a dlt pair and $A$ an effective $\mathbb{R}$-Cartier divisor on $X$ such that $(X,B+A)$ is lc and $K_{X}+B+A\sim_{\mathbb{R},\,Y}0$.  
Let $H$ be a $\pi$-ample $\mathbb{R}$-divisor on $Y$ such that $(X,B+H)$ is lc and $K_{X}+B+H$ is nef over $W$. 
Then there exists a sequence of steps of a $(K_{X}+B)$-MMP over $Y$ around $W$ with scaling of $H$
$$(X_{0}:=X,B_{0}:=B) \dashrightarrow (X_{1},B_{1}) \dashrightarrow\cdots \dashrightarrow (X_{m},B_{m})$$
such that after shrinking $Y$ around $W$ the lc pair $(X_{m},B_{m})$ is a log minimal model or a Mori fiber space of $(X,B)$ over $Y$ around $W$.  
\end{thm}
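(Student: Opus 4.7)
The plan is to construct the $(K_X+B)$-MMP with scaling of $H$ step by step via the cone and contraction theorems for lc pairs in the analytic setting (established earlier in the paper), and to prove termination by the Birkar--Hashizume method from \cite{birkar-flip, has-mmp}, reducing termination to Theorem~\ref{thm--mmp-ample+eff} via ample+effective perturbations and special termination.

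From $(X_0,B_0):=(X,B)$ and $\lambda_0:=\inf\{t\ge 0\mid K_{X_0}+B_0+tH\text{ is nef over }W\}\in[0,1]$, the cone theorem produces a $(K+B)$-negative extremal ray $R_0/W$ that is $(K+B+\lambda_0 H)$-trivial. If $R_0$ is of fiber type we stop with a Mori fiber space; otherwise we contract or flip it (existence of lc flips in the analytic setting follows from Theorem~\ref{thm--mmp-ample+eff} via a standard ample perturbation), possibly shrinking $Y$ around $W$ by (P3). Iterating produces a sequence with $\lambda_i\searrow\lambda\in[0,1]$. The finiteness of the relative Picard rank guaranteed by (P4) rules out infinitely many divisorial contractions, so after finitely many steps the MMP is a sequence of flips.

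The identity $K_X+B+A\sim_{\mathbb{R},Y}0$ forces $A_i\cdot R_i=\lambda_i H_i\cdot R_i>0$ at every step. For small $\epsilon,\delta>0$ the lc pair $(X,B+\epsilon A+\delta H)$ has boundary $B+\epsilon A$ (effective) plus $\delta H$ (effective and $\pi$-ample), fitting the setup of Theorem~\ref{thm--mmp-ample+eff}; a direct computation using $(K+B+\epsilon A+\delta H)\cdot R=-(1-\epsilon)A\cdot R+\delta H\cdot R$ shows that its associated MMP with scaling of $H$ coincides, after the change of scaling parameter $s=(1-\epsilon)\lambda-\delta$, with our $(K+B)$-MMP restricted to scalings $\lambda>\tfrac{\delta}{1-\epsilon}$, and Theorem~\ref{thm--mmp-ample+eff} forces this truncated MMP to terminate. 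If $\lambda>0$, choosing $\delta$ with $\tfrac{\delta}{1-\epsilon}<\lambda$ captures all steps of our MMP inside this truncation, yielding termination. If $\lambda=0$ the perturbation argument alone does not suffice, and we invoke special termination for dlt pairs along $\lfloor B\rfloor$ (in the analytic setting) together with induction on the dimension of lc strata, as in \cite{has-mmp}, to rule out any infinite tail of flips concentrated on the dlt boundary.

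The principal difficulty is the termination argument in the complex analytic setting. Beyond adapting the Birkar--Hashizume technique from the algebraic to the lc analytic framework, we must manage the successive shrinkings of $Y$ around $W$ so that the full sequence of steps is eventually defined over a common Stein open neighborhood of $W$; here we use (P3) for the shrinkings and (P4) for the finiteness of connected components of $W\cap Z$ and of the relative Picard rank over $W$, which replace the Noetherian hypotheses of the algebraic proof. Establishing special termination for dlt pairs in the analytic setting, which plays the role of trapping infinite flipping sequences into lower-dimensional strata in the case $\lambda=0$, is another essential input that must be developed in earlier sections of the paper.
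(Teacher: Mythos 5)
Your construction of the individual steps and your treatment of the case $\lim_i\lambda_i=\lambda>0$ are fine in spirit: the computation showing that the run is, up to rescaling, an MMP for the lc pair $(X,B+\epsilon A+\delta H)$ with scaling of $H$ is correct, and although Theorem \ref{thm--mmp-ample+eff} is stated as an existence result, its proof (log abundance of all pairs plus $\lambda_i\to 0$ plus Theorem \ref{thm--mmp-ind2}) does give termination of any such run, so this part can be repaired; the upshot is exactly that an infinite run forces $\lambda_i\to 0$. The genuine gap is your treatment of the case $\lambda=0$, which is the heart of the theorem.

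There you propose to conclude by "special termination along $\lfloor B\rfloor$", but special termination (Theorems \ref{thm--sp-ter-1} and \ref{thm--sp-ter-2}) only controls steps whose non-isomorphism loci meet lc centers, and in the present situation the flipping rays satisfy $A_i\cdot R_i=-(K_{X_i}+B_i)\cdot R_i>0$, so there is no reason whatsoever that the flips are "concentrated on the dlt boundary": an infinite tail of flips disjoint from every lc center (for instance the case $\lfloor B\rfloor=0$, which is already not a consequence of finiteness of models since the ample part of the relevant polytope degenerates as the scaling parameter tends to $0$) is left completely untouched by your argument. Moreover, in the lc setting special termination is itself conditional on the existence of log minimal models for the adjunction pairs on lc centers, which in this paper is precisely what the heavy machinery of Section \ref{sec4} and Theorem \ref{thm--mmp-ind1} provides, so it cannot be invoked as a black box. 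The missing idea — and the paper's actual proof, following \cite[Remark 3.7]{has-finite} — is to exploit $K_{X_i}+B_i+A_i\sim_{\mathbb{R},Y}0$ beyond the numerical identity you use: it implies that on $X_i$ and on the normalization of every lc center the restriction of $K_{X_i}+B_i$ is $\mathbb{R}$-linearly equivalent over $Y$ to $-A_i\leq 0$, which forces $\kappa_\iota=\kappa_\sigma$ on sufficiently general fibers, i.e.\ every $(X_i,B_i)$ is log abundant over $Y$ around $W$; then Theorem \ref{thm--mmp-ind2} (the key technical result of the paper) says that no infinite MMP with scaling can simultaneously have $\lambda_i\to 0$ and infinitely many log abundant pairs, which kills the $\lambda=0$ case. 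Since your proposal never extracts this log abundance from the hypothesis and never appeals to Theorem \ref{thm--mmp-ind2} or an equivalent, the termination argument is incomplete at its crucial point.
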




Theorem \ref{thm--lcfliptype} is new and nontrivial even in the klt case because we  cannot directly apply the argument by Lai \cite{lai} to the $\mathbb{R}$-boundary divisor case, and we need the equi-dimensinoal reduction (cf.~\cite{ak}) in the analytic setting established in \cite{eh-semistablereduction}. 

To prove the main results, we will apply the ideas developed in \cite{has-finite} by the second author. 
The key technical result is the following:

\begin{thm}[= Theorem \ref{thm--mmp-ind2}, cf.~{\cite[Theorem 3.5]{has-finite}}]\label{thm--key-intro} 
Let $\pi \colon X \to Y$ be a projective morphism from a normal analytic variety $X$ to a Stein space $Y$, and let $W \subset Y$ be a compact subset such that $\pi$ and $W$ satisfy (P). 
Let $(X,\Delta)$ be an lc pair. 
Let $A$ be an effective $\mathbb{R}$-divisor on $X$ such that $(X,\Delta+A)$ is an lc pair and $K_{X}+\Delta+A$ is nef over $W$.
Then no infinite sequence of steps of a $(K_{X}+\Delta)$-MMP over $Y$ around $W$ with scaling of $A$
$$(X_{0}:=X,\Delta_{0}:=\Delta) \dashrightarrow (X_{1},\Delta_{1}) \dashrightarrow\cdots \dashrightarrow (X_{i},\Delta_{i})\dashrightarrow \cdots$$
satisfies the following properties.
\begin{itemize}
\item
If we define $\lambda_{i}={\rm inf}\set{\mu \in \mathbb{R}_{\geq0} \!|\! K_{X_{i}}+\Delta_{i}+\mu A_{i}\text{\rm \, is nef over }W}$, where $A_{i}$ is the birational transform of $A$ on $X_{i}$, then ${\rm lim}_{i\to\infty}\lambda_{i}=0$, and  
\item
there are infinitely many $i$ such that $(X_{i},\Delta_{i})$ is log abundant over $Y$ around $W$. 
\end{itemize}
\end{thm}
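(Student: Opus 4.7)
My plan is to argue by contradiction. Suppose we have an infinite sequence of steps
\[
(X_0,\Delta_0)\dashrightarrow (X_1,\Delta_1)\dashrightarrow\cdots
\]
of a $(K_X+\Delta)$-MMP over $Y$ around $W$ with scaling of $A$ that satisfies both bullet points. I want to exploit a single log abundant step $(X_j,\Delta_j)$ together with a weak Zariski decomposition of $K_{X_j}+\Delta_j$ to produce a good minimal model, and then deduce termination after $X_j$, contradicting the assumed infiniteness.

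First, I would record the structural consequences of running the MMP with scaling of $A$. By construction of the scaling, $K_{X_i}+\Delta_i+\lambda_i A_i$ is nef over $W$ for every $i$, where $A_i$ is the birational transform of $A$, and $\{\lambda_i\}$ is non-increasing. The identity
\[
K_{X_i}+\Delta_i \sim_{\mathbb{R},\,Y}(K_{X_i}+\Delta_i+\lambda_i A_i)-\lambda_i A_i
\]
therefore exhibits $K_{X_i}+\Delta_i$ as the difference of a nef and an effective $\mathbb{R}$-divisor, i.e.\ a weak Zariski decomposition over $Y$ around $W$; in particular $K_{X_i}+\Delta_i$ is pseudo-effective over $Y$ around $W$ for every $i$. (The hypothesis $\lambda_i\to 0$ is not needed for this step, but it will be used implicitly through the existence-of-good-minimal-model input, which also requires the decomposition to be nontrivially controlled.)

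Next, I would invoke the second bullet to choose an index $j$ for which $(X_j,\Delta_j)$ is log abundant over $Y$ around $W$. The combination of log abundance with the weak Zariski decomposition of $K_{X_j}+\Delta_j$ is precisely the input for the analytic analogue of the existence-of-good-minimal-model result for log abundant lc pairs (the counterpart of \cite{has-finite} in the setup of \cite{fujino-analytic-bchm} established earlier in the paper). Applying it, after shrinking $Y$ around $W$ the lc pair $(X_j,\Delta_j)$ admits a good minimal model over $Y$ around $W$. The MMP from $(X_j,\Delta_j)$ onward is a $(K_{X_j}+\Delta_j)$-MMP with scaling of $\lambda_j A_j$, and $K_{X_j}+\Delta_j+\lambda_j A_j$ is nef over $W$; by the analytic version of the standard termination-with-scaling principle for pairs admitting a good minimal model, this MMP must terminate in finitely many steps, contradicting infiniteness of the original sequence.

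The main obstacle is the invocation of these two black boxes in the analytic setup. The existence of good minimal models for log abundant lc pairs with a weak Zariski decomposition, and termination of an MMP with scaling once a good minimal model exists, must be set up compatibly with property (P) and the repeated shrinking of $Y$ around $W$; each application of such a result potentially shrinks $Y$, and one has to check that all constructions made up to step $j$ (for $j$ chosen very large) survive a single further shrinking, and that $\lambda_i$, $A_i$, log abundance, and nefness are all preserved under this shrinking. Verifying these compatibilities — essentially, that one may always arrange the data to live over a single Stein open neighborhood of $W$ before running the contradiction argument — is the delicate point and is where the analytic case diverges technically from \cite[Theorem~3.5]{has-finite}.
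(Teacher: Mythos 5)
Your endgame is fine: once some truncation $(X_j,\Delta_j)$ is known to have a weak lc model over $Y$ around $W$, Theorem \ref{thm--termination-mmp} together with $\lambda_i\to 0$ forces $\lambda_n=0$ for some $n$ and hence termination, a contradiction. But the step that produces that model is a genuine gap. First, a small but real error: $K_{X_i}+\Delta_i=(K_{X_i}+\Delta_i+\lambda_i A_i)-\lambda_i A_i$ is ``nef minus effective'', which is not a weak Zariski decomposition and by itself does not even give pseudo-effectivity (a weak Zariski decomposition is nef \emph{plus} effective); pseudo-effectivity of $K_{X_i}+\Delta_i$ does hold, but only by comparing with the later pairs $K_{X_j}+\Delta_j+\lambda_j A_j$ ($j\gg i$) via the negativity lemma and letting $\lambda_j\to 0$ --- so, contrary to your parenthetical, this step does use $\lambda_i\to 0$. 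Second, and decisively: the black box you invoke, ``log abundance plus (pseudo-effectivity / a weak Zariski decomposition) implies existence of a good minimal model for an lc pair'', is not a result of this paper, of \cite{has-finite}, or of \cite{hashizumehu}. For klt pairs abundance does yield good minimal models, but for lc pairs this is precisely the open problem the whole machinery is built to circumvent. The only existence statement available here is the conditional Theorem \ref{thm--mmp-ind1}, which in addition to abundance requires that $(K_X+\Delta)|_S$ be nef over $W$ for \emph{every} lc center $S$ and that $\sigma_P(K_X+\Delta;X/Y)=0$ for all divisors $P$ with $a(P,X,\Delta)<0$ whose centers meet an lc center; neither condition is automatic at an arbitrary log abundant step $j$. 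If your black box were available, Theorem \ref{thm--mmp-ind2} would be a two-line corollary and none of Sections \ref{sec3}--\ref{sec5} would be needed.

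What the paper actually does (following \cite[Theorem 3.5]{has-finite}) is to manufacture the missing hypotheses of Theorem \ref{thm--mmp-ind1} along the MMP: one passes to the lift of the MMP by $\mathbb{Q}$-factorial dlt blow-ups (Subsection \ref{subsection--lift-mmp}), runs the special termination framework (Subsection \ref{subsec-special-termi}, Theorems \ref{thm--sp-ter-1} and \ref{thm--sp-ter-2}) with an induction on the dimension of lc centers so that, after finitely many steps, the MMP is an isomorphism near the relevant lc centers --- this is what makes the restrictions of $K_{X_i}+\Delta_i$ to lc centers nef over $W$ --- and uses Lemma \ref{lem--mmpscaling-effective} (which again needs $\lambda_i\to 0$ and the eventual isomorphism near the centers) to obtain the vanishing $\sigma_P=0$. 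Only then is Theorem \ref{thm--mmp-ind1} applied at a log abundant index, and Theorem \ref{thm--termination-mmp} closes the argument as in your last step. Your proposal skips exactly this verification, so as written it is circular: it assumes an unconditional minimal model statement for log abundant lc pairs that is strictly stronger than what is being proved.
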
 

For Theorem \ref{thm--key-intro}, we will establish fundamental results of the asymptotic vanishing order and the Nakayama--Zariski decomposition in \cite{nakayama} (see Section \ref{sec4}), techniques of MMP as in \cite{fujino-sp-ter} and \cite{birkar-flip} (see Section \ref{sec3}), and we will apply the equi-dimensinoal reduction (cf.~\cite{ak}) in the analytic setting established in \cite{eh-semistablereduction}. 
We also note that some important results on the log canonical abundance in the algebraic case are necessary to prove the main results of this paper. 
For example, to prove Lemma \ref{lem--mmp-genericCYfib}, which is crucial to the proof of Theorem \ref{thm--key-intro}, we need the abundance theorem for lc pairs of numerical dimension zero \cite{gongyo} by Gongyo. 
Moreover, we need \cite[Theorem 1.5]{hashizumehu} by the second author and Hu for the reduction of Theorem \ref{thm--polarizationtype} to Theorem \ref{thm--key-intro}.

The contents of this paper are as follows: 
In Section \ref{sec2}, we collect some notions on complex analytic spaces used in this paper. 
In Section \ref{sec3}, we define a sequence of steps of an MMP in the analytic setting and prove some fundamental results. 
In Section \ref{sec4}, we discuss the asymptotic vanishing order and the Nakayama--Zariski decomposition in the analytic setting. 
In Section \ref{sec5}, we prove the main results of this paper.

\begin{ack}
The first author was partially supported by JSPS KAKENHI Grant Number JP20K14297.
The second author was partially supported by JSPS KAKENHI Grant Number JP22K13887. 
The authors are grateful to Professor Osamu Fujino for fruitful discussions, constant support, and warm encouragement.   
\end{ack}

\section{Preliminaries}\label{sec2}

Throughout this paper, {\em complex analytic spaces} are always assumed to be Hausdorff and second countable. 
{\em Analytic varieties} are reduced and irreducible complex analytic spaces. 

In this section, we collect definitions and basic results used in this paper. 
For the notations and definitions, see \cite{fujino-analytic-bchm}. 

A {\em contraction} $\pi \colon X \to Y$ is a projective morphism between analytic varieties such that $\pi_{*}\mathcal{O}_{X} \cong \mathcal{O}_{Y}$. 
For any contraction $\pi \colon X \to Y$, if $X$ is a normal analytic variety then $Y$ is also a normal analytic variety. 
For an analytic variety $X$ and an $\mathbb{R}$-divisor $D$ on $X$, a {\em log resolution of} $(X,D)$ is a projective bimeromorphism $g \colon X' \to X$ from a non-singular analytic variety $X'$ such that the exceptional locus ${\rm Ex}(g)$ of $g$ is pure codimension one and ${\rm Ex}(g)\cup {\rm Supp}\,g_{*}^{-1}D$ is a simple normal crossing divisor. 

Let $X$ be a complex analytic space. 
A subset $S$ of $X$ is said to be {\em analytically meagre} if $S \subset \bigcup_{n \in \mathbb{Z}_{>0}}Y_{n}$, where each $Y_{n}$ is a locally closed analytic subset of $X$ such that ${\rm codim}_{X}(Y_{n})\geq 1$. 
An {\em analytically sufficiently general point of $X$} is a point which is not contained in an analytically meagre subset. 
Let $f \colon X \to Y$ be a morphism of analytic spaces. 
Then an {\em analytically sufficiently general fiber of $f$} is the inverse image of an  analytically sufficiently general point of $Y$.

\begin{defn}[Property (P), see {\cite{fujino-analytic-bchm}}]\label{defn--property(P)}
Let $\pi \colon X \to Y$ be a projective morphism of complex analytic spaces, and let $W \subset Y$ be a compact subset. 
In this paper, we will use the following conditions:
\begin{itemize}
\item[(P1)]
$X$ is a normal complex variety,
\item[(P2)]
$Y$ is a Stein space,
\item[(P3)]
$W$ is a Stein compact subset of $Y$, and
\item[(P4)]
$W \cap Z$ has only finitely many connected components for any analytic subset $Z$
which is defined over an open neighborhood of $W$. 
\end{itemize}
We say that {\em $\pi \colon X \to Y$ and $W \subset Y$ satisfy (P)} if the conditions (P1)--(P4) hold. 
\end{defn}

\subsection{Complex analytic space}

We collect basic facts on complex analytic varieties.

\begin{thm}\label{thm--birat-basic}
Let $f\colon X' \to X$ be a proper bimeromorphism between normal analytic varieties. 
Then $f_{*}\mathcal{O}_{X'}\cong \mathcal{O}_{X}$ and there exists an Zariski open subset $U \subset X$ over which $f$ is a biholomorphism and ${\rm codim}_{X}(X\setminus U)\geq 2$. 
\end{thm}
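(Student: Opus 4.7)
The plan is to prove the two assertions in turn, with the first feeding into the second.

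For $f_{*}\mathcal{O}_{X'}\cong \mathcal{O}_{X}$, I would combine Grauert's direct image theorem with an analytic Stein factorization. Grauert's theorem gives that $f_{*}\mathcal{O}_{X'}$ is a coherent $\mathcal{O}_{X}$-algebra, and taking the relative analytic spectrum $Z$ of this algebra produces a factorization $X'\xrightarrow{h} Z\xrightarrow{g} X$ with $h_{*}\mathcal{O}_{X'}=\mathcal{O}_{Z}$ and $g$ finite. Since $f$ is bimeromorphic and $h$ has connected fibers, $g$ is a finite bimeromorphism, so the inclusion $\mathcal{O}_{X}\hookrightarrow g_{*}\mathcal{O}_{Z}$ is an integral extension that is generically an isomorphism. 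The normality of $X$ then forces this extension to be an equality, so $g$ is a biholomorphism and $f_{*}\mathcal{O}_{X'}=g_{*}\mathcal{O}_{Z}=\mathcal{O}_{X}$.

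For the existence of the Zariski open set $U$, I would define $B:=\{x\in X \mid \dim f^{-1}(x)\geq 1\}$, which is a closed analytic subset of $X$ by upper semicontinuity of fiber dimension for proper morphisms, and put $U:=X\setminus B$. Over $U$ the morphism $f$ is proper with finite fibers, hence finite; combined with the first part, $f_{*}\mathcal{O}_{f^{-1}(U)}=\mathcal{O}_{U}$ implies that $f$ restricts to a biholomorphism $f^{-1}(U)\to U$, because a finite morphism is determined by its pushforward of structure sheaves. It therefore remains to check that ${\rm codim}_{X}B\geq 2$.

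To rule out codimension-one components of $B$, I would assume for contradiction that $D\subset B$ is such a component and write $f^{-1}(D)=\bigcup_{i}E_{i}$ with $E_{i}$ irreducible. For any $i$ with $f(E_{i})=D$ one has $\dim X-1=\dim D\leq \dim E_{i}\leq \dim X'=\dim X$; the case $\dim E_{i}=\dim X$ would make $E_{i}=X'$ (since $X'$ is irreducible) and hence $D=f(X')=X$, which is impossible, so every such $E_{i}$ is generically finite over $D$. Meanwhile any $E_{i}$ with $f(E_{i})\subsetneq D$ avoids $f^{-1}(\eta)$ for an analytically sufficiently general $\eta \in D$. Consequently $\dim f^{-1}(\eta)=0$ for such $\eta$, so $D\cap B$ is a proper analytic subset of $D$, contradicting $D\subset B$. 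The main obstacle is the step from Grauert's coherence to $f_{*}\mathcal{O}_{X'}=\mathcal{O}_{X}$, namely showing that a finite bimeromorphism onto a normal analytic variety is a biholomorphism; normality enters essentially here, through the integral closedness of $\mathcal{O}_{X}$ inside its sheaf of meromorphic functions. Once this step is granted, the finiteness of $f$ over $U$ and the dimension bookkeeping above are formal.
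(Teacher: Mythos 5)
Your proposal is correct, and there is nothing in the paper to compare it against: the paper proves Theorem \ref{thm--birat-basic} only by the phrase ``It is well known,'' and what you write is precisely the standard Zariski-main-theorem-type argument (Stein factorization plus the integral closedness of $\mathcal{O}_{X}$ in the meromorphic functions for $f_{*}\mathcal{O}_{X'}\cong\mathcal{O}_{X}$; the fiber-dimension locus and a dimension count for the codimension-two statement). Two minor points are worth tightening. First, the analyticity of $B=\{x\in X\mid \dim f^{-1}(x)\geq 1\}$ is best justified by Remmert's semicontinuity theorem applied on the source, i.e.\ the locus $\{x'\in X'\mid \dim_{x'}f^{-1}(f(x'))\geq 1\}$ is analytic in $X'$, followed by the proper mapping theorem to push it down to $X$; also, since $f^{-1}(D)$ may have countably many irreducible components, your appeal to an analytically sufficiently general $\eta\in D$ (rather than a single proper analytic subset) is exactly what is needed, and matches the paper's terminology. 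Second, your dimension count only excludes codimension-one components of $B$; to conclude ${\rm codim}_{X}(X\setminus U)\geq 2$ you should also record that $B\neq X$, which follows from the same bookkeeping applied to $X'\to X$ itself (equidimensional and generically one-to-one because $f$ is a proper bimeromorphism), so that general fibers are finite. With these remarks your argument is complete.
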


\begin{proof}
It is well known. 
\end{proof}

\begin{thm}[cf.~{\cite[Section 13]{resolution-1}}]\label{thm--resolution}
Let $X$ be an analytic variety and let $\mathcal{I}$ be a coherent sheaf on $X$. 
Then there exists a projective bimeromorphism $f \colon X' \to X$ from a non-singular analytic variety $X'$ such that $\mathcal{I}\cdot \mathcal{O}_{X'}$ is an invertible sheaf on $X'$ and there exists an effective $f$-exceptional Cartier divisor $E'$ on $X'$ such that ${\rm Supp}\,E'={\rm Ex}(f)$ and $-E'$ is $f$-ample.  
\end{thm}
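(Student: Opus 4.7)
The plan is to construct $f$ in two stages: first I resolve the singular locus of $X$, and then I principalize the pull-back of $\mathcal{I}$ on the resolution. The divisor $E'$ will be assembled from exceptional data produced at each stage, and its $f$-relative ampleness will follow from the standard lemma combining relative ampleness under composition of projective morphisms.

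In the first stage I invoke the complex analytic version of Hironaka's resolution of singularities (as provided by \cite{resolution-1}) to obtain a projective bimeromorphism $h\colon \tilde{X} \to X$ from a non-singular analytic variety $\tilde{X}$, realized as a finite sequence of blow-ups in smooth centers over the singular locus of $X$. This construction simultaneously yields an effective $h$-exceptional Cartier divisor $\tilde{E}$ on $\tilde{X}$ with ${\rm Supp}\,\tilde{E}={\rm Ex}(h)$ such that $-\tilde{E}$ is $h$-ample. In the second stage I apply Hironaka's principalization to the coherent ideal sheaf $\mathcal{I}\cdot\mathcal{O}_{\tilde{X}}$ on the smooth analytic variety $\tilde{X}$, giving a projective bimeromorphism $g\colon X'\to\tilde{X}$ from a non-singular $X'$, again realized as a finite sequence of blow-ups in smooth centers, so that $\mathcal{I}\cdot\mathcal{O}_{X'}$ is invertible. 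The same construction produces an effective $g$-exceptional Cartier divisor $F$ on $X'$ with ${\rm Supp}\,F={\rm Ex}(g)$ and $-F$ $g$-ample.

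Setting $f=h\circ g\colon X'\to X$, I take $E':=g^{*}\tilde{E}+nF$ for $n$ sufficiently large. The equality ${\rm Supp}\,E'={\rm Ex}(f)$ follows from ${\rm Ex}(f)={\rm Ex}(g)\cup g^{-1}({\rm Ex}(h))$, using ${\rm Supp}\,(g^{*}\tilde{E})=g^{-1}({\rm Ex}(h))$ and ${\rm Supp}\,F={\rm Ex}(g)$, together with the observation that every $f$-exceptional prime divisor on $X'$ is either $g$-exceptional (so a component of $F$) or dominates an $h$-exceptional prime divisor on $\tilde{X}$ (so a component of $g^{*}\tilde{E}$). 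For the $f$-exceptionality of $E'$, a direct dimension count shows that components of $g^{*}\tilde{E}$ and of $F$ all map to codimension $\ge 2$ subsets of $X$ under $f$. Finally, $-E'$ is $f$-ample for $n\gg 0$ by the standard fact that if $\alpha$ is $g$-ample and $\beta$ is $h$-ample, then $n\alpha+g^{*}\beta$ is $(h\circ g)$-ample for $n$ sufficiently large; applied to $\alpha=-F$ and $\beta=-\tilde{E}$ this yields the $f$-ampleness of $-E'$.

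The main obstacle lies in invoking the complex analytic Hironaka-type resolution and principalization with the additional control on the exceptional divisors needed for the $h$-ample $\tilde{E}$ and $g$-ample $F$ with full support. These outputs are standard byproducts of the iterated smooth blow-up construction in \cite{resolution-1}, so the remaining argument is a routine combination of the two stages.
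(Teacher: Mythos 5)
Your two-stage strategy (resolve $X$, then principalize $\mathcal{I}\cdot\mathcal{O}_{\tilde{X}}$, then combine the exceptional data) is the natural one, and the paper itself offers no argument at all for this statement: it is quoted as a black box from \cite[Section 13]{resolution-1}. But your reduction of the statement to ``standard byproducts of the iterated smooth blow-up construction'' hides exactly the point that is nontrivial in the analytic category, and the paper flags this explicitly in the remark immediately following the theorem: the resolution $f\colon X'\to X$ is \emph{not} necessarily a finite sequence of blow-ups. For a general (second countable, non-compact) analytic variety, canonical desingularization and principalization are only locally finite sequences of smooth blow-ups; globally the sequence may be infinite. Consequently, the existence of a single effective Cartier divisor $\tilde{E}$ (resp.\ $F$) with support equal to the whole exceptional locus and with $-\tilde{E}$ $h$-ample (resp.\ $-F$ $g$-ample) is not an automatic consequence of composing blow-ups, and a ``sufficiently large $n$'' cannot be chosen uniformly over a non-compact base by the finite-composite argument. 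This is precisely the content being outsourced to Bierstone--Milman, so your proof as written assumes what needs to be proved.

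There is also a concrete error in the gluing step: the lemma you invoke is stated backwards. If $\alpha$ is $g$-ample and $\beta$ is $h$-ample, the correct statement is that $\alpha+m\,g^{*}\beta$ is $(h\circ g)$-ample for $m\gg 0$ (locally over the base); the combination $n\alpha+g^{*}\beta$ for $n\gg 0$ can fail, because on a curve contracted by $h\circ g$ but not by $g$ the term $g^{*}\beta$ is a fixed positive number while $n\alpha$ can be arbitrarily negative. So your divisor should be $E'=F+m\,g^{*}\tilde{E}$ rather than $E'=g^{*}\tilde{E}+nF$, and even in the corrected form one must justify that a single $m$ works globally when $X$ is not compact. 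Your support computation ${\rm Ex}(f)={\rm Ex}(g)\cup g^{-1}({\rm Ex}(h))$ and the $f$-exceptionality of the components are fine, but the two issues above are genuine gaps rather than routine details.
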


\begin{rem}
The resolution $f \colon X' \to X$ in Theorem \ref{thm--resolution} is not necessarily a finite sequence of blow-ups.  
\end{rem}

\begin{lem}\label{lem--cont-shrink}
Let $\pi \colon X \to Y$ be a contraction from a normal analytic variety to an analytic space. 
Then $\pi^{-1}(U)$ is a normal variety for any connected open subset $U \subset Y$. 
\end{lem}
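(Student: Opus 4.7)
The plan is to verify the two assertions hidden in the statement separately: (i) $\pi^{-1}(U)$ is a normal complex analytic space, and (ii) $\pi^{-1}(U)$ is irreducible (so that together with normality it is a normal analytic variety in the sense of the paper).

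Part (i) is essentially for free: normality is a local property on complex analytic spaces, and $\pi^{-1}(U)$ is an open subspace of the normal analytic variety $X$, hence normal.

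For (ii), I would first show that $\pi^{-1}(U)$ is connected, and then upgrade connectedness to irreducibility using that a normal analytic variety is locally irreducible, so that its irreducible components coincide with its connected components. For connectedness, the key input is that $\pi$, being projective, is proper, and the condition $\pi_{*}\mathcal{O}_{X}\cong \mathcal{O}_{Y}$ forces the analytic Stein factorization of $\pi$ to be trivial. Hence $\pi$ is surjective and every fiber $\pi^{-1}(y)$ is connected. Now assume, for contradiction, that $\pi^{-1}(U)=V_{1}\sqcup V_{2}$ with $V_{1},V_{2}$ disjoint nonempty open subsets. The restriction $\pi|_{\pi^{-1}(U)}\colon \pi^{-1}(U)\to U$ is proper, and each $V_{i}$ is closed in $\pi^{-1}(U)$, so $\pi(V_{i})$ is closed in $U$. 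Connectedness of each fiber forces $\pi^{-1}(y)\subset V_{1}$ or $\pi^{-1}(y)\subset V_{2}$ for every $y\in U$, so $U=\pi(V_{1})\sqcup \pi(V_{2})$ becomes a disjoint decomposition of $U$ into two nonempty closed subsets, contradicting the connectedness of $U$. Thus $\pi^{-1}(U)$ is connected, and the locally irreducible remark above then gives irreducibility.

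I do not foresee a genuine obstacle. The only non-formal ingredients are the analytic Stein factorization for proper morphisms (which supplies connected fibers from $\pi_{*}\mathcal{O}_{X}\cong \mathcal{O}_{Y}$) and the fact that normal analytic spaces are locally irreducible; both are standard in the complex analytic category. The remainder is the elementary topological argument above.
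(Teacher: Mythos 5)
Your proposal is correct and follows essentially the same route as the paper: normality is local, so the point is connectedness of $\pi^{-1}(U)$, which both you and the paper obtain from properness of $\pi$, connectedness of fibers coming from $\pi_{*}\mathcal{O}_{X}\cong\mathcal{O}_{Y}$, and the decomposition of $U$ into closed images of the putative components, with local irreducibility of normal spaces upgrading connectedness to irreducibility. The only cosmetic difference is the order of the contradiction (you use connected fibers to make the images disjoint, the paper picks a point in their intersection and splits its fiber), which changes nothing of substance.
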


\begin{proof}
Fix a connected open subset $U \subset Y$ and we put $X_{U}=\pi^{-1}(U)$. 
For any point $x \in X$, the stalk $\mathcal{O}_{X,\, x}$ is an integral domain. 
Hence $\mathcal{O}_{X_{U},\, x'}$ is an integral domain for any point $x' \in X_{U}$. 
Therefore $X_{U}$ is a disjoint union of normal varieties. 
If there are at least two connected components, then we can write $X_{U}=X'_{U} \amalg X''_{U}$ for some closed subsets $X'_{U}$ and $X''_{U}$ of $X_{U}$. 
Since $\pi|_{X_{U}}$ is proper, $\pi(X'_{U})$ and $\pi(X''_{U})$ are closed subsets of $U$ and 
$U= \pi(X'_{U}) \cup \pi(X''_{U})$. 
Since $U$ is connected, there exists a point $y \in \pi(X'_{U}) \cap \pi(X''_{U})$. 
Then $f^{-1}(y)=(f^{-1}(y) \cap X'_{U}) \amalg (f^{-1}(y) \cap X''_{U})$, which contradicts the fact that $\pi$ is a contraction. 
Thus $X_{U}$ is a normal variety. 
\end{proof}

\begin{lem}\label{lem--shrink-irreducible}
Let $\pi \colon X \to Y$ be a projective morphism from a normal analytic variety $X$ to an analytic space $Y$, and let $W \subset Y$ be a compact subset such that $W \cap Z$ has only finitely many connected components for any analytic subset $Z$ which is defined over an open neighborhood of $W$ (see also the condition $({\rm P}4)$ in \cite{fujino-analytic-bchm}). 
Then there exists an open subset $Y' \subset Y$ containing $W$ such that if $\pi^{-1}(Y') =\coprod_{\lambda \in \Lambda}X'_{\lambda}$ is the decomposition of $\pi^{-1}(Y')$ into connected components $X'_{\lambda}$, then there are only finitely many connected components, which we denote by $X'_{1},\cdots,\, X'_{l}$, such that their images on $Y$ by $\pi$ intersect $W$. 
Furthermore, these $X'_{1},\cdots,\, X'_{l}$ satisfy the following properties.
\begin{itemize}
\item
$X'_{i}$ is a normal complex variety for all $1 \leq i \leq l$, and
\item
for any $1 \leq i \leq l$ and open subset $U \subset Y'$ containing $W$, there exists a unique connected component $X_{U}^{(i)}$ of $\pi^{-1}(U) \cap X'_{i}$ such that $\pi(X_{U}^{(i)}) \cap W \neq \emptyset$, and furthermore, $X_{U}^{(i)}$ is a normal analytic variety. 
\end{itemize} 
\end{lem}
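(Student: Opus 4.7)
The plan is to reduce via the Stein factorization of $\pi$ to the case of a finite morphism, then use (P4) together with the local structure of finite morphisms to construct $Y'$. Because $\pi$ is projective, $\pi_*\mathcal{O}_X$ is a coherent $\mathcal{O}_Y$-algebra by Grauert's direct image theorem, yielding a Stein factorization $\pi = g \circ \pi'$ with $\pi' \colon X \to Y''$ a contraction and $g \colon Y'' \to Y$ finite. Since $X$ is a normal analytic variety, so is $Y''$, and by Lemma \ref{lem--cont-shrink} applied to $\pi'$, the preimage under $\pi'$ of any connected open subset of $Y''$ is a normal analytic variety, in particular connected. Hence for every open $V \subset Y$ the connected components of $\pi^{-1}(V)$ are in bijection with those of $g^{-1}(V)$, and everything reduces to the finite map $g$.

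The next step is to show that $g^{-1}(W)$ has only finitely many connected components. By Remmert's proper mapping theorem, $g(Y'')$ is an irreducible analytic subset of $Y$, so applying (P4) to $Z = g(Y'')$ yields that $W \cap g(Y'') = g(g^{-1}(W))$ has finitely many connected components $C_1, \ldots, C_p$. For each $C_j$, $g^{-1}(C_j)$ has only finitely many connected components by a standard sheet-counting argument: $g$ has a well-defined generic degree $d$ as a finite surjective morphism between irreducible analytic varieties, and each connected component of $g^{-1}(C_j)$ accounts for at least one sheet of $g$, bounding the number of such components by $d$. Hence $g^{-1}(W) = \bigsqcup_j g^{-1}(C_j)$ decomposes into finitely many pairwise disjoint compact connected components $E_1, \ldots, E_s$.

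With these in hand, I choose pairwise disjoint connected open neighborhoods $D_\alpha \supset E_\alpha$ in $Y''$ (possible since $Y''$ is Hausdorff and locally connected). By properness of $g$, the set $F := g(Y'' \setminus \bigsqcup_\alpha D_\alpha)$ is closed in $Y$ and disjoint from $W$, so $Y' := Y \setminus F$ is an open neighborhood of $W$ with $g^{-1}(Y') \subset \bigsqcup_\alpha D_\alpha$. Letting $D'_\alpha$ denote the connected component of $g^{-1}(Y') \cap D_\alpha$ containing $E_\alpha$, the sets $D'_1, \ldots, D'_s$ are precisely the connected components of $g^{-1}(Y')$ whose images meet $W$; lifting via $\pi'$ gives the required $l := s$ components $X'_i := \pi'^{-1}(D'_i)$ of $\pi^{-1}(Y')$ with images meeting $W$. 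Each $X'_i$ is a connected open subset of the normal variety $X$, so the same argument as in the proof of Lemma \ref{lem--cont-shrink} (using that the stalks of $\mathcal{O}_X$ are integral domains) shows it is a normal analytic variety. For the uniqueness statement, given any open $U \subset Y'$ containing $W$, the connected set $E_i = g^{-1}(W) \cap D'_i$ lies in a unique connected component of the open set $g^{-1}(U) \cap D'_i$, which pulls back via $\pi'$ to the unique connected component $X_U^{(i)}$ of $\pi^{-1}(U) \cap X'_i$ meeting $\pi^{-1}(W)$; it is a normal analytic variety by the same argument.

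The main technical obstacle is the finiteness in the second step: without (P4), $g^{-1}(W)$ could easily have infinitely many connected components, even for the identity map (e.g.\ $W = \{0\} \cup \{1/n : n \geq 1\} \subset \mathbb{C}$), so (P4) is essential in order to cut the target down to finitely many connected components $C_j$, after which the analytic structure of the finite morphism $g$ (via its generic degree) handles the bound on each $g^{-1}(C_j)$.
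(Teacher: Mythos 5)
Your overall architecture coincides with the paper's proof: reduce via the Stein factorization $\pi = g\circ\pi'$ (so that components of $\pi^{-1}(V)$ correspond to components of $g^{-1}(V)$ by Lemma \ref{lem--cont-shrink}), show $g^{-1}(W)$ has finitely many connected components, separate them by pairwise disjoint open sets, take $Y'$ to be the complement of the image of the complement, and pull everything back through the contraction $\pi'$ to get normality and the uniqueness statement for smaller $U$. All of that matches the paper.

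The genuine gap is the finiteness step. You apply (P4) only to the single analytic set $Z=g(Y'')=\pi(X)$ and then claim that each connected component $C_j$ of $W\cap g(Y'')$ has at most $d=\deg g$ connected components in its preimage, ``since each component accounts for at least one sheet.'' This is not justified and is false in general: a connected component $E$ of $g^{-1}(C_j)$ need not map onto $C_j$ (its image is compact but need not be relatively open in $C_j$, because a finite map onto a non-normal image need not be open, and components of a non-locally-connected compactum need not be open in it), so $E$ need not meet every fiber. Concretely, let $g$ be the normalization of a nodal curve, so $d=1$, and let $C$ be the union of the node and a small arc lying on one local branch; then $g^{-1}(C)$ has two connected components. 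Thus the bound by the generic degree fails, and your argument does not establish that $g^{-1}(W)$ has finitely many components. This is exactly the point where the paper invokes the full strength of (P4) for $W$ via \cite[Theorem 2.13]{fujino-analytic-bchm}, which says that the (P4)-type finiteness is inherited by $f^{-1}(W)$ under a finite morphism; in particular $\widetilde W=f^{-1}(W)$ has finitely many connected components. To repair your proof you should either cite that theorem (after which the rest of your construction of $Y'$, the components $X'_i$, and the uniqueness of $X_U^{(i)}$ goes through as you wrote it) or supply an argument of comparable strength; the elementary sheet-counting does not suffice.
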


\begin{proof}
We may assume $\pi(X) \cap W \neq \emptyset$ because otherwise we may set $Y'=Y\setminus \pi(X)$. 
Let $\widetilde{\pi} \colon X \to \widetilde{Y}$ be the Stein factorization of $\pi$, and let $f \colon \widetilde{Y} \to Y$ be the induced finite morphism. 
We set $\widetilde{W}=f^{-1}(W)$. 
Then $\widetilde{W}$ is compact, and \cite[Theorem 2.13]{fujino-analytic-bchm} shows that $\widetilde{W} \cap \widetilde{Z}$ has only finitely many connected components for any analytic subset $\widetilde{Z}$ which is defined over an open neighborhood of $\widetilde{W}$. 
In particular, $\widetilde{W}$ has only finitely many connected components $\widetilde{W}_{1},\,\cdots,\,\widetilde{W}_{l}$. 
Then we can take open subsets $\widetilde{Y}_{1},\,\cdots,\,\widetilde{Y}_{l}$ of $\widetilde{Y}$ such that $\widetilde{Y}_{i} \supset \widetilde{W}_{i}$ for every $i$ and $\widetilde{Y}_{i} \cap \widetilde{Y}_{i'}=\emptyset$ for any indices $i$ and $i'$. 
Let $Y'$ be the complement of $f(Y \setminus \coprod_{i=1}^{l}\widetilde{Y}_{i})$. 
Since $f$ is finite, $Y'$ is open, and $Y' \supset W$. 

We check that this $Y'$ satisfies the condition of Lemma \ref{lem--shrink-irreducible}.  
By construction, we have
$$\coprod_{i=1}^{l}\widetilde{W}_{i}=\widetilde{W} \subset f^{-1}(Y') \subset \coprod_{i=1}^{l}\widetilde{Y}_{i}.$$
By the connectedness of each $\widetilde{W}_{i}$ and considering the decomposition $f^{-1}(Y')=\coprod_{\gamma \in \Gamma}\widetilde{Y}'_{\gamma}$ of $f^{-1}(Y')$ into connected components, we can find connected components $\widetilde{Y}'_{1},\,\cdots,\,\widetilde{Y}'_{l}$ such that $\widetilde{W}_{i} \subset \widetilde{Y}'_{i} \subset \widetilde{Y}_{i}$ for every $1 \leq i \leq l$, and the other connected components of $f^{-1}(Y')$ cannot intersect $\widetilde{W}$. 
For each $\gamma \in \Gamma$, we put $X'_{\gamma}=\widetilde{\pi}^{-1}(\widetilde{Y}'_{\gamma})$. 
Then
$$X':=\widetilde{\pi}^{-1}(f^{-1}(Y'))=\widetilde{\pi}^{-1}\left(\coprod_{\gamma \in \Gamma}\widetilde{Y}'_{\gamma}\right)=\coprod_{\gamma \in \Gamma}X'_{\gamma}.$$
Since $\widetilde{\pi} \colon X \to \widetilde{Y}$ is a contraction, all $X'_{\gamma}$ are connected. 
Therefore, $X'=\coprod_{\gamma \in \Gamma}X'_{\gamma}$ is the decomposition of $X'$ into connected component, and a connected component $X'_{\gamma}$ of $X'$ satisfying $\pi(X'_{\gamma})\cap W \neq \emptyset$ is one of $X'_{1},\,\cdots,\,X'_{l}$.  

To finish the proof, it is sufficient to prove that $X'_{1},\,\cdots,\,X'_{l}$ satisfy the two properties of Lemma \ref{lem--shrink-irreducible}. 
By applying Lemma \ref{lem--cont-shrink} to $\widetilde{\pi} \colon X \to \widetilde{Y}$ and $\widetilde{Y}'_{1},\,\cdots,\,\widetilde{Y}'_{l}$, it follows that $X'_{1},\,\cdots,\,X'_{l}$ are normal analytic varieties. 
Therefore the first assertion of Lemma \ref{lem--shrink-irreducible} holds. 
To prove the second assertion of Lemma \ref{lem--shrink-irreducible}, we take an open subset $U \subset Y'$ containing $W$. 
Then
$$\coprod_{i=1}^{l}\widetilde{W}_{i}=\widetilde{W} \subset f^{-1}(U) \subset \coprod_{\gamma \in \Gamma}\widetilde{Y}_{\gamma}.$$
By considering the decomposition of $f^{-1}(U)$ into connected components, we may find connected components $\widetilde{U}_{1},\,\cdots,\,\widetilde{U}_{l}$ of $f^{-1}(U)$ such that $\widetilde{W}_{i} \subset \widetilde{U}_{i} \subset \widetilde{Y}'_{i}$ for all $1 \leq i \leq l$, and the other connected components of $f^{-1}(U)$ cannot intersect $\widetilde{W}$. 
For each $1 \leq i \leq l$, we put $X_{U}^{(i)}=\widetilde{\pi}^{-1}(\widetilde{U}_{i})$. 
By the same argument as in the case of $\widetilde{Y}'_{1},\,\cdots,\,\widetilde{Y}'_{l}$, we see that $X_{U}^{(1)},\, \cdots,\, X_{U}^{(l)}$ are normal analytic varieties and connected components of $\pi^{-1}(U)$, and each $X_{U}^{(i)}$ is the unique connected component of $\pi^{-1}(U) \cap X'_{i}$ such that $\pi(X_{U}^{(i)}) \cap W \neq \emptyset$. 
From these facts, the second assertion of Lemma \ref{lem--shrink-irreducible} holds. 
We finish the proof. 
\end{proof}

\subsection{Stable base locus}

We collect basic results on stable base loci of $\mathbb{R}$-divisors. 

\begin{lem}\label{lem--base-locus-1}
Let $\pi\colon X \to Y$ be a projective morphism from a normal analytic variety $X$ to a Stein space $Y$. 
Let $D$ be a Cartier divisor on $X$. 
Then 
$$\bigcap_{D\sim E \geq0}{\rm Supp}\,E={\rm Supp}\,\bigl({\rm Coker}(\pi^{*}\pi_{*}\mathcal{O}_{X}(D)\otimes \mathcal{O}_{X}(-D)\to \mathcal{O}_{X})\bigr)$$
set-theoretically. 
The left hand hand side is denoted by ${\boldsymbol{\rm Bs}}|D|$. 
\end{lem}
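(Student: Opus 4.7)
The plan is to translate the cokernel condition on the right-hand side into a pointwise generation condition on the line bundle $\mathcal{O}_X(D)$, and then to match it with the existence of effective linearly equivalent divisors.

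First, I would observe that tensoring with the invertible sheaf $\mathcal{O}_X(-D)$ is an equivalence on $\mathcal{O}_X$-modules, so the support of the cokernel appearing in the statement coincides with the support of the cokernel of the untwisted evaluation map $\phi_D\colon \pi^{*}\pi_{*}\mathcal{O}_{X}(D) \to \mathcal{O}_{X}(D)$. Hence a point $x \in X$ lies outside the right-hand side if and only if $\phi_D$ is surjective at $x$; since $\mathcal{O}_X(D)_x$ is free of rank one, this is in turn equivalent to the existence of a germ $s \in (\pi_{*}\mathcal{O}_{X}(D))_{\pi(x)}$ whose image in $\mathcal{O}_X(D)_x$ is an $\mathcal{O}_{X,x}$-module generator.

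Next, I would use the standard dictionary between nonzero sections $t \in H^{0}(X,\mathcal{O}_{X}(D))$ and effective divisors $E \sim D$: setting $E := (t)$ gives an effective divisor linearly equivalent to $D$, and $t$ fails to generate $\mathcal{O}_X(D)_x$ exactly when $x \in \Supp E$. This immediately proves the inclusion ``$\supseteq$'' of the asserted equality: if $x \notin \bigcap_{D \sim E \geq 0}\Supp E$, the corresponding global section $t \in H^{0}(X,\mathcal{O}_{X}(D)) = \Gamma(Y,\pi_{*}\mathcal{O}_{X}(D))$ supplies a germ at $\pi(x)$ generating $\mathcal{O}_X(D)_x$, so the cokernel vanishes at $x$.

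The converse inclusion ``$\subseteq$'' is where the Stein hypothesis enters, and I expect it to be the only real (though mild) obstacle. A germ $s \in (\pi_{*}\mathcal{O}_{X}(D))_{\pi(x)}$ that produces a generator of $\mathcal{O}_X(D)_x$ is only a local section on $\pi^{-1}(V)$ for some open neighborhood $V$ of $\pi(x)$, so it does not a priori yield a global effective divisor on $X$. To bridge this gap I would invoke Grauert's direct image theorem, which applies since $\pi$ is projective and hence proper, to conclude that $\pi_{*}\mathcal{O}_{X}(D)$ is a coherent $\mathcal{O}_Y$-module, and then Cartan's Theorem~A on the Stein space $Y$ to write $s = \sum g_i t_i$ as an $\mathcal{O}_{Y,\pi(x)}$-linear combination of global sections $t_i \in \Gamma(Y,\pi_{*}\mathcal{O}_{X}(D)) = H^{0}(X,\mathcal{O}_{X}(D))$. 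Trivializing $\mathcal{O}_X(D)$ in a neighborhood of $x$ and evaluating at $x$, at least one $t_i$ must be nonvanishing at $x$; its zero divisor $E_i := (t_i)$ then satisfies $E_i \sim D$, $E_i \geq 0$, and $x \notin \Supp E_i$, completing the proof.
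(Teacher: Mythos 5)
Your proof is correct and follows essentially the same route as the paper: one inclusion comes from the dictionary between global sections of $\mathcal{O}_{X}(D)$ and effective divisors $E \sim D$, and the other from coherence of $\pi_{*}\mathcal{O}_{X}(D)$ together with Cartan's Theorem A on the Stein base, which lets you express the local germ generating $\mathcal{O}_{X}(D)_{x}$ in terms of global sections, at least one of which is nonvanishing at $x$. If anything, your finite-sum formulation $s=\sum_i g_i t_i$ of the global generation step is slightly more careful than the paper's single-factor factorization $\psi|_{V}=\varphi|_{V}\cdot\tau$.
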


\begin{proof}
Pick $x \in X$. 
We put
$$T:={\rm Supp}\,\bigl({\rm Coker}(\pi^{*}\pi_{*}\mathcal{O}_{X}(D)\otimes \mathcal{O}_{X}(-D)\to \mathcal{O}_{X})\bigr).$$
If $x \not\in {\boldsymbol{\rm Bs}}|D|$, then there is an effective divisor $E \sim D$ such that $x \not\in {\rm Supp}\,E$. 
Taking $\phi \in \Gamma(X, \mathcal{O}_{X}(D))$ corresponding to $E$, then we can easily check $\phi\cdot \mathcal{O}_{X}(-D)_{x}=\mathcal{O}_{X, x}$. 
Therefore, $x \not\in T$. 
From this, we have 
${\boldsymbol{\rm Bs}}|D| \supset T$. 

If $x \not \in T$, then there exist an open subset $U \subset Y$ and an element $\psi \in \Gamma(U, \pi_{*}\mathcal{O}_{X}(D))$ such that ${\rm Supp}\,({\rm div}(\psi)+D|_{\pi^{-1}(U)})\not\ni x$. 
Since $Y$ is Stein, $\pi_{*}\mathcal{O}_{X}(D)$ is generated by global sections, in other words, $\Gamma(Y, \pi_{*}\mathcal{O}_{X}(D))\cdot \mathcal{O}_{Y, y}=(\pi_{*}\mathcal{O}_{X}(D))_{y}$ for any $y \in Y$. 
By considering the case $y=f(x)$, we can find an open subset $V \subset U$ and elements $\varphi \in  \Gamma(Y, \pi_{*}\mathcal{O}_{X}(D))=\Gamma(X,\mathcal{O}_{X}(D))$ and $\tau \in \mathcal{O}_{Y}(V)$ such that $\psi|_{V}=\varphi|_{V}\cdot \tau$. 
We put 
$$E':= {\rm div}(\varphi)+D.$$ 
Then $E' \geq 0$. 
We also have ${\rm Supp}\,E' \not\ni x$ because
$$E'|_{\pi^{-1}(V)}=({\rm div}(\varphi)+D)|_{\pi^{-1}(V)}\leq {\rm div}(\varphi|_{V}\cdot \pi^{*}\tau)+D|_{\pi^{-1}(V)}={\rm div}(\psi|_{V})+D|_{\pi^{-1}(V)}$$
and ${\rm Supp}\,({\rm div}(\psi)+D|_{\pi^{-1}(U)})\not\ni x$. 
From this, we have $x \not \in {\boldsymbol{\rm Bs}}|D|$. 
Hence ${\boldsymbol{\rm Bs}}|D| \subset T$. 

By the above argument, we have ${\boldsymbol{\rm Bs}}|D| = T$, which we wanted to prove. 
\end{proof}

\begin{prop}\label{prop--q-base-locus}
Let $\pi\colon X \to Y$ be a projective morphism from a normal analytic variety $X$ to a Stein space $Y$. 
Let $D$ be a globally $\mathbb{Q}$-Cartier divisor on $X$. 
We put 
$$|D|_{\mathbb{Q}}:=\{E\,|\,D\sim_{\mathbb{Q}}E\geq 0\},\quad {\rm and} \quad |D/Y|_{\mathbb{Q}}:=\{E\,|\,D\sim_{\mathbb{Q},\,Y}E\geq 0\}.$$
The notations are different from \cite[Section 10]{fujino-analytic-bchm}. 
Then we have
\begin{equation*}
\begin{split}
{\boldsymbol{\rm Bs}}|D|_{\mathbb{Q}}=&{\boldsymbol{\rm Bs}}|D/Y|_{\mathbb{Q}}\\
=&\bigcap_{mD\,:\,{\rm Cartier}}{\rm Supp}\,\bigl({\rm Coker}(\pi^{*}\pi_{*}\mathcal{O}_{X}(mD)\otimes \mathcal{O}_{X}(-mD)\to \mathcal{O}_{X})\bigr)
\end{split}
\end{equation*}
set-theoretically. 
\end{prop}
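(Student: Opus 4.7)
The plan is to reduce the three expressions to a common intersection of stable base loci of Cartier multiples of $D$ and then invoke the Stein hypothesis on $Y$ only at one key point. By Lemma~\ref{lem--base-locus-1}, for each positive integer $m$ with $mD$ Cartier, the support of $\mathrm{Coker}(\pi^{*}\pi_{*}\mathcal{O}_{X}(mD)\otimes \mathcal{O}_{X}(-mD)\to \mathcal{O}_{X})$ equals ${\boldsymbol{\rm Bs}}|mD|$. Consequently the third expression in the statement coincides set-theoretically with $\bigcap_{m}{\boldsymbol{\rm Bs}}|mD|$, and the problem reduces to proving
\[
{\boldsymbol{\rm Bs}}|D|_{\mathbb{Q}}\;=\;\bigcap_{m}{\boldsymbol{\rm Bs}}|mD|\;=\;{\boldsymbol{\rm Bs}}|D/Y|_{\mathbb{Q}}.
\]

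The first equality is a routine clearing-of-denominators argument. Given $x\notin{\boldsymbol{\rm Bs}}|D|_{\mathbb{Q}}$, one picks $E\geq 0$ with $D\sim_{\mathbb{Q}}E$ and $x\notin\mathrm{Supp}\,E$; for any sufficiently divisible $m$, the divisor $mE$ belongs to $|mD|$ and avoids $x$. Conversely, any effective $F$ with $mD\sim F$ gives $\tfrac{1}{m}F\in|D|_{\mathbb{Q}}$ with the same support. Moreover, the inclusion ${\boldsymbol{\rm Bs}}|D/Y|_{\mathbb{Q}}\subset {\boldsymbol{\rm Bs}}|D|_{\mathbb{Q}}$ is automatic from $|D|_{\mathbb{Q}}\subset |D/Y|_{\mathbb{Q}}$.

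The only nontrivial point, and the one that requires the Stein hypothesis, is the reverse inclusion ${\boldsymbol{\rm Bs}}|D|_{\mathbb{Q}}\subset {\boldsymbol{\rm Bs}}|D/Y|_{\mathbb{Q}}$. I would argue as follows. Fix $x\notin {\boldsymbol{\rm Bs}}|D/Y|_{\mathbb{Q}}$ and pick effective $E$ with $D\sim_{\mathbb{Q},Y}E$ and $x\notin\mathrm{Supp}\,E$. Choose $m>0$ so that $mD$ and $mE$ are Cartier and $m(D-E)\sim_{Y}0$, and unpack the definition to obtain a Cartier divisor $M$ on $Y$ with $mD\sim mE+\pi^{*}M$. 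Since $Y$ is Stein, Cartan's Theorem~A applied to the invertible sheaf $\mathcal{O}_{Y}(M)$ furnishes a global section $\tau$ generating the stalk at $\pi(x)$; equivalently, $M':=\mathrm{div}(\tau)+M$ is an effective Cartier divisor linearly equivalent to $M$ whose support does not contain $\pi(x)$. Then $F:=mE+\pi^{*}M'$ is effective, satisfies $F\sim mD$, and has $x\notin\mathrm{Supp}\,F$, so $\tfrac{1}{m}F\in|D|_{\mathbb{Q}}$ witnesses $x\notin{\boldsymbol{\rm Bs}}|D|_{\mathbb{Q}}$. The main obstacle is precisely this conversion from a relatively linearly equivalent effective divisor into an absolutely linearly equivalent one without enlarging the support at $x$; Cartan's Theorem~A on the Stein base $Y$ is the tool that makes this conversion possible.
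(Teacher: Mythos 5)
Your reduction to $\bigcap_{m}{\boldsymbol{\rm Bs}}|mD|$ via Lemma \ref{lem--base-locus-1}, the clearing-of-denominators identification with ${\boldsymbol{\rm Bs}}|D|_{\mathbb{Q}}$, and the trivial inclusion ${\boldsymbol{\rm Bs}}|D/Y|_{\mathbb{Q}}\subset {\boldsymbol{\rm Bs}}|D|_{\mathbb{Q}}$ all match the paper. The gap is in the remaining inclusion, at the step ``choose $m>0$ so that $m(D-E)\sim_{Y}0$ and unpack the definition to obtain a Cartier divisor $M$ on $Y$ with $mD\sim mE+\pi^{*}M$.'' The relative $\mathbb{Q}$-linear equivalence $D\sim_{\mathbb{Q},Y}E$ only provides a $\mathbb{Q}$-Cartier divisor $G$ on $Y$ with $D+\pi^{*}G\sim_{\mathbb{Q}}E$, and in the analytic setting being $\mathbb{Q}$-Cartier is a local condition: on the non-compact Stein space $Y$ the divisor $G$ may have unbounded Cartier index (it is only \emph{globally} $\mathbb{Q}$-Cartier on relatively compact open subsets, as the paper's proof explicitly points out), so there need not exist any single integer $m$ with $mG$ Cartier on all of $Y$. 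Consequently your $M$, and hence the invertible sheaf $\mathcal{O}_{Y}(M)$ to which you want to apply Cartan's Theorem A, need not exist, and the construction of the global effective divisor $F\sim mD$ breaks down. (Your use of Cartan A on $Y$ would be fine if $M$ existed; the hypothesis that $D$ itself is globally $\mathbb{Q}$-Cartier does not transfer to $G$.)

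This is exactly the subtlety the paper's argument is built around: instead of producing a global member of $|mD|$ avoiding $x$, it restricts to a relatively compact open neighborhood $U\ni\pi(x)$ where some multiple $lG|_{U}$ is Cartier and even principal, obtains from the equivalence a section of $\pi_{*}\mathcal{O}_{X}(lD)$ over $U$ not vanishing at $x$, and concludes only that $x\notin T_{l}$, i.e.\ $x$ is outside the cokernel locus for that $l$. The global statement $x\notin{\boldsymbol{\rm Bs}}|D|_{\mathbb{Q}}$ is then recovered not by your direct construction but through the already-proved chain $\bigcap_{m}T_{m}={\boldsymbol{\rm Bs}}|D|_{\mathbb{Q}}\supset{\boldsymbol{\rm Bs}}|D/Y|_{\mathbb{Q}}\supset\bigcap_{m}T_{m}$, where the Stein hypothesis (Cartan's Theorem A applied to $\pi_{*}\mathcal{O}_{X}(mD)$ in Lemma \ref{lem--base-locus-1}) converts local data into global sections. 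To repair your argument you should likewise argue locally near $\pi(x)$ and route the conclusion through the sets $T_{m}$ rather than through a global Cartier divisor on $Y$.
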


\begin{proof}
For each $m >0$ such that $mD$ is Cartier, we put
$$T_{m}:={\rm Supp}\,\bigl({\rm Coker}(\pi^{*}\pi_{*}\mathcal{O}_{X}(mD)\otimes \mathcal{O}_{X}(-mD)\to \mathcal{O}_{X})\bigr).$$
By Lemma \ref{lem--base-locus-1}, we have 
$$\bigcap_{mD\,:\,{\rm Cartier}}T_{m}=\bigcap_{mD\,:\,{\rm Cartier}}{\boldsymbol{\rm Bs}}|mD|,$$
and we can easily check that the right hand side is equal to ${\boldsymbol{\rm Bs}}|D|_{\mathbb{Q}}$. 
It is also clear that ${\boldsymbol{\rm Bs}}|D|_{\mathbb{Q}}\supset{\boldsymbol{\rm Bs}}|D/Y|_{\mathbb{Q}}$ since we have $|D|_{\mathbb{Q}} \subset |D/Y|_{\mathbb{Q}}$. 
Therefore, we only need to prove ${\boldsymbol{\rm Bs}}|D/Y|_{\mathbb{Q}}\supset \bigcap_{m}T_{m}$. 
Pick a point $x \not\in {\boldsymbol{\rm Bs}}|D/Y|_{\mathbb{Q}}$. 
Then there exist an effective $\mathbb{Q}$-divisor $E$ on $X$ and a $\mathbb{Q}$-Cartier divisor $G$ on $Y$ such that $D +\pi^{*}G\sim_{\mathbb{Q}}E$ and ${\rm Supp}\,E \not\ni x$. 
Let $U \subset Y$ be an open subset such that $U \ni x$ and some multiple of $G$ is a principal divisor on $U$. 
Such $U$ exists since $G$ is globally $\mathbb{Q}$-Cartier on any relatively compact open subset. 
Then there exist a positive integer $l$, a meromorphic function $\phi$ on $X$, and a meromorphic function $\psi$ on $Y$ such that $lD$ is Cartier, $lG|_{U}={\rm div}(\psi)$, and
$$(lD+{\rm div}(\phi \cdot \pi^{*}\psi))|_{\pi^{-1}(U)}=lE|_{\pi^{-1}(U)}.$$
This implies $(\phi \cdot \pi^{*}\psi))|_{\pi^{-1}(U)} \in \Gamma(U, \pi_{*}\mathcal{O}_{X}(lD))$. 
Therefore, we see that $x \not\in T_{l}$. 
From this fact, we have ${\boldsymbol{\rm Bs}}|D/Y|_{\mathbb{Q}}\supset \bigcap_{m}T_{m}$. 
Now we have 
$$\bigcap_{mD\,:\,{\rm Cartier}}T_{m}= {\boldsymbol{\rm Bs}}|D|_{\mathbb{Q}} \supset {\boldsymbol{\rm Bs}}|D/Y|_{\mathbb{Q}} \supset \bigcap_{mD\,:\,{\rm Cartier}}T_{m},$$ 
and therefore Proposition \ref{prop--q-base-locus} holds.
\end{proof}

\begin{prop}\label{prop--r-base-locus}
Let $\pi\colon X \to Y$ be a projective morphism from a normal analytic variety $X$ to a Stein space $Y$. 
Let $D$ be a globally $\mathbb{R}$-Cartier divisor on $X$. 
We set 
$$|D|_{\mathbb{R}}:=\{E\,|\,D\sim_{\mathbb{R}}E\geq 0\},\quad {\rm and} \quad |D/Y|_{\mathbb{R}}:=\{E\,|\,D\sim_{\mathbb{R},\,Y}E\geq 0\}.$$ 
The notations are different from \cite[Section 10]{fujino-analytic-bchm}. 
Then, for any non-empty open subset $U \subset Y$, we have
$$({\boldsymbol{\rm Bs}}|D|_{\mathbb{R}})|_{\pi^{-1}(U)}={\boldsymbol{\rm Bs}}|D|_{\pi^{-1}(U)}|_{\mathbb{R}}=({\boldsymbol{\rm Bs}}|D/Y|_{\mathbb{R}})|_{\pi^{-1}(U)}={\boldsymbol{\rm Bs}}|D|_{\pi^{-1}(U)}/U|_{\mathbb{R}}$$ set-theoretically. 
\end{prop}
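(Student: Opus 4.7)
The plan is to establish the four set equalities by a cycle of inclusions. Writing $X_{U} := \pi^{-1}(U)$, one gets immediately from the definitions that $(\boldsymbol{\rm Bs}|D|_{\mathbb{R}})|_{X_{U}} \supset (\boldsymbol{\rm Bs}|D/Y|_{\mathbb{R}})|_{X_{U}}$ and $\boldsymbol{\rm Bs}|D|_{X_{U}}|_{\mathbb{R}} \supset \boldsymbol{\rm Bs}|D|_{X_{U}}/U|_{\mathbb{R}}$ (from $|D|_{\mathbb{R}} \subset |D/Y|_{\mathbb{R}}$), and that $(\boldsymbol{\rm Bs}|D|_{\mathbb{R}})|_{X_{U}} \supset \boldsymbol{\rm Bs}|D|_{X_{U}}|_{\mathbb{R}}$ and $(\boldsymbol{\rm Bs}|D/Y|_{\mathbb{R}})|_{X_{U}} \supset \boldsymbol{\rm Bs}|D|_{X_{U}}/U|_{\mathbb{R}}$ (because the restriction to $X_{U}$ of a global effective representative is a local one). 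These assemble into a diamond with $(\boldsymbol{\rm Bs}|D|_{\mathbb{R}})|_{X_{U}}$ on top and $\boldsymbol{\rm Bs}|D|_{X_{U}}/U|_{\mathbb{R}}$ on bottom, so it remains to close the loop by proving the reverse inclusion $(\boldsymbol{\rm Bs}|D|_{\mathbb{R}})|_{X_{U}} \subset \boldsymbol{\rm Bs}|D|_{X_{U}}/U|_{\mathbb{R}}$; equivalently, if $x \in X_{U}$ and $x \notin \boldsymbol{\rm Bs}|D|_{X_{U}}/U|_{\mathbb{R}}$, then $x \notin \boldsymbol{\rm Bs}|D|_{\mathbb{R}}$.

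By hypothesis there exist an effective $\mathbb{R}$-divisor $E_{U}$ on $X_{U}$, an $\mathbb{R}$-Cartier divisor $G=\sum_{k}t_{k}G_{k}$ on $U$ with $G_{k}$ Cartier and $t_{k}\in\mathbb{R}$, finitely many meromorphic functions $f_{j}$ on $X_{U}$, and $r_{j}\in\mathbb{R}$ such that $D|_{X_{U}}+\pi^{*}G = E_{U}+\sum_{j}r_{j}{\rm div}(f_{j})$ with $x\notin{\rm Supp}\,E_{U}$. Each $G_{k}$ is locally principal, so after shrinking $U$ to a smaller open subset $U''\ni\pi(x)$ we may assume $G_{k}|_{U''}={\rm div}(h_{k})$ for meromorphic $h_{k}$ on $U''$. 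Absorbing $\sum_{k}t_{k}{\rm div}(\pi^{*}h_{k})$ into the $\sum_{j}r_{j}{\rm div}(f_{j})$ term converts the relative equivalence into an absolute $\mathbb{R}$-linear equivalence on $X_{U''}$, namely $D|_{X_{U''}}\sim_{\mathbb{R}} E_{U}|_{X_{U''}}\geq 0$ with $x\notin{\rm Supp}\,E_{U}|_{X_{U''}}$, and it therefore suffices to lift this absolute local $\mathbb{R}$-linear equivalence to a global one on $X$.

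For this lift I would mirror the argument of Proposition~\ref{prop--q-base-locus}. Decompose $D=\sum_{i}s_{i}D_{i}$ with $D_{i}$ Cartier on $X$ and $s_{i}\in\mathbb{R}$. By the Steinness of $Y$ and Cartan's Theorem~A, each coherent sheaf $\pi_{*}\mathcal{O}_{X}(D_{i})$ is generated by its global sections. The local equivalence on $X_{U''}$ expresses $E_{U}|_{X_{U''}}$ as $\sum_{i}s_{i}D_{i}|_{X_{U''}}-\sum_{j}r_{j}{\rm div}(f_{j}|_{X_{U''}})$; interpreting this through local sections of the sheaves $\pi_{*}\mathcal{O}_{X}(D_{i})|_{U''}$ and applying global generation as in the proof of Proposition~\ref{prop--q-base-locus}, each such local section is replaceable by a global section of $\pi_{*}\mathcal{O}_{X}(D_{i})$ whose associated effective divisor still avoids $x$ near $\pi(x)$. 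Taking the same real linear combination on $X$ then produces the desired global effective $\mathbb{R}$-divisor $E^{*}$ with $D\sim_{\mathbb{R}} E^{*}$ and $x\notin{\rm Supp}\,E^{*}$.

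The main obstacle is precisely this last lifting step. In the $\mathbb{Q}$-Cartier setting of Proposition~\ref{prop--q-base-locus}, one clears denominators to obtain a single Cartier divisor $lD$ and applies Lemma~\ref{lem--base-locus-1} directly; here the real weights $s_{i}$ cannot be cleared, so the generation argument must be run componentwise on each Cartier divisor $D_{i}$ while preserving the prescribed real weights, and one must verify that the resulting real combination of global effective representatives still avoids $x$. The core mechanism---global generation of $\pi_{*}\mathcal{O}_{X}(D_{i})$ by Cartan's theorem on the Stein base $Y$---is unchanged from the $\mathbb{Q}$-Cartier case.
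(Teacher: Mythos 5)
Your reduction to the single inclusion $({\boldsymbol{\rm Bs}}|D|_{\mathbb{R}})|_{\pi^{-1}(U)}\subset{\boldsymbol{\rm Bs}}|D|_{\pi^{-1}(U)}/U|_{\mathbb{R}}$ and the step of shrinking the base so that the $\mathbb{R}$-Cartier divisor $G$ becomes principal both match the paper. The gap is in the lifting step, and you have located it correctly but the fix you propose does not work: from the local equivalence $D|_{X_{U''}}\sim_{\mathbb{R}}E_{U}|_{X_{U''}}$ one cannot "interpret this through local sections of the sheaves $\pi_{*}\mathcal{O}_{X}(D_{i})|_{U''}$." The Cartier pieces $D_{i}$ of your chosen decomposition are arbitrary: individually they need not admit any effective representative near $x$ (they may fail to be pseudo-effective, in which case $\pi_{*}\mathcal{O}_{X}(D_{i})$ can even vanish), and when sections do exist their divisors have no reason to avoid $x$; only the specific real combination $D$ does. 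Cartan's Theorem A, as used in Lemma \ref{lem--base-locus-1}, only upgrades an already-given local section whose divisor avoids $x$ to a global one with the same property; it cannot manufacture the componentwise local sections your argument presupposes. So "running the generation argument componentwise while preserving the prescribed real weights" is exactly the step that fails.

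The paper closes this gap differently: after restricting to a relatively compact open $V\ni\pi(x)$, all divisors appearing in the equality $\sum_{i}r_{i}D_{i}|_{\pi^{-1}(V)}+\sum_{k}t_{k}{\rm div}(\pi_{V}^{*}\gamma_{k})+\sum_{j}s_{j}{\rm div}(\cdot)=E|_{\pi^{-1}(V)}$ have finitely many components, so this becomes a finite linear system with rational structure. A convex-geometry (rational approximation) argument then writes $D=\sum_{l=1}^{q}r'_{l}D'_{l}$ with $r'_{l}>0$, $\sum_{l}r'_{l}=1$, where each $D'_{l}$ is a $\mathbb{Q}$-linear combination of the $D_{i}$ satisfying $D'_{l}|_{\pi^{-1}(V)}\sim_{\mathbb{Q}}E'_{l}\geq0$ with ${\rm Supp}\,E'_{l}={\rm Supp}\,E|_{\pi^{-1}(V)}\not\ni x$. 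Each $D'_{l}$ is then handled by the already-proved $\mathbb{Q}$-Cartier case (Proposition \ref{prop--q-base-locus}), giving $x\notin{\boldsymbol{\rm Bs}}|D'_{l}|_{\mathbb{Q}}$, and the positive combination produces a global effective $\mathbb{R}$-divisor $\mathbb{R}$-linearly equivalent to $D$ avoiding $x$. In other words, one does not keep the original Cartier decomposition and its real weights; one replaces $D$ by a convex combination of nearby $\mathbb{Q}$-divisors, each of which already carries an effective representative supported on ${\rm Supp}\,E$. That perturbation is the missing ingredient in your proposal.
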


\begin{proof} 
Let $D=\sum_{i=1}^{p}r_{i}D_{i}$ be the decomposition of $D$ into Cartier divisors on $X$. 
It is easy to check that we only need to prove that $({\boldsymbol{\rm Bs}}|D|_{\mathbb{R}})|_{\pi^{-1}(U)}\subset{\boldsymbol{\rm Bs}}|D|_{\pi^{-1}(U)}/U|_{\mathbb{R}}$. 
We denote the morphism $\pi^{-1}(U) \to U$ by $\pi_{U}$. 

Fix a point $x \in \pi^{-1}(U)$ that is not contained in ${\boldsymbol{\rm Bs}}|D|_{\pi^{-1}(U)}/U|_{\mathbb{R}}$. 
Then there exist finitely many real numbers $s_{1},\,\cdots,\,s_{m}$, bimeromorphic functions $\phi_{1},\cdots,\,\phi_{m}$ on $U$, an $\mathbb{R}$-Cartier divisor $G$ on $U$, and an effective $\mathbb{R}$-divisor $E$ on $\pi^{-1}(U)$ such that 
$$D|_{\pi^{-1}(U)}+\pi^{*}_{U}G+\sum_{j=1}^{m}s_{j}\cdot {\rm div}(\pi^{*}_{U}\phi_{j})=E$$
and ${\rm Supp}\,E \not\ni x$. 
Let $V \subset U$ be a relatively compact open subset such that $V \ni f(x)$. 
Then we can write $G|_{V}=\sum_{k=1}^{n}t_{k}G_{k}$, where $G_{k}$ are Cartier divisors on $V$. 
By shrinking $V$ around $x$ if necessary, we may assume that $G_{k}$ are principal divisors, which we denote by ${\rm div}(\gamma_{k})$, for all $1 \leq k \leq n$. 
Now we have
$$\sum_{i=1}^{p}r_{i}D_{i}|_{\pi^{-1}(V)}+\sum_{k=1}^{n}t_{k}\cdot {\rm div}(\pi^{*}_{V}\gamma_{k})+\sum_{j=1}^{m}s_{j}\cdot {\rm div}(\pi^{*}_{U}\phi_{j})|_{\pi^{-1}(V)}=E|_{\pi^{-1}(V)}.$$
Since $V$ is relatively compact, $D_{i}|_{\pi^{-1}(V)}$, ${\rm div}(\pi^{*}_{V}\gamma_{k})$, ${\rm div}(\pi^{*}_{U}\phi_{j})|_{\pi^{-1}(V)}$, and $E|_{\pi^{-1}(V)}$ have only finitely many components. 
By the argument from convex geometry, we can find finitely many positive real numbers 
$r'_{1},\cdots, r'_{q}$, $\mathbb{Q}$-Cartier divisors $D'_{1},\cdots ,D'_{q}$ on $X$ that are finite $\mathbb{Q}$-linear combinations of $D_{1},\cdots,\,D_{p}$, and an effective $\mathbb{Q}$-divisors $E'_{1},\cdots ,E'_{q}$ on $\pi^{-1}(V)$ such that 
\begin{itemize}
\item
$\sum_{l=1}^{q}r'_{l}=1$ and $\sum_{l=1}^{q}r'_{l}D'_{l}=D$, 
\item
${\rm Supp}\,E'_{l}={\rm Supp}\,E|_{\pi^{-1}(V)}$ for all $1 \leq l \leq p$ and $\sum_{l=1}^{q}r'_{l}E'_{l}=E|_{\pi^{-1}(V)}$, and 
\item
$D'_{l}|_{\pi^{-1}(V)} \sim_{\mathbb{Q}}E'_{l}$. 
\end{itemize}
By the second property, it follows that ${\rm Supp}\,E'_{l} \not\ni x$. 
By this fact and the third property, there exists a positive integer $I$ such that all $ID_{l}$ are Cartier and
$$x \not\in{\rm Supp}\,\bigl({\rm Coker}(\pi^{*}\pi_{*}\mathcal{O}_{X}(ID'_{l})\otimes \mathcal{O}_{X}(-ID'_{l})\to \mathcal{O}_{X})\bigr)$$
for all $1 \leq l \leq q$. 
By Proposition \ref{prop--q-base-locus}, we have $x \not\in {\boldsymbol{\rm Bs}}|D_{l}|_{\mathbb{Q}}$ for all $1 \leq l \leq p$. 
Since $r_{1},\cdots,\,r_{q}$ are positive real numbers and $\sum_{l=1}^{q}r'_{l}D'_{l}=D$, which is the first property stated above, we have $x \not\in {\boldsymbol{\rm Bs}}|D|_{\mathbb{R}}$. 
This shows $x \not\in ({\boldsymbol{\rm Bs}}|D|_{\mathbb{R}})|_{\pi^{-1}(U)}$. 

By the above argument, for any $x \not\in {\boldsymbol{\rm Bs}}|D|_{\pi^{-1}(U)}/U|_{\mathbb{R}}$, we have $x \not\in ({\boldsymbol{\rm Bs}}|D|_{\mathbb{R}})|_{\pi^{-1}(U)}$. 
Therefore we have $({\boldsymbol{\rm Bs}}|D|_{\mathbb{R}})|_{\pi^{-1}(U)}\subset{\boldsymbol{\rm Bs}}|D|_{\pi^{-1}(U)}/U|_{\mathbb{R}}$. 
\end{proof}

\subsection{Iitaka fibration}\label{subsec--iitaka-fib}

Let $\pi\colon X \to Y$ be a projective morphism from a non-singular analytic variety $X$ to an analytic space $Y$, and let $W \subset Y$ be a compact subset. 
Let $D$ be an $\mathbb{R}$-Cartier divisor on $X$ such that $D\sim_{\mathbb{R},\, Y}E$ for some effective $\mathbb{R}$-divisor $E$ on $X$. 
For each $l \in \mathbb{Z}_{>0}$, we consider the ideal sheaf $$\mathcal{I}_{l}:={\rm Im} \bigl( \pi^{*}\pi_{*}\mathcal{O}_{X}( \lfloor lE\rfloor ) \otimes \mathcal{O}_{X}( -\lfloor lE\rfloor ) \longrightarrow \mathcal{O}_{X} \bigr) \subset \mathcal{O}_{X}.$$
There exists a resolution $f_{l} \colon X'_{l} \to X$ of $X$ such that $\mathcal{I}_{l} \cdot \mathcal{O}_{X'_{l}}$ is an invertible sheaf on $X'_{l}$. 
We may write $\mathcal{I}_{l} \cdot \mathcal{O}_{X'_{l}}=\mathcal{O}_{X'_{l}}(-F'_{l})$ with an effective Cartier divisor $F'_{l}$ on $X'_{l}$. 
We put $M'_{l}:=f_{l}^{*}\lfloor lE \rfloor-F'_{l}$. 
Then there is a natural isomorphism $$(\pi \circ f_{l})_{*}\mathcal{O}_{X'_{l}}(M'_{l}) \simeq \pi_{*}\mathcal{O}_{X}( \lfloor lE\rfloor )$$ 
and the morphism 
$$(\pi \circ f_{l})^{*}(\pi \circ f_{l})_{*}\mathcal{O}_{X'_{l}}(M'_{l}) \otimes \mathcal{O}_{X'_{l}}(-M'_{l})\longrightarrow \mathcal{O}_{X'_{l}}$$ is surjective, in other words, $M'_{l}$ is globally generated over $Y$. 
Hence we can construct a contraction $\phi_{l} \colon X'_{l} \to Z_{l}$ over $Y$ between normal analytic varieties and a Cartier divisor $H_{l}$ on $Z_{l}$ such that $Z_{l}$ is projective over $Y$, $H_{l}$ is very ample over $Y$, and $M_{l}\sim \phi_{l}^{*}H_{l}$. 
We take $m \in \mathbb{Z}_{>0}$ such that ${\rm dim}\,Z_{m}$ takes the maximum among the dimension of $Z_{l}$ varying $l \in \mathbb{Z}_{>0}$. We call the meromorphic map $X \dashrightarrow Z:=Z_{m}$ an {\em Iitaka fibration over $Y$ associated to $D$}. 
Note that $Z$ is not unique, but $Z$ is determined up to bimeromorhic equivalence. 

Let $X \to Y'$ be the Stein factorization of $\pi \colon X \to Y$. 
Then the induced meromorphic map $Z \dashrightarrow Y'$ is a contraction and for any analytically sufficiently general point $p \in Y'$, the map $X_{p} \dashrightarrow Z_{p}$ of the fibers over $p$ is the usual Iitaka fibration associated to $D|_{X_{p}}$.

\subsection{Negativity lemma}

In this subsection, we prove variants of negativity lemma.

\begin{defn}[Movable divisor]
Let $\pi\colon X \to Y$ be a projective morphism from a normal analytic variety to an analytic space, and let $U \subset Y$ be an open subset. 
We say that an $\mathbb{R}$-Cartier divisor $D$ on $X$ is {\em movable over $U$} if any irreducible component of ${\boldsymbol{\rm Bs}}|D|_{\pi^{-1}(U)}/U|_{\mathbb{R}}$ has codimension at least two. 
Let $W \subset Y$ be a subset. 
We say that an $\mathbb{R}$-Cartier divisor $D$ on $X$ is {\em movable over a neighborhood of $W$} if $D$ is movable over an open subset $V$ containing $W$. 
\end{defn}

\begin{defn}[Very exceptional divisor, {\cite[Definition 3.1]{birkar-flip}}]\label{defn--veryexcepdiv}
Let $\pi \colon X \to Y$ be a contraction of normal analytic varieties. 
We say that an $\mathbb{R}$-divisor $D$ on $X$ is {\em very exceptional over $Y$} if $D$ is vertical and  for any prime divisor $Q$ on $Y$, there is a prime divisor $P$ on $X$ such that $\pi(P)=Q$ and $P$ is not a component of $D$. 
\end{defn}

\begin{rem}\label{rem--very ecepdiv-shrink}
Let $\pi \colon X \to Y$ be a contraction of normal analytic varieties and $D$ a very exceptional $\mathbb{R}$-divisor on $X$. 
Let $U \subset Y$ be an connected open subset, and we put $X_{U}=\pi^{-1}(U)$, $D_{U}=D|_{X_{U}}$, and $\pi_{U}=\pi|_{X_{U}} \colon X_{U} \to U$. 
Then $D_{U}$ is very exceptional over $U$. 
Indeed, picking any prime divisor $Q'$ on $U$, it is sufficient to find a prime divisor $P'$ on $X_{U}$ such that $\pi_{U}(P')=Q'$ and $P'$ is not a component of $D_{U}$. 
We may assume the existence of a prime divisor $\tilde{P} \subset {\rm Supp}\,D_{U}$ such that $\pi_{U}(\tilde{P})=Q'$ because otherwise the existence of such $P'$ as above is obvious. 
Then $Q'$ is an irreducible component of $\pi({\rm Supp}\,D)|_{U}$. 
In particular, there is a prime divisor $Q_{0}$ on $Y$ such that $Q'$ is an irreducible comonent of $Q_{0}|_{U}$. 
Since $D$ is very exceptional over $Y$, we can find a prime divisor $P_{0}$ on $X$ such that $\pi(P_{0})=Q_{0}$ and $P_{0}$ is not a component of $D$. 
Then $P_{0}|_{X_{U}} \not\subset D_{U}$ over general points of $Q'$, and therefore we can find a prime divisor $P'$ on $X_{U}$ such that $\pi_{U}(P')=Q'$ and $P'$ is not a component of $D_{U}$. 
Hence, we see that $D_{U}$ is very exceptional over $U$. 
\end{rem}

\begin{exam}
Let $\pi \colon X \to Y$ be a contraction of normal analytic varieties, and let $D$ be an $\mathbb{R}$-divisor on $X$. 
If $\pi({\rm Supp}\,D)$ has codimension at least two in $Y$, then $D$ is very exceptional over $Y$. 
In particular, any exceptional divisors in bimeromorphisms are also very exceptional divisors. 
\end{exam}

\begin{lem}[cf.~{\cite[Lemma 3.3]{birkar-flip}}]\label{lem--negativity-veryexc}
Let $\pi \colon X \to Y$ be a contraction of normal analytic varieties. 
Let $D=E-F$ be an $\mathbb{R}$-Cartier divisor on $X$, where $E$ and $F$ are effective $\mathbb{R}$-divisors having no common components. 
Suppose that $E$ is very exceptional over $Y$ and there is an analytically meagre subset $S \subset {\rm Supp}\,E$ such that for any curve $C \subset {\rm Supp}\,E$ contained in a fiber of $\pi$, if $C \not\subset S$ then $(D\cdot C)\geq 0$.
Then $-D \geq0$. 
\end{lem}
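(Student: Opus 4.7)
We argue by contradiction, assuming $E \neq 0$, and aim to produce a curve $C \subset \Supp E$ contained in a fiber of $\pi$ with $C \not\subset S$ and $(D \cdot C) < 0$, contradicting the hypothesis. The strategy follows \cite[Lemma~3.3]{birkar-flip}, adapted to the analytic setting.

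First, I would reduce to the case where $X$ is smooth and $\Supp(E+F)$ is simple normal crossing. Take a log resolution $g \colon \tilde{X} \to X$ of $(X, E + F)$ by Theorem \ref{thm--resolution} and write $g^{*}D = \tilde{E} - \tilde{F}$ with $\tilde{E}, \tilde{F} \geq 0$ sharing no common components. Then $\tilde{E}$ is still very exceptional over $Y$: verticality passes to the pullback, and for any prime divisor $Q \subset Y$ hit by $(\pi \circ g)(\Supp \tilde{E})$, the very exceptionality of $E$ supplies a prime divisor $P \subset X$ with $\pi(P)=Q$ and $P \not\subset \Supp E$, whose strict transform $g_{*}^{-1}P$ maps to $Q$ and lies outside $\Supp \tilde{E}$. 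Setting $\tilde{S} := (g^{-1}(S) \cup {\rm Ex}(g)) \cap \Supp \tilde{E}$, which is analytically meagre since ${\rm Ex}(g)$ lies over a codimension-two subset of $X$ by Theorem \ref{thm--birat-basic}, the projection formula shows that the intersection hypothesis transfers to $(\tilde{X}, \tilde{E}, \tilde{F}, \tilde{S})$: any curve $\tilde C \subset \Supp \tilde E$ in a fiber of $\pi\circ g$ with $\tilde C \not\subset \tilde S$ maps birationally to a curve $g(\tilde C) \subset \Supp E$ in a fiber of $\pi$ with $g(\tilde C) \not\subset S$, so $(g^{*}D\cdot \tilde C) \geq 0$. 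It thus suffices to produce the desired curve on $\tilde{X}$.

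Now pick a component $P$ of $\tilde{E}$ and set $Q = (\pi \circ g)(P) \subsetneq Y$. Working analytically locally near a sufficiently general point $q \in Q$, decompose the scheme-theoretic fiber of $\pi \circ g$ over $q$ as $\sum_{i} m_{i} P_{i}$, where $P_{1},\ldots, P_{r}$ appear in $\tilde{E}$ with coefficients $a_{1},\ldots,a_{r}$, and at least one further component $P_{j}$ (with $j > r$) is not in $\Supp \tilde{E}$; this is guaranteed by very exceptionality when $Q$ is a divisor of $Y$, and is automatic when ${\rm codim}_{Y}(Q) \geq 2$. Choose an index $i^{*}$ maximizing $a_{i}/m_{i}$ and take a sufficiently general complete-intersection curve $C \subset P_{i^{*}}$ in the fiber over $q$ meeting $P_{j}$ properly, and avoiding $\tilde{S}$ together with the SNC-degeneration loci; such $C$ exists since the bad loci form an analytically meagre subfamily of the positive-dimensional family of such curves. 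Since $\pi \circ g$ is a contraction, $\sum_{i} m_{i} (P_{i} \cdot C) = 0$, and SNC gives $(P_{i} \cdot C) \geq 0$ for $i \neq i^{*}$, which together with $(P_{j} \cdot C) > 0$ forces $(P_{i^{*}} \cdot C) < 0$. The maximality $a_{i}/m_{i} \leq a_{i^{*}}/m_{i^{*}}$ then yields
$(\tilde{E} \cdot C) = \sum_{i \leq r} a_{i}(P_{i} \cdot C) \leq \tfrac{a_{i^{*}}}{m_{i^{*}}} \sum_{i \leq r} m_{i}(P_{i} \cdot C) = -\tfrac{a_{i^{*}}}{m_{i^{*}}} \sum_{i > r} m_{i}(P_{i} \cdot C) < 0$,
while $(\tilde{F} \cdot C) \geq 0$ because $C \not\subset \Supp \tilde{F}$ by SNC and genericity. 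Hence $(g^{*}D \cdot C) < 0$, the required contradiction.

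The main difficulty is the third step, namely ensuring strict negativity of $(\tilde{E} \cdot C)$; this is resolved by the ratio-maximization combined with the non-$\tilde{E}$ fiber component $P_{j}$ furnished by very exceptionality, which turns a generic nonpositivity into strict negativity. The reduction to SNC and the analytic-generic choices of $q$ and $C$ are routine adaptations of standard algebraic arguments, relying on Theorem \ref{thm--resolution}, properness of fibers of $\pi$, and the stability of analytically meagre subsets under pullback and under passage to subfamilies.
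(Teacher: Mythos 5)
Your reduction to a log resolution is where the argument breaks. After writing $g^{*}D=\tilde{E}-\tilde{F}$, the positive part $\tilde{E}$ in general acquires $g$-exceptional components: any $g$-exceptional prime divisor whose center lies in $\Supp E\setminus \Supp F$ appears in $g^{*}D$ with positive coefficient, hence in $\tilde{E}$. Such a component is entirely contained in ${\rm Ex}(g)$, so your set $\tilde{S}=(g^{-1}(S)\cup {\rm Ex}(g))\cap \Supp\tilde{E}$ contains whole components of $\Supp\tilde{E}$ and is therefore not analytically meagre in $\Supp\tilde{E}$; Theorem \ref{thm--birat-basic} only says that $g({\rm Ex}(g))$ has codimension $\geq 2$ in $X$, not that ${\rm Ex}(g)\cap\Supp\tilde{E}$ is small in $\Supp\tilde{E}$. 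Concretely, the hypothesis of the lemma concerns curves lying in $\Supp E$ on $X$ itself, and it gives no information about curves in $\Supp\tilde{E}$ that are contracted by $g$ (their images are points, not curves in $\Supp E$). Since your ratio-maximizing component $P_{i^{*}}$ may well be such an exceptional component, the complete-intersection curve $C\subset P_{i^{*}}$ cannot be chosen outside ${\rm Ex}(g)$, the transferred inequality $(g^{*}D\cdot C)\geq 0$ is unavailable, and no contradiction is reached. This is not a removable technicality: the whole difficulty of the very-exceptional negativity lemma is that one cannot pass to a model where the individual components become Cartier without losing the sign hypothesis.

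There is a second gap when the image $Q=(\pi\circ g)(P)$ of the chosen component has codimension $\geq 2$ in $Y$, which Definition \ref{defn--veryexcepdiv} allows. Your key identity $\sum_{i}m_{i}(P_{i}\cdot C)=0$ comes from pulling back a local defining equation of $Q$ at a general point $q$ (the fiber over $q$ is not a divisor once $\dim Y\geq 2$), and such a pullback exists only when $Q$ is a prime divisor and $q$ a smooth point of $Y$; the remark that the extra component $P_{j}$ is ``automatic when ${\rm codim}_{Y}(Q)\geq 2$'' addresses the wrong issue, since without a numerically trivial divisor supported near the fiber the ratio-maximization has nothing to compare against. (A smaller point: one must choose a maximal-ratio component adjacent to a component of smaller ratio or outside $\tilde{E}$, using connectedness of the fiber; otherwise $C$ need not meet $P_{j}$ at all.) The paper takes a different route that avoids both problems: it cuts $Y$ by general hyperplane sections to reduce to $\dim\pi(\Supp E)=0$, then for $\dim Y\geq 2$ cuts $X$ down to a surface and applies Grauert's Hodge index theorem to the induced bimeromorphism of normal surfaces, and for $\dim Y=1$ runs the classical threshold argument with $-D+t\pi^{*}y$; at every stage the curve hypothesis is applied only to curves genuinely lying in $\Supp E$ on $X$.
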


\begin{proof}
We will get a contradiction by assuming $E \neq 0$. 
By taking a Stein factorization of $\pi$, we may assume that $\pi$ is a contraction. 
By shrinking $Y$ around a point, we may assume that $Y$ is Stein.

\begin{step2}\label{lem--negativity-veryexc-step1}
In this step, we reduce the problem to the case ${\rm dim}\,\pi({\rm Supp}\,E)=0$. 

If ${\rm dim}\,\pi({\rm Supp}\,E)>0$, we take a sufficiently general hyperplane section $H$ of $Y$. 
Then $H$ and $\pi^{-1}(H)$ are normal analytic varieties (see, for example, \cite[(II.5) Theorem]{analytic-bertini}), and $\pi^{-1}(H) \to H$ is a contraction. 
It also follows that $E|_{\pi^{-1}(H)}$ is very exceptional over $H$. 
It is easy to check that $S \cap \pi^{-1}(H)$ is an analytically meagre subset of ${\rm Supp}\,E|_{\pi^{-1}(H)}$, and for any curve $C' \subset {\rm Supp}\,E|_{\pi^{-1}(H)}$ contained in a fiber of $\pi|_{\pi^{-1}(H)}$, if $C' \not\subset S \cap \pi^{-1}(H)$ then $(D|_{\pi^{-1}(H)}\cdot C')\geq 0$.
Therefore, it is sufficient to get a contradiction after replacing $\pi \colon X \to Y$, $D$, and $S$ with $\pi^{-1}(H) \to H$, $D|_{\pi^{-1}(H)}$, and $S \cap \pi^{-1}(H)$ respectively. 

Repeating this discussion, we may assume ${\rm dim}\,\pi({\rm Supp}\,E)=0$. 
\end{step2}

\begin{step2}\label{lem--negativity-veryexc-step2}
In this step, we treat the case ${\rm dim}\,Y \geq 2$. 

We set $d:={\rm dim}\,X$. 
We take a sufficiently general hyperplane sections $A_{1},\, \cdots, A_{d-2}$ of $X$. 
We put $X'=A_{1} \cap \cdots \cap A_{d-2}$, and let $\pi' \colon X' \to Y'$ be the Stein factorization of $X' \hookrightarrow X \to Y$. 
Then $\pi'$ is a bimeromorphism between normal analytic surfaces. 
We put $D'=D|_{X'}$, $E'=E|_{X'}$, and $F'=F|_{X'}$. 
Then $E'$ and $F'$ are the effective and the negative part of $D'$, respectively, and $E'$ is $\pi'$-exceptional since ${\rm dim}\,\pi({\rm Supp}\,E)=0$. 
Moreover, $S \cap X'$ is a union of points, and we have $(D'\cdot E_{i})\geq 0$ for every component $E_{i}$ of $E'$.
This contradicts the analytic analog of Hodge index theorem \cite[p367]{grauert-hodgeindex}. 

We finish the case ${\rm dim}\,Y \geq 2$. 
\end{step2}

\begin{step2}\label{lem--negativity-veryexc-step3}
In this step, we treat the case ${\rm dim}\,Y =1$. 

In this case, $Y$ is a non-singular curve. 
We put $y:=\pi({\rm Supp}\,E)$, and we define
$$t:={\rm inf}\{u \in \mathbb{R}_{\geq0}| -D+u\pi^{*}y\geq 0\}.$$
Then $-D+t \pi^{*}y \geq 0$, and there exist a component $P$ of $E$ and a component $Q$ of $\pi^{*}y$ such that $P \cap Q \neq \emptyset$, ${\rm coeff}_{P}(-D+t \pi^{*}y)=0$, and ${\rm coeff}_{Q}(-D+t \pi^{*}y)>0$. 
By cutting out $P$ with sufficiently general hyperplane sections, we can find a curve $C \subset P$ such that
$$(-D+t \pi^{*}y)\cdot C=((F-E+t \pi^{*}y)\cdot C)>0.$$
Now recall the hypothesis that for any curve $C \subset {\rm Supp}\,E$ contained in a fiber of $\pi$, if $C \not\subset S$ then $(D\cdot C)\geq 0$.
Since $\pi(C)$ is a point and $C$ passes through a very general point, we have
$$(-D+t \pi^{*}y)\cdot C=(-D\cdot C)\leq 0.$$
Thus, we get a contradiction. 
\end{step2}
By the above argument, we get a contradiction. 
Therefore, Lemma \ref{lem--negativity-veryexc} holds.
\end{proof}

\begin{cor}\label{cor--negativity-veryexc-2}
Let $\pi \colon X \to Y$ be a contraction of normal analytic varieties and $W \subset Y$ a subset. 
Let $D=E-F$ be an $\mathbb{R}$-Cartier divisor on $X$, where $E$ and $F$ are effective $\mathbb{R}$-divisors having no common components. 
Suppose that $E$ is very exceptional over $Y$ and there exist an $\mathbb{R}$-Cartier divisor $A$ on $X$ and a sequence of non-negative real numbers $\{\epsilon_{i}\}_{i \geq 0}$ such that ${\rm lim}_{i \to \infty}\epsilon_{i}=0$ and every $D+\epsilon_{i} A$ is movable over an open neighborhood of $W$.
Then there exists an open subset $U \subset Y$ containing $W$ such that $-D|_{\pi^{-1}(U)} \geq0$. 
\end{cor}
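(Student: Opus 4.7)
The strategy is to deduce the corollary from Lemma \ref{lem--negativity-veryexc} applied to a suitably restricted contraction $\pi_U \colon \pi^{-1}(U) \to U$ with $U$ an open neighborhood of $W$. The positive part of $D|_{\pi^{-1}(U)} = E|_{\pi^{-1}(U)} - F|_{\pi^{-1}(U)}$ remains very exceptional over $U$ by Remark \ref{rem--very ecepdiv-shrink}, and once $U$ is chosen connected, $\pi_U$ is itself a contraction of normal analytic varieties by Lemma \ref{lem--cont-shrink}.

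The first step is to select $U$. For each $i \geq 0$, the hypothesis furnishes an open neighborhood $V_i$ of $W$ over which $D + \epsilon_i A$ is movable. Proposition \ref{prop--r-base-locus} guarantees that the relative stable base locus is compatible with restriction to Stein open subsets, so movability descends to any Stein open sub-neighborhood. Shrinking $Y$ around $W$ to a connected Stein open neighborhood contained in every $V_i$, we reduce to the case in which every $D + \epsilon_i A$ is movable over one common connected Stein open neighborhood $U \supset W$; from now on we work with $\pi_U$.

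Next we construct the analytically meagre set demanded by Lemma \ref{lem--negativity-veryexc}. For each $i$, put
\[
B_i := \boldsymbol{\mathrm{Bs}}\bigl|(D + \epsilon_i A)|_{\pi^{-1}(U)}/U\bigr|_{\mathbb{R}}.
\]
By movability, every irreducible component of $B_i$ has codimension at least two in $\pi^{-1}(U)$, so $B_i \cap \operatorname{Supp} E|_{\pi^{-1}(U)}$ is an analytic subset of codimension at least one in the pure codimension-one set $\operatorname{Supp} E|_{\pi^{-1}(U)}$. Set
\[
S := \bigcup_{i \geq 0} \bigl( B_i \cap \operatorname{Supp} E|_{\pi^{-1}(U)} \bigr);
\]
as a countable union of codimension-$\geq 1$ analytic subsets, $S$ is analytically meagre in $\operatorname{Supp} E|_{\pi^{-1}(U)}$. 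Now let $C \subset \operatorname{Supp} E|_{\pi^{-1}(U)}$ be an irreducible curve contained in a fiber of $\pi_U$ with $C \not\subset S$. Choosing any $x \in C \setminus S$ yields $x \notin B_i$ for every $i$, so $C \not\subset B_i$; by the very definition of $B_i$ as an intersection of supports, there is, for each $i$, an effective $\mathbb{R}$-divisor $E_i^C \sim_{\mathbb{R}, U} (D + \epsilon_i A)|_{\pi^{-1}(U)}$ with $C \not\subset \operatorname{Supp} E_i^C$. Because $C$ lies in a fiber of $\pi_U$, pull-backs of $\mathbb{R}$-Cartier divisors from $U$ and of principal divisors have zero intersection with $C$, and hence
\[
\bigl( (D + \epsilon_i A) \cdot C \bigr) = (E_i^C \cdot C) \geq 0.
\]
Letting $i \to \infty$ and using $\epsilon_i \to 0$ gives $(D \cdot C) \geq 0$.

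Applying Lemma \ref{lem--negativity-veryexc} to $\pi_U$ with the above decomposition of $D|_{\pi^{-1}(U)}$ and the analytically meagre set $S$ then yields $-D|_{\pi^{-1}(U)} \geq 0$, as desired. The main obstacle I foresee is the initial construction of a common Stein open $U$ on which every $D + \epsilon_i A$ is simultaneously movable, since the hypothesis only provides neighborhoods $V_i$ that may depend on $i$; this step hinges on the local-near-$W$ character of movability together with Proposition \ref{prop--r-base-locus}, which guarantees that stable base loci restrict compatibly to Stein opens.
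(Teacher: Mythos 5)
There is a genuine gap at the very first reduction. You assert that one can shrink $Y$ to a single connected Stein open neighborhood of $W$ ``contained in every $V_i$'' and thereby assume that all the divisors $D+\epsilon_iA$ are movable over one common open set $U\supset W$. Nothing guarantees such a $U$ exists: the hypothesis only gives a neighborhood $V_i$ for each $i$, and a countable intersection $\bigcap_i V_i$ of open neighborhoods of $W$ need not contain any open neighborhood of $W$ (the $V_i$ may shrink down to $W$). Moreover, the tool you invoke to bridge this, Proposition \ref{prop--r-base-locus}, only works in the harmless direction: it lets you restrict movability from $V_i$ to smaller open subsets, whereas what you need is to pass from movability over the possibly shrinking $V_i$ to movability over one fixed $U$, and that implication is false in general (the base locus over a larger open set can acquire divisorial components lying over $U\setminus V_i$). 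Since you yourself flag this as the main obstacle but do not actually close it, the proof as written does not go through.

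The paper's proof avoids ever producing a common neighborhood of movability. After reducing to the case where $W$ is a point and $Y$ is Stein, it fixes a Zariski open $Y'\subset Y$ over which every component of $D$ has image containing $W$, and then uses Proposition \ref{prop--r-base-locus} in the opposite direction from yours: movability of $D+\epsilon_iA$ over $U_i$ identifies the base locus over $U_i$ with the restriction of the \emph{global} base locus ${\boldsymbol{\rm Bs}}|D+\epsilon_iA|_{\mathbb{R}}$ over the fixed Stein $Y$, and since each component of $E':=E|_{\pi^{-1}(Y')}$ meets $\pi^{-1}(U_i)$ for every $i$ (its image contains $W\subset U_i$), no such component can lie in ${\boldsymbol{\rm Bs}}|D+\epsilon_iA|_{\mathbb{R}}$. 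The analytically meagre set is then built from these global base loci, which are independent of any choice of neighborhood, so the curve inequality $(D\cdot C)\geq 0$ is obtained for curves in \emph{all} fibers over $Y'$, and Lemma \ref{lem--negativity-veryexc} applies over $Y'$. Your construction of $S$, the intersection computation on curves, and the limit $\epsilon_i\to 0$ are exactly as in the paper; it is only the localization step that must be replaced by this global-base-locus argument (or some equivalent device) for the proof to be correct.
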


\begin{proof}
For any $y \in W$, if there is an open subset $U_{y} \subset Y$ containing $y$ such that $-D|_{\pi^{-1}(U_{y})} \geq0$, then $U:=\bigcup_{y \in W}U_{y}$ is the desired open subset. 
Thus, we may assume that $W$ is a point. 
By shrinking $Y$ around $W$, we may assume that $Y$ is Stein and there is a Zariski open subset $Y' \subset Y$ such that the image of all components of $D':=D|_{\pi^{-1}(Y')}$ contain $W$. 

By definition of movable divisors, there is an open subset $U_{i} \subset Y'$ containing $W$ such that any irreducible component of ${\boldsymbol{\rm Bs}}|(D+\epsilon_{i}A)|_{\pi^{-1}(U_{i})}/U_{i}|_{\mathbb{R}}$ has codimension $\geq 2$. 
Since $Y$ is Stein, by Proposition \ref{prop--r-base-locus} and the fact that the image of all components of $D'$ contain $W$, any component of $E':=E|_{\pi^{-1}(Y')}$  is not contained in ${\boldsymbol{\rm Bs}}|D+\epsilon_{i}A|_{\mathbb{R}}$. 
Therefore, we can find an analytically meagre subset $S \subset {\rm Supp}\,E'$ such that any point $x \in {\rm Supp}\,E'\setminus S$ is not contained in any ${\boldsymbol{\rm Bs}}|D+\epsilon_{i}A|_{\mathbb{R}}$. 
From this, for any curve $C \subset {\rm Supp}\,E'$ contained in a fiber of $\pi$, if $C \not\subset S$ then $(D\cdot C)={\rm lim}_{i \to \infty}(D+\epsilon_{i}A)\cdot C\geq 0$. 
By Lemma \ref{lem--negativity-veryexc}, we have $-D|_{\pi^{-1}(Y')} \geq0$. 
Thus, Corollary \ref{cor--negativity-veryexc-2} holds. 
\end{proof}

\begin{cor}\label{cor--negativity-veryexc-nef}
Let $\pi \colon X \to Y$ be a contraction of normal analytic varieties and $W \subset Y$ a subset. 
Let $D=E-F$ be an $\mathbb{R}$-Cartier divisor on $X$, where $E$ and $F$ are effective $\mathbb{R}$-divisors having no common components. 
Suppose that $E$ is very exceptional over $Y$ and $D$ is nef over $W$. 
Then, there is an open subset $U \supset W$ of $Y$ such that $-D|_{\pi^{-1}(U)} \geq0$. 
In particular, for any bimeromorphism $\pi\colon X \to Y$ of normal analytic varieties and any $\mathbb{R}$-Cartier divisor $D$ on $X$ such that $D$ is nef over $W$ and $-\pi_{*}D \geq 0$, there exists an open subset $U \subset Y$ containing $W$ such that  such that $-D|_{\pi^{-1}(U)} \geq0$. 
\end{cor}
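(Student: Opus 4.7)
The plan is to deduce the first assertion from Corollary \ref{cor--negativity-veryexc-2} by exhibiting, for a suitable auxiliary $A$, a sequence $\epsilon_i \searrow 0$ along which each $D+\epsilon_i A$ is movable over a neighborhood of $W$. Since the desired conclusion is local around $W$ (open neighborhoods for different points of $W$ can be glued), I may replace $Y$ by a Stein open neighborhood of $W$. Since $\pi$ is projective over the Stein space $Y$, there exists a $\pi$-ample $\mathbb{R}$-Cartier divisor $A$ on $X$; fix one such $A$.

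Next, for every $\epsilon>0$ I claim that $D+\epsilon A$ is $\pi$-ample, and hence movable, over some open neighborhood of $W$. The nefness of $D$ over $W$ together with the $\pi$-ampleness of $A$ yields $\pi$-ampleness of $D+\epsilon A$ over a neighborhood of $W$ by the analytic analog of the openness of the relative ample cone (shrinking $Y$ around $W$ if necessary). A $\pi$-ample $\mathbb{R}$-Cartier divisor has empty relative stable base locus and is therefore movable in the sense of the definition. Applying Corollary \ref{cor--negativity-veryexc-2} to the sequence $\epsilon_i=1/i$ then produces an open neighborhood $U \supset W$ on which $-D|_{\pi^{-1}(U)}\geq 0$, as required. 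The main technical delicacy is precisely this relative openness statement over a Stein base in the analytic category; once it is granted, everything reduces to applying the preceding corollary.

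For the supplementary statement, let $\pi \colon X \to Y$ be a bimeromorphism of normal analytic varieties, let $D$ be an $\mathbb{R}$-Cartier divisor with $-\pi_*D \geq 0$, and write $D=E-F$ with $E,F$ effective and without common components. It suffices to verify that $E$ is very exceptional over $Y$ and then to invoke the first part. Let $P \subset {\rm Supp}\,E$ be any prime divisor. If $P$ were not $\pi$-exceptional, then by Theorem \ref{thm--birat-basic} $P$ would be the strict transform of $Q:=\pi(P)$, and in particular the unique prime divisor on $X$ mapping onto $Q$ divisorially. Since $P$ is a component of $E$ but not of $F$, one obtains ${\rm coeff}_{Q}(\pi_*D)={\rm coeff}_{P}(E)>0$, contradicting $\pi_*D \leq 0$. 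Hence every component of $E$ is $\pi$-exceptional; for a bimeromorphism this automatically makes $E$ very exceptional, since for each prime divisor $Q$ on $Y$ its strict transform is a prime divisor on $X$ that does not appear in $E$. The first part of the corollary now finishes the argument.
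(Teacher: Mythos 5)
Your proof is correct and essentially identical to the paper's: after localizing (properly, one reduces to the case where $W$ is a single point, since an arbitrary subset $W$ need not admit a Stein neighborhood), you perturb by a relatively ample divisor, invoke the openness statement that nefness over $W$ plus an ample perturbation gives ampleness over a neighborhood — this is exactly \cite[Lemma 4.10]{fujino-analytic-bchm}, which the paper cites at this very step — and then apply Corollary \ref{cor--negativity-veryexc-2} with $\epsilon_i \to 0$. Your explicit verification of the ``in particular'' statement, namely that $-\pi_*D\geq 0$ forces every component of $E$ to be $\pi$-exceptional and hence very exceptional for a bimeromorphism, is also correct and simply spells out what the paper leaves implicit via the example following Definition \ref{defn--veryexcepdiv}.
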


\begin{proof}
For any $y \in W$, if there is an open subset $U_{y} \subset Y$ containing $y$ such that $-D|_{\pi^{-1}(U_{y})} \geq0$, then $U:=\bigcup_{y \in W}U_{y}$ is the desired open subset. 
Thus, we may assume that $W$ is a point. 
Let $A$ be a $\pi$-ample Cartier divisor on $X$. 
By \cite[Lemma 4.10]{fujino-analytic-bchm}, for any $\epsilon \in \mathbb{R}_{>0}$, there is a Stein open subset $U_{\epsilon}\subset Y$ containing $W$ such that $D+\epsilon A$ is ample over $U_{\epsilon}$. 
In particular, ${\boldsymbol{\rm Bs}}|(D+\epsilon A)|_{\pi^{-1}(U_{\epsilon})}/U_{\epsilon}|_{\mathbb{R}}$ is empty. 
Corollary \ref{cor--negativity-veryexc-nef} follows from this fact and Corollary \ref{cor--negativity-veryexc-2}. 
\end{proof}

\subsection{Singularities of pairs} 

In this subsection, we define singularities of pairs. 
We recommend the reader to read \cite[Section 3]{fujino-analytic-bchm} for basic properties of pairs. 

Let $X$ be a normal analytic variety. 
The {\em canonical sheaf} $\omega_{X}$ of $X$ is the unique reflexive sheaf whose restriction to the non-singular locus $X_{\rm sm}$ is isomorphic to $\Omega_{X_{\rm sm}}^{{\rm dim}X}$. 
Let $\Delta$ be an $\mathbb{R}$-divisor on $X$. 
We say that $K_{X}+\Delta$ is $\mathbb{R}$-Cartier at $x \in X$ if there exist an open neighborhood $U_{x}$ of $x$ and a Weil divisor $K_{U_{x}}$ on $U_{x}$ with $\mathcal{O}_{U_{x}}(K_{U_{x}}) \simeq \omega_{X}|_{U_{x}}$ such that $K_{U_{x}}+\Delta|_{U_{x}}$ is $\mathbb{R}$-Cartier at $x$. 
We simply say that $K_{X}+\Delta$ is $\mathbb{R}$-Cartier when $K_{X}+\Delta$ is $\mathbb{R}$-Cartier at any point $x \in X$. 
Unfortunately, we can not define $K_{X}$ globally with $\mathcal{O}_{X}(K_{X}) \simeq \omega_{X}$. 
It only exists locally on $X$. 
However, we use the symbol $K_{X}$ as a formal divisor class with
an isomorphism $\mathcal{O}_{X}(K_{X}) \simeq \omega_{X}$ and call it the {\em canonical divisor} of $X$ if there is no danger of confusion. 

A {\em pair} $(X,\Delta)$ consists of a normal analytic variety $X$ and an effective $\mathbb{R}$-divisor $\Delta$ on $X$ such that $K_{X}+\Delta$ is $\mathbb{R}$-Cartier. 
Let $(X,\Delta)$ be a pair. 
Let $f \colon Y \to X$ be a proper bimeromorphism from a normal analytic variety $Y$.
We take a Stein open subset $U$ of $X$ where $K_{U}+\Delta|_{U}$ is a well defined $\mathbb{R}$-Cartier divisor on $U$. 
Then we
can define $K_{f^{-1}(U)}$ and $K_{U}$ such that $f_{*}K_{f^{-1}(U)}=K_{U}$, and we can write
$K_{f^{-1}(U)}=f^{*}(K_{U}+\Delta|_{U})+E_{U}$
as usual. 
Note that $E_{U}$ is a well defined $\mathbb{R}$-divisor on $f^{-1}(U)$ such that $f_{*}E_{U}=-\Delta|_{U}$.
Then we have 
$$K_{Y}=f^{*}(K_{X}+\Delta)+\sum_{E}a(E,X,\Delta)E$$
such that $\bigl( \sum_{E}a(E,X,\Delta)E \bigr)|_{U}=E_{U}$. 
We note that $\sum_{E}a(E,X,\Delta)E$ is a globally well defined although $K_{X}$ and $K_{Y}$ are well defined only locally. 

A pair $(X,\Delta)$ is called a {\em log canonical} ({\em lc}, for short) {\em pair} if $a(E,X,\Delta) \geq -1$ for any $f \colon Y \to X$ and every prime divisor divisor $E$ on $Y$. 
A pair $(X,\Delta)$ is called a {\em Kawamata log terminal} ({\em klt}, for short) {\em pair} if $a(E,X,\Delta) > -1$ for any $f \colon Y \to X$ and every prime  divisor $E$ on $Y$. 
A pair $(X,\Delta)$ is called a {\em divisorial log terminal} ({\em dlt}, for short) {\em pair} if there exists a log resolution $f\colon Y \to X$ of $(X,\Delta)$ such that $a(E,X,\Delta)>-1$ for every $f$-exceptional prime divisor $E$. 
The image of $E$ with $a(E,X,\Delta) = -1$ for some $f \colon Y \to X$ such that $(X,\Delta)$ is lc around general points of $f(E)$ is called an {\em lc center} of $(X,\Delta)$.

\begin{lem}\label{lem--finite-lccenter}
Let $(X,\Delta)$ be an lc pair and $K \subset X$ a compact subset. 
Then there are only finitely many lc centers of $(X,\Delta)$ intersecting $K$.   
\end{lem}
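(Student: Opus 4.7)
The plan is to reduce the statement to a local finiteness assertion by compactness of $K$: it suffices to show that every point $x \in X$ admits an open neighborhood $V_x$ meeting only finitely many lc centers of $(X,\Delta)$, and then to cover $K$ by finitely many such neighborhoods.

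To prove the local claim, fix $x \in X$, choose a relatively compact Stein open neighborhood $V$ of $x$, and take a log resolution $g \colon \widetilde V \to V$ of $(V,\Delta|_V)$ by Theorem \ref{thm--resolution}. Since ${\rm Supp}\,\Delta$ is a locally finite collection of closed analytic hypersurfaces, only finitely many of its components meet $\overline V$, and over the relatively compact base $V$ the log resolution $g$ can be arranged so that ${\rm Ex}(g) \cup {\rm Supp}\,(g_{*}^{-1}\Delta|_V)$ has only finitely many irreducible components---log resolutions in this analytic setup are built from finite sequences of admissible blow-ups over relatively compact bases. Writing
$$K_{\widetilde V} = g^{*}(K_{V}+\Delta|_V) + \sum_E a(E,V,\Delta|_V)\,E,$$
every prime divisor $E$ on $\widetilde V$ with $a(E,V,\Delta|_V) = -1$ is either $g$-exceptional or the strict transform of a component of $\lfloor \Delta|_V \rfloor$, so only finitely many such divisors $E_{1}, \ldots, E_{r}$ occur.

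Each lc center of $(X,\Delta)$ meeting $V$ is then an irreducible component of some $g(E_{i_{1}} \cap \cdots \cap E_{i_{s}})$: any lc place whose center meets $V$ is extracted, after finitely many further blow-ups of $\widetilde V$ along strata of the simple normal crossing divisor $\sum_{i}E_{i}$, as an additional discrepancy $-1$ divisor whose image already lies in the enumerated collection $\{g(E_{i_{1}} \cap \cdots \cap E_{i_{s}})\}$. Since this collection is finite and contains every lc center meeting $V$, the local finiteness follows. The main technical obstacle is the finiteness of components of the log resolution over the relatively compact base $V$, which distinguishes the analytic from the algebraic case and relies on the locally finite nature of the admissible blow-ups used in analytic resolution of singularities.
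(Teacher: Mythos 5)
Your overall strategy (localize around points of $K$ by compactness, resolve, and identify lc centers with images of strata of the discrepancy $-1$ divisors) can be made to work, but as written the key finiteness count has a gap. After fixing a relatively compact Stein neighborhood $V$ of $x$ and a log resolution $g \colon \widetilde V \to V$ of $(V,\Delta|_V)$, finiteness of the set of discrepancy $-1$ prime divisors $E_1,\dots,E_r$ does \emph{not} imply finiteness of the set of lc centers of $(V,\Delta|_V)$: those centers are images of the irreducible components of the intersections $E_{i_1}\cap\cdots\cap E_{i_s}$, and over the non-compact base $V$ such an intersection may have infinitely many irreducible components accumulating toward the boundary of $V$ (already two irreducible smooth divisors crossing transversally in a non-compact manifold can meet in infinitely many points). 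So your concluding sentence ``since this collection is finite and contains every lc center meeting $V$'' conflates finiteness of the index sets $\{i_1,\dots,i_s\}$ with finiteness of the strata. To close the gap you must re-invoke compactness at this stage: for instance shrink to $V'\Subset V$, note that only finitely many irreducible components of each $E_{i_1}\cap\cdots\cap E_{i_s}$ meet the compact set $g^{-1}(\overline{V'})$ (local finiteness of the irreducible-component decomposition of an analytic set), and also verify that distinct lc centers of the global pair $(X,\Delta)$ have distinct traces on $V$, so that the local count really bounds the number of global lc centers meeting $V'$.

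The same caveat applies to your claim that ${\rm Ex}(g)\cup {\rm Supp}\,g_{*}^{-1}(\Delta|_V)$ has only finitely many components over all of $V$: the remark following Theorem \ref{thm--resolution} warns that these analytic resolutions are not in general finite sequences of blow-ups, and even a finite sequence of blow-ups over the non-compact $V$ can have centers, hence exceptional divisors, with infinitely many irreducible components; what is true, and all you actually need, is finiteness after cutting with a compact set. The paper's proof avoids both issues by keeping the compact set in play throughout: it takes one global log resolution, reduces to the log smooth case, and then inducts on the dimension via adjunction to the components of $\lfloor\Delta\rfloor$, at each step counting only the finitely many components or lc centers that meet the compact set. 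Your localization route is viable, but the missing compactness/local-finiteness step is precisely the analytic content of the lemma and has to be supplied explicitly.
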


\begin{proof}
Let $f \colon Y \to X$ be a log resolution of $(X,\Delta)$. 
We can write
$$K_{Y}+\Gamma=f^{*}(K_{X}+\Delta)+E$$
for some effective $\mathbb{R}$-divisors $\Gamma$ and $E$ on $Y$ having no common components. 
Since any lc center of $(X,\Delta)$ is the image of an lc center of $(Y,\Gamma)$, it is sufficient to the finiteness of the lc centers of $(Y,\Gamma)$ intersecting $f^{-1}(K)$. 
By replacing $(X,\Delta)$ and $K$ with $(Y,\Gamma)$ and $f^{-1}(K)$ respectively, we may assume that $(X,\Delta)$ is log smooth. 

We prove the log smooth case by induction on a dimension of $X$. 
If ${\rm dim}\,X=1$, then the assertion is clear since ${\rm Supp}\,\Delta \cap K$ contains only finitely many points. 
In the general case, there are only finitely components $T_{1},\,\cdots,\,T_{m}$ of $\lfloor \Delta \rfloor$ intersecting $K$. 
By the induction hypothesis, for every $1 \leq i \leq m$ the lc pair $(T_{i},(\Delta-T_{i})|_{T_{i}})$ has only finitely many lc centers intersecting $T_{i} \cap K$. 
Since any lc center of $(X,\Delta)$ is $T_{1},\,\cdots,\,T_{m}$ or an lc center of $(T_{i},(\Delta-T_{i})|_{T_{i}})$ for some $i$, Lemma \ref{lem--finite-lccenter} holds.  
\end{proof}

\begin{thm}\label{thm--dlt-restriction}
Let $\pi \colon X \to Y$ be a projective morphism from a normal analytic variety $X$ to an analytic space $Y$, and let $W \subset Y$ be a compact subset such that $W \cap Z$ has only finitely many connected components for any analytic subset $Z$ which is defined over an open neighborhood of $W$ (see also the condition $({\rm P}4)$ in \cite{fujino-analytic-bchm}). 
Let $(X,\Delta)$ be a dlt pair.
Then there exists an open subset $Y' \subset Y$ containing $W$ such that every connected component $X'$ of $\pi^{-1}(Y')$ satisfying the following conditions.
\begin{itemize}
\item
$(X',\Delta|_{X'})$ is a dlt pair, and
\item
for any lc center $S'$ of $(X',\Delta|_{X'})$ such that $\pi(S') \cap W \neq \emptyset$ and any open subset $U \subset Y'$ containing $W$, there exists a unique lc center $S_{U}$ of $(\pi^{-1}(U), \Delta|_{\pi^{-1}(U)})$ such that $S_{U} \subset S'$ and $\pi(S_{U})\cap W \neq \emptyset$. 
\end{itemize} 
In particular, for any open subset $U \subset Y'$ containing $W$, there is a natural bijection between the set of the lc centers $S'$ of $(X',\Delta|_{X'})$ satisfying $\pi(S') \cap W \neq \emptyset$ and the set of the lc centers $S_{U}$ of $(\pi^{-1}(U), \Delta|_{\pi^{-1}(U)})$ satisfying $\pi(S_{U})\cap W \neq \emptyset$. 
\end{thm}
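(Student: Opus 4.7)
The plan is to apply Lemma \ref{lem--shrink-irreducible} twice: first to $\pi \colon X \to Y$ itself, and then to the restriction of $\pi$ to each of the finitely many lc centers meeting $\pi^{-1}(W)$, where finiteness is supplied by Lemma \ref{lem--finite-lccenter}.

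First I would apply Lemma \ref{lem--shrink-irreducible} to $(\pi, W)$ to obtain an open $Y_{0} \subset Y$ containing $W$ together with the finitely many connected components $X'_{1}, \ldots, X'_{l}$ of $\pi^{-1}(Y_{0})$ whose images meet $W$. Each $X'_{i}$ is a normal analytic variety open in $X$, so $(X'_{i}, \Delta|_{X'_{i}})$ inherits the dlt property, and the first bullet of the theorem is already in hand. Since $\pi$ is proper and $W$ is compact, each intersection $K_{i} := \pi^{-1}(W) \cap X'_{i}$ is compact, so Lemma \ref{lem--finite-lccenter} produces only finitely many lc centers $S'_{i,1}, \ldots, S'_{i,k_{i}}$ of $(X'_{i}, \Delta|_{X'_{i}})$ that meet $K_{i}$.

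Because each lc center of a dlt pair is normal (a standard consequence of dlt singularities, provable in the analytic setting via dlt blow-ups; otherwise one runs the same argument on the normalization of $S'_{i,j}$), I would then apply Lemma \ref{lem--shrink-irreducible} to each projective morphism $\pi|_{S'_{i,j}} \colon S'_{i,j} \to Y$ with the same compact set $W$, where condition (P4) is inherited automatically since $W$ is unchanged. This yields open subsets $Y_{i,j} \subset Y$ containing $W$ such that for any open $U \subset Y_{i,j}$ containing $W$, the preimage $(\pi|_{S'_{i,j}})^{-1}(U)$ admits a unique connected component $\Sigma_{i,j,U}$ whose image meets $W$. Setting $Y' := Y_{0} \cap \bigcap_{i,j} Y_{i,j}$ gives an open neighborhood of $W$ since the intersection is finite.

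For any open $U \subset Y'$ containing $W$ and any $i$, let $X_{U}^{(i)}$ denote the unique connected component of $\pi^{-1}(U) \cap X'_{i}$ meeting $\pi^{-1}(W)$ supplied by Lemma \ref{lem--shrink-irreducible}. The lc centers of $(X_{U}^{(i)}, \Delta|_{X_{U}^{(i)}})$ whose images meet $W$ are the irreducible components of the sets $S'_{i,j} \cap X_{U}^{(i)}$ meeting $\pi^{-1}(W)$; by construction of $Y_{i,j}$, exactly one such irreducible component is contained in each $S'_{i,j}$, namely $\Sigma_{i,j,U}$, which furnishes the unique $S_{U}$ required by the theorem as well as the bijection in the last assertion. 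The principal obstacle is confirming normality of lc centers of an analytic dlt pair and correctly matching irreducible components of $S'_{i,j} \cap \pi^{-1}(U)$ with connected components of $(\pi|_{S'_{i,j}})^{-1}(U)$; if direct normality is unavailable, the remedy is to run Lemma \ref{lem--shrink-irreducible} on the normalization $\widetilde{S}'_{i,j} \to S'_{i,j}$ and push the conclusion forward, using irreducibility of $S'_{i,j}$ to control the induced map on components.
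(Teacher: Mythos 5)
Your proposal is correct in substance and follows essentially the same route as the paper's proof: finiteness of the lc centers meeting $W$ via Lemma \ref{lem--finite-lccenter}, an application of Lemma \ref{lem--shrink-irreducible} to $X$ and to each (normal, irreducible) lc center, a finite intersection of the resulting open neighborhoods of $W$, and the observation that lc centers of $(\pi^{-1}(U),\Delta|_{\pi^{-1}(U)})$ are irreducible components of intersections of lc centers with $\pi^{-1}(U)$. The one imprecision is your claim that $(\pi|_{S'_{i,j}})^{-1}(U)$ has a \emph{unique} connected component whose image meets $W$: Lemma \ref{lem--shrink-irreducible} only yields finitely many connected components of $(\pi|_{S'_{i,j}})^{-1}(Y_{i,j})$ meeting $W$, with uniqueness inside each of them (an irreducible lc center can split into several such pieces near $W$, e.g.\ a multisection over $Y$); but since each such piece over $Y'$ is itself a separate lc center $S'$ of the corresponding component pair, this per-component uniqueness is exactly what the second bullet of the theorem requires, and your argument goes through after this minor adjustment.
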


\begin{proof}
By Lemma \ref{lem--finite-lccenter}, there are finitely many lc centers of $(X,\Delta)$ whose images on $Y$ by $\pi$ intersect $W$. 
Let $S$ be $X$ or an lc center of $(X,\Delta)$ such that $\pi(S) \cap W \neq \emptyset$, and let $\pi_{S}\colon S \to Y$ be the induced morphism. 
Then $S$ is a normal analytic variety. 
By applying Lemma \ref{lem--shrink-irreducible} to $\pi_{S}$ and $W$, we obtain an open subset $Y'_{S}$ containing $W$ such that if $\pi^{-1}(Y'_{S}) =\coprod_{\lambda \in \Lambda}S'_{\lambda}$ is the decomposition of $\pi^{-1}(Y'_{S})$ into connected components $S'_{\lambda}$, then there are only finitely many connected components, which we denote by $S'_{1},\cdots,\, S'_{l}$, such that their images on $Y$ by $\pi$ intersect $W$ and $S'_{1},\cdots,\, S'_{l}$ satisfy the following properties.
\begin{itemize}
\item
$S'_{i}$ is a normal complex variety for all $1 \leq i \leq l$, and
\item
for any $1 \leq i \leq l$ and open subset $U \subset Y'_{S}$ containing $W$, there exists a unique connected component $S_{U}^{(i)}$ of $\pi^{-1}(U) \cap S'_{i}$ such that $\pi(S_{U}^{(i)}) \cap W \neq \emptyset$, and furthermore, $S_{U}^{(i)}$ is a normal analytic variety. 
\end{itemize} 
We set $Y':=\bigcap_{S}Y'_{S}$, where $S$ runs over $X$ and lc centers of $(X,\Delta)$ whose images on $Y$ by $\pi$ intersect $W$. 
Then $Y'$ is open and $Y' \supset W$. 
Is is easy to check that the above two properties imply the two conditions of Theorem \ref{thm--dlt-restriction}. 
Note that for any open subset $U \subset Y$, an lc center of $(\pi^{-1}(U), \Delta|_{\pi^{-1}(U)})$ can be written as an irreducible component of the intersection of $\pi^{-1}(U)$ and an lc center of $(X,\Delta)$. 
Thus, $Y'$ is the desired open subset. 
\end{proof}

\begin{lem}\label{lem--dlt-perturbation-coefficients}
Let $\pi \colon X \to Y$ be a contraction from a normal analytic variety $X$ to a Stein space $Y$, and let $(X,\Delta)$ be a dlt pair. 
Let $W \subset Y$ be a compact subset. 
Let $H$ be a $\pi$-ample $\mathbb{R}$-divisor on $X$. 
Then after shrinking $Y$ around $W$, we can find a klt pair $(X,B)$ such that $K_{X}+B\sim_{\mathbb{R}}K_{X}+\Delta+H$. 
\end{lem}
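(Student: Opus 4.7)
The plan is to perturb $(X,\Delta)$ into a klt pair by slightly lowering the coefficients of $\lfloor\Delta\rfloor$, and then absorb the resulting deficit against an effective, small-coefficient representative of the ample divisor $H$. Set $S:=\lfloor\Delta\rfloor$. After shrinking $Y$ around $W$, Lemma \ref{lem--finite-lccenter} ensures that only finitely many lc centers of $(X,\Delta)$ meet $\pi^{-1}(W)$, so the collection of relevant lc places is locally controlled.

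The key input from dlt-ness is that every lc place $E$ of $(X,\Delta)$ has its center contained in $\lfloor\Delta\rfloor$, so on any log resolution $f$ one has ${\rm ord}_{E}(f^{*}S)\geq 1$. Consequently, for any $\epsilon>0$,
\[
a(E,X,\Delta-\epsilon S)=a(E,X,\Delta)+\epsilon\cdot {\rm ord}_{E}(f^{*}S)\geq -1+\epsilon,
\]
while for non-lc-place $E$ the same formula only increases the already strict inequality $a(E,X,\Delta)>-1$. Thus, whenever the difference $K_{X}+(\Delta-\epsilon S)$ is $\mathbb{R}$-Cartier (which holds for the dlt pair $(X,\Delta)$, at least after passing to a small $\mathbb{Q}$-factorialization of $X$ near $W$), $(X,\Delta-\epsilon S)$ is klt for every $\epsilon>0$. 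Shrinking $\epsilon$ further if necessary, openness of the $\pi$-ample cone in the (finite-dimensional, by property (P4)) relative N\'eron--Severi space over $W$ ensures that $H+\epsilon S$ is $\pi$-ample after a further shrinking of $Y$ around $W$.

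The heart of the argument is to construct an effective $\mathbb{R}$-divisor $G\sim_{\mathbb{R}}H+\epsilon S$ with coefficients as small as desired and whose support contains none of the finitely many lc strata of $(X,\Delta-\epsilon S)$ meeting $\pi^{-1}(W)$. Write $H+\epsilon S$ as a positive $\mathbb{R}$-linear combination of $\pi$-ample Cartier divisors $D_{i}$. By Cartan's Theorem B on the Stein space $Y$ and Serre vanishing, $\pi_{*}\mathcal{O}_{X}(mD_{i})$ is globally generated for $m\gg 0$; generic global sections yield effective divisors linearly equivalent to $mD_{i}$ that avoid any prescribed finite family of subvarieties. Dividing by $m$ and taking a positive combination produces the required $G$.

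Setting $B:=\Delta-\epsilon S+G$ gives $K_{X}+B\sim_{\mathbb{R}}K_{X}+\Delta-\epsilon S+(H+\epsilon S)=K_{X}+\Delta+H$, and $(X,B)$ is klt because $(X,\Delta-\epsilon S)$ is klt and the generic small-coefficient $G$ is too small to create a new lc place. The main technical obstacle is this last construction of $G$: passing from $\sim_{\mathbb{R},\pi}$ to the stronger $\sim_{\mathbb{R}}$ while keeping the coefficients small and avoiding a prescribed finite list of subvarieties is precisely what the Stein hypothesis on $Y$, together with projectivity of $\pi$, lets one arrange through global generation of coherent sheaves on Stein spaces and an analytic Bertini-type avoidance argument.
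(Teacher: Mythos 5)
Your construction hinges on the divisor $S=\lfloor\Delta\rfloor$ being $\mathbb{R}$-Cartier: you need it to make sense of the pair $(X,\Delta-\epsilon S)$, to pull $S$ back in the discrepancy computation, to apply openness of the ample cone to $H+\epsilon S$, and to decompose $H+\epsilon S$ into $\pi$-ample Cartier pieces when constructing $G$. For a dlt pair on a normal (non-$\mathbb{Q}$-factorial) analytic variety this simply fails in general, and the lemma is stated with no $\mathbb{Q}$-factoriality hypothesis. Your parenthetical escape route --- passing to a small $\mathbb{Q}$-factorialization $\phi\colon X'\to X$ near $W$ --- does not rescue the argument: on $X'$ the divisor $\phi^{*}H$ is only relatively semi-ample (trivial on $\phi$-exceptional curves), so ``openness of the ample cone'' no longer applies, and $\phi^{*}H+\epsilon S'$ is in general not even nef over $W$ (the strict transform $S'$ can be negative on exceptional curves), so the global-generation/avoidance construction of $G$ breaks there as well; since $\phi$ is small there is no exceptional divisor available to subtract in order to restore ampleness. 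You also never explain how a klt pair produced on $X'$ would descend to a klt pair on $X$ itself (this can be done using smallness, crepancy and the negativity lemma, but it is an argument that has to be made, not a formality).

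The paper avoids all of this by quoting Fujino's analytic Bertini-type lemma and running the standard algebraic argument, which is carried out not on $X$ but on a log resolution: take a log resolution $f\colon V\to X$ as in the dlt condition, with an effective $f$-exceptional divisor $F$ such that $-F$ is $f$-ample (Theorem \ref{thm--resolution}); write $K_{V}+\Gamma=f^{*}(K_{X}+\Delta)+E$ with $\Gamma,E\geq 0$ sharing no components, so that $\lfloor\Gamma\rfloor=f_{*}^{-1}\lfloor\Delta\rfloor$; then $f^{*}H-\delta F$ is ample over a neighborhood of $W$ for small $\delta$, and since $V$ is smooth one may now legitimately add $\epsilon\lfloor\Gamma\rfloor$ and keep ampleness, choose a general small-coefficient member $A_{V}\sim_{\mathbb{R}}f^{*}H-\delta F+\epsilon\lfloor\Gamma\rfloor$, set $B:=f_{*}(\Gamma-\epsilon\lfloor\Gamma\rfloor+A_{V})$, and compare $K_{V}+\Gamma-\epsilon\lfloor\Gamma\rfloor+A_{V}$ with $f^{*}(K_{X}+B)$ via the negativity lemma (Corollary \ref{cor--negativity-veryexc-nef}) to conclude $(X,B)$ is klt and $K_{X}+B\sim_{\mathbb{R}}K_{X}+\Delta+H$. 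Your final perturbation-plus-Bertini step is the right instinct, but performed on $X$ it presupposes exactly the $\mathbb{Q}$-factoriality that is missing; if you want to stay on $X$ you would instead need an auxiliary klt pair $(X,\Gamma_{0})$ to run the convex-combination trick (as in the proof of Lemma \ref{lem--mmp-ample-scaling-nefthreshold}), and the existence of such a pair is not among the hypotheses.
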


\begin{proof}
By using \cite[Lemma 3.9]{fujino-analytic-bchm}, the argument in the algebraic case works. 
\end{proof}

We close this subsection with the definition of log smooth models.

\begin{defn}[Log smooth model, {\cite[Definition 2.9]{has-trivial}}, cf.~{\cite[Definition 2.3]{birkar-flip}}]\label{defnlogsmoothmodel}
Let $(X,\Delta)$ be an lc pair and $f:Y \to X$ a log resolution of $(X,\Delta)$. 
Let $\Gamma$ be a boundary $\mathbb{R}$-divisor on $Y$ such that $(Y,\Gamma)$ is log smooth. 
Then $(Y,\Gamma)$ is a {\it log smooth model} of $(X,\Delta)$ if we write 
$$K_{Y}+\Gamma=f^{*}(K_{X}+\Delta)+F, $$
then
\begin{enumerate}
\item[(i)]
$F$ is an effective $f$-exceptional divisor, and 
\item[(ii)] 
every $f$-exceptional prime divisor $E$ satisfying $a(E,X,\Delta)>-1$ is a component of $F$ and $\Gamma-\lfloor \Gamma \rfloor$.  
\end{enumerate}
\end{defn}

\subsection{Abundant divisor}

In this subsection, we define the property of being abundant and the property of being log abundant with respect to an lc pair. 

\begin{defn}[Invariant Iitaka dimension, cf.~{\cite[Definition 2.2.1]{choi}}]\label{defn--inv-iitaka-dim}
Let $\pi \colon X \to Y$ be a projective morphism from a normal analytic variety $X$ to an analytic variety $Y$, and let $D$ be an $\mathbb{R}$-Cartier divisor on $X$. 
Then the {\em relative invariant Iitaka dimension} of $D$, denoted by $\kappa_{\iota}(X/Y,D)$, is defined as follows: 
If there is an $\mathbb{R}$-divisor $E\geq 0$ such that $D\sim_{\mathbb{R},Y}E$ then we set $\kappa_{\iota}(X/Y,D)=\kappa_{\iota}(F,D|_{F})$, where $F$ is an analytically sufficiently general fiber of the Stein factorization of $\pi$, and otherwise we set $\kappa_{\iota}(X/Y,D)=-\infty$. 
As the algebraic case (\cite{choi}), this definition is independent of the choice of $E$ and $F$, hence $\kappa_{\iota}(X/Y,D)$ is well defined. 
\end{defn}

\begin{defn}[Numerical dimension, cf.~{\cite{nakayama}}]\label{defn--num-dim}
Let $\pi \colon X \to Y$ be a projective morphism from a normal analytic variety $X$ to an analytic variety $Y$, and let $D$ be an $\mathbb{R}$-Cartier divisor on $X$. 
Then, the {\em relative numerical dimension} of $D$ over $Y$ is defined by the numerical dimension $\kappa_{\sigma}(F,D|_{F})$ defined in \cite[V, 2.5 Definition]{nakayama}, where $F$ is an analytically sufficiently general fiber of the Stein factorization of $\pi$. 
As the algebraic case \cite[2.2]{has-mmp}, $\kappa_{\sigma}(F,D|_{F})$ is independent of the choice of $F$.  
 In this paper, we denote $\kappa_{\sigma}(F,D|_{F})$ by $\kappa_{\sigma}(X/Y,D)$. 
\end{defn}

\begin{defn}[Abundant divisor, log abundant divisor, cf.~{\cite[Definition 2.16]{has-finite}}]\label{defn--abund}
Let $\pi\colon X\to Y$ be a projective morphism from a normal analytic variety $X$ to an analytic variety $Y$, and let $D$ be an $\mathbb{R}$-Cartier divisor on $X$. 
We say that $D$ is $\pi$-{\em abundant} (or {\em abundant over} $Y$) if the equality $\kappa_{\iota}(X/Y,D)=\kappa_{\sigma}(X/Y,D)$ holds. 

Let $\pi\colon X\to Y$ and $D$ be as above. 
Let $W \subset Y$ be a subset and $(X,\Delta)$ an lc pair. 
We say that $D$ is $\pi$-{\em log abundant} (or {\em log abundant over} $Y$) {\em around} $W$ {\em with respect to} $(X,\Delta)$ if $D$ satisfies the following: Let $S$ be $X$ or an lc center of $(X,\Delta)$ with the normalization $S^{\nu}\to S$. 
If $\pi(S) \cap W \neq \emptyset$, then the pullback $D|_{S^{\nu}}$ is abundant over $Y$.

Let $\pi\colon X\to Y$ be a projective morphism from a normal analytic variety $X$ to an analytic variety $Y$, and let $W \subset Y$ be a subset. 
Let $(X,\Delta)$ an lc (resp.~dlt) pair. 
When $K_{X}+\Delta$ is log abundant over $Y$ around $W$ with respect to $(X,\Delta)$, we call $(X,\Delta)$ a {\em log abundant lc} (resp.~{\em log abundant dlt}) {\em pair over} $Y$ {\em around} $W$ or we say that $(X,\Delta)$ is {\em log abundant over} $Y$ around $W$.  
\end{defn}

\begin{lem}[cf.~{\cite[Lemma 2.11]{hashizumehu}}]\label{lem--iitakafib} 
Let $\pi\colon X\to Y$ be a projective morphism from a normal analytic variety $X$ to an analytic variety $Y$. 
Let $(X,\Delta)$ be an lc pair with an $\mathbb{R}$-divisor $\Delta$. 
Suppose that $K_{X}+\Delta$ is abundant over $Y$ and there is an effective $\mathbb{R}$-divisor $D\sim_{\mathbb{R},\,Y}K_{X}+\Delta$. 
Let $X\dashrightarrow V$ be the Iitaka fibration over $Y$ associated to $D$. 
Pick a log resolution $f\colon \overline{X} \to X$ of $(X,\Delta)$ such that the induced meromorphic map $\overline{X} \dashrightarrow V$ is a morphism, and let $(\overline{X},\overline{\Delta})$ be a projective lc pair such that we can write $K_{\overline{X}}+\overline{\Delta}=f^{*}(K_{X}+\Delta)+E$ for an effective $f$-exceptional $\mathbb{R}$-divisor $E$. 
Then, we have $\kappa_{\sigma}(\overline{X}/V,K_{\overline{X}}+\overline{\Delta})=0$. 
\end{lem}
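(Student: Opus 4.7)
The plan is to unfold the definition of the relative numerical dimension, transfer the abundance hypothesis from $(X,\Delta)$ to the log resolution $(\overline{X},\overline{\Delta})$, and then apply the characterization of abundance via restriction to a general fiber of the Iitaka fibration.

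By Definition \ref{defn--num-dim}, $\kappa_\sigma(\overline{X}/V, K_{\overline{X}}+\overline{\Delta})$ equals $\kappa_\sigma(\overline{F}, (K_{\overline{X}}+\overline{\Delta})|_{\overline{F}})$, where $\overline{F}$ is an analytically sufficiently general fiber of the Stein factorization of $\overline{X}\to V$. Let $X\to Y'$ be the Stein factorization of $\pi$; by the construction in Subsection \ref{subsec--iitaka-fib}, the Iitaka fibration induces a contraction $V\to Y'$, and for a sufficiently general $p\in Y'$ the meromorphic map $X_p\dashrightarrow V_p$ is the classical Iitaka fibration of $D|_{X_p}$, while $\overline{F}$ can be chosen as a general fiber of the projective morphism $\overline{X}_p\to V_p$. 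The problem is thereby reduced to a statement on the projective variety $\overline{X}_p$.

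Next I would check that $K_{\overline{X}}+\overline{\Delta}$ is abundant over $Y$ with the same Iitaka fibration $V$. Since $K_{\overline{X}}+\overline{\Delta}=f^{\ast}(K_X+\Delta)+E$ with $E\geq 0$ and $f$-exceptional, the natural isomorphism $f_{\ast}\mathcal{O}_{\overline{X}}(\lfloor m(K_{\overline{X}}+\overline{\Delta})\rfloor)\cong \mathcal{O}_X(\lfloor m(K_X+\Delta)\rfloor)$ identifies the relative invariant Iitaka dimensions over $Y$ and shows that the Iitaka fibrations agree; the analogous identity for $\kappa_\sigma$ follows from the behavior of the Nakayama--Zariski decomposition developed in Section \ref{sec4} under birational pullback with an effective exceptional divisor. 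Restricting to the projective fiber $\overline{X}_p$, we conclude that $(K_{\overline{X}}+\overline{\Delta})|_{\overline{X}_p}$ is an abundant $\mathbb{R}$-Cartier divisor whose Iitaka fibration is $\overline{X}_p\to V_p$.

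The final step invokes Nakayama's classical characterization of abundance \cite{nakayama}: an $\mathbb{R}$-Cartier divisor is abundant if and only if its numerical Iitaka fibration coincides with its Iitaka fibration, which in turn is equivalent to the vanishing of $\kappa_\sigma$ on the restriction to a general fiber of the Iitaka fibration. Applied to $(K_{\overline{X}}+\overline{\Delta})|_{\overline{X}_p}$, this immediately gives $\kappa_\sigma(\overline{F}, (K_{\overline{X}}+\overline{\Delta})|_{\overline{F}})=0$, proving the lemma. The main obstacle is the bookkeeping in the first two paragraphs: one must choose $p\in Y'$ sufficiently generally so that the analytic Iitaka fibration, the $f$-exceptional divisor $E$, and the various fiber restrictions all interact with fiberwise generality in a compatible way, after which the concluding step is purely algebraic and Nakayama's theory applies verbatim.
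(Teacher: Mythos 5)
Your proposal is correct and follows essentially the same route as the paper: the entire content of the paper's proof is the reduction, via the discussion in Subsection \ref{subsec--iitaka-fib}, to the fibers over an analytically sufficiently general point of (the Stein factorization of) $Y$, after which the resulting projective statement is exactly \cite[Lemma 2.11]{hashizumehu}. The only difference is that where the paper simply cites that lemma, you sketch its proof (identification of the section sheaves under $f$, invariance of $\kappa_{\sigma}$ under adding effective $f$-exceptional divisors, and Nakayama's characterization of abundant divisors via the Iitaka fibration \cite{nakayama}), which is precisely the content of the cited result.
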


\begin{proof}
By the discussion in Subsection \ref{subsec--iitaka-fib} and considering the fibers over an analytically sufficiently general point of $Y$, we may reduce the lemma to the algebraic case, and the the lemma follows from \cite[Lemma 2.11]{hashizumehu}. 
\end{proof}

\subsection{Bimeromorphic map}
The following lemma is well known to experts but useful to discuss steps of MMP.

\begin{lem}\label{lem--basic-1}
Let $f\colon X \to Y$ and $f'\colon X' \to Y$ be projective morphisms from normal analytic varieties to an analytic space $Y$, and let $\phi \colon X\dashrightarrow X'$ be a bimeromorphic contraction over $Y$. 
Let $D$ be an $\mathbb{R}$-Cartier divisor on $X$ such that $-D$ is ample over $Y$. 
Suppose that $\phi_{*}D$ is  $\mathbb{R}$-Cartier and ample over $Y$. 
Let $g \colon W \to X$ and $g' \colon W \to X'$ be the normalization of the graph of $\phi$. 
We put 
$$E:=g^{*}D-g'^{*}(\phi_{*}D).$$
Then $E$ is effective and ${\rm Supp}\,E={\rm Ex}(g) \cup {\rm Ex}(g')$. 
\end{lem}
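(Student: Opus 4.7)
My plan is to prove the lemma in three stages: show $E$ is $g'$-exceptional, invoke the negativity lemma for effectiveness, and determine the support exactly via the normalized-graph structure.

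First, I would use the fact that $\phi$ is a bimeromorphic contraction---so $\phi^{-1}$ extracts no divisor---to conclude that every $g$-exceptional prime divisor on $W$ is also $g'$-exceptional. Bookkeeping the pushforward $g'_{*}g^{*}D$ then yields $g'_{*}g^{*}D=\phi_{*}D$: strict transforms of non-$\phi$-exceptional components of $D$ push to the corresponding components of $\phi_{*}D$, while $\phi$-exceptional strict transforms and $g$-exceptional contributions (which are automatically $g'$-exceptional) vanish under $g'_{*}$. Hence $g'_{*}E=0$, so $E$ is $g'$-exceptional.

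Second, I would write $-E=g^{*}(-D)+g'^{*}(\phi_{*}D)$ as a sum of pull-backs of $\mathbb{R}$-Cartier divisors ample over $Y$. For any $g'$-exceptional curve $C\subset W$, one has $g'^{*}(\phi_{*}D)\cdot C=0$ and $g^{*}(-D)\cdot C=(-D)\cdot g_{*}C\geq 0$ by $f$-ampleness of $-D$ together with the fact that $C$ is contracted by $h:=f\circ g=f'\circ g'$, so $g_{*}C$ is contracted by $f$. Hence $-E$ is $g'$-nef. The negative part $E^{-}$ of $E$ is $g'$-exceptional and even very exceptional over $X'$ (since prime divisors on $X'$ correspond to non-$g'$-exceptional prime divisors on $W$, and these avoid $\mathrm{Supp}\,E^{-}$), so Lemma~\ref{lem--negativity-veryexc} applied to $-E$ forces $E^{-}=0$, i.e.\ $E\geq 0$.

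Third, for the support equality, the inclusion $\mathrm{Supp}\,E\subseteq\mathrm{Ex}(g')=\mathrm{Ex}(g)\cup\mathrm{Ex}(g')$ is automatic from the first two stages. For the reverse, given a $g'$-exceptional prime divisor $P$, I exploit that $W$ is the normalization of the graph of $\phi$: the finite map $P\to X\times X'$ lands in $g(P)\times g'(P)$, and the identity $h=f\circ g=f'\circ g'$ confines this image to the fiber product $g(P)\times_{h(P)}g'(P)$. This yields
\[
\dim P\leq \dim g(P)+\dim g'(P)-\dim h(P),
\]
which, combined with $\dim P=\dim X-1$ and $\dim g'(P)\leq\dim X-2$, forces $\dim g(P)>\dim h(P)$ or $\dim g'(P)>\dim h(P)$. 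Assuming the former (the latter is symmetric), I can choose a curve $C\subset P$ in a general fiber of $h|_{P}$ whose image $g(C)$ lies in a fiber of $f|_{g(P)}$; then $g_{*}C$ is nonzero and contracted by $f$, so $-E\cdot C\geq (-D)\cdot g_{*}C>0$. If $P\not\subset\mathrm{Supp}\,E$, a generic such $C$ would give $E\cdot C\geq 0$, a contradiction; hence $P\subset\mathrm{Supp}\,E$.

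The main obstacle is ruling out the doubly-degenerate scenario in which both $f|_{g(P)}$ and $f'|_{g'(P)}$ are generically finite over $h(P)$, since then the witness-curve construction would fail; this is precisely what the normalized-graph fiber-product dimension estimate rules out by forcing $P$ into a subset of dimension at most $\dim h(P)\leq\dim X-2$, contradicting $\dim P=\dim X-1$.
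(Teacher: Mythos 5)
Your first two stages are fine and essentially follow the paper: $g'_{*}E=0$ together with the $g'$-nefness of $-E$ and the negativity lemma (Lemma \ref{lem--negativity-veryexc}, or directly Corollary \ref{cor--negativity-veryexc-nef}) gives $E\geq 0$, and $E$ being $g'$-exceptional gives ${\rm Supp}\,E\subseteq {\rm Ex}(g)\cup{\rm Ex}(g')$. The genuine gap is in the reverse inclusion. The lemma asserts a \emph{set-theoretic} equality with the full exceptional loci, which a priori need not be purely divisorial: ${\rm Ex}(g')$ may have components of codimension at least two, and ${\rm Ex}(g)$ is a separate set. Your argument only shows that every $g'$-exceptional \emph{prime divisor} lies in ${\rm Supp}\,E$, i.e.\ it covers the divisorial part of ${\rm Ex}(g')$; it says nothing about small components of ${\rm Ex}(g')$, and it never addresses ${\rm Ex}(g)$ except through the unproved identification ${\rm Ex}(g)\cup{\rm Ex}(g')={\rm Ex}(g')$. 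That inclusion ${\rm Ex}(g)\subseteq{\rm Ex}(g')$ is not a formal consequence of $\phi$ being a bimeromorphic contraction (if $\phi$ is the inverse of a small bimeromorphic morphism $\psi\colon X'\to X$, then $W=X'$, $g=\psi$, $g'={\rm id}$, and ${\rm Ex}(g)\neq\emptyset={\rm Ex}(g')$); under the ampleness hypotheses it can only be obtained as a consequence of the lemma itself, so it cannot be assumed. The full set-level statement is exactly what is used later (e.g.\ in Lemma \ref{lem--basic-2}) to conclude that $\phi$ is biholomorphic on $X\setminus g\bigl({\rm Ex}(g)\cup{\rm Ex}(g')\bigr)$, so this is not a cosmetic point.

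The paper closes this by arguing fiber-wise rather than divisor-wise: for any $x\in g({\rm Ex}(g))$, the fiber $g^{-1}(x)$ is connected (Theorem \ref{thm--birat-basic}) and, since $(g,g')$ is finite onto the graph, $g'$ contracts no curve inside $g^{-1}(x)$; cutting by hyperplane sections through a general point $u$ of the fiber produces a curve $C\subset g^{-1}(x)$ with $g'(C)$ a curve, whence $E\cdot C=-\bigl(\phi_{*}D\cdot g'(C)\bigr)<0$ and $u\in{\rm Supp}\,E$, so the whole fiber $g^{-1}(x)$ lies in ${\rm Supp}\,E$; symmetrically, using the ampleness of $-D$ over $Y$, every fiber of $g'$ over $g'({\rm Ex}(g'))$ lies in ${\rm Supp}\,E$. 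This yields ${\rm Ex}(g)\cup{\rm Ex}(g')\subseteq{\rm Supp}\,E$ as sets, including ${\rm Ex}(g)$ and any non-divisorial components. Your curve construction for a $g'$-exceptional prime divisor $P$ is sound (and can be simplified: $\dim h(P)\leq\dim g'(P)\leq\dim X-2$ already makes the general fibers of $h|_{P}$ positive-dimensional, and finiteness of $W\to X\times_{Y}X'$ prevents both $g$ and $g'$ from contracting them), but it must be supplemented by the fiber-wise argument, or an equivalent, to capture the remaining parts of ${\rm Ex}(g)\cup{\rm Ex}(g')$.
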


\begin{proof}
Note that $g$ and $g'$ are bimeromorphisms. 
The effectivity of $E$ follows from the negativity lemma (Corollary \ref{cor--negativity-veryexc-nef}). 
We also have ${\rm Supp}\,E \subset {\rm Ex}(g) \cup {\rm Ex}(g')$. 
Therefore, it is sufficient to prove ${\rm Supp}\,E \supset {\rm Ex}(g) \cup {\rm Ex}(g')$. 

Shrinking $Y$ if necessary, we may assume that $Y$ is a Stein space.
By Theorem \ref{thm--birat-basic}, all fibers of $g$ and $g'$ are connected. 
Pick any $x \in g({\rm Ex}(g))$. 
Then there exists a Zariski open subset $U$ of $g^{-1}(x)$ such that for any $u \in U$, by cutting out by hyperplane sections on $W$ passing through $u$, we can find a curve $C \subset g^{-1}(x)$ such that $C \ni u$ and $g'(C)$ is not a point. 
Here, we used Bertini type theorem for Cartier divisors on $W$ which are very ample over $Y$. 
Then 
$$(E \cdot C)=\bigl(g^{*}D-g'^{*}(\phi_{*}D)\bigr)\cdot C=-(\phi_{*}D\cdot g'(C))<0.$$
Therefore ${\rm Supp}\,E \ni u$. 
Then ${\rm Supp}\,E \supset U$. 
Since ${\rm Supp}\,E \cap g^{-1}(x)$ is closed in $g^{-1}(x)$, we have ${\rm Supp}\,E \supset g^{-1}(x)$. 
Thus, ${\rm Supp}\,E \supset {\rm Ex}(g)$. 
Similarly, we have ${\rm Supp}\,E \supset {\rm Ex}(g')$. 
From these facts, we have ${\rm Supp}\,E \supset {\rm Ex}(g) \cup {\rm Ex}(g')$. 
\end{proof}

\begin{lem}\label{lem--basic-2}
Let $f\colon X \to Y$ and $f'\colon X' \to Y$ be projective morphisms from normal analytic varieties to an analytic space $Y$, and let $\phi \colon X\dashrightarrow X'$ be a bimeromorphic contraction over $Y$. 
Let $(X,\Delta)$ and $(X',\Delta')$ be dlt pairs such that $\Delta'=\phi_{*}\Delta$. 
Suppose that $-(K_{X}+\Delta)$ and $K_{X'}+\Delta'$ are ample over $Y$. 
Then the following hold.
\begin{enumerate}[(i)]
\item\label{lem--basic-2-(i)}
For any prime divisor $P$ over $X$, the inequality $a(P,X,\Delta) \leq a(P,X',\Delta')$ holds, and the equality holds if and only if the center of $P$ on $X$, denoted by $c_{X}(P)$, intersects the biholomorphic locus of $\phi$.
\item\label{lem--basic-2-(ii)}
Let $S$ (resp.~$S'$) be an lc center of $(X,\Delta)$ (resp.~$(X',\Delta')$) such that $S$ intersects the biholomorphic locus of $\phi$ and $\phi|_{S}$ defines a bimeromorphic map from $S$ to $S'$. 
Let $(S,\Delta_{S})$ and $(S',\Delta_{S'})$ be dlt pairs defined by adjunctions. 
If $\phi|_{S}\colon S \dashrightarrow S'$ is a biholomorphism over $Y$ and $(\phi|_{S})_{*}\Delta_{S}=\Delta_{S'}$, then there exists a Zariski open subset $U\subset X$ containing $S$ such that $\phi|_{U}$ is a biholomorphism to its image. 
\end{enumerate} 
\end{lem}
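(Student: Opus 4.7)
\emph{Part (i).} The plan is to apply Lemma~\ref{lem--basic-1} to $D := K_X+\Delta$. Let $g\colon W\to X$ and $g'\colon W\to X'$ be the normalization of the graph of $\phi$. By hypothesis $-D$ is ample over $Y$ and $\phi_{*}D = K_{X'}+\Delta'$ is an ample $\mathbb{R}$-Cartier divisor over $Y$, so Lemma~\ref{lem--basic-1} yields
\[
E := g^{*}(K_{X}+\Delta) - g'^{*}(K_{X'}+\Delta') \geq 0, \qquad {\rm Supp}\,E = {\rm Ex}(g)\cup {\rm Ex}(g').
\]
Expressing $K_W$ by the defining formulas for discrepancies on each side and subtracting produces
\[
E = \sum_{P}\bigl(a(P,X',\Delta') - a(P,X,\Delta)\bigr)P
\]
as divisors on $W$, which proves $a(P,X,\Delta)\leq a(P,X',\Delta')$ for prime divisors $P$ on $W$, with equality exactly when $P\not\subset {\rm Ex}(g)\cup {\rm Ex}(g')$. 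By analytic Zariski's main theorem (Theorem~\ref{thm--birat-basic}), the biholomorphic locus of $\phi$ equals $X\setminus g({\rm Ex}(g)\cup {\rm Ex}(g'))$, and hence the last condition is equivalent to $c_X(P)$ meeting the biholomorphic locus. For a prime divisor $P$ over $X$ not appearing on $W$ one takes a further log resolution $h\colon W''\to W$ realizing $P$ and applies the same argument to $h^{*}E$, whose support is $h^{-1}({\rm Ex}(g)\cup {\rm Ex}(g'))$.

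\emph{Part (ii).} The plan is to show that $S$ is contained in the biholomorphic locus of $\phi$, so that $U$ can be taken equal to that locus. By part (i) it is enough to rule out the existence of a component $P$ of $E$ satisfying $g(P)\cap S\neq \emptyset$. Suppose such $P$ exists. Adjunction along $S$ and $S'$, together with the biholomorphism $\phi|_S$ and the identity $(\phi|_S)_{*}\Delta_S = \Delta_{S'}$, gives $K_S+\Delta_S = (\phi|_S)^{*}(K_{S'}+\Delta_{S'})$; since $-(K_X+\Delta)|_S$ is ample over $Y$ and $(\phi|_S)^{*}(K_{X'}+\Delta')|_{S'} = K_S+\Delta_S$ is also ample over $Y$ while these two classes are negatives of each other, one deduces $K_S+\Delta_S\sim_{\mathbb{R},\,Y}0$ and that $S\to Y$ is finite. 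Replacing $W$ by a higher common log resolution of $(X,\Delta)$ and $(X',\Delta')$, Theorem~\ref{thm--dlt-restriction} yields a bijection between the lc places of $(X,\Delta)$ centered in $S$ and the lc places of $(X',\Delta')$ centered in $S'$, mediated by $\phi|_S$; each such lc place $Q$ satisfies $a(Q,X,\Delta) = -1 = a(Q,X',\Delta')$, so by part (i) its center in $X$ meets the biholomorphic locus. A negativity argument via Corollary~\ref{cor--negativity-veryexc-nef} applied to the restriction of $E$ to the strict transform of $S$ on $W$, together with the triviality $K_S+\Delta_S\sim_{\mathbb{R},\,Y}0$, is then used to propagate this equality of discrepancies to the hypothetical $P$, forcing $\mathrm{coeff}_{P}(E)=0$ and producing a contradiction.

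The main obstacle is the execution of this final negativity step. Concretely, one must verify that the restriction of $E$ to the strict transform of $S$ is very exceptional over $S'$ (using the biholomorphism $\phi|_S$ and the matching of adjunction data) so that Corollary~\ref{cor--negativity-veryexc-nef} is available, and then exploit $K_S+\Delta_S\sim_{\mathbb{R},\,Y}0$ to bootstrap the discrepancy equality from lc places centered in $S$ to the divisor $P$. Making all of this cohere in the analytic category relies crucially on the lc center bijection of Theorem~\ref{thm--dlt-restriction} after shrinking $Y$ around a Stein compact subset.
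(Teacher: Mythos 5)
Your part (i) is essentially the paper's own argument: apply Lemma \ref{lem--basic-1} to $D=K_{X}+\Delta$, so that $E:=g^{*}(K_{X}+\Delta)-g'^{*}(K_{X'}+\Delta')$ is effective with ${\rm Supp}\,E={\rm Ex}(g)\cup {\rm Ex}(g')$, and read off both the inequality of discrepancies and the equality criterion; that half is correct.

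Part (ii) contains a genuine gap, and you flag it yourself: the ``final negativity step'' that is supposed to force ${\rm coeff}_{P}(E)=0$ is never executed, and the scaffolding you set up for it does not hold. Theorem \ref{thm--dlt-restriction} is a statement about shrinking $Y$ around a Stein compact subset satisfying (P) and counting lc \emph{centers}; it gives no bijection between lc \emph{places} of $(X,\Delta)$ centered in $S$ and those of $(X',\Delta')$ centered in $S'$, and in the present lemma no compact $W$ or property (P) is even available. Similarly, from the two ampleness hypotheses you may conclude that $S\to Y$ contracts no curves, but not that $K_{S}+\Delta_{S}\sim_{\mathbb{R},\,Y}0$; and the worry about verifying very exceptionality so as to apply Corollary \ref{cor--negativity-veryexc-nef} is misplaced, because no negativity argument is needed at all. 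The missing idea is simply to restrict $E$ to the strict transform $T\subset W$ of $S$, so that $g|_{T}\colon T\to S$ and $g'|_{T}\colon T\to S'$ are bimeromorphic and, by divisorial adjunction, $E|_{T}=(g|_{T})^{*}(K_{S}+\Delta_{S})-(g'|_{T})^{*}(K_{S'}+\Delta_{S'})$. Since $\phi|_{S}$ is a biholomorphism over $Y$ with $(\phi|_{S})_{*}\Delta_{S}=\Delta_{S'}$, every prime divisor $Q$ over $S$ satisfies $a(Q,S,\Delta_{S})=a(Q,S',\Delta_{S'})$, hence $E|_{T}=0$. As $E\geq 0$ and ${\rm Supp}\,E={\rm Ex}(g)\cup {\rm Ex}(g')$, this forces $T\cap({\rm Ex}(g)\cup {\rm Ex}(g'))=\emptyset$, so $S$ is disjoint from $g\bigl({\rm Ex}(g)\cup {\rm Ex}(g')\bigr)$ and $U:=X\setminus g\bigl({\rm Ex}(g)\cup {\rm Ex}(g')\bigr)$ is the required Zariski open subset; this is the paper's proof and it closes exactly the step your proposal leaves open.
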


\begin{proof}
$g \colon W \to X$ and $g' \colon W \to X'$ be the normalization of the graph of $\phi$. 
We put 
$$E:=g^{*}(K_{X}+\Delta)-g'^{*}(K_{X'}+\Delta').$$
By Lemma \ref{lem--basic-1},  $E \geq 0$ and ${\rm Supp}\,E={\rm Ex}(g) \cup {\rm Ex}(g')$. 
Thus $a(P,X,\Delta) \leq a(P,X',\Delta')$ for all prime divisor $P$ over $X$, and the equality holds if and only if 
$$c_{W}(P)\not\subset {\rm Supp}E\,= {\rm Ex}(g) \cup {\rm Ex}(g')$$
This implies the first statement. 

Let $(S,\Delta_{S})$ and $(S',\Delta_{S'})$ be the dlt pairs in the second statement. 
Then there is an analytic subvariety $T$ of $W$ such that $g|_{T}\colon T \to S$ and $g'|_{T}\colon T \to S'$ are bimeromorphic morphism. 
Then
$$E|_{T}=g|_{T}^{*}(K_{S}+\Delta_{S})-g'|_{T}^{*}(K_{S'}+\Delta_{S'}).$$
If $\phi|_{S}\colon S \dashrightarrow S'$ is a biholomorphism over $Y$ and $(\phi|_{S})_{*}\Delta_{S}=\Delta_{S'}$, then all prime divisor $Q$ over $S$ satisfy 
$$a(Q,S,\Delta_{S})=a(Q,S',\Delta_{S'}).$$
This shows $E|_{T}=0$. 
Therefore, $T$ is disjoint from ${\rm Ex}(g) \cup {\rm Ex}(g')$, and therefore $S$ is disjoint from $g\bigl({\rm Ex}(g) \cup {\rm Ex}(g')\bigr)$. 
Then 
$$U:=X\setminus g\bigl({\rm Ex}(g) \cup {\rm Ex}(g')\bigr)$$
is the desired Zariski open subset. 
\end{proof}

\section{Fundamental results of minimal model program}\label{sec3}

\subsection{Models}
In this subsection, we define some models. 

\begin{defn}[Models]\label{defn--models}
Let $\pi \colon X \to Y$ be a projective morphism from a normal analytic variety $X$ to an analytic space $Y$, and let $(X,\Delta)$ be an lc pair. 
Let $W \subset Y$ be a subset. 
Let $\pi' \colon X' \to Y$ be a projective morphism from a normal analytic variety $X'$ to $Y$, and let $\phi \colon X \dashrightarrow X'$ be a bimeromorphic map over $Y$. 
We set $\Delta':=\phi_{*}\Delta+E$, where $E$ is the sum of all $\phi^{-1}$-exceptional prime divisors with the coefficients one. 
When $K_{X'}+\Delta'$ is $\mathbb{R}$-Cartier, we say that $(X',\Delta')$ is a {\em log birational model of $(X,\Delta)$ over $Y$}. 

A log birational model of $(X,\Delta)$ over $Y$ is called a {\em weak log canonical model} ({\em weak lc model}, for short) {\em of $(X,\Delta)$ over $Y$ around $W$} if
\begin{itemize}
\item
$K_{X'}+\Delta'$ is nef over $W$, and
\item
for any prime divisor $P$ on $X$ that is exceptional over $X'$, we have 
$$a(P,X,\Delta) \leq a(P,X',\Delta').$$ 
\end{itemize} 
A weak lc model $(X',\Delta')$ of $(X,\Delta)$ over $Y$ around $W$ is a {\em log minimal model} if 
\begin{itemize}
\item
the above inequality on discrepancies is strict. 
\end{itemize}
A log minimal model $(X',\Delta')$ of $(X,\Delta)$ over $Y$ around $W$ is called a {\em good minimal model} if $K_{X'}+\Delta'$ is semi-ample over a neighborhood of $W$. 

Suppose that $W \subset Y$ is a compact subset such that $\pi \colon X \to Y$ and $W$ satisfy (P). 
Then, a log birational model $(X',\Delta')$ of $(X,\Delta)$ over $Y$ is called a {\em Mori fiber space over $Y$ around $W$} if there exists a contraction $X' \to Z$ over $Y$ such that
\begin{itemize}
\item
${\rm dim}\,X'>{\rm dim}\,Z$, 
\item
$\rho(X'/Y;W)-\rho(Z/Y;W)=1$ and $-(K_{X'}+\Delta')$ is ample over $Z$, and 
\item
for any prime divisor $P$ over $X$, we have 
$$a(P,X,\Delta) \leq a(P,X',\Delta'),$$
and the strict inequality holds if $P$ is a $\phi$-exceptional prime divisor on $X$.  
\end{itemize}

If $W=\emptyset$, then we formally define $(X,\Delta)$ itself to be a log minimal model (and thus a good minimal model) of $(X,\Delta)$ over $Y$ around $W$. 
Let $(\widetilde{X},\widetilde{\Delta})$ be a disjoint union $\coprod_{\lambda \in \Lambda}(\widetilde{X}_{\lambda},\widetilde{\Delta}_{\lambda})$ of lc pairs, $\widetilde{\pi} \colon \widetilde{X} \to Y$ a projective morphism to an analytic space $Y$, and $W \subset Y$ a subset. 
Then a {\rm log birational model of $(\widetilde{X},\widetilde{\Delta})$ over $Y$} (resp.~a {\em log minimal model of $(\widetilde{X},\widetilde{\Delta})$ over $Y$ around $W$}) is a disjoint union of lc pairs $\coprod_{\lambda \in \Lambda}(\widetilde{X}'_{\lambda},\widetilde{\Delta}'_{\lambda})$ such that $(\widetilde{X}'_{\lambda},\widetilde{\Delta}'_{\lambda})$ is a log birational model of $(\widetilde{X}_{\lambda},\widetilde{\Delta}_{\lambda})$ over $Y$ (resp.~a log minimal model of $(\widetilde{X}_{\lambda},\widetilde{\Delta}_{\lambda})$ over $Y$ around $W$) for all $\lambda \in \Lambda$. 
\end{defn}

\begin{rem}
With notation as in Definition \ref{defn--models}, let $(X',\Delta')$ be a weak lc model of an lc pair $(X,\Delta)$ over $Y$ around $W$. 
By the negativity lemma (Corollary \ref{cor--negativity-veryexc-nef}), we can check that 
$$a(P,X,\Delta) \leq a(P,X',\Delta')$$ 
for all prime divisors $P$ over $X$ whose image on $Y$ intersects $W$.
\end{rem}

\begin{lem}\label{lem--two-logminmodel}
Let $\pi \colon X \to Y$ be a projective morphism from a normal analytic variety $X$ to an analytic space $Y$, and let $W \subset Y$ be a compact subset. 
Let $(X,\Delta)$ be an lc pair. 
Let $(X_{1},\Delta_{1})$ and $(X_{2},\Delta_{2})$ be two weak lc models of $(X,\Delta)$ over $Y$ around $W$. 
Let $f_{1}\colon X' \to X_{1}$ and $f_{2}\colon X' \to X_{2}$ be the bimeromorphisms from a normal variety $X'$ that resolves the indeterminacy of the induced bimeromorphic map $X_{1}\dashrightarrow X_{2}$. 
Then, after shrinking $Y$ around $W$, we have $f_{1}^{*}(K_{X_{1}}+\Delta_{1})=f_{2}^{*}(K_{X_{2}}+\Delta_{2})$. 
\end{lem}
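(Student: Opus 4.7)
The plan is to set $G := f_1^*(K_{X_1}+\Delta_1) - f_2^*(K_{X_2}+\Delta_2)$ on $X'$ and show $G = 0$ after shrinking $Y$ around $W$. Decompose $G = G^+ - G^-$ into its effective and negative parts with no common components; for every prime divisor $P$ on $X'$ one has
$${\rm coeff}_P G = a(P,X_2,\Delta_2) - a(P,X_1,\Delta_1),$$
so the problem reduces to showing $G^+ = G^- = 0$ on $\pi^{-1}(V)$ for some open $V \supset W$, where $\pi \colon X' \to Y$ denotes the structure morphism.

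The key claim is that $G^+$ is $f_2$-exceptional and, symmetrically, $G^-$ is $f_1$-exceptional, at least after restricting to the preimage of $W$. To prove the first, suppose a prime divisor $P \subset \Supp G^+$ has $\pi(P) \cap W \neq \emptyset$ and assume for contradiction that $P_2 := f_2(P)$ is a prime divisor on $X_2$. If $P$ is also not $f_1$-exceptional and $P_1 := f_1(P)$, then, with respect to the induced bimeromorphic map $\phi \colon X_1 \dashrightarrow X_2$, neither $P_1$ nor $P_2$ can be $\phi^{-1}_i$-exceptional, since the $\phi^{-1}_i$-exceptionality of one would force the image on the opposite side to be lower-dimensional; hence both are strict transforms of a common prime divisor $P_0$ on $X$, so the definition of log birational model yields ${\rm coeff}_{P_i}\Delta_i = {\rm coeff}_{P_0}\Delta$ and ${\rm coeff}_P G = 0$, contradicting $P \subset \Supp G^+$. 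If instead $P$ is $f_1$-exceptional, then either $P_2$ is $\phi_2^{-1}$-exceptional, giving ${\rm coeff}_{P_2}\Delta_2 = 1$ and $a(P,X_2,\Delta_2) = -1 \leq a(P,X_1,\Delta_1)$ by the lc property, or $P_2$ is the strict transform of some prime divisor $P_0$ on $X$; in this last subcase $P_0$ is necessarily $\phi_1$-exceptional (otherwise its strict transform on $X_1$ would correspond to $P$ on $X'$, contradicting $P$ being $f_1$-exceptional), and the weak lc model inequality for $(X_1,\Delta_1)$, strengthened to arbitrary valuations with center meeting $W$ as in the remark after Definition \ref{defn--models}, gives $a(P,X,\Delta) \leq a(P,X_1,\Delta_1)$, while $a(P,X_2,\Delta_2) = -{\rm coeff}_{P_0}\Delta = a(P,X,\Delta)$, yielding $a(P,X_2,\Delta_2) \leq a(P,X_1,\Delta_1)$, again a contradiction. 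The statement for $G^-$ follows by exchanging the roles of $X_1$ and $X_2$.

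Once the exceptionality claim is in hand, two applications of Corollary \ref{cor--negativity-veryexc-nef} finish the argument. Consider $-G = G^- - G^+$ on $X'$ together with the bimeromorphism $f_1 \colon X' \to X_1$ and the subset $\pi_1^{-1}(W) \subset X_1$, where $\pi_1 \colon X_1 \to Y$: the positive part $G^-$ of $-G$ is $f_1$-exceptional hence very exceptional, and $-G$ is nef over $\pi_1^{-1}(W)$ because for any curve $C$ contracted by $f_1$ with $f_1(C) \subset \pi_1^{-1}(W)$ one has
$$-G \cdot C = f_2^*(K_{X_2}+\Delta_2) \cdot C = (K_{X_2}+\Delta_2) \cdot (f_2)_*C \geq 0,$$
using that $(f_2)_*C$ is either zero or a curve with image in $W$ and that $K_{X_2}+\Delta_2$ is nef over $W$. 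The corollary then gives $G \geq 0$, i.e., $G^- = 0$, after shrinking $Y$ around $W$. Exchanging the roles of $f_1$ and $f_2$ in the same argument produces $G^+ = 0$ after a further shrinking, and combining the two yields $G = 0$ on $\pi^{-1}$ of an open neighborhood of $W$, as desired.

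The main obstacle is the case analysis establishing the exceptionality claim: one must carefully track how the boundary coefficients of $\Delta_1$ and $\Delta_2$ are forced by the definition of log birational model, and invoke the weak lc model inequality in the correct form, across every position of $P$ relative to the exceptional loci of $f_1$ and $f_2$. The double application of the negativity lemma afterwards is routine.
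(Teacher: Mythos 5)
Your overall strategy is essentially the paper's argument in a slightly different dress: the paper arranges (after replacing $X'$) that $X'\to X$ is a morphism, sets $E_{i}=f^{*}(K_{X}+\Delta)-f_{i}^{*}(K_{X_{i}}+\Delta_{i})$ (so your $G$ is exactly $E_{2}-E_{1}$), and obtains effectivity together with $f_{i}$-exceptionality of each $E_{i}$ in one stroke, which is what lets it skip your case analysis; the two applications of Corollary \ref{cor--negativity-veryexc-nef} at the end, including the verification that $-G$ is nef over the relevant fibers because $f_{1}^{*}(K_{X_{1}}+\Delta_{1})$ is numerically trivial on $f_{1}$-contracted curves, match the paper and are fine in your write-up.

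There is, however, a flawed step in Case A of your exceptionality claim. You assert that when $P$ is non-exceptional over both $X_{1}$ and $X_{2}$, ``neither $P_{1}$ nor $P_{2}$ can be $\phi_{i}^{-1}$-exceptional, since the $\phi_{i}^{-1}$-exceptionality of one would force the image on the opposite side to be lower-dimensional,'' and from this you conclude that $P_{1}$ and $P_{2}$ are strict transforms of a common prime divisor $P_{0}$ on $X$. That implication is false: $\phi_{i}^{-1}$-exceptionality is about the center of $P$ on $X$, not about its center on the other model, and it can perfectly well happen that $P$ is exceptional over $X$ while being a divisor on both $X_{1}$ and $X_{2}$ (the two weak lc models may extract the same divisor). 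This sub-case is therefore not handled by your argument as written. The repair is the one-line observation you already use in Case B: if $P$ is exceptional over $X$ and $P_{i}=f_{i}(P)$ is a divisor on $X_{i}$, then $P_{i}$ is $\phi_{i}^{-1}$-exceptional, so by the definition of log birational model ${\rm coeff}_{P_{i}}(\Delta_{i})=1$ and $a(P,X_{i},\Delta_{i})=-1$; in the missing sub-case both discrepancies equal $-1$, so ${\rm coeff}_{P}(G)=0$, giving the contradiction. With that patch (and the routine preliminary shrinking of $Y$ around the compact set $W$ so that every component of $G^{\pm}$ has image meeting $W$, a step both you and the paper use implicitly), your proof is complete.
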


\begin{proof}
By replacing $X'$ if necessary, we may assume that the induced bimeromorphic map $f \colon X' \dashrightarrow X$ is a morphism. 
We set
$$E_{i}:=f^{*}(K_{X}+\Delta)-f_{i}^{*}(K_{X_{i}}+\Delta_{i})=\sum_{P}\bigl(a(P,X_{i},\Delta_{i})-a(P,X,\Delta)\bigr)P$$
for $i=1$ and $2$, where $P$ runs over prime divisors on $X'$. 
By the definition of weak lc models, $E_{i}$ is effective. 
When a prime divisor $P$ on $X'$ is not exceptional over $X$ or $X_{i}$, obviously ${\rm coeff}_{P}(E_{i})=0$. 
When $P$ is exceptional over $X$ but not exceptional over $X_{i}$, then $a(P,X_{i},\Delta_{i})=-1$. 
Then
$-1=a(P,X_{i},\Delta_{i}) \geq a(P,X,\Delta) \geq -1$ since $(X,\Delta)$ is lc. 
Therefore, we have ${\rm coeff}_{P}(E_{i})=0$. 
From these facts, $E_{i}$ is $f_{i}$-exceptional. 

Let $W_{1}$ (resp.~$W_{2}$) be the inverse image of $W$ to $X_{1}$ (resp.~$X_{2}$). 
Then the divisor
$$E_{1}-E_{2}=f_{2}^{*}(K_{X_{2}}+\Delta_{2})-f_{1}^{*}(K_{X_{1}}+\Delta_{1})$$
is nef over $W_{1}$ and we have $f_{1*}(E_{2}-E_{1})\geq 0$. 
Applying Corollary \ref{cor--negativity-veryexc-nef} to $f_{1}\colon X' \to X_{1}$ and $E_{1}-E_{2}$, we can find an open subset $U_{1}\supset W_{1}$ such that $(E_{2}-E_{1})|_{f^{-1}_{1}(U_{1})}\geq 0$. 
Therefore, after shrinking $Y$ around $W$, we have $E_{2}-E_{1}\geq 0$. 
By the similar argument, shrinking $Y$ around $W$, we may assume that $E_{1}-E_{2}\geq 0$. 
Then $E_{1}=E_{2}$, and therefore $f_{1}^{*}(K_{X_{1}}+\Delta_{1})=f_{2}^{*}(K_{X_{2}}+\Delta_{2}).$
\end{proof}

\begin{lem}\label{lem--exist-model-birat-I}
Let $\pi \colon X \to Y$ be a projective morphism from a normal analytic variety $X$ to a Stein space $Y$, and let $W \subset Y$ be a compact subset such that $\pi$ and $W$ satisfy (P). 
Let $(X,\Delta)$ be an lc pair and let $(X',\Delta')$ be an lc pair together with a projective bimeromorphism $f \colon X' \to X$ such that $f^{-1}\colon (X,\Delta) \dashrightarrow (X',\Delta')$ is a log birational model of $(X,\Delta)$ over $Y$. 
Then the following conditions are equivalent.
\begin{itemize}
\item
After shrinking $Y$ around $W$, $(X,\Delta)$ has a weak lc model over $Y$ around $W$.  
\item
After shrinking $Y$ around $W$, $(X',\Delta')$ has a weak lc model over $Y$ around $W$. 
\end{itemize}
Moreover, the same statement holds even if we replace weak lc model by log minimal model or good minimal model.
\end{lem}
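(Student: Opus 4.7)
The plan is to use the same pair $(X_m,\Delta_m)$ as a model on both sides, modified only in the log minimal / good minimal case by extracting finitely many lc places. The key input is the relation
\[
K_{X'}+\Delta'=f^{*}(K_{X}+\Delta)+F,
\]
where $F=\sum_{P\ f\text{-exceptional}}(a(P,X,\Delta)+1)P$ is effective (by the lc hypothesis on $(X,\Delta)$) and $f$-exceptional; in particular $a(P,X',\Delta')=a(P,X,\Delta)$ for a prime divisor $P$ on $X'$ that is not $f$-exceptional, while $a(P,X',\Delta')=-1$ if $P$ is $f$-exceptional.

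For the weak lc model case, direction $(\Rightarrow)$: given $\phi\colon(X,\Delta)\dashrightarrow(X_m,\Delta_m)$, set $g:=\phi\circ f\colon X'\dashrightarrow X_m$. A bookkeeping of divisorial valuations shows that $(X_m,\Delta_m)$ is a log birational model of $(X',\Delta')$—the $\phi^{-1}$-exceptional prime divisors on $X_m$ split into those coinciding with the valuation of some $f$-exceptional prime divisor on $X'$ (accounting for $g_{*}E$) and the $g^{-1}$-exceptional ones, yielding $g_{*}\Delta'+E_{g^{-1}}=\Delta_m$. Nefness over $W$ is unchanged. The discrepancy inequality on a $g$-exceptional prime divisor $P$ on $X'$ splits into two cases: if $P$ is $f$-exceptional then $a(P,X',\Delta')=-1\le a(P,X_m,\Delta_m)$ by the lc property of $(X_m,\Delta_m)$; if $P$ is not $f$-exceptional, then $f(P)$ is a $\phi$-exceptional prime divisor on $X$, and $a(P,X',\Delta')=a(f(P),X,\Delta)\le a(f(P),X_m,\Delta_m)=a(P,X_m,\Delta_m)$ by the weak lc model condition for $(X,\Delta)$.

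The reverse direction $(\Leftarrow)$ is symmetric: given $g'\colon(X',\Delta')\dashrightarrow(X'_m,\Delta'_m)$, set $\phi:=g'\circ f^{-1}$; the analogous bookkeeping gives $\phi_{*}\Delta+E_{\phi^{-1}}=\Delta'_m$. For a $\phi$-exceptional prime divisor $P$ on $X$, its strict transform $P':=f^{-1}_{*}P$ on $X'$ is a $g'$-exceptional prime divisor with the same divisorial valuation as $P$, so $a(P,X,\Delta)=a(P',X',\Delta')\le a(P',X'_m,\Delta'_m)=a(P,X'_m,\Delta'_m)$, as required.

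For the log minimal and good minimal model cases the direction $(\Leftarrow)$ transfers the strict discrepancy inequality unchanged via the identification of valuations, but the direction $(\Rightarrow)$ presents a subtlety: an $f$-exceptional $g$-exceptional prime divisor $P$ on $X'$ can have valuation $v_P$ equal to an lc place of $(X_m,\Delta_m)$, making the strict inequality $-1<a(P,X_m,\Delta_m)$ fail. I would resolve this by a further crepant extraction $\pi\colon\widetilde{X}_m\to X_m$ of all such lc-place valuations as prime divisors with coefficient one in the boundary $\widetilde{\Delta}_m=\pi^{-1}_{*}\Delta_m+\widetilde{E}$, which is a finite extraction because the $f$-exceptional divisors on $X'$ whose images on $Y$ meet $W$ are finite in number. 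Since each extracted valuation is an lc place, we have $K_{\widetilde{X}_m}+\widetilde{\Delta}_m=\pi^{*}(K_{X_m}+\Delta_m)$, so nefness and semi-ampleness over a neighborhood of $W$ are preserved, handling the good minimal model case as well. The resulting $(\widetilde{X}_m,\widetilde{\Delta}_m)$ is then a log minimal (respectively good minimal) model of both $(X,\Delta)$ and $(X',\Delta')$. The main obstacle is purely bookkeeping: carefully tracking prime divisors across the two bimeromorphic maps $f$ and $\phi$ (respectively $g'$) via their divisorial valuations, together with the lc-place extraction in the $(\Rightarrow)$ direction for (good) log minimal models.
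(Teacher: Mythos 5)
Your handling of the weak lc model case and of the direction ``model of $(X',\Delta')$ $\Rightarrow$ model of $(X,\Delta)$'' is correct and is essentially what the paper does: using $K_{X'}+\Delta'=f^{*}(K_{X}+\Delta)+F$ with $F\geq 0$ and $f$-exceptional, one checks (with Corollary~\ref{cor--negativity-veryexc-nef} and valuation bookkeeping) that one and the same pair is a weak lc model of $(X,\Delta)$ if and only if it is one of $(X',\Delta')$, and strict discrepancy inequalities transfer through strict transforms in the easy direction. You also correctly isolate the point the paper's terse proof leaves implicit: a log minimal model $(X_{m},\Delta_{m})$ of $(X,\Delta)$ need not be a log minimal model of $(X',\Delta')$, since an $f$-exceptional prime divisor $P$ on $X'$ contracted by $X'\dashrightarrow X_{m}$ has $a(P,X',\Delta')=-1$, which may equal $a(P,X_{m},\Delta_{m})$.

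The gap is in your fix for that direction. The existence of a crepant extraction $\pi\colon\widetilde{X}_{m}\to X_{m}$ of exactly a prescribed finite set of lc places is itself a nontrivial theorem, not bookkeeping, and it cannot be quoted from the tools at hand: Lemma~\ref{lem--extraction} applies only to dlt pairs and to divisors with discrepancy strictly between $-1$ and $0$, so it excludes lc places, while the dlt blow-up of \cite[Theorem 1.21]{fujino-analytic-bchm} extracts only divisors of discrepancy $-1$ but gives no control over which ones; moreover $(X_{m},\Delta_{m})$ is merely lc, and neither dlt nor $\mathbb{Q}$-factorial over $W$. In the algebraic case such prescribed extractions are obtained by taking a log resolution on which the chosen valuations appear and running an MMP over $X_{m}$, contracting the remaining exceptional divisors via the negativity lemma for very exceptional divisors; in the analytic setting this argument would have to be carried out (alternatively one can upgrade the weak lc model $(X_{m},\Delta_{m})$ of $(X',\Delta')$ to a log minimal model by MMP with scaling, i.e.\ Theorem~\ref{thm--termination-mmp} and Lemma~\ref{lem--mmp-ample-scaling-nefthreshold}, but those results appear later and carry extra hypotheses, so they are not available here either). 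A further small point: Definition~\ref{defn--models} requires the strict inequality for \emph{all} contracted prime divisors on $X'$, not only those whose image meets $W$, so before your finiteness argument you must shrink $Y$ so that only finitely many $f$-exceptional divisors survive and each has image meeting $W$. For comparison, the paper's own proof only verifies the weak lc model statement (same model on both sides, via the negativity lemma) and omits the details for log minimal and good minimal models, so your proposal is more explicit there, but its key extraction step remains unproved.
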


\begin{proof}
The proof is the same as the algebraic case. 
Let $(\widetilde{X},\widetilde{\Delta})$ be an lc pair. 
By the negativity lemma (Corollary \ref{cor--negativity-veryexc-nef}), we can easily check that $(\widetilde{X},\widetilde{\Delta})$ is a weak lc model of $(X,\Delta)$ over $Y$ around $W$ if and only of $(\widetilde{X},\widetilde{\Delta})$ is a weak lc model of $(X',\Delta')$ over $Y$ around $W$. 
We omit the details.
\end{proof}

\subsection{Definition of log MMP}

In this subsection, we define a sequence of steps of a log MMP.  
Compared to the algebraic case, the definition is much more complicated.

\begin{defn}[Log MMP]\label{defn--mmp}
Let $\pi \colon X \to Y$ be a projective morphism from a normal analytic variety $X$ to a Stein space $Y$, and let $(X,\Delta)$ be an lc pair. 
Let $W \subset Y$ be a compact subset such that $\pi$ and $W$ satisfy (P).
\begin{enumerate}[(1)]
\item\label{defn--mmp-(1)}
A {\em step of a $(K_{X}+\Delta)$-MMP over $Y$ around $W$} is a diagram
 $$
\xymatrix{
(X,\Delta)\ar@{-->}[rr]^-{\phi}\ar[dr]^-{f}\ar[ddr]_-{\pi}&&(X',\Delta':=\phi_{*}\Delta)\ar[dl]_-{f'}\ar[ddl]^-{\pi'}\\
&Z\ar[d]\\
&Y
}
$$
consisting of projective morphisms such that 
\begin{itemize}
\item
$(X',\Delta')$ is an lc pair and $Z$ is a normal analytic variety,
\item
$\phi \colon X \dashrightarrow X'$ is a bimeromorphic contraction over $Y$ and $f\colon X \to Z$ and $f' \colon X' \to Z$ are bimeromorphisms, 
\item
$f$ is a contraction of a $(K_{X}+\Delta)$-negative extremal ray of $\overline{\rm NE}(X/Y;W)$, in particular, $\rho(X/Y;W)-\rho(Z/Y;W)=1$ (cf.~\cite[Theorem 1.2 (4)(iii)]{fujino-analytic-conethm}) and $-(K_{X}+\Delta)$ is ample over $Z$, and
\item
$K_{X'}+\Delta'$ is ample over $Z$. 
\end{itemize}
Let $H$ be an $\mathbb{R}$-Cartier divisor on $X$. 
A {\em step of a $(K_{X}+\Delta)$-MMP over $Y$ around $W$ with scaling of $H$} is the above diagram satisfying the above conditions and
\begin{itemize}
\item
$K_{X}+\Delta+t H$ is nef over $W$ for some $t \in \mathbb{R}_{> 0}$, and 
\item
if we put 
$$\lambda:={\rm inf}\{t \in \mathbb{R}_{\geq 0}\,|\, \text{$K_{X}+\Delta+tH$ {\rm is nef over} $W$}\},$$
then $(K_{X}+\Delta+\lambda H)\cdot C= 0$ for any curve $C \subset \pi^{-1}(W)$ contracted by $f$. 
\end{itemize}

\item\label{defn--mmp-(2)}
A {\em sequence of steps of a $(K_{X}+\Delta)$-MMP over $Y$ around $W$} is a pair of sequences $\{Y_{i}\}_{i \geq 0}$ and $\{\phi_{i}\colon X_{i} \dashrightarrow X'_{i}\}_{i\geq 0}$, where $Y_{i} \subset Y$ are Stein open subsets and $\phi_{i}$ are bimeomorphic contractions of normal analytic varieties over $Y_{i}$, such that
\begin{itemize}
\item
$Y_{i}\supset W$ and $Y_{i}\supset Y_{i+1}$ for every $i\geq 0$, 
\item
$X_{0}=\pi^{-1}(Y_{0})$ and $X_{i+1}=X'_{i}\times_{Y_{i}}Y_{i+1}$ for every $i\geq 0$, and
\item
if we put $\Delta_{0}:=\Delta|_{X_{0}}$ and $\Delta_{i+1}:=(\phi_{i*}\Delta_{i})|_{X_{i+1}}$ for every $i\geq 0$, then 
$$(X_{i},\Delta_{i}) \dashrightarrow (X'_{i},\phi_{i*}\Delta_{i})$$ 
is a step of a $(K_{X_{i}}+\Delta_{i})$-MMP over $Y_{i}$ around $W$.  
\end{itemize}
For the simplicity of notation, a sequence of steps of a $(K_{X}+\Delta)$-MMP over $Y$ around $W$ is denoted by
$$(X_{0},\Delta_{0})\dashrightarrow (X_{1},\Delta_{1})\dashrightarrow \cdots \dashrightarrow (X_{i},\Delta_{i})\dashrightarrow \cdots.$$

\item\label{defn--mmp-(3)}
Let $H$ be an $\mathbb{R}$-Cartier divisor on $X$. 
We put 
$$H_{0}:=H|_{X_{0}} \qquad {\rm and} \qquad H_{i+1}:=(\phi_{i*}H_{i})|_{X_{i+1}}$$
for each $i \geq 0$. 
Then a {\em sequence of steps of a $(K_{X}+\Delta)$-MMP over $Y$ around $W$ with scaling of $H$} is a sequence of steps of a $(K_{X}+\Delta)$-MMP over $Y$ around $W$
$$(X_{0},\Delta_{0})\dashrightarrow (X_{1},\Delta_{1})\dashrightarrow \cdots \dashrightarrow (X_{i},\Delta_{i})\dashrightarrow \cdots$$
with the data $\{Y_{i}\}_{i \geq 0}$ and $\{\phi_{i}\colon X_{i} \dashrightarrow X'_{i}\}_{i\geq 0}$ such that $(X_{i},\Delta_{i})\dashrightarrow (X'_{i},\phi_{i*}\Delta_{i})$ is a step of a $(K_{X_{i}}+\Delta_{i})$-MMP over $Y$ around $W$ with scaling of $H_{i}$ for every $i$.
\end{enumerate}
\end{defn}

\begin{defn}
With notation as in Definition \ref{defn--mmp}, let
$$(X_{0},\Delta_{0})\dashrightarrow (X_{1},\Delta_{1})\dashrightarrow \cdots \dashrightarrow (X_{i},\Delta_{i})\dashrightarrow \cdots$$
be a sequence of steps of a $(K_{X}+\Delta)$-MMP over $Y$ around $W$ defined with $\{Y_{i}\}_{i \geq 0}$ and $\{\phi_{i}\colon X_{i} \dashrightarrow X'_{i}\}_{i\geq 0}$. 
We say that the $(K_{X}+\Delta)$-MMP is {\em represented by bimeromorphic contractions over $Y$} if $Y_{i}=Y$ for all $i \geq 0$. 
\end{defn}

\begin{rem}[Reductions towards existence of a log minimal model]\label{rem--mmp-reduction-basic}
Let $\pi \colon X \to Y$ be a projective morphism from a normal analytic variety $X$ to a Stein space $Y$, and let $W \subset Y$ be a compact subset such that $\pi$ and $W$ satisfy (P). 
Let $(X,\Delta)$ be an lc pair. 
We are usually interested in the existence of an open subset $U \supset W$ of $Y$ and log minimal models or Mori fiber spaces of all the connected components of $(\pi^{-1}(U),\Delta|_{\pi^{-1}(U)})$ over $U$ around $W$. 
To find such an open subset and log minimal models or Mori fiber spaces, we often use the following reductions. 
 
Firstly, by \cite[Theorem 2.8 and Theorem 2.13]{fujino-analytic-bchm} and replacing $\pi \colon X \to Y$ with the Stein factorization, we may assume that $\pi$ is a contraction. 
We can easily check that every connected component of $W$ is Stein compact and satisfies the property (P4) in \cite{fujino-analytic-bchm}. 
Therefore, replacing $W$ by any connected component of $W$, we may assume that $W$ is connected. 
By the property (P3) for $W$, any open subset $U \supset W$ of $Y$ contains a connected Stein open subset $Y' \subset U$ containing $W$. 
Then $\pi^{-1}(Y')$ is a normal variety by Lemma \ref{lem--cont-shrink}, in particular $(\pi^{-1}(Y'), \Delta|_{\pi^{-1}(Y')})$ is an lc pair. 
Hence, the reduction enables us to freely shrink $Y$ around $W$ keeping the irreducibility of $X$. 

Secondly, taking a dlt blow-up of $(X,\Delta)$ after shrinking $Y$ around $W$, we may assume that $(X,\Delta)$ is dlt. 
By using Theorem \ref{thm--dlt-restriction} and shrinking $Y$ around $W$ again, we may assume that for any lc center $S$ of $(X,\Delta)$ such that $\pi(S) \cap W \neq \emptyset$ and any open subset $U \subset Y$ containing $W$, there exists a unique lc center $S_{U}$ of $(\pi^{-1}(U), \Delta|_{\pi^{-1}(U)})$ such that $S_{U} \subset S$ and $\pi(S_{U})\cap W \neq \emptyset$.  
By this reduction, we may freely shrink $Y$ around $W$ keeping the number of lc centers $S$ of $(X,\Delta)$ of fixed dimension such that $\pi(S)\cap W \neq \emptyset$. 

Finally, suppose that after the above two reductions we are given data $\{Y_{i}\}_{i \geq 0}$ and $\{\phi_{i}\colon X_{i} \dashrightarrow X'_{i}\}_{i\geq 0}$ of a sequence of steps of a $(K_{X}+\Delta)$-MMP over $Y$ around $W$. 
Let $\pi_{i} \colon X_{i} \to Y_{i}$ be the induced morphism. 
Let $\{Y'_{i}\}_{i \geq 0}$ be a sequence of a Stein open subset of $Y$ such that $W\subset Y'_{i} \subset Y_{i}$ and $Y'_{i+1} \subset Y'_{i}$ for all $i \geq 0$.  
Then 
$$\{Y'_{i}\}_{i \geq 0}\qquad {\rm and} \qquad \{\phi_{i}|_{X_{i}\times_{Y_{i}}Y'_{i}}\colon X_{i}\times_{Y_{i}}Y'_{i} \dashrightarrow X'_{i}\times_{Y_{i}}Y'_{i}\}_{i\geq 0}$$
are the data of a sequence of steps of a $(K_{X}+\Delta)$-MMP over $Y$ around $W$.  
However, we identify the new data with the originally given data because we are interested in the existence of a log minimal model or a Mori fiber space of $(X,\Delta)$ over $Y$ around $W$. 
In the rest of this paper, we will apply the above reductions to all $X_{i} \to Y_{i}$ and $(X_{i},\Delta_{i})$ without detailed explanation.  
Then, for every $i$, we may freely shrink $Y_{i}$ around $W$ keeping the irreducibility of $X_{i}$ and the number of lc centers $S_{i}$ of $(X_{i},\Delta_{i})$ of fixed dimension such that the image of $\pi_{i}(S_{i}) \cap W \neq \emptyset$.  
\end{rem}

\subsection{Basics of log MMP}

In this subsection, we collect basic properties of log MMP. 
Those results are well known in the algebraic case.

\begin{thm}\label{thm--mmp-Cartier}
Let $\pi \colon X \to Y$ be a contraction from a normal analytic variety $X$ to a Stein space $Y$, and let $W \subset Y$ be a connected compact subset such that $\pi$ and $W$ satisfy (P). 
Let $(X,\Delta)$ be an lc pair. 
Let
$$
\xymatrix{
(X,\Delta)\ar@{-->}[rr]^-{\phi}\ar[dr]_-{f}&&(X',\Delta':=\phi_{*}\Delta)\ar[dl]^-{f'}\\
&Z
}
$$
be a step of a $(K_{X}+\Delta)$-MMP over $Y$ around $W$. 
Let $D$ be a Cartier divisor on $X$ such that $(D \cdot C)=0$ for any curve $C \subset \pi^{-1}(W)$ contracted by $f$. 
Then, after shrinking $Y$ around $W$, the divisor $\phi_{*}D$ is Cartier.  
\end{thm}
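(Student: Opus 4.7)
The plan is to descend $D$ through $f$ to a Cartier divisor on $Z$ after shrinking $Y$ around $W$, and then recover $\phi_{*}D$ as the pullback by $f'$ of that Cartier divisor. Since $f$ is the contraction of a single $(K_{X}+\Delta)$-negative extremal ray of $\overline{\mathrm{NE}}(X/Y;W)$ with $\rho(X/Y;W)-\rho(Z/Y;W)=1$, the relative Picard number over $Z$ around $W$ is $1$, so the hypothesis $(D\cdot C)=0$ for every curve $C\subset\pi^{-1}(W)$ contracted by $f$ implies that $D$ is numerically trivial relative to $f$ over a neighborhood of $W$.

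To upgrade numerical triviality to a genuine Cartier descent, I would invoke the supporting Cartier divisor part of Fujino's analytic cone and contraction theorem from \cite{fujino-analytic-conethm}. Since $-(K_{X}+\Delta)$ is $f$-ample and $D$ is $f$-numerically trivial, the divisors $nD-(K_{X}+\Delta)$ and $(n+1)D-(K_{X}+\Delta)$ are $f$-ample for $n\gg 0$; applying the analytic relative base-point-free theorem to both, after shrinking $Y$ around $W$, yields $f$-free line bundles that each descend through $f$ to $Z$, and whose difference is $\mathcal{O}_{X}(D)$. This produces a Cartier divisor $D_{Z}$ on $Z$ together with an isomorphism $\mathcal{O}_{X}(D)\simeq f^{*}\mathcal{O}_{Z}(D_{Z})$, equivalently a meromorphic function $h$ on $X$ with $D=f^{*}D_{Z}+\mathrm{div}(h)$.

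Finally, let $V\subset X$ be the biholomorphic locus of $\phi$ and set $V':=\phi(V)\subset X'$. Transport along the biholomorphism $\phi|_{V}\colon V\to V'$ gives $\phi_{*}D|_{V'}=(f'^{*}D_{Z}+\mathrm{div}(h\circ \phi^{-1}))|_{V'}$. Because $\phi$ is a bimeromorphic contraction, $\phi^{-1}$ extracts no divisors, so $X'\setminus V'$ has codimension at least $2$; by normality of $X'$ the identity of Weil divisors on $V'$ extends to all of $X'$, giving $\phi_{*}D=f'^{*}D_{Z}+\mathrm{div}(h\circ \phi^{-1})$. In particular, $\phi_{*}D\sim f'^{*}D_{Z}$, so $\phi_{*}D$ is Cartier.

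The main technical obstacle is the middle step: turning numerical triviality of $D$ over $Z$ into a genuine line-bundle descent. In the algebraic MMP this is a direct consequence of the contraction theorem, but in the analytic setting one must combine the analytic cone theorem of \cite{fujino-analytic-conethm} with the relative base-point-free theorem in \cite{fujino-analytic-bchm}, and carry out the shrinking of $Y$ around $W$ carefully within a Stein neighborhood controlled by property (P).
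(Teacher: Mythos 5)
Your proposal is correct and follows essentially the same route as the paper: both use the $f$-ampleness of $-(K_{X}+\Delta)$ together with the analytic relative base-point-free theorem to make two consecutive multiples of $D$ relatively free over $Z$, descend them to Cartier divisors on $Z$ (the paper justifies exactly the descent step you flag as the main obstacle by observing that, since $D\equiv_{W}0$ over $Z$, the contraction over $Z$ induced by each free multiple is a biholomorphism near the inverse image of $W$ after shrinking $Y$), and then recover $\phi_{*}D$ as an $f'$-pullback. The only cosmetic difference is that the paper takes the difference $(m_{0}+2)\phi_{*}D-(m_{0}+1)\phi_{*}D$ after pushing forward, rather than transporting the linear equivalence $D\sim f^{*}D_{Z}$ through the biholomorphic locus and extending over a codimension-two set as you do.
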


\begin{proof}
By the ampleness of $-(K_{X}+\Delta)$ over $Z$ and \cite[Theorem 9.1]{fujino-analytic-conethm}, after shrinking $Y$ around $W$, we can find $m_{0} \in \mathbb{Z}_{>0}$ such that $mD$ is base point free for all $m \geq m_{0}$. 
Let $X \to Z_{1}$ and $X \to Z_{2}$ be the contractions over $Z$ induced by $(m_{0}+1)D$ and $(m_{0}+2)D$, respectively. 
Since $D \equiv_{W} 0$, the induced morphisms $Z_{1} \to Z$ and $Z_{2} \to Z$ are biholomorphisms near the inverse image of $W$. 
By shrinking $Y$ around $W$ if necessary, we may assume $Z_{1}=Z_{2}=Z$. 
Then there are Cartier divisors $H_{1}$ and $H_{2}$ on $Z$ such that $(m_{0}+1)D\sim f^{*}H_{1}$ and $(m_{0}+2)D\sim f^{*}H_{2}$. 
Then 
$$\phi_{*}D=(m_{0}+2)\phi_{*}D-(m_{0}+1)\phi_{*}D \sim f'^{*}H_{2}-f'^{*}H_{1}$$
which is Cartier. 
\end{proof}

\begin{thm}\label{thm--mmp-R-Cartier}
Let $\pi \colon X \to Y$ be a contraction from a normal analytic variety $X$ to a Stein space $Y$, and let $W \subset Y$ be a connected compact subset such that $\pi$ and $W$ satisfy (P). 
Let $(X,\Delta)$ be an lc pair.
Let
$$
\xymatrix{
(X,\Delta)\ar@{-->}[rr]^-{\phi}\ar[dr]_-{f}&&(X',\Delta':=\phi_{*}\Delta)\ar[dl]^-{f'}\\
&Z
}
$$
be a step of a $(K_{X}+\Delta)$-MMP over $Y$ around $W$. 
Then, for any $\mathbb{R}$-Cartier divisor $D$ on $X$, after shrinking $Y$ around $W$, it follows that $\phi_{*}D$ is a $\mathbb{R}$-Cartier divisor on $X'$. 
In particular, if $X$ is $\mathbb{Q}$-factorial over $W$ then $X'$ is also $\mathbb{Q}$-factorial over $W$. 
\end{thm}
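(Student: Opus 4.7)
The plan is to reduce the $\mathbb{R}$-Cartier assertion to the Cartier case already settled in Theorem \ref{thm--mmp-Cartier}. Recall that for any Cartier divisor $F$ on $X$ with $(F\cdot C)=0$ on every curve $C\subset\pi^{-1}(W)$ contracted by $f$, Theorem \ref{thm--mmp-Cartier} yields that $\phi_{*}F$ is Cartier after shrinking $Y$ around $W$; applied to finitely many divisors at once, a single common shrinking works. Hence the strategy is to express $D$ as an $\mathbb{R}$-linear combination of such special Cartier divisors together with the reference divisor $K_{X}+\Delta$, whose pushforward $K_{X'}+\Delta'$ is automatically $\mathbb{R}$-Cartier by the very definition of the lc pair $(X',\Delta')$.

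After shrinking $Y$ around $W$, I would write $D=\sum_{i}r_{i}D_{i}$ and $K_{X}+\Delta=\sum_{k}s_{k}E_{k}$ with $r_{i},s_{k}\in\mathbb{R}$ and $D_{i},E_{k}$ Cartier on $X$. Fix a curve $C\subset\pi^{-1}(W)$ contracted by the extremal contraction $f$; since $-(K_{X}+\Delta)$ is $f$-ample we have $((K_{X}+\Delta)\cdot C)\neq 0$, so after relabeling I may assume $b:=(E_{1}\cdot C)\neq 0$. The Cartier divisors
$$F_{i}:=bD_{i}-(D_{i}\cdot C)E_{1}\qquad\text{and}\qquad G_{k}:=bE_{k}-(E_{k}\cdot C)E_{1}\quad(k\geq 2)$$
have intersection zero with $C$, and hence with every curve in $\pi^{-1}(W)$ contracted by $f$, since the contracted extremal ray is one-dimensional in numerical classes. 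Eliminating $E_{1}$ between the formulas for $D$ and $K_{X}+\Delta$ then expresses $D$ as an $\mathbb{R}$-linear combination of the $F_{i}$, the $G_{k}$, and $K_{X}+\Delta$. Applying Theorem \ref{thm--mmp-Cartier} simultaneously to the finitely many $F_{i}$ and $G_{k}$ and pushing forward by $\phi$, I obtain that $\phi_{*}D$ is an $\mathbb{R}$-linear combination of Cartier divisors on $X'$ and $K_{X'}+\Delta'$, so it is $\mathbb{R}$-Cartier.

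For the ``in particular'' part, given any prime Weil divisor $D'$ on $X'$, I would take its strict transform $D$ on $X$; since $\phi$ is a bimeromorphic contraction, $\phi(\Exc(\phi))$ has codimension at least two in $X'$ and $\phi_{*}D=D'$. The hypothesis that $X$ is $\mathbb{Q}$-factorial over $W$ then provides, after shrinking $Y$ around $W$, that $D$ is $\mathbb{R}$-Cartier, and the main statement just proved gives that $D'=\phi_{*}D$ is $\mathbb{R}$-Cartier, so that $X'$ is $\mathbb{Q}$-factorial over $W$.

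The main obstacle is that $K_{X}+\Delta$ is merely $\mathbb{R}$-Cartier, not $\mathbb{Q}$-Cartier, so one cannot kill the extremal-ray intersection of $D$ by a rational modification of a single Cartier decomposition. The linear-algebra elimination must therefore be carried out in the joint real span of Cartier decompositions of both $D$ and $K_{X}+\Delta$, and the argument crucially relies on the fact that $\phi_{*}(K_{X}+\Delta)=K_{X'}+\Delta'$ is a priori $\mathbb{R}$-Cartier on $X'$; a secondary technical point is to verify that finitely many applications of Theorem \ref{thm--mmp-Cartier} can be harmonized by a single shrinking of $Y$ around $W$.
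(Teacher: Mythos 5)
Your proof is correct, and it rests on the same two pillars as the paper's own argument: Theorem \ref{thm--mmp-Cartier} for Cartier divisors that are trivial on the contracted curves over $W$, and the fact that $\phi_{*}(K_{X}+\Delta)=K_{X'}+\Delta'$ is $\mathbb{R}$-Cartier because $(X',\Delta')$ is an lc pair by the definition of a step of the MMP. The mechanics of the reduction differ, though. The paper uses $\rho(X/Y;W)-\rho(Z/Y;W)=1$ and the ampleness of $-(K_{X}+\Delta)$ over $Z$ to write $D-r(K_{X}+\Delta)-f^{*}L\equiv_{W}0$ for some $r\in\mathbb{R}$ and some $\mathbb{R}$-Cartier divisor $L$ on $Z$, reduces to the case $D\equiv_{W}0$, and then chooses a Cartier decomposition $D=\sum_{i}r_{i}D_{i}$ with the $r_{i}$ linearly independent over $\mathbb{Q}$, which forces each $D_{i}\equiv_{W}0$, so that Theorem \ref{thm--mmp-Cartier} applies to every piece. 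You instead eliminate against a single contracted curve $C$: since the $D_{i}$ and $E_{k}$ are Cartier, the intersection numbers $(D_{i}\cdot C)$ and $(E_{k}\cdot C)$ are integers, so your $F_{i}$ and $G_{k}$ are genuinely Cartier and $C$-trivial, and the promotion from ``trivial on $C$'' to ``trivial on every curve of $\pi^{-1}(W)$ contracted by $f$'' is exactly the contraction-theorem statement that such curves have class in the one-dimensional extremal ray; this is the same input the paper accesses through the Picard-rank drop, and you should cite \cite[Theorem 1.2]{fujino-analytic-conethm} explicitly at that point. Your route avoids descending a divisor to $Z$ and the $\mathbb{Q}$-linear-independence trick; the paper's route avoids the explicit intersection-number bookkeeping. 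Harmonizing the finitely many applications of Theorem \ref{thm--mmp-Cartier} by one shrinking of $Y$ is indeed harmless.

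One small point in the ``in particular'' part: your argument produces, for a prime divisor $D'$ on $X'$, that $D'$ is $\mathbb{R}$-Cartier near $\pi'^{-1}(W)$ after shrinking, whereas $\mathbb{Q}$-factoriality over $W$ asks for $\mathbb{Q}$-Cartier. This is not a real gap, because a Weil divisor with integral coefficients that is $\mathbb{R}$-Cartier at a point is automatically $\mathbb{Q}$-Cartier there (its coefficient vector lies in the real span of the rational coefficient vectors of finitely many Cartier divisors, hence already in their rational span), but that one-line observation should be stated; the paper itself offers no proof of the ``in particular'' clause.
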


\begin{proof}
Since $\rho(X/Y;W)-\rho(Z/Y;W)=1$ and $-(K_{X}+\Delta)$ is ample over $Z$, by shrinking $Y$ around $W$, we can find $r \in \mathbb{R}$ and an $\mathbb{R}$-Cartier divisor $L$ on $Z$ such that $$D-r(K_{X}+\Delta)-f^{*}L \equiv_{W}0.$$ Then $\phi_{*}D$ is $\mathbb{R}$-Cartier if and only if $$\phi_{*}(D-r(K_{X}+\Delta)-f^{*}L)=\phi_{*}D-r(K_{X'}+\Delta')-f'^{*}L$$ is $\mathbb{R}$-Cartier. By replacing $D$ with $D-r(K_{X}+\Delta)-f^{*}L$, we may assume $D\equiv_{W}0$. 

By shrinking $Y$ around $W$, we may assume that $D$ is a finite sum $\sum_{i=1}^{n}r_{i}D_{i}$ of Cartier divisors $D_{i}$ such that $r_{1},\cdots, r_{n}$ are linearly independent over $\mathbb{Q}$. 
Then, for any curve $C \subset X$ contained in a fiber of $\pi$ over $W$, we have
$$\sum_{i=1}^{n}r_{i}(D_{i} \cdot C)=(D \cdot C)=0.$$
By the linear independence of $r_{1},\cdots ,r_{n}$ over $\mathbb{Q}$, we have $(D_{i} \cdot C)=0$ for all $i$. 
This shows $D_{i}\equiv_{W}0$ for all $i$.  
If $\phi_{*}D_{i}$ are Cartier for all $i$, then clearly $\phi_{*}D$ is $\mathbb{R}$-Cartier. 
By replacing $D$ with $D_{i}$ for an arbitrary $i$, we may assume that $D$ is Cartier. 

Now $D$ is Cartier and $D \equiv_{W}0$, so $\phi_{*}D$ is Cartier by Theorem \ref{thm--mmp-Cartier}. 
\end{proof}

\begin{lem}\label{lem--mmp-triviallyintersect}
Let $\pi \colon X \to Y$ be a contraction from a normal analytic variety $X$ to a Stein space $Y$, and let $W \subset Y$ be a connected compact subset such that $\pi$ and $W$ satisfy (P). 
Let $(X,\Delta)$ be an lc pair. 
Suppose that $K_{X}+\Delta$ is nef over $W$. 
Let $H\geq 0$ be an $\mathbb{R}$-Cartier divisor on $X$ such that $(X,\Delta+H)$ is an lc pair.
Then there exists $\epsilon \in \mathbb{R}_{>0}$ such that for any $t \in (0,\epsilon]$, any sequence of steps of a $(K_{X}+\Delta+tH)$-MMP over $Y$ around $W$
$$(X_{0},\Delta_{0}+tH_{0})\dashrightarrow (X_{1},\Delta_{1}+tH_{1})\dashrightarrow \cdots \dashrightarrow (X_{i},\Delta_{i}+tH_{i})\dashrightarrow \cdots$$
and the $(K_{X_{i}}+\Delta_{i}+tH_{i})$-extremal contraction $f_{i} \colon X_{i} \to Z_{i}$ in the  $(i+1)$-th step of the MMP, $K_{X_{i}}+\Delta_{i}$ trivially intersects any curve over $W$ contracted by $f_{i}$. 
Moreover, $(X_{i},\Delta_{i}+H_{i})$ is an lc pair and $K_{X_{i}}+\Delta_{i}$ is nef over $W$ for every $i \geq 0$. 
\end{lem}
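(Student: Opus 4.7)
The plan is to apply the cone theorem for lc pairs (Fujino's analytic version) to the lc pair $(X,\Delta+H)$ over $W$, obtaining finitely many $(K_X+\Delta+H)$-negative extremal rays $R_1,\ldots,R_n$ of $\overline{\rm NE}(X/Y;W)$, each spanned by a rational curve $C_j$ of bounded length. Since $K_X+\Delta$ is nef, $(K_X+\Delta)\cdot C_j\geq 0$; since $(K_X+\Delta+H)\cdot C_j<0$, we have $H\cdot C_j<0$. I would set
\[
\epsilon := \min\Bigl\{1,\; \min_{j\,:\,(K_X+\Delta)\cdot C_j>0}\frac{(K_X+\Delta)\cdot C_j}{-H\cdot C_j}\Bigr\}>0,
\]
with the inner minimum read as $+\infty$ if no index $j$ satisfies $(K_X+\Delta)\cdot C_j>0$.

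Fix $t\in(0,\epsilon]$ and an arbitrary $(K_X+\Delta+tH)$-MMP over $Y$ around $W$. The heart of the proof is an induction on $i$ establishing simultaneously: (a) $(X_i,\Delta_i+H_i)$ is lc, (b) $K_{X_i}+\Delta_i$ is nef over $W$, and (c) the extremal ray $R$ contracted in the $(i+1)$-th step satisfies $(K_{X_i}+\Delta_i)\cdot R=0$. The base case $i=0$ is the hypothesis. In the inductive step, (b) together with $(K_{X_i}+\Delta_i+tH_i)\cdot R<0$ and $t\leq 1$ yields $H_i\cdot R<0$ and $(K_{X_i}+\Delta_i+H_i)\cdot R<0$ by direct algebra; hence each step is simultaneously a $(K_{X_i}+\Delta_i+H_i)$-negative extremal contraction, and (a) at $i+1$ follows by preservation of lc singularities under MMP. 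Granted (c), the step is $(K+\Delta)$-trivial, so writing $K_{X_i}+\Delta_i=f_i^*L_i$ with $L_i$ nef over $W$ gives $K_{X_{i+1}}+\Delta_{i+1}=(f_i')^*L_i$ nef over $W$, yielding (b) at $i+1$.

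The main obstacle is proving (c) with an $\epsilon$ that is uniform across all steps of all MMP sequences. The plan is to exploit the inductive hypothesis on (c): since every prior step is $(K+\Delta)$-trivial, on a common resolution $g\colon \widetilde{X}\to X$, $g_i\colon \widetilde{X}\to X_i$ of the birational map $X\dashrightarrow X_i$, the negativity lemma (Corollary~\ref{cor--negativity-veryexc-nef}) forces $g^*(K_X+\Delta)=g_i^*(K_{X_i}+\Delta_i)$, and the simultaneously running $(K+\Delta+H)$-MMP gives $g^*(K_X+\Delta+H)\geq g_i^*(K_{X_i}+\Delta_i+H_i)$. Taking a bounded-length generator $C$ of $R$ from the cone theorem for $(X_i,\Delta_i+H_i)$ and lifting to $\widetilde{C}\subset \widetilde{X}$, the pushforward $g_*\widetilde{C}$ sits in $\overline{\rm NE}(X/Y;W)$ and, by the cone theorem for $(X,\Delta+H)$, admits a decomposition as a non-negative combination of the $C_j$'s plus a $(K_X+\Delta+H)$-nef tail. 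A careful analysis of this decomposition, combined with the exceptional comparison of $g^*H$ and $g_i^*H_i$, shows that the ratio $(K_{X_i}+\Delta_i)\cdot C/(-H_i\cdot C)$, when positive, is bounded below by $\epsilon$; hence $(K_{X_i}+\Delta_i)\cdot R>0$ together with $t\leq\epsilon$ would force $(K_{X_i}+\Delta_i+tH_i)\cdot C\geq 0$, contradicting the $(K+\Delta+tH)$-negativity of $R$. Therefore $(K_{X_i}+\Delta_i)\cdot R=0$, closing the induction.
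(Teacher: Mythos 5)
Your inductive skeleton coincides with the paper's reduction: once one knows that each step is $(K_{X_i}+\Delta_i)$-trivial, nefness of $K_{X_i}+\Delta_i$ over $W$ descends (after shrinking $Y$ around $W$, via the contraction theorem as in Theorems \ref{thm--mmp-Cartier} and \ref{thm--mmp-R-Cartier}), and since the steps are then also $(K+\Delta+H)$-negative, the pairs $(X_i,\Delta_i+H_i)$ stay lc. The gap is in how you produce $\epsilon$ and propagate it. The cone theorem does not give \emph{finitely many} $(K_X+\Delta+H)$-negative extremal rays of $\overline{\rm NE}(X/Y;W)$: $H$ is only an effective divisor, so the analytic cone theorem, like the algebraic one, only yields a locally discrete (possibly infinite) family of negative rays that can accumulate against the supporting hyperplane. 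Hence your $\epsilon$ is not defined as a minimum; replacing it by an infimum, its positivity is essentially the assertion of the lemma already at the zeroth step, and it requires a discreteness input: after shrinking $Y$ around $W$ one writes $K_X+\Delta$ and $H$ as finite combinations of Cartier divisors (equivalently, decomposes the boundary into nearby rational lc boundaries) and combines this with the bounded length of extremal rational curves. Nothing in your proposal supplies this.

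The propagation to step $i$ also does not work as described. With $g^{*}(K_X+\Delta)=g_i^{*}(K_{X_i}+\Delta_i)$ and $g^{*}H=g_i^{*}H_i+F$, $F\geq 0$ exceptional, one finds for a curve $\widetilde C$ lying over the contracted curve $C$ and $Z:=g_{*}\widetilde C$ that $(K_{X_i}+\Delta_i)\cdot C$ is proportional to $(K_X+\Delta)\cdot Z$, but $-H_i\cdot C$ is proportional to $-H\cdot Z+F\cdot\widetilde C$, and the term $F\cdot\widetilde C$ is an intersection number on the resolution that no decomposition of $Z$ inside $\overline{\rm NE}(X/Y;W)$ controls. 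Moreover, even the level-zero inequality $(K_X+\Delta)\cdot Z\geq \epsilon\,(-H\cdot Z)$ does not follow from your definition of $\epsilon$: a decomposition $Z=\sum_j a_jC_j+N$ may have large components along rays, or a nef tail, on which $K_X+\Delta$ is trivial while $H$ is negative, which inflate the denominator and contribute nothing to the numerator. What actually makes the lemma true is a uniform (in $i$) positive lower bound for $(K_{X_i}+\Delta_i)\cdot C$ whenever this number is positive, for generators $C$ of the contracted ray of bounded length with respect to $(X_i,\Delta_i+tH_i)$; this is exactly what the paper obtains from the length of extremal rational curves and Shokurov's polytopes in the analytic setting (\cite[Section 14]{fujino-analytic-conethm}), after which the algebraic arguments of \cite[Proposition 3.2]{birkar-existII} and \cite[Lemma 2.12]{has-mmp} apply without change. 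Your convex-geometric decomposition cannot replace that discreteness argument.
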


\begin{proof}
By Theorem \ref{thm--mmp-R-Cartier}, $K_{X_{i}}+\Delta_{i}$ is $\mathbb{R}$-Cartier for every $i \geq 0$. 
Let $f_{i} \colon X_{i} \to Z_{i}$ be the $(K_{X_{i}}+\Delta_{i}+tH_{i})$-extremal contraction of the $(i+1)$-th step of the log MMP. 

It is sufficient to prove that for every $i \geq 0$, $K_{X_{i}}+\Delta_{i}$ is nef over $W$ and there is a curve $C_{i} \subset X_{i}$ contracted by $f_{i}$ such that $(K_{X_{i}}+\Delta_{i})\cdot C_{i}=0$. 
Indeed, if the two properties holds, then $K_{X_{i}}+\Delta_{i}$ is nef over $W$  and numerically trivial over $Z_{i}$, and furthermore the $(K_{X}+\Delta+tH)$-MMP in the lemma is a sequence of steps of a $(K_{X}+\Delta+H)$-MMP over $Y$ around $W$. 
Then $(X_{i},\Delta_{i}+H_{i})$ are lc pairs, and therefore Lemma \ref{lem--mmp-triviallyintersect} holds true. 
Thus, we will prove that $K_{X_{i}}+\Delta_{i}$ is nef over $W$ and there exists a curve $C_{i} \subset X_{i}$ contracted by $f_{i}$ such that $(K_{X_{i}}+\Delta_{i})\cdot C_{i}=0$. 

Since the problem is local around $W$ and $W$ is compact, by shrinking $Y$ around $W$, we may assume that 
\begin{itemize}
\item
$K_{X}+\Delta$ and $H$ are globally $\mathbb{R}$-Cartier, i.e., $K_{X}+\Delta$ and $H$ are finite $\mathbb{R}$-linear combination of Cartier divisors $D^{(j)}$ ($1\leq j \leq m$) and $H^{(j')}$ ($1 \leq j' \leq m' $), respectively, and
\item
all $D^{(j)}$ and $H^{(j')}$ contain only finitely many components, in particular, 
$K_{X}$, $\Delta$, and $H$ contain only finitely many components.  
\end{itemize}
Then we can use the length of extremal rational curves (\cite[Section 14]{fujino-analytic-conethm}) and the Shokurov's polytopes (\cite[Section 14]{fujino-analytic-conethm}), and the argument in the algebraic case (see, for example, \cite[Proposition 3.2]{birkar-existII} or \cite[Lemma 2.12]{has-mmp}) works with no changes.  
\end{proof}

\begin{lem}\label{lem--mmp-weak-lc-model}
Let $\pi \colon X \to Y$ be a contraction from a normal analytic variety $X$ to a Stein space $Y$, and let $W \subset Y$ be a connected compact subset such that $\pi$ and $W$ satisfy (P). 
Let $(X,\Delta)$ be an lc pair and
$$(X_{0},\Delta_{0})\dashrightarrow (X_{1},\Delta_{1})\dashrightarrow \cdots \dashrightarrow (X_{n},\Delta_{n})$$
a finite sequence of steps of a $(K_{X}+\Delta)$-MMP over $Y$ around $W$. 
Let $(X',\Delta')$ be an lc pair. 
Then the following conditions are equivalent.
\begin{itemize}
\item
After shrinking $Y$ around $W$, $(X',\Delta')$ is a weak lc model of $(X,\Delta)$ over $Y$ around $W$. 
\item
After shrinking $Y$ around $W$, $(X',\Delta')$ is a weak lc model of $(X_{n},\Delta_{n})$ over $Y$ around $W$. 
\end{itemize}
\end{lem}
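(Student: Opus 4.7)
The plan is to induct on $n$ to reduce to the case of a single MMP step $\phi\colon (X_0,\Delta_0) \dashrightarrow (X_1,\Delta_1)$ over a common contraction to some $Z$. The inductive step is transparent: if the equivalence holds for each individual step, then a weak lc model is transferred through the chain $(X_0,\Delta_0)\dashrightarrow\cdots\dashrightarrow(X_n,\Delta_n)$ one step at a time, each transition possibly shrinking $Y$ a little further around $W$, with the final open neighborhood being the finite intersection. So everything hinges on the case of one step.

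For the single-step case, the key input is Lemma \ref{lem--basic-1} applied to $D=K_{X_0}+\Delta_0$: since $-(K_{X_0}+\Delta_0)$ is ample over $Z$ and $\phi_{*}(K_{X_0}+\Delta_0)=K_{X_1}+\Delta_1$ is ample over $Z$, on the normalized graph $g\colon V\to X_0$, $g'\colon V\to X_1$ one obtains
$$g^{*}(K_{X_0}+\Delta_0)-g'^{*}(K_{X_1}+\Delta_1)=E\geq 0,$$
with $\mathrm{Supp}\,E=\mathrm{Ex}(g)\cup\mathrm{Ex}(g')$. Translating to discrepancies yields $a(P,X_0,\Delta_0)\leq a(P,X_1,\Delta_1)$ for every prime divisor $P$ over $X_0$, with equality precisely when the center of $P$ on $V$ avoids the two exceptional loci. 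A short separate check is needed to verify that the prescribed boundary $\Delta'=\phi_{*}\Delta+E_{\phi^{-1}\text{-exc}}$ on $X'$ is the same whether the recipe is applied to $(X_0,\Delta_0)$ or $(X_1,\Delta_1)$: this is immediate when $\phi$ is an isomorphism in codimension one, and for a divisorial contraction it follows by case analysis on the strict transform of the contracted divisor, using the remark after Definition \ref{defn--models} to match coefficients after shrinking $Y$ around $W$.

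Given these preliminaries, the transfer of the weak lc model property itself is routine. Nef-ness of $K_{X'}+\Delta'$ over $W$ is a property of $(X',\Delta')$ and is shared by both formulations. For the discrepancy condition, the remark after Definition \ref{defn--models} (based on Corollary \ref{cor--negativity-veryexc-nef}) upgrades the hypothesis to $a(P,X_i,\Delta_i)\leq a(P,X',\Delta')$ for \emph{every} prime divisor $P$ over $X_i$ whose image on $Y$ meets $W$, after shrinking $Y$ around $W$; combining this with the stepwise inequality above transfers the condition in either direction between $i=0$ and $i=1$. The variants for log minimal and good minimal models follow by the same pattern, since the extra requirements (the strict discrepancy inequality and the semi-ampleness of $K_{X'}+\Delta'$) depend only on $(X',\Delta')$ and not on the source. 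The main technical nuisance is the bookkeeping in the divisorial contraction case, where the two conventions for $\Delta'$ must be reconciled; this is precisely where the ability to shrink $Y$ around $W$ and invoke the analytic negativity lemma is essential.
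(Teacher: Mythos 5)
Your reduction to a single step, and the use of Lemma \ref{lem--basic-1} to obtain the stepwise inequality $a(P,X_0,\Delta_0)\le a(P,X_1,\Delta_1)$ (strict exactly when the center of $P$ meets the non-isomorphism locus, in particular at the divisor contracted by a divisorial step), are fine, and the implication ``weak lc model of $(X_1,\Delta_1)$ $\Rightarrow$ weak lc model of $(X_0,\Delta_0)$'' does go through with your tools: there the remark after Definition \ref{defn--models} gives $a(P,X_1,\Delta_1)\le a(P,X',\Delta')$, which chains with the stepwise inequality, and a divisor $D$ contracted by the step but surviving on $X'$ would have $a(D,X',\Delta')=-1$ (it gets coefficient one in the recipe from $X_1$), while $a(D,X',\Delta')\ge a(D,X_1,\Delta_1)>a(D,X_0,\Delta_0)\ge -1$, a contradiction.

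The gap is in the other direction. When $(X',\Delta')$ is a weak lc model of $(X_0,\Delta_0)$, the remark only yields $a(P,X_0,\Delta_0)\le a(P,X',\Delta')$, and the stepwise inequality is $a(P,X_0,\Delta_0)\le a(P,X_1,\Delta_1)$: both bound discrepancies from below by the $X_0$-discrepancy, so they cannot be combined into $a(P,X_1,\Delta_1)\le a(P,X',\Delta')$, and in particular they do not dispose of the problematic case in your ``case analysis on the contracted divisor'', namely a divisor $D$ contracted by the step, with $\mathrm{coeff}_D(\Delta_0)<1$, surviving as a divisor on $X'$. In that scenario one only gets the equality $a(D,X',\Delta')=a(D,X_0,\Delta_0)$, which is perfectly consistent with the remark, yet the recipe from $(X_1,\Delta_1)$ would assign $D$ coefficient one, so $(X',\Delta')$ would not even be a log birational model of $(X_1,\Delta_1)$; ruling this out is exactly the non-trivial point. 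What is needed is one more application of the negativity lemma relative to $X_1$ (not to $X_0$): on a common resolution $p\colon V\to X_0$, $q\colon V\to X_1$, $r\colon V\to X'$ write $p^*(K_{X_0}+\Delta_0)=q^*(K_{X_1}+\Delta_1)+E=r^*(K_{X'}+\Delta')+F$ with $E\ge0$ $q$-exceptional and $F\ge0$ near $W$; then $E-F=r^*(K_{X'}+\Delta')-q^*(K_{X_1}+\Delta_1)$ is nef over $W$ with respect to $q$ and $q_*(F-E)=q_*F\ge0$, so Corollary \ref{cor--negativity-veryexc-nef} gives $F-E\ge0$ after shrinking $Y$ around $W$, i.e. $a(P,X',\Delta')\ge a(P,X_1,\Delta_1)$ for all $P$; combined with the strict inequality at $D$ this excludes the reappearance of $D$ and simultaneously gives the discrepancy condition over $X_1$. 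This is precisely the step ``$F-E\ge0$'' in the paper's proof, which treats the whole finite chain at once on a single common resolution instead of inducting step by step; once you insert this negativity comparison, your single-step argument closes.
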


\begin{proof}
We may assume that $K_{X'}+\Delta'$ is nef over $W$. 
By shrinking $Y$ around $W$, we may assume that $X=X_{0}$ and the $(K_{X}+\Delta)$-MMP in the lemma is represented by a bimeromorphic contraction $X\dashrightarrow X_{n}$ over $Y$. 
Let $f \colon V \to X$ be a resolution of $X$ such that the induced bimeromorphic maps $f_{n}\colon V \dashrightarrow X_{n}$ and $f' \colon V \dashrightarrow X'$ are morphisms. 
Then we can write
\begin{equation*}
\begin{split}
f^{*}(K_{X}+\Delta)=f_{n}^{*}(K_{X_{n}}+\Delta_{n})+E
\end{split}
\end{equation*}
for some effective $f_{n}$-exceptional $\mathbb{R}$-divisor $E$ on $V$ such that the strict transform of any exceptional divisor of $X\dashrightarrow X_{n}$ appear in $E$. 

Suppose that $(X',\Delta')$ is a weak lc model of $(X,\Delta)$ over $Y$ around $W$. 
Then there is an effective $\mathbb{R}$-divisor $F$ on $V$ such that $f^{*}(K_{X}+\Delta)=f'^{*}(K_{X'}+\Delta')+F$. 
Then 
$$F-E=f_{n}^{*}(K_{X_{n}}+\Delta_{n})-f'^{*}(K_{X'}+\Delta'),$$
and $E$ is $f_{n}$-exceptional. 
By the negativity lemma (Corollary \ref{cor--negativity-veryexc-nef}), after shrinking $Y$ around $W$ we have $F-E \geq 0$. 
Moreover, for any prime divisor $P$ over $X$ which is exceptional over $X_{n}$, the relation $F-E \geq 0$ implies
$$a(P,X',\Delta')-a(P,X,\Delta)\geq a(P,X_{n},\Delta_{n})-a(P,X,\Delta)>0.$$
Therefore, $P$ is exceptional over $X'$. 
From this, we see that $(X',\Delta')$ is a log birational model of $(X_{n},\Delta_{n})$ over $Y$ and $a(Q,X_{n},\Delta_{n})-a(Q,X,\Delta)\geq 0$ for all prime divisors $Q$ over $X_{n}$. 
Therefore, $(X',\Delta')$ is a weak lc model of $(X_{n},\Delta_{n})$ over $Y$ around $W$. 

Conversely, suppose that $(X',\Delta')$ is a weak lc model of $(X_{n},\Delta_{n})$ over $Y$ around $W$. 
Then we have $f_{n}^{*}(K_{X_{n}}+\Delta_{n})=f'^{*}(K_{X'}+\Delta')+F'$
for some effective $\mathbb{R}$-divisor $F'$ on $V$. 
Then
$$f^{*}(K_{X}+\Delta)=f_{n}^{*}(K_{X_{n}}+\Delta_{n})+E=f'^{*}(K_{X'}+\Delta')+E+F'.$$
Moreover, for any prime divisor $P$ over $X$ which is exceptional over $X_{n}$, the relation $E+F' \geq E$ shows
$$a(P,X',\Delta')-a(P,X,\Delta)\geq a(P,X_{n},\Delta_{n})-a(P,X,\Delta)>0.$$
If $P$ is not exceptional over $X'$, then the definition of log birational models implies $a(P,X',\Delta')=-1$, which implies $a(P,X,\Delta)<-1$, a contradiction. 
Therefore, $P$ is exceptional over $X'$. 
From this, we see that $(X',\Delta')$ is a log birational model of $(X,\Delta)$ over $Y$ and $a(Q,X_{n},\Delta_{n})-a(Q,X,\Delta)\geq 0$ for all prime divisors $Q$ over $X_{n}$. 
Therefore, $(X',\Delta')$ is a weak lc model of $(X,\Delta)$ over $Y$ around $W$. 
\end{proof}

\subsection{Nef thresholds in log MMP}

In this subsection, we discuss nef thresholds of log MMP.

\begin{thm}\label{thm--mmp-nefthreshold}
Let $\pi \colon X \to Y$ be a contraction from a normal analytic variety $X$ to a Stein space $Y$, and let $W \subset Y$ be a connected compact subset such that $\pi$ and $W$ satisfy (P). 
Let $(X,\Delta)$ be an lc pair. 
Let $H$ be an $\mathbb{R}$-Cartier divisor on $X$, and let
$$(X_{0},\Delta_{0})\dashrightarrow (X_{1},\Delta_{1})\dashrightarrow \cdots \dashrightarrow (X_{i},\Delta_{i})\dashrightarrow \cdots$$
be a sequence of steps of a $(K_{X}+\Delta)$-MMP over $Y$ around $W$ with scaling of $H$. 
Put
$$\lambda_{i}:={\rm inf}\{t \in \mathbb{R}_{\geq 0}\,|\, \text{$K_{X_{i}}+\Delta_{i}+tH_{i}$ {\rm is nef over} $W$}\}$$
for each $i$. 
Then $\lambda_{i}\geq \lambda_{i+1}$ for every $i \geq 0$.  
\end{thm}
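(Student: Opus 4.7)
The plan is to prove $\lambda_{i+1}\le \lambda_i$ by establishing, after shrinking $Y_i$ around $W$, that $K_{X_{i+1}}+\Delta_{i+1}+\lambda_i H_{i+1}$ is nef over $W$; the asserted inequality is then immediate from the infimum defining $\lambda_{i+1}$. Since $\lambda_i$ is the scaling factor of an actual step of the MMP, one may assume $\lambda_i>0$: if $\lambda_i=0$ then $K_{X_i}+\Delta_i$ is already nef over $W$, so there is no $(K_{X_i}+\Delta_i)$-negative extremal ray to contract and the MMP would terminate at step $i$.

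Set $D_i:=K_{X_i}+\Delta_i+\lambda_i H_i$. By the definition of a scaling step (Definition \ref{defn--mmp}), $D_i$ is nef over $W$ and intersects trivially every curve $C\subset \pi_i^{-1}(W)$ contracted by the extremal contraction $f_i\colon X_i\to Z_i$. Because $\rho(X_i/Y_i;W)-\rho(Z_i/Y_i;W)=1$, the classes of such curves span a single one-dimensional subspace inside $\overline{\rm NE}(X_i/Y_i;W)$, so $D_i$ is numerically $f_i$-trivial over $W$. After shrinking $Y_i$ around $W$, running the descent argument that underlies the proof of Theorem \ref{thm--mmp-Cartier} (which rests on Fujino's analytic base-point-free theorem and the Shokurov-polytope type argument used in Lemma \ref{lem--mmp-triviallyintersect}), I would produce an $\mathbb{R}$-Cartier divisor $L_i$ on $Z_i$, nef over $W$, with $D_i\sim_{\mathbb{R}} f_i^{*}L_i$.

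It remains to transport $L_i$ from $Z_i$ across $\phi_i$ to $X_{i+1}$. In the divisorial case one has $X_{i+1}=Z_i$ and $\phi_i=f_i$, so $K_{X_{i+1}}+\Delta_{i+1}+\lambda_i H_{i+1}=\phi_{i*}D_i=L_i$, which is nef over $W$. In the flipping case $f_i'\colon X_{i+1}\to Z_i$ is a small bimeromorphic contraction and $\phi_i$ is an isomorphism in codimension one; Theorem \ref{thm--mmp-R-Cartier} guarantees that $\phi_{i*}D_i=K_{X_{i+1}}+\Delta_{i+1}+\lambda_i H_{i+1}$ is $\mathbb{R}$-Cartier, and passing to the normalization of the graph of $\phi_i$ together with the identity $f_i=f_i'\circ\phi_i$ (and Lemma \ref{lem--basic-1} to control exceptional loci) forces this pushforward to coincide with $f_i'^{*}L_i$, which is nef over $W$. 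The main obstacle of the whole argument is exactly the descent step producing $L_i$: once the descent of a numerically trivial $\mathbb{R}$-Cartier divisor across an extremal contraction is available in the analytic setting, the rest of the proof is purely formal. This is the point at which the property $(\mathrm{P})$, together with Fujino's analytic cone and base-point-free theorems, is used in an essential way.
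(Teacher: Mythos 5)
Your proposal is correct and follows essentially the same route as the paper: write $K_{X_i}+\Delta_i+\lambda_i H_i$ (after shrinking $Y$ around $W$) as a combination of Cartier divisors numerically trivial over $W$ on the contracted curves, descend them to $Z_i$ via Fujino's analytic base-point-free theorem to get $K_{X_i}+\Delta_i+\lambda_i H_i\sim_{\mathbb{R}}f_i^{*}A$ with $A$ nef over $W$, and conclude that $\phi_{i*}(K_{X_i}+\Delta_i+\lambda_i H_i)\sim_{\mathbb{R}}f_i'^{*}A$ is nef over $W$, hence $\lambda_{i+1}\le\lambda_i$. The only cosmetic difference is that the paper treats divisorial and flipping steps uniformly (so the appeal to Lemma \ref{lem--basic-1} and Theorem \ref{thm--mmp-R-Cartier} in your last paragraph is not needed), but this does not affect correctness.
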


\begin{proof}
Fix $i \geq 0$. 
Note that $\lambda_{i}>0$ because otherwise $(X_{i},\Delta_{i}) \dashrightarrow (X_{i+1},\Delta_{i+1})$ cannot be defined. 
Let
$$
\xymatrix{
(X_{i},\Delta_{i})\ar@{-->}[rr]^-{\phi_{i}}\ar[dr]_-{f_{i}}&&(X'_{i},\phi_{i*}\Delta_{i})\ar[dl]^-{f'_{i}}\\
&Z_{i}
}
$$
be the $(i+1)$-th step of a $(K_{X}+\Delta)$-MMP over $Y$ around $W$ with scaling of $H$. 

By shrinking $Y$ around $W$, we may write 
$$K_{X_{i}}+\Delta_{i}+\lambda_{i}H_{i}=\sum_{j}r_{j}D_{j}$$
such that $D_{j}$ are Cartier and trivially intersects any curve over $W$ contracted by $f_{i}$. 
By the ampleness of $-(K_{X_{i}}+\Delta_{i})$ over $Z_{i}$ and \cite[Theorem 9.1]{fujino-analytic-conethm}, after shrinking $Y$ around $W$, it follows that $D_{j}$ are semi-ample over $Z_{i}$ for all $j$. 
Let $X_{i} \to V_{j}$ be the contractions over $Z_{i}$ induced by $D_{j}$. 
By the above condition for $D_{j}$, the induced morphisms $V_{j} \to Z_{i}$ are biholomorphisms near the inverse image of $W$. 
By shrinking $Y$ around $W$, we may assume $V_{j}=Z_{i}$ for all $j$. 
Then there are $\mathbb{Q}$-Cartier divisors $H_{j}$ on $Z_{i}$ such that $D_{j}\sim_{\mathbb{Q}}f_{i}^{*}H_{j}$ for all $j$. 
Put $A:=\sum_{j}r_{j}H_{j}$. 
Then
$$K_{X_{i}}+\Delta_{i}+\lambda_{i}H_{i}\sim_{\mathbb{R}} f_{i}^{*}A.$$
Since $K_{X_{i}}+\Delta_{i}+\lambda_{i}H_{i}$ is nef over $W$, we see that $A$ is also nef over $W$. 
Now we have 
$$\phi_{i*}(K_{X_{i}}+\Delta_{i}+\lambda_{i}H_{i})\sim_{\mathbb{R}} f'^{*}_{i}A,$$
and therefore $\phi_{i*}(K_{X_{i}}+\Delta_{i}+\lambda_{i}H_{i})$ is nef over $W$. 
This implies $\lambda_{i}\geq \lambda_{i+1}$. 
\end{proof}

\begin{lem}\label{lem--mmp-ample-scaling-nefthreshold}
Let $\pi \colon X \to Y$ be a projective morphism from a normal analytic variety $X$ to a Stein space $Y$, and let $W \subset Y$ be a compact subset such that $\pi$ and $W$ satisfy (P). 
Let $(X,\Delta)$ be an lc pair such that $K_{X}+\Delta$ is $\pi$-pseudo-effective. 
Suppose that there is a klt pair $(X,\Gamma)$. 
Let $H$ be a $\pi$-ample $\mathbb{R}$-divisor on $X$, and let
$$(X_{0},\Delta_{0})\dashrightarrow (X_{1},\Delta_{1})\dashrightarrow \cdots \dashrightarrow (X_{i},\Delta_{i})\dashrightarrow \cdots$$
be a sequence of steps of a $(K_{X}+\Delta)$-MMP over $Y$ around $W$ with scaling of $H$. 
Put
$$\lambda_{i}:={\rm inf}\{t \in \mathbb{R}_{\geq 0}\,|\, \text{$K_{X_{i}}+\Delta_{i}+tH_{i}$ {\rm is nef over} $W$}\}$$
for each $i \geq 0$. 
Then ${\rm lim}_{i \to \infty}\lambda_{i}=0$.  
\end{lem}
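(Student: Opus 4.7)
\emph{Plan.} The plan is to argue by contradiction, producing a klt pair to which Fujino's analytic BCHM~\cite{fujino-analytic-bchm} applies in the form of termination of a scaling MMP.

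By Theorem~\ref{thm--mmp-nefthreshold} the sequence $\{\lambda_i\}_{i\ge0}$ is non-increasing, hence converges to some $\lambda\in[0,\lambda_0]$. Suppose for contradiction that $\lambda>0$ and fix $\mu\in(0,\lambda)$. The first step is to produce an effective $\mathbb{R}$-divisor $\Theta$ on $X$ such that $(X,\Theta)$ is klt and
\[
K_{X}+\Theta\sim_{\mathbb{R},\,Y}K_{X}+\Delta+\mu H.
\]
Choose $\epsilon\in(0,1)$ small enough that the convex combination $(X,(1-\epsilon)\Delta+\epsilon\Gamma)$ is klt (convex combinations of an lc and a klt pair are klt on the level of discrepancies) and that $\mu H+\epsilon(\Delta-\Gamma)$ remains $\pi$-ample. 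After shrinking $Y$ around $W$, a Bertini-type argument in the spirit of Lemma~\ref{lem--dlt-perturbation-coefficients} yields an effective divisor $H'\sim_{\mathbb{R},\,Y}\mu H+\epsilon(\Delta-\Gamma)$ general enough that $\Theta:=(1-\epsilon)\Delta+\epsilon\Gamma+H'$ defines a klt pair; the displayed equivalence then holds by construction. Since $K_{X}+\Delta$ is $\pi$-pseudo-effective and $\mu H$ is $\pi$-ample, $K_{X}+\Theta$ is $\pi$-big.

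The second step is to show that the given MMP is simultaneously a sequence of steps of a $(K_{X}+\Theta)$-MMP over $Y$ around $W$ with scaling of $H$. Let $\Theta_{i}$ be the birational transform of $\Theta$ on $X_{i}$; by Theorem~\ref{thm--mmp-R-Cartier} each $K_{X_i}+\Theta_i$ is $\mathbb{R}$-Cartier, and pushing the equivalence forward inductively yields
\[
K_{X_i}+\Theta_i\sim_{\mathbb{R},\,Y_i}K_{X_i}+\Delta_i+\mu H_i
\]
on every $X_i$. The $(i{+}1)$-th contracted extremal ray $R_i$ is $(K_{X_i}+\Delta_i)$-negative and $(K_{X_i}+\Delta_i+\lambda_iH_i)$-trivial; since $\mu<\lambda\le\lambda_i$, we get $(K_{X_i}+\Delta_i+\mu H_i)\cdot R_i<0$, hence $(K_{X_i}+\Theta_i)\cdot R_i<0$ by the equivalence. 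The standard discrepancy computation for $(K+\Theta)$-negative steps preserves klt-ness of $(X_i,\Theta_i)$, and with $\tau_i:=\lambda_i-\mu>0$ the divisor $K_{X_i}+\Theta_i+\tau_iH_i$ is numerically trivial over $Z_i$ and nef over $W$; this exhibits the MMP as a legitimate $(K_X+\Theta)$-MMP with scaling of $H$.

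Finally, since $(X,\Theta)$ is klt with $K_{X}+\Theta$ $\pi$-big, Fujino's analytic BCHM~\cite{fujino-analytic-bchm} guarantees that any $(K_X+\Theta)$-MMP over $Y$ around $W$ with scaling of $H$ terminates; hence $\tau_i=0$ for $i$ large, contradicting $\tau_i\to\lambda-\mu>0$, so $\lambda=0$. I expect the main obstacle to be the first step: verifying in the analytic setting the existence of a sufficiently general effective $H'$ in the required $\mathbb{R}$-linear equivalence class (a Bertini-type argument for $\pi$-ample $\mathbb{R}$-divisors after Stein shrinking), and carefully matching the given scaling-of-$H$ MMP with the precise scaling-MMP termination statement extractable from~\cite{fujino-analytic-bchm}.
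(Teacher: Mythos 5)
Your reduction is essentially the paper's: the paper also assumes $\lambda_\infty:=\lim_i\lambda_i>0$, forms the convex combination $(1-\epsilon)\Delta+\epsilon\Gamma$ plus a general $\pi$-ample divisor $H'\sim_{\mathbb{R}}\tfrac12\lambda_\infty H+\epsilon(K_X+\Delta)-\epsilon(K_X+\Gamma)$ (your $\Theta$ with $\mu=\tfrac12\lambda_\infty$, justified by Lemma \ref{lem--dlt-perturbation-coefficients}-type perturbation after shrinking), and observes exactly as you do that the given MMP is then a $(K_X+B)$-MMP over $Y$ around $W$ with scaling of $H$ for the klt pair $(X,B)$ with big boundary; your verification that each extremal ray is $(K+\Theta)$-negative and that the thresholds shift by $\mu$ is the content of that observation.

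Where you diverge is the endgame, and that is where the one real risk lies. The paper does \emph{not} invoke a theorem saying that an arbitrary klt MMP with scaling of an ample divisor and big boundary terminates; instead it notes that, for every $i$, $(X_i,B_i+(\lambda_i-\tfrac12\lambda_\infty)H_i)$ is a weak lc model over $Y_i$ around $W$ of a pair whose boundary $B+tH$, $t\in[\tfrac12\lambda_\infty,\lambda_0]$, lies in a fixed rational polytope $\mathcal{L}_A(V;\pi^{-1}(W))$, and then contradicts the finiteness of models \cite[Theorem E]{fujino-analytic-bchm}. Your appeal to ``Fujino's analytic BCHM guarantees that any $(K_X+\Theta)$-MMP with scaling of $H$ terminates'' is the analytic analogue of \cite[Corollary 1.4.2]{bchm}; you must check that \cite{fujino-analytic-bchm} actually states it for an \emph{arbitrary given} sequence of steps around $W$ (and with a scaling divisor that need not make an lc pair), not merely the existence of some terminating run — note that where the present paper needs termination for klt pairs with big boundary (e.g.\ Step \ref{step2-thm--termination-mmp} and Step \ref{step5-thm--termination-mmp} of Theorem \ref{thm--termination-mmp}) it cites \cite[Theorem 1.1]{fujino-analytic-bchm} only to produce \emph{some} terminating run, and otherwise derives termination of a given run by combining this very lemma with Theorem \ref{thm--termination-mmp}, which suggests the ``any run'' form is not directly quotable. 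If it is not, your argument closes with no further work by replacing that citation with the finiteness-of-models step: your own setup already exhibits the infinitely many pairwise distinct models $X_i$ as weak lc models for boundaries $\Theta+tH$ in a fixed polytope, giving the contradiction exactly as in the paper. Your remaining technical caveats (Bertini-type choice of $H'$ after Stein shrinking, $\mathbb{R}$-Cartier-ness via Theorem \ref{thm--mmp-R-Cartier}) are handled in the paper the same way and are not obstacles.
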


\begin{proof}
We take the Stein factorization $X \to Y'$ of $\pi$ and put $W'$ as the inverse image of $W$ by the induced morphism $Y' \to Y$. 
Then the $(K_{X}+\Delta)$-MMP over $Y$ around $W$ is a $(K_{X}+\Delta)$-MMP over $Y'$ around $W'$. 
By replacing $\pi$ and $W$ with $X \to Y'$ and $W'$ respectively, we may assume that $\pi$ is a contraction. 
Since the property (P4) of \cite{fujino-analytic-bchm} shows that $W$ has only finitely connected components, we may replace $W$ by any its connected component, and therefore, we may assume that $W$ is connected. 
As in Remark \ref{rem--mmp-reduction-basic}, we may freely shrink $Y$ around $W$ keeping the irreducibility of $X'$ and any $X_{i}$ if necessary.  

By replacing $H$, we may assume that $H$ is effective and $(X,\Gamma+H)$ is klt. 
We put $\lambda_{\infty}:={\rm lim}_{i \to \infty}\lambda_{i}$. 
Suppose by contradiction that $\lambda_{\infty}>0$. 
We pick $\epsilon \in \mathbb{R}_{>0}$ such that $\frac{1}{2}\lambda_{\infty}H+\epsilon(K_{X}+\Delta)-\epsilon(K_{X}+\Gamma)$ is $\pi$-ample. 
By shrinking $Y$ around $W$, we can find a general $\pi$-ample $\mathbb{R}$-divisor
 $H'$ on $X$ such that $H' \sim_{\mathbb{R}} \frac{1}{2}\lambda_{\infty}H+\epsilon(K_{X}+\Delta)-\epsilon(K_{X}+\Gamma)$. 
 We put $B:=(1-\epsilon)\Delta+\epsilon\Gamma+H'$. 
  Since $H'$ is general, $(X,B)$ is a klt pair such that 
$$K_{X}+\Delta+\frac{1}{2}\lambda_{\infty}H\sim_{\mathbb{R}}K_{X}+B.$$ 
Since $K_{X}+B+\lambda_{0}H\sim_{\mathbb{R}}(K_{X}+\Delta+\lambda_{0}H)+\frac{1}{2}\lambda_{\infty}H$ is $\pi$-ample, the $(K_{X}+\Delta)$-MMP induces a sequence of steps of a $(K_{X}+B)$-MMP over $Y$ around $W$ with scaling of $H$ 
$$(X_{0},B_{0})\dashrightarrow (X_{1},B_{1})\dashrightarrow \cdots \dashrightarrow (X_{i},B_{i})\dashrightarrow \cdots.$$
By Definition \ref{defn--mmp} (\ref{defn--mmp-(2)}), for every $i \geq0$, there is an open subset $Y_{i}\subset Y$ such that $Y_{i} \supset W$ and $(X_{i},B_{i}+(\lambda_{i}-\frac{1}{2}\lambda_{\infty})H_{i})$ is a weak lc model of $(\pi^{-1}(Y_{i}), (B+(\lambda_{i}-\frac{1}{2}\lambda_{\infty})H)|_{\pi^{-1}(Y_{i})})$ over $Y_{i}$ around $W$. 

By shrinking $Y$ around $W$, we may assume that $K_{X}$, $B$, and $H$ have only finitely many components. 
We fix an effective $\pi$-ample $\mathbb{Q}$-divisor $A \leq \frac{1}{2}\lambda_{\infty}H$, and let $V \subset {\rm WDiv}_{\mathbb{R}}(X)$ be an $\mathbb{R}$-vector space generated by the components of $B$ and $H$. 
Then 
$$\mathcal{L}_{A}(V;\pi^{-1}(W))\supset\left\{B+tH\,\middle|\, \frac{1}{2}\lambda_{\infty} \leq t \leq \lambda_{0}\right\},$$
where the left hand side is the set in \cite[Definition 11.8]{fujino-analytic-bchm}.  
Then the last sentence of the previous paragraph contradicts the finiteness of models \cite[Theorem E]{fujino-analytic-bchm}. 
Therefore, we have $\lambda_{\infty}={\rm lim}_{i \to \infty}\lambda_{i}=0$.  
\end{proof}

\subsection{From existence of weak lc model to termination}

In this subsection we prove Theorem \ref{thm--termination-mmp} and Theorem \ref{thm--mmpsequence-2}, which are the analytic case of \cite[Theorem 4.1 (iii)]{birkar-flip} and \cite[Lemma 2.14]{has-mmp} respectively.

\begin{lem}\label{lem--relative-mmp}
Let $\pi \colon X \to Y$ be a contraction from a normal analytic variety $X$ to a Stein space $Y$, and let $W \subset Y$ be a compact subset such that $\pi$ and $W$ satisfy (P). 
Let $(X,\Delta)$ be an lc pair. 
Let $f \colon X \to X'$ be a projective morphism of analytic varieties over $Y$ with the structure morphism $\pi' \colon X' \to Y$. 
Let $H'$ be a $\pi'$-ample Cartier divisor on $X'$, and let $H \geq 0$ be an $\mathbb{R}$-divisor on $X$ such that $H \sim_{\mathbb{R}} rf^{*}H'$ for some $r>2\cdot{\rm dim}\,X$ and $(X,\Delta+H)$ is lc. 
Then, for any sequence of steps of a $(K_{X}+\Delta+H)$-MMP over $Y$ around $W$
$$(X_{0},\Delta_{0}+H_{0})\dashrightarrow (X_{1},\Delta_{1}+H_{1})\dashrightarrow \cdots \dashrightarrow (X_{i},\Delta_{i}+H_{i})\dashrightarrow \cdots$$
and $i \geq 0$, the induced map $X_{i}\dashrightarrow X'$ is a morphism, and the $(K_{X}+\Delta+H)$-MMP is also a $(K_{X}+\Delta)$-MMP over $Y$ around $W$. 
\end{lem}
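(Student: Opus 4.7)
The plan is to proceed by induction on $i$, with the inductive hypothesis that (after suitably shrinking $Y$ around $W$) the induced bimeromorphic map $\tilde{f}_{i}\colon X_{i} \dashrightarrow X'$ is in fact a projective morphism over $Y$, that $(X_{i}, \Delta_{i}+H_{i})$ is lc with $H_{i} \sim_{\mathbb{R}} r\,\tilde{f}_{i}^{*}H'$, and that the first $i$ steps of the $(K_{X}+\Delta+H)$-MMP are simultaneously steps of a $(K_{X}+\Delta)$-MMP. The base case $i=0$ is the hypothesis of the lemma.

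For the inductive step, let $\sigma_{i}\colon X_{i} \to Z_{i}$ be the extremal contraction of a $(K_{X_{i}}+\Delta_{i}+H_{i})$-negative extremal ray $R$ of $\overline{\rm NE}(X_{i}/Y_{i};W)$. The whole argument reduces to the key claim $H_{i}\cdot R=0$. Since $H_{i} \sim_{\mathbb{R}} r\tilde{f}_{i}^{*}H'$ and $H'$ is ample over $Y$, the divisor $H_{i}$ is $\pi_{i}$-nef, so $H_{i}\cdot R\geq 0$ and consequently $(K_{X_{i}}+\Delta_{i})\cdot R<0$ as well; in particular $R$ is also a $(K_{X_{i}}+\Delta_{i})$-negative extremal ray. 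The length of extremal rays in the analytic setting (\cite[Section 14]{fujino-analytic-conethm}) applied to the lc pair $(X_{i},\Delta_{i})$ then yields a rational curve $C_{0}$ generating $R$, contained in a fiber of $\pi_{i}$, with $0<-(K_{X_{i}}+\Delta_{i})\cdot C_{0}\leq 2\,{\rm dim}\,X$. If $H_{i}\cdot R>0$, then $\tilde{f}_{i*}C_{0}\neq 0$, so $\tilde{f}_{i*}C_{0}$ is a nonzero effective integral $1$-cycle on $X'$ and $H'\cdot\tilde{f}_{i*}C_{0}\geq 1$ by the ampleness of the Cartier divisor $H'$, whence $H_{i}\cdot C_{0}\geq r$ and
\begin{equation*}
(K_{X_{i}}+\Delta_{i}+H_{i})\cdot C_{0}\;=\;(K_{X_{i}}+\Delta_{i})\cdot C_{0}+H_{i}\cdot C_{0}\;\geq\;-2\,{\rm dim}\,X+r\;>\;0,
\end{equation*}
contradicting the negativity on $R$. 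Hence $H_{i}\cdot R=0$.

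From the claim, every curve in each fiber of $\sigma_{i}$ lies in $R$ and hence is contracted by $\tilde{f}_{i}$, so by connectedness of fibers and the universal property of the contraction $\sigma_{i}$, the morphism $\tilde{f}_{i}$ factors as $X_{i}\xrightarrow{\sigma_{i}}Z_{i}\xrightarrow{g_{i}}X'$. After shrinking $Y$ around $W$, this factorization descends to a morphism $\tilde{f}_{i+1}\colon X_{i+1}\to X'$ via $X_{i+1}\to Z_{i}\xrightarrow{g_{i}}X'$ in both the divisorial and flipping cases, and pushing the identity $H_{i}\sim_{\mathbb{R}}r\sigma_{i}^{*}(g_{i}^{*}H')$ through the step (using Theorems \ref{thm--mmp-Cartier} and \ref{thm--mmp-R-Cartier} to preserve the $\mathbb{R}$-Cartier structure) gives $H_{i+1}\sim_{\mathbb{R}}r\tilde{f}_{i+1}^{*}H'$. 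Since $H_{i}$ and $H_{i+1}$ are $\sigma_{i}$-trivial, ampleness of $-(K_{X_{i}}+\Delta_{i})$ and of $K_{X_{i+1}}+\Delta_{i+1}$ over $Z_{i}$ follows from the corresponding statements for $K+\Delta+H$; thus the step is also a step of a $(K_{X}+\Delta)$-MMP, and $(X_{i+1},\Delta_{i+1}+H_{i+1})$ remains lc by standard MMP invariance. This closes the induction.

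The main obstacle is the key claim $H_{i}\cdot R=0$. Its proof depends on having the strict inequality $r>2\,{\rm dim}\,X$ pair with the explicit length bound $2\,{\rm dim}\,X$ from the analytic cone theorem in \cite{fujino-analytic-conethm}; once that strict inequality is in place, the remaining ingredients (the universal property of $\sigma_{i}$, Theorems \ref{thm--mmp-Cartier} and \ref{thm--mmp-R-Cartier}, and the compatible Stein shrinkings of $Y$ built into Definition \ref{defn--mmp}) are routine bookkeeping.
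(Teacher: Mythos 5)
Your argument is correct and is essentially the paper's: the paper proves this lemma by citing the arguments of Theorem 9.3 and Lemma 9.4 of \cite{fujino-analytic-bchm}, which are precisely the induction you spell out — the length bound $0<-(K_{X_i}+\Delta_i)\cdot C_0\leq 2\dim X$ from \cite[Section 14]{fujino-analytic-conethm} against $r>2\dim X$ and the Cartier divisor $H'$ being ample over $Y$ forces $H_i\cdot R=0$, so each contraction factors through $X'$ and the steps remain $(K_X+\Delta)$-negative. So your proposal matches the intended proof, just written out in full rather than by reference.
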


\begin{proof}
The arguments of \cite[Theorem 9.3]{fujino-analytic-bchm} and \cite[Lemma 9.4]{fujino-analytic-bchm} work in our situation. 
\end{proof}

\begin{thm}[cf.~{\cite[Theorem 4.1 (iii)]{birkar-flip}}]\label{thm--termination-mmp}
Let $\pi \colon X \to Y$ be a projective morphism from a normal analytic variety $X$ to a Stein space $Y$, and let $W \subset Y$ be a compact subset such that $\pi$ and $W$ satisfy (P). 
Let $(X,\Delta)$ be an lc pair. 
Let $H$ be an effective $\mathbb{R}$-Cartier divisor on $X$ such that $(X,\Delta+H)$ is an lc pair, and let
$$(X_{0},\Delta_{0})\dashrightarrow (X_{1},\Delta_{1})\dashrightarrow \cdots \dashrightarrow (X_{i},\Delta_{i})\dashrightarrow \cdots$$
be a sequence of steps of a $(K_{X}+\Delta)$-MMP over $Y$ around $W$ with scaling of $H$. 
Put
$$\lambda_{i}:={\rm inf}\{t \in \mathbb{R}_{\geq 0}\,|\, \text{$K_{X_{i}}+\Delta_{i}+tH_{i}$ {\rm is nef over} $W$}\}$$
for each $i$. 
If ${\rm lim}_{i \to \infty}\lambda_{i}=0$ and $(X,\Delta)$ has a weak lc model over $Y$ around $W$, then we have $\lambda_{n}=0$ for some $n \in \mathbb{Z}_{\geq 0}$. 
\end{thm}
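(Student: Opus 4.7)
The plan is to argue by contradiction: assume $\lambda_{i}>0$ for all $i\geq 0$. By hypothesis, $(X,\Delta)$ admits a weak lc model $(X',\Delta')$ over $Y$ around $W$. After the reductions in Remark \ref{rem--mmp-reduction-basic} and shrinking $Y$ around $W$, Lemma \ref{lem--mmp-weak-lc-model} shows inductively that $(X',\Delta')$ is a weak lc model of every $(X_{i},\Delta_{i})$ over $Y$ around $W$.

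The central observation is that, for each $i\geq 0$, the pair $(X_{i},\Delta_{i}+\lambda_{i}H_{i})$ is itself a weak lc model of $(X,\Delta+\lambda_{i}H)$ over $Y$ around $W$. Nefness of $K_{X_{i}}+\Delta_{i}+\lambda_{i}H_{i}$ over $W$ is built into the definition of MMP with scaling. To verify the discrepancy condition, note that on each extremal ray $R_{j+1}$ contracted at step $j+1$ of the MMP one has $(K_{X_{j}}+\Delta_{j}+\lambda_{j}H_{j})\cdot R_{j+1}=0$ and $H_{j}\cdot R_{j+1}>0$, while $\lambda_{j}\geq\lambda_{i}$ by Theorem \ref{thm--mmp-nefthreshold}; hence $(K_{X_{j}}+\Delta_{j}+\lambda_{i}H_{j})\cdot R_{j+1}\leq 0$. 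Applying the negativity lemma (Corollary \ref{cor--negativity-veryexc-nef}) on common resolutions step by step yields that the discrepancies of $K+\Delta+\lambda_{i}H$ are non-decreasing along the MMP, which is exactly the weak lc model discrepancy condition of Definition \ref{defn--models}.

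To conclude, we invoke a finiteness of weak lc models for the one-parameter family $\{\Delta+tH : t\in[0,\lambda_{0}]\}$, derived from \cite[Theorem E]{fujino-analytic-bchm} in the same spirit as in the proof of Lemma \ref{lem--mmp-ample-scaling-nefthreshold}, after reducing the lc situation to the klt setting via a dlt blow-up (Lemma \ref{lem--exist-model-birat-I}) and the ample perturbation in Lemma \ref{lem--dlt-perturbation-coefficients}. Only finitely many bimeromorphic models arise as weak lc models for pairs in this family, so two of the $X_{i}$ must coincide bimeromorphically over $Y$ around $W$, say $X_{i}\cong X_{j}$ with $i<j$. On the other hand, each MMP step with $\lambda_{k}>0$ is a genuine divisorial contraction or flip that strictly increases some discrepancy; combined with Lemma \ref{lem--two-logminmodel}, this rules out a biholomorphism between $(X_{i},\Delta_{i}+\lambda_{i}H_{i})$ and $(X_{j},\Delta_{j}+\lambda_{j}H_{j})$ compatible with the MMP, yielding the contradiction. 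The principal obstacle is assembling this finiteness ingredient, since \cite[Theorem E]{fujino-analytic-bchm} is stated for klt boundaries in a polytope $\mathcal{L}_{A}(V;\pi^{-1}(W))$; the reduction to klt and the choice of $A$ and $V$ must be made compatibly so that all perturbations $\Delta+\lambda_{i}H$ lie in the polytope, which typically requires further shrinking of $Y$ around $W$.
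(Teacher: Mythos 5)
Your preliminary observations are fine: the monotonicity $\lambda_j\geq\lambda_i$ together with the negativity lemma does show that $(X_{i},\Delta_{i}+\lambda_{i}H_{i})$ is a weak lc model of $(X,\Delta+\lambda_{i}H)$ over $Y$ around $W$ (this is essentially what the paper records in Step \ref{step4-thm--termination-mmp} via Lemma \ref{lem--mmp-weak-lc-model}). The genuine gap is the finiteness ingredient you invoke to conclude. \cite[Theorem E]{fujino-analytic-bchm} only applies to boundaries lying in a polytope $\mathcal{L}_{A}(V;\pi^{-1}(W))$, i.e.\ to klt-type pairs containing a fixed general $\pi$-ample divisor $A$; in the present theorem $H$ is merely an effective $\mathbb{R}$-Cartier divisor (not ample) and $(X,\Delta)$ is lc with no klt pair assumed to exist on $X$. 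Your proposed reduction does not repair this: Lemma \ref{lem--dlt-perturbation-coefficients} requires a $\pi$-ample divisor to absorb into the boundary, which is not available here, and a dlt blow-up does not create an ample component either. If instead you perturb $\Delta+tH$ by a small ample divisor, the pairs you feed into the finiteness statement are no longer the ones whose weak lc models are the $X_{i}$ (and the nef thresholds change), so no contradiction between "infinitely many steps" and "finitely many models" can be extracted. Finiteness of weak lc models for genuinely lc families is not known and is essentially as hard as the termination statement you are trying to prove — this is exactly the obstruction that forces the roundabout argument in the paper (and in Birkar's original proof). Contrast this with Lemma \ref{lem--mmp-ample-scaling-nefthreshold}, where the polytope argument is legitimate precisely because a klt pair $(X,\Gamma)$ is assumed and the scaling divisor is $\pi$-ample.

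For comparison, the paper avoids any finiteness-of-models statement in the lc setting: it passes to a common log resolution $\widetilde{X}$ of $X$ and the given weak lc model $X'$, runs an auxiliary MMP with scaling of an ample divisor on $\widetilde{X}+\widetilde{L}$, where $\widetilde{L}$ is a large multiple of an ample divisor pulled back from $X'$, so that Lemma \ref{lem--relative-mmp} keeps the models over $X'$ and produces a crepant model of $(X',\Delta')$; it chooses $\epsilon$ by Lemma \ref{lem--mmp-triviallyintersect}, settles first the $\mathbb{Q}$-factorial klt case with general ample scaling (where \cite[Theorem 1.1]{fujino-analytic-bchm} and Lemma \ref{lem--mmp-ample-scaling-nefthreshold} do apply), and then compares weak lc models of $(\widetilde{X},\widetilde{\Delta}+t\widetilde{H})$ at the two distinct values $t=\lambda'$ and $t=\lambda_{n}$ via Lemma \ref{lem--two-logminmodel}; subtracting the two pullback identities gives $h^{*}(K_{X_{n}}+\Delta_{n})=\widetilde{h}^{*}(K_{\widetilde{X}'''}+\widetilde{\Delta}''')$, hence nefness over $W$ and $\lambda_{n}=0$. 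If you want to salvage your outline, the "two distinct scaling values" comparison is the device to use in place of finiteness; as written, the proposal's key step would fail.
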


\begin{proof}
Let $(X',\Delta')$ be a weak lc model of $(X,\Delta)$ over $Y$ around $W$. 
Let $H'$ be the strict transform of $H$ on $X'$. 
We take the Stein factorization $X \to Y'$ of $\pi$ and put $W'$ as the inverse image of $W$ by the induced morphism $Y' \to W$. 
Then the $(K_{X}+\Delta)$-MMP over $Y$ around $W$ is a $(K_{X}+\Delta)$-MMP over $Y'$ around $W'$, and $(X',\Delta')$ is a weak lc model of $(X,\Delta)$ over $Y'$ around $W'$. 
By replacing $\pi$ and $W$ with $X \to Y'$ and $W'$ respectively, we may assume that $\pi$ is a contraction. 
Since the property (P4) of \cite{fujino-analytic-bchm} shows that $W$ has only finitely connected components, we may replace $W$ by any its connected component, and therefore, we may assume that $W$ is connected. 
As in Remark \ref{rem--mmp-reduction-basic}, we may freely shrink $Y$ around $W$ keeping the irreducibility of $X'$ and any $X_{i}$ if necessary.

\begin{step1}\label{step1-thm--termination-mmp}
In this step, after shrinking $Y$ around $W$ we will construct a good common resolution $\widetilde{X} \to X$ and $\widetilde{X} \to X'$ of the bimeromorphic map $X \dashrightarrow X'$. 

After shrinking $Y$ around $W$, we take a common log resolution $f \colon \widetilde{X} \to X$ and $f' \colon \widetilde{X} \to X'$ of the bimeromorphic map $(X,\Delta+H) \dashrightarrow (X',\Delta'+H')$. 
Let $\widetilde{\Delta}$ be the sum of $f^{-1}_{*}\Delta$ and the reduced $f$-exceptional divisor. 
Put $\widetilde{H}:=f^{-1}_{*}H$. 
Since $(X,\Delta+tH)$ is lc for all $t \in [0,1]$, for each $t \in [0,1]$ there is an effective $f$-exceptional $\mathbb{R}$-divisor $\widetilde{E}_{t}$ on $\widetilde{X}$ such that 
\begin{equation*}\tag{$*$}\label{thm--termination-mmp-(*)}
\begin{split}
K_{\widetilde{X}}+\widetilde{\Delta}+t\widetilde{H}=f^{*}(K_{X}+\Delta+tH)+\widetilde{E}_{t}.
\end{split}
\end{equation*} 
By definition, we have $\widetilde{E}_{t'}\leq \widetilde{E}_{t}$ when $t \leq t'$. 
By the definition of weak lc models, after shrinking $Y$ around $W$ we have an effective $f'$-exceptional $\mathbb{R}$-divisor $\widetilde{F}$ on $\widetilde{X}$ such that
$$f^{*}(K_{X}+\Delta)=f'^{*}(K_{X'}+\Delta')+\widetilde{F}.$$
From these relations, we have
\begin{equation*}\tag{$**$}\label{thm--termination-mmp-(**)}
\begin{split}
K_{\widetilde{X}}+\widetilde{\Delta}=f'^{*}(K_{X'}+\Delta')+\widetilde{E}_{0}+\widetilde{F}.
\end{split}
\end{equation*} 
We show that $\widetilde{E}_{0}+\widetilde{F}$ is $f'$-exceptional. 
It is sufficient to prove that $\widetilde{E}_{0}$ is $f'$-exceptional. 
For any prime divisor $P$ on $\widetilde{X}$, we have 
$${\rm coeff}_{P}(\widetilde{E}_{0})=a(P,X,\Delta)+{\rm coeff}_{P}(\widetilde{\Delta}).$$
When $P$ is not $f$-exceptional, then ${\rm coeff}_{P}(\widetilde{E}_{0})$ is clearly zero. 
When $P$ is $f$-exceptional but not $f'$-exceptional, we have ${\rm coeff}_{P}(\widetilde{\Delta})=1$ and 
$$-1=a(P,X',\Delta')\geq a(P,X,\Delta)\geq-1,$$
thus we have ${\rm coeff}_{P}(\widetilde{E}_{0})=a(P,X,\Delta)+{\rm coeff}_{P}(\widetilde{\Delta})=0$. 
From these discussions, $\widetilde{E}_{0}$ is $f'$-exceptional. 
Therefore, $\widetilde{E}_{0}+\widetilde{F}$ is $f'$-exceptional. 
\end{step1}

\begin{step1}\label{step2-thm--termination-mmp}
In this step, we run a $(K_{\widetilde{X}}+\widetilde{\Delta})$-MMP by using Lemma \ref{lem--relative-mmp} and construct an lc pair $(\widetilde{X}',\widetilde{\Delta}')$ with a projective bimeromorphism $\widetilde{X}' \to X'$. 

Let $\pi' \colon X' \to Y$ be the structure morphism. 
Let $L'$ be a $\pi'$-ample Cartier divisor on $X'$, and let $\widetilde{L} \geq 0$ be an $\mathbb{R}$-divisor on $\widetilde{X}$ such that $(\widetilde{X},\widetilde{\Delta}+\widetilde{H}+\widetilde{L})$ is an lc pair and $\widetilde{L} \sim_{\mathbb{R}} (3\cdot{\rm dim}\,X) f'^{*}L'$. 
We run a $(K_{\widetilde{X}}+\widetilde{\Delta}+\widetilde{L})$-MMP over $Y$ around $W$ with scaling of a $(\pi'\circ f')$-ample divisor
$$(\widetilde{X}_{0},\widetilde{\Delta}_{0}+\widetilde{L}_{0})\dashrightarrow (\widetilde{X}_{1},\widetilde{\Delta}_{1}+\widetilde{L}_{1})\dashrightarrow \cdots \dashrightarrow (\widetilde{X}_{j},\widetilde{\Delta}_{j}+\widetilde{L}_{j})\dashrightarrow \cdots.$$
Since $(X,\Delta)$ has a weak lc model over $Y$ around $W$, $K_{X}+\Delta$ is $\pi$-pseudo-effective. 
Thus $K_{\widetilde{X}}+\widetilde{\Delta}+\widetilde{L}$ is $(\pi'\circ f')$-pseudo-effective. 
By \cite[Lemma 13.7]{fujino-analytic-bchm}, there is $j' \gg 0$ such that $K_{\widetilde{X}_{j'}}+\widetilde{\Delta}_{j'}+\widetilde{L}_{j'} \in \overline{\rm Mov}(\widetilde{X}_{j'}/Y;W)$. 
After shrinking $Y$ around $W$, we may assume that the $(K_{\widetilde{X}}+\widetilde{\Delta}+\widetilde{L})$-MMP $(\widetilde{X}_{0},\widetilde{\Delta}_{0}+\widetilde{L}_{0})\dashrightarrow (\widetilde{X}_{j'},\widetilde{\Delta}_{j'}+\widetilde{L}_{j'})$ is a bimeromorphic contraction over $Y$. 
Put 
$$\widetilde{X}':=\widetilde{X}_{j'}, \quad \widetilde{\Delta}':=\widetilde{\Delta}_{j'}, \quad {\rm and} \quad \widetilde{L}':=\widetilde{L}_{j'}.$$
By Lemma \ref{lem--relative-mmp}, the induced bimeromorphic map $\widetilde{f}' \colon \widetilde{X}' \dashrightarrow X'$ is a morphism. 
Let $\widetilde{E}'$ and $\widetilde{F}'$ be the strict transforms of $\widetilde{E}_{0}$ and $\widetilde{F}$ on $\widetilde{X}'$, respectively. 
Then we have $\widetilde{L}' \sim_{\mathbb{R}} (3\cdot{\rm dim}\,X) \widetilde{f}'^{*}L'$, and (\ref{thm--termination-mmp-(**)}) implies
$$K_{\widetilde{X}'}+\widetilde{\Delta}'=\widetilde{f}'^{*}(K_{X'}+\Delta')+\widetilde{E}'+\widetilde{F}'.$$
Therefore $K_{\widetilde{X}'}+\widetilde{\Delta}' \in \overline{\rm Mov}(\widetilde{X}'/X';\pi'^{-1}(W))$. 
Moreover, $\widetilde{E}'+\widetilde{F}'$ is $\widetilde{f}'$-exceptional by construction. 
By the negativity lemma (Corollary \ref{cor--negativity-veryexc-2}), after shrinking $Y$ around $W$ we have $\widetilde{E}'+\widetilde{F}'=0$. 
Therefore, $K_{\widetilde{X}'}+\widetilde{\Delta}'=\widetilde{f}'^{*}(K_{X'}+\Delta')$ is nef over $W$. 
Now recall by Lemma \ref{lem--relative-mmp} that the $(K_{\widetilde{X}}+\widetilde{\Delta}+\widetilde{L})$-MMP over $Y$ around $W$ is also a $(K_{\widetilde{X}}+\widetilde{\Delta})$-MMP over $Y$ around $W$. 
Hence, $(\widetilde{X}',\widetilde{\Delta}')$ is a log minimal model of $(\widetilde{X},\widetilde{\Delta})$ over $Y$ around $W$. 
\end{step1}

\begin{step1}\label{step3-thm--termination-mmp}
In this step, we will construct a diagram used in the rest of the proof.  

Let $\widetilde{H}'$ be the strict transform of $\widetilde{H}$ on $\widetilde{X}'$. 
We take a positive real number $\epsilon$ such that $\widetilde{X} \dashrightarrow \widetilde{X}'$ is a sequence of steps of a $(K_{\widetilde{X}}+\widetilde{\Delta}+\epsilon\widetilde{H})$-MMP over $Y$ around $W$. 
By Lemma \ref{lem--mmp-triviallyintersect} and taking $\epsilon$ sufficiently small, we may assume that for any sequence of steps of a $(K_{\widetilde{X}'}+\widetilde{\Delta}'+\epsilon\widetilde{H}')$-MMP over $Y$ around $W$
$$(\widetilde{X}'_{0},\widetilde{\Delta}'_{0}+\epsilon\widetilde{H}'_{0})\dashrightarrow (\widetilde{X}'_{1},\widetilde{\Delta}'_{1}+\epsilon\widetilde{H}'_{1})\dashrightarrow \cdots \dashrightarrow (\widetilde{X}'_{k},\widetilde{\Delta}'_{k}+\epsilon\widetilde{H}'_{k})\dashrightarrow \cdots,$$
$K_{\widetilde{X}'_{k}}+\widetilde{\Delta}'_{k}$ trivially intersects the curves over $W$ that are contracted by the $(k+1)$-th steps of the MMP. 
In particular, for every $k \geq 0$, after shrinking $Y$ around $W$, it follows that $(\widetilde{X}'_{k},\widetilde{\Delta}'_{k})$ is a weak lc model of $(\widetilde{X}', \widetilde{\Delta}')$ over $Y$ around $W$. 

Since ${\rm lim}_{i \to \infty}\lambda_{i}=0$, there is an index $n \geq 0$ such that $\lambda_{n}<\lambda_{n-1}$ and $\lambda_{n}<\epsilon$. 
By shrinking $Y$ around $W$, we may assume that the sequence of the $(K_{X}+\Delta)$-MMP
$$(X_{0},\Delta_{0})\dashrightarrow (X_{1},\Delta_{1})\dashrightarrow \cdots \dashrightarrow (X_{n},\Delta_{n})$$
is represented by bimeromorphic contractions over $Y$ and $X=X_{0}$. 

By the above discussions, after shrinking $Y$ around $W$ suitably, we get 
$$
\xymatrix{
(\widetilde{X}, \widetilde{\Delta})\ar[d]_{f}\ar@{-->}[r]&(\widetilde{X}', \widetilde{\Delta}')\\
(X,\Delta)\ar@{-->}[rr]&&(X_{n},\Delta_{n})
}
$$
over $Y$ and a positive real number $\epsilon<1 $ satisfying the following properties: 
\begin{enumerate}[(i)]
\item\label{thm--termination-mmp-(i)}
$f \colon \widetilde{X} \to X$ is a log resolution of $(X,\Delta+H)$ such that for any $t \in [0,1]$, we can write
$$K_{\widetilde{X}}+\widetilde{\Delta}+t\widetilde{H}=f^{*}(K_{X}+\Delta+tH)+\widetilde{E}_{t}$$
for some effective $f$-exceptional $\mathbb{R}$-divisor $\widetilde{E}_{t}$ on $\widetilde{X}$, where $\widetilde{H}=f^{-1}_{*}H$,  
\item\label{thm--termination-mmp-(ii)}
$\widetilde{X} \dashrightarrow \widetilde{X}'$ is a bimeromorphic contraction over $Y$ that defines:
\begin{itemize}
\item
a finite sequence of steps of a $(K_{\widetilde{X}}+\widetilde{\Delta})$-MMP over $Y$ around $W$ 
$$(\widetilde{X}, \widetilde{\Delta}) \dashrightarrow (\widetilde{X}', \widetilde{\Delta}')$$
terminating with a log minimal model $(\widetilde{X}', \widetilde{\Delta}')$ over $Y$ around $W$, and
\item
a finite sequence of steps of a $(K_{\widetilde{X}}+\widetilde{\Delta}+\epsilon \widetilde{H})$-MMP over $Y$ around $W$
$$(\widetilde{X}, \widetilde{\Delta}+\epsilon \widetilde{H}) \dashrightarrow (\widetilde{X}', \widetilde{\Delta}'+\epsilon \widetilde{H}'),$$ 
\end{itemize}
\item\label{thm--termination-mmp-(iii)}
for any sequence of steps of a $(K_{\widetilde{X}'}+\widetilde{\Delta}'+\epsilon\widetilde{H}')$-MMP over $Y$ around $W$
$$(\widetilde{X}'_{0},\widetilde{\Delta}'_{0}+\epsilon\widetilde{H}'_{0})\dashrightarrow (\widetilde{X}'_{1},\widetilde{\Delta}'_{1}+\epsilon\widetilde{H}'_{1})\dashrightarrow \cdots \dashrightarrow (\widetilde{X}'_{k},\widetilde{\Delta}'_{k}+\epsilon\widetilde{H}'_{k})\dashrightarrow \cdots$$
and any $k \geq0$, after shrinking $Y$ around $W$ suitably, 
$(\widetilde{X}'_{k},\widetilde{\Delta}'_{k})$ is a weak lc model of $(\widetilde{X}', \widetilde{\Delta}')$ over $Y$ around $W$, and 
\item\label{thm--termination-mmp-(iv)}
$X \dashrightarrow X_{n}$ is a bimeromorphic contraction over $Y$ that defines a finite sequence of steps of a $(K_{X}+\Delta)$-MMP over $Y$ around $W$ with scaling of $H$
$$(X, \Delta) \dashrightarrow (X_{n}, \Delta_{n})$$
such that $\lambda_{n}<{\rm min}\{\lambda_{n-1},\epsilon\}$. 
\end{enumerate}
\end{step1}

\begin{step1}\label{step4-thm--termination-mmp}
We put 
$$\lambda':={\rm min}\{\lambda_{n-1},\epsilon\}.$$
In this step, we will prove that after shrinking $Y$ around $W$ suitably, $(X_{n},\Delta_{n}+\lambda'H_{n})$ is a weak lc model of $(\widetilde{X}',\widetilde{\Delta}'+\lambda'\widetilde{H}')$ over $Y$ around $W$. 

By definition and (\ref{thm--termination-mmp-(i)}) in Step \ref{step3-thm--termination-mmp}, $(X_{n},\Delta_{n}+\lambda'H_{n})$ is a weak lc model of $(\widetilde{X},\widetilde{\Delta}+\lambda'\widetilde{H})$ over $Y$ around $W$. 
Moreover, since $\lambda' \leq \epsilon$, by (\ref{thm--termination-mmp-(ii)}) in Step \ref{step3-thm--termination-mmp}, the bimeromorphic map $\widetilde{X} \dashrightarrow \widetilde{X}'$ defines a sequence of steps of a $(K_{\widetilde{X}}+\widetilde{\Delta}+\lambda' \widetilde{H})$-MMP over $Y$ around $W$
$$(\widetilde{X}, \widetilde{\Delta}+\lambda' \widetilde{H}) \dashrightarrow (\widetilde{X}', \widetilde{\Delta}'+\lambda' \widetilde{H}').$$ 
By Lemma \ref{lem--mmp-weak-lc-model}, after shrinking $Y$ around $W$ suitably, $(X_{n},\Delta_{n}+\lambda'H_{n})$ is a weak lc model of $(\widetilde{X}',\widetilde{\Delta}'+\lambda'\widetilde{H}')$ over $Y$ around $W$. 
\end{step1}

\begin{step1}\label{step5-thm--termination-mmp}
In this step, we will prove Theorem \ref{thm--termination-mmp} in the case where $X$ is $\mathbb{Q}$-factorial over $W$, $(X,0)$ is klt, and $H$ is a general $\pi$-ample $\mathbb{R}$-divisor. 

By shrinking $Y$ around $W$, we may assume that $\Delta$ is globally $\mathbb{R}$-Cartier. 
We take $\delta \in \mathbb{R}_{>0}$ such that $\delta \Delta+\epsilon H$ is ample. 
By shrinking $Y$ around $W$, we can find a general $\pi$-ample $\mathbb{R}$-divisor $\Gamma$ on $X$ such that $\Gamma\sim_{\mathbb{R}} \delta \Delta+\epsilon H$ and $f^{*}\Gamma=f_{*}^{-1}\Gamma$. 
We may assume that the pair $\bigl(X,\Delta+\epsilon H+\gamma(\Gamma-\delta \Delta-\epsilon H)\bigr)$ is klt for any $\gamma\in (0,1)$. 
We take $\gamma>0$ sufficiently small and we define 
$$\widetilde{\Gamma}:=\widetilde{\Delta}+\epsilon\widetilde{H}+\gamma f^{*}(\Gamma-\delta \Delta-\epsilon H)-\gamma\widetilde{E}_{\epsilon}.$$ 
Since ${\rm Supp}\,(\widetilde{\Delta}+\epsilon\widetilde{H})$ contains all $f$-exceptional divisors and components of $f^{-1}_{*}(\Delta+H)$, we may assume that $\widetilde{\Gamma} \geq 0$. 
By (\ref{thm--termination-mmp-(i)}) in Step \ref{step3-thm--termination-mmp}, we have 
$$K_{\widetilde{X}}+\widetilde{\Gamma}=f^{*}\bigl(K_{X}+\Delta+\epsilon H+\gamma(\Gamma-\delta \Delta-\epsilon H)\bigr)+(1-\gamma)\widetilde{E}_{\epsilon}.$$
Since $\Gamma$ is general, we may assume that $(\widetilde{X},\widetilde{\Gamma})$ is log smooth and lc, and $\Gamma\sim_{\mathbb{R}} \delta \Delta+\epsilon H$ shows
$$K_{\widetilde{X}}+\widetilde{\Gamma}\sim_{\mathbb{R}}
K_{\widetilde{X}}+\widetilde{\Delta}+\epsilon\widetilde{H}-\gamma\widetilde{E}_{\epsilon}.$$
Then $(\widetilde{X},\widetilde{\Gamma})$ is klt. 
Indeed, if a prime divisor $Q$ on $\widetilde{X}$ is not a component of $\widetilde{E}_{\epsilon}$, then 
$${\rm coeff}_{Q}(\widetilde{\Gamma})=-a\bigl(Q,X,\Delta+\epsilon H+\gamma(\Gamma-\delta \Delta-\epsilon H)\bigr)<1.$$ 
If $Q$ is a component of $\widetilde{E}_{\epsilon}$, then $Q$ is not a component of $f^{*}\Gamma$, and therefore
$${\rm coeff}_{Q}(\widetilde{\Gamma})= {\rm coeff}_{Q}(\widetilde{\Delta}+\epsilon\widetilde{H}+\gamma f^{*}(\Gamma-\delta \Delta-\epsilon H)-\gamma\widetilde{E}_{\epsilon})< {\rm coeff}_{Q}(\widetilde{\Delta}+\epsilon\widetilde{H})\leq 1.$$ 
Since $(\widetilde{X},\widetilde{\Gamma})$ is log smooth, we see that $(\widetilde{X},\widetilde{\Gamma})$ is klt. 

Now $\widetilde{\Gamma}$ is $(\pi \circ f)$-big.
Since $\gamma>0$ is sufficiently small, we may assume that $\widetilde{X}\dashrightarrow \widetilde{X}'$ is a finite sequence of steps of a $(K_{\widetilde{X}}+\widetilde{\Delta}+\epsilon \widetilde{H}-\gamma\widetilde{E}_{\epsilon})$-MMP over $Y$ around $W$. 
Here, we used the second part of (\ref{thm--termination-mmp-(ii)}) in Step \ref{step3-thm--termination-mmp}. 
Then $\widetilde{X}\dashrightarrow \widetilde{X}'$ is also a $(K_{\widetilde{X}}+\widetilde{\Gamma})$-MMP over $Y$ around $W$. 
Let $\widetilde{\Gamma}'$ be the strict transform of $\widetilde{\Gamma}$ on $\widetilde{X}'$. 
Then $(\widetilde{X}',\widetilde{\Gamma}')$ is klt. 
Moreover, by the argument of Step \ref{step2-thm--termination-mmp}, the strict transform of $\widetilde{E}_{0}$ on $\widetilde{X}'$ is zero. 
Since $\widetilde{E}_{\epsilon}\leq \widetilde{E}_{0}$, the strict transform of $\widetilde{E}_{\epsilon}$ on $\widetilde{X}'$ is zero. 
This implies
$$K_{\widetilde{X}'}+\widetilde{\Gamma}'\sim_{\mathbb{R}}K_{\widetilde{X}'}+\widetilde{\Delta}'+\epsilon\widetilde{H}'.$$ 
Since $\widetilde{\Gamma}'$ is big over $Y$, by \cite[Theorem 1.7]{fujino-analytic-bchm} and shrinking $Y$ around $W$, we get a bimeromorphic contraction $\widetilde{X}'\dashrightarrow \widetilde{X}''$ over $Y$ that defines a finite sequence of steps of a $(K_{\widetilde{X}'}+\widetilde{\Gamma}')$-MMP over $Y$ around $W$ terminating with a log minimal model $(\widetilde{X}'',\widetilde{\Gamma}'')$. 
Let $\widetilde{\Delta}''$ (resp.~$\widetilde{H}''$) be the strict transform of $\widetilde{\Delta}'$ (resp.~$\widetilde{H}'$) on $\widetilde{X}''$. 
Then $\widetilde{X}'\dashrightarrow \widetilde{X}''$ defines a sequence of steps of a $(K_{\widetilde{X}'}+\widetilde{\Delta}'+\epsilon\widetilde{H}')$-MMP over $Y$ around $W$ terminating with a log minimal model $(\widetilde{X}'',\widetilde{\Delta}''+\epsilon\widetilde{H}'')$ over $Y$ around $W$. 
Therefore, we have a bimeromorphic contraction
$$\widetilde{X}\dashrightarrow \widetilde{X}''$$
over $Y$ that defines a sequence of steps of a $(K_{\widetilde{X}}+\widetilde{\Delta}+\epsilon\widetilde{H})$-MMP over $Y$ around $W$ terminating with a log minimal model $(\widetilde{X}'',\widetilde{\Delta}''+\epsilon\widetilde{H}'')$ over $Y$ around $W$. 
By (\ref{thm--termination-mmp-(iii)}) in Step \ref{step3-thm--termination-mmp}, $(\widetilde{X}'',\widetilde{\Delta}'')$ is a weak lc model of $(\widetilde{X},\widetilde{\Delta})$ over $Y$ around $W$. 
Since
$0\leq \lambda_{n}<\lambda'\leq\epsilon$ 
by (\ref{thm--termination-mmp-(iv)}) in Step \ref{step3-thm--termination-mmp},  we see that $(\widetilde{X}'',\widetilde{\Delta}''+\lambda'\widetilde{H}'')$ and $(\widetilde{X}'',\widetilde{\Delta}''+\lambda_{n}\widetilde{H}'')$ are weak lc models of $(\widetilde{X},\widetilde{\Delta}+\lambda'\widetilde{H})$ and $(\widetilde{X},\widetilde{\Delta}+\lambda_{n}\widetilde{H})$ over $Y$ around $W$, respectively. 

We use the diagram
$$
\xymatrix{
\widetilde{X}\ar[d]_{f}\ar@{-->}[rr]&&\widetilde{X}''\\
X\ar@{-->}[rr]&&X_{n}.
}
$$
By (\ref{thm--termination-mmp-(i)}) and (\ref{thm--termination-mmp-(iv)}) in Step \ref{step3-thm--termination-mmp} and the definition of weak lc models, $(X_{n},\Delta_{n}+\lambda'H_{n})$ and $(X_{n},\Delta_{n}+\lambda_{n}H_{n})$ are also weak lc models of $(\widetilde{X},\widetilde{\Delta}+\lambda'\widetilde{H})$ and $(\widetilde{X},\widetilde{\Delta}+\lambda_{n}\widetilde{H})$ over $Y$ around $W$, respectively. 
Let $g \colon V \to X_{n}$ and $\widetilde{g} \colon V \to \widetilde{X}''$ be a common resolution of the induced bimeromorphic map $X_{n} \dashrightarrow \widetilde{X}''$. 
By Lemma \ref{lem--two-logminmodel} and shrinking $Y$ around $W$, we have
\begin{equation*}
\begin{split}
g^{*}(K_{X_{n}}+\Delta_{n}+\lambda'H_{n})&=\widetilde{g}^{*}(K_{\widetilde{X}''}+\widetilde{\Delta}''+\lambda'\widetilde{H}''),\quad {\rm and}\\
g^{*}(K_{X_{n}}+\Delta_{n}+\lambda_{n}H_{n})&=\widetilde{g}^{*}(K_{\widetilde{X}''}+\widetilde{\Delta}''+\lambda_{n}\widetilde{H}'').
\end{split}
\end{equation*}
Since $\lambda'\neq \lambda_{n}$, we have 
$$g^{*}(K_{X_{n}}+\Delta_{n})=\widetilde{g}^{*}(K_{\widetilde{X}''}+\widetilde{\Delta}'').$$
By (\ref{thm--termination-mmp-(iii)}) in Step \ref{step3-thm--termination-mmp}, $K_{\widetilde{X}''}+\widetilde{\Delta}''$ is nef over $W$. 
Therefore, $K_{X_{n}}+\Delta_{n}$ is nef over $W$. 
This implies $\lambda_{n}=0$, and therefore Theorem \ref{thm--termination-mmp} holds if $X$ is $\mathbb{Q}$-factorial over $W$, $(X,0)$ is klt, and $H$ is a general $\pi$-ample $\mathbb{R}$-divisor. 
\end{step1}

\begin{step1}\label{step6-thm--termination-mmp}
Finally, we prove Theorem \ref{thm--termination-mmp} in the full general case. 

The argument is very similar to that in Step \ref{step5-thm--termination-mmp}. 
We will use the diagram
$$
\xymatrix{
(\widetilde{X}, \widetilde{\Delta})\ar[d]_{f}\ar@{-->}[r]&(\widetilde{X}', \widetilde{\Delta}')\\
(X,\Delta)\ar@{-->}[rr]&&(X_{n},\Delta_{n})
}
$$
and conditions (\ref{thm--termination-mmp-(i)})--(\ref{thm--termination-mmp-(iv)}) in Step \ref{step3-thm--termination-mmp}. 

By Step \ref{step4-thm--termination-mmp}, after shrinking $Y$ around $W$, there is a weak lc model of $(\widetilde{X}',\widetilde{\Delta}'+\lambda'\widetilde{H}')$ over $Y$ around $W$. 
We run a $(K_{\widetilde{X}'}+\widetilde{\Delta}'+\lambda'\widetilde{H}')$-MMP over $Y$ around $W$ with scaling of an ample divisor. 
By Lemma \ref{lem--mmp-ample-scaling-nefthreshold} and the conclusion of Step \ref{step5-thm--termination-mmp}, the MMP terminates with a log minimal model   $(\widetilde{X}''',\widetilde{\Delta}'''+\lambda'\widetilde{H}''')$ over $Y$ around $W$. 
By (\ref{thm--termination-mmp-(i)}) in Step \ref{step3-thm--termination-mmp} and shrinking $Y$ around $W$, we get a bimeromorphic contraction
$$\widetilde{X}\dashrightarrow \widetilde{X}'''$$
over $Y$ that defines a sequence of steps of a $(K_{\widetilde{X}}+\widetilde{\Delta}+\lambda'\widetilde{H})$-MMP over $Y$ around $W$ terminating with a log minimal model $(\widetilde{X}''',\widetilde{\Delta}'''+\lambda'\widetilde{H}''')$ over $Y$ around $W$. 
By (\ref{thm--termination-mmp-(iii)}) in Step \ref{step3-thm--termination-mmp}, $(\widetilde{X}''',\widetilde{\Delta}''')$ is a weak lc model of $(\widetilde{X},\widetilde{\Delta})$ over $Y$ around $W$. 
Since $0\leq \lambda_{n}<\lambda'$, we see that
 $(\widetilde{X}''',\widetilde{\Delta}'''+\lambda'\widetilde{H}''')$ and $(\widetilde{X}''',\widetilde{\Delta}'''+\lambda_{n}\widetilde{H}''')$ are weak lc models of $(\widetilde{X},\widetilde{\Delta}+\lambda'\widetilde{H})$ and $(\widetilde{X},\widetilde{\Delta}+\lambda_{n}\widetilde{H})$ over $Y$ around $W$, respectively. 

We use the diagram
$$
\xymatrix{
\widetilde{X}\ar[d]_{f}\ar@{-->}[rr]&&\widetilde{X}'''\\
X\ar@{-->}[rr]&&X_{n}.
}
$$
By (\ref{thm--termination-mmp-(i)}) and (\ref{thm--termination-mmp-(iv)}) in Step \ref{step3-thm--termination-mmp} and the definition of weak lc models, $(X_{n},\Delta_{n}+\lambda'H_{n})$ and $(X_{n},\Delta_{n}+\lambda_{n}H_{n})$ are also weak lc models of $(\widetilde{X},\widetilde{\Delta}+\lambda'\widetilde{H})$ and $(\widetilde{X},\widetilde{\Delta}+\lambda_{n}\widetilde{H})$ over $Y$ around $W$, respectively. 
Let $h \colon V' \to X_{n}$ and $\widetilde{h} \colon V' \to \widetilde{X}'''$ be a common resolution of the induced bimeromorphic map $X_{n} \dashrightarrow \widetilde{X}'''$. 
By Lemma \ref{lem--two-logminmodel} and shrinking $Y$ around $W$, we have
\begin{equation*}
\begin{split}
h^{*}(K_{X_{n}}+\Delta_{n}+\lambda'H_{n})&=\widetilde{h}^{*}(K_{\widetilde{X}'''}+\widetilde{\Delta}'''+\lambda'\widetilde{H}'''),\quad {\rm and}\\
h^{*}(K_{X_{n}}+\Delta_{n}+\lambda_{n}H_{n})&=\widetilde{h}^{*}(K_{\widetilde{X}'''}+\widetilde{\Delta}'''+\lambda_{n}\widetilde{H}''').
\end{split}
\end{equation*}
Since $\lambda'\neq \lambda_{n}$, we have 
$$h^{*}(K_{X_{n}}+\Delta_{n})=\widetilde{h}^{*}(K_{\widetilde{X}'''}+\widetilde{\Delta}''').$$
By (\ref{thm--termination-mmp-(iii)}) in Step \ref{step3-thm--termination-mmp}, $K_{\widetilde{X}'''}+\widetilde{\Delta}'''$ is nef over $W$. 
Therefore, $K_{X_{n}}+\Delta_{n}$ is nef over $W$. 
This implies $\lambda_{n}=0$, and therefore Theorem \ref{thm--termination-mmp} holds.
\end{step1}
We complete the proof. 
\end{proof}

\begin{thm}[cf.~{\cite[Lemma 2.14]{has-mmp}}]\label{thm--mmpsequence-2}
Let $\pi \colon X \to Y$ be a contraction from a normal analytic variety $X$ to a Stein space $Y$, and let $W \subset Y$ be a connected compact subset such that $\pi$ and $W$ satisfy (P). 
Let $(X,\Delta)$ be an lc pair. 
Suppose that $(X,B)$ is klt for some $\mathbb{R}$-divisor $B$ on $X$. 
Let $H$ be an effective $\mathbb{R}$-Cartier divisor on $X$ such that $(X,\Delta+H)$ is an lc pair and $K_{X}+\Delta+H$ is nef over $W$. 
Suppose that for any $\mu \in (0,1]$, after shrinking $Y$ around $W$, the lc pair $(X,\Delta+\mu H)$ has a log minimal model over $Y$ around $W$.  
Then, we may construct  a sequence of steps of a $(K_{X}+\Delta)$-MMP over $Y$ around $W$ with scaling of $H$
$$(X_{0},\Delta_{0})\dashrightarrow (X_{1},\Delta_{1})\dashrightarrow \cdots \dashrightarrow (X_{i},\Delta_{i})\dashrightarrow \cdots$$
such that if we put
$$\lambda_{i}:={\rm inf}\{t \in \mathbb{R}_{\geq 0}\,|\, \text{$K_{X_{i}}+\Delta_{i}+tH_{i}$ {\rm is nef over} $W$}\}$$
for each $i \geq 0$, then either the log MMP terminates after finitely many steps or we have ${\rm lim}_{i \to \infty}\lambda_{i}=0$ even if the log MMP terminates does not terminate. 
\end{thm}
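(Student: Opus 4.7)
The plan is to construct the desired MMP by induction, concatenating finite blocks indexed by a fixed strictly decreasing sequence $1 = \mu_{1} > \mu_{2} > \cdots > 0$ with $\mu_{k} \to 0$, such that at the end of the $k$-th block the nef threshold satisfies $\lambda_{n_{k}} \leq \mu_{k}$. After the standard reduction in Remark~\ref{rem--mmp-reduction-basic} (Stein factorization and restriction to a connected component of $W$), the base case is immediate: because $K_{X} + \Delta + H$ is nef over $W$, one has $\lambda_{0} \leq 1 = \mu_{1}$.

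For the inductive step, suppose the MMP has been built up to $(X_{n_{k}}, \Delta_{n_{k}})$ with $\lambda_{n_{k}} \leq \mu_{k}$. If $\lambda_{n_{k}} \leq \mu_{k+1}$, I take $n_{k+1} := n_{k}$. Otherwise, because $\mu_{k+1} < \lambda_{i}$ for every $i \leq n_{k}$, each extremal ray $R_{i}$ contracted so far is $(K_{X_{i}} + \Delta_{i} + \mu_{k+1} H_{i})$-negative, so the partial MMP is simultaneously a $(K_{X} + \Delta + \mu_{k+1} H)$-MMP. Combined with the hypothesis that $(X, \Delta + \mu_{k+1} H)$ admits a log minimal model over $Y$ around $W$ after shrinking, the log-minimal-model analog of Lemma~\ref{lem--mmp-weak-lc-model} (proved by the same negativity-lemma argument) transfers this model to a log minimal model, and therefore a weak lc model, of $(X_{n_{k}}, \Delta_{n_{k}} + \mu_{k+1} H_{n_{k}})$ over $Y$ around $W$.

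Next I run a $(K_{X_{n_{k}}} + \Delta_{n_{k}} + \mu_{k+1} H_{n_{k}})$-MMP with scaling of $H_{n_{k}}$. A short computation (using $H_{n_{k}} \cdot R > 0$ for each ray $R$ contracted by the sub-MMP) shows that each step of the sub-MMP is simultaneously a $(K_{X} + \Delta)$-MMP step with scaling of $H$, and the corresponding nef threshold is $\lambda_{n_{k} + j} = \mu_{k+1} + \widetilde\lambda_{j}$, where $\widetilde\lambda_{j}$ is the sub-MMP's own nef threshold. Applying Theorem~\ref{thm--termination-mmp} to the sub-MMP (its weak lc model exists by the previous paragraph, and its thresholds $\widetilde\lambda_{j}$ can be shown to tend to zero), the sub-MMP terminates at some $n_{k+1} > n_{k}$, yielding $K + \Delta_{n_{k+1}} + \mu_{k+1} H_{n_{k+1}}$ nef over $W$ and hence $\lambda_{n_{k+1}} \leq \mu_{k+1}$. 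Concatenating the blocks produces either a terminating MMP, or an infinite MMP with $\lambda_{i} \leq \mu_{k}$ for all $i \geq n_{k}$, giving $\lim_{i \to \infty} \lambda_{i} = 0$.

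The main technical obstacle is verifying $\widetilde\lambda_{j} \to 0$ for each sub-MMP so that Theorem~\ref{thm--termination-mmp} applies, because $H_{n_{k}}$ is not in general $\pi$-ample and Lemma~\ref{lem--mmp-ample-scaling-nefthreshold} is not directly available. The intended fix is a perturbation argument: for a general $\pi$-ample $\mathbb{R}$-divisor $A$ and small $\epsilon > 0$ one replaces $H_{n_{k}}$ by the $\pi$-ample divisor $H_{n_{k}} + \epsilon A$, keeps lc-ness of the boundary by generality of $A$, invokes Lemma~\ref{lem--mmp-ample-scaling-nefthreshold} via the klt pair $(X_{n_{k}}, B_{n_{k}})$ obtained from $(X, B)$, and passes to the limit $\epsilon \to 0$ using the monotonicity of the nef thresholds in $\epsilon$ together with Theorem~\ref{thm--mmp-nefthreshold}. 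Carefully controlling this perturbation and limit is the delicate part of the argument.
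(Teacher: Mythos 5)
Your outer skeleton is the right one and is essentially how the cited algebraic argument (\cite[Lemma 2.14]{has-mmp}, going back to \cite[Theorem 4.1]{birkar-flip}) proceeds: build the MMP in blocks, observe that while the threshold exceeds $\mu_{k+1}$ every step of a $(K_{X_{n_k}}+\Delta_{n_k}+\mu_{k+1}H_{n_k})$-MMP with scaling of $H_{n_k}$ is also a step of the $(K_X+\Delta)$-MMP with scaling of $H$ with $\lambda_{n_k+j}=\mu_{k+1}+\widetilde{\lambda}_j$, and transfer the hypothesized log minimal model of $(X,\Delta+\mu_{k+1}H)$ to the current model via Lemma \ref{lem--mmp-weak-lc-model}. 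The gap is exactly where you place it, and your proposed repair does not close it. Theorem \ref{thm--termination-mmp} needs the block's own thresholds $\widetilde{\lambda}_j$ to tend to $0$, and replacing the scaling divisor by $H_{n_k}+\epsilon A$ produces a genuinely different MMP: its steps contract rays trivial against $K_{X_{n_k+j}}+\Delta_{n_k+j}+\mu_{k+1}H_{n_k+j}+t_j(H_{n_k+j}+\epsilon A_j)$, not against $K_{X_{n_k+j}}+\Delta_{n_k+j}+\lambda_{n_k+j}H_{n_k+j}$; such rays need not even be $(K+\Delta)$-negative (only $(H+\epsilon A)$-positivity is guaranteed), so these steps cannot be spliced into the sequence you are constructing, and there is no meaningful ``limit as $\epsilon\to 0$'' of a family of mutually different sequences of bimeromorphic modifications --- Theorem \ref{thm--mmp-nefthreshold} gives monotonicity of thresholds along one fixed MMP, not continuity in the scaling divisor. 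In addition, $(X_{n_k},B_{n_k})$ need not be klt: the steps performed are $(K+\Delta)$-negative but may be $(K+B)$-positive, so discrepancies with respect to $(X,B)$ can drop; a klt structure on $X_{n_k}$ has to be manufactured (e.g.\ from $(1-t)\Delta_{n_k}+tB_{n_k}$ for small $t$, using that only finitely many steps have been performed), which you do not do.

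The way this is actually handled in the sources does not force $\widetilde{\lambda}_j\to 0$ by perturbation. One sets $\widetilde{\lambda}:=\lim_j\widetilde{\lambda}_j$ for the (hypothetically infinite) block; if $\widetilde{\lambda}_j>\widetilde{\lambda}$ for all $j$, then every contracted ray is strictly negative on $K+\Delta_{n_k+j}+(\mu_{k+1}+\widetilde{\lambda})H_{n_k+j}$, so the same sequence is a $(K_{X_{n_k}}+\Delta_{n_k}+(\mu_{k+1}+\widetilde{\lambda})H_{n_k})$-MMP with scaling of $H_{n_k}$ whose thresholds $\widetilde{\lambda}_j-\widetilde{\lambda}$ do tend to zero, and Theorem \ref{thm--termination-mmp} applies because $(X,\Delta+(\mu_{k+1}+\widetilde{\lambda})H)$ has a log minimal model by hypothesis (this is where ``for any $\mu\in(0,1]$'' is used in full strength), yielding a contradiction. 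What remains is the case where the thresholds become constant at a positive value; this residual case requires a separate argument, and it is precisely there that the hypothesis of a klt pair $(X,B)$ and tools such as Lemma \ref{lem--mmp-ample-scaling-nefthreshold} (finiteness of models) enter in \cite{birkar-flip} and \cite{has-mmp}. Your proposal neither carries out this case analysis nor supplies a working substitute for it (and it also takes for granted that every step of the sub-MMP, in particular each lc flip, exists, which in the analytic setting must itself be extracted from the existence of the minimal model), so as written the crucial step of the proof is missing.
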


\begin{proof}
The argument of \cite[Lemma 2.14]{has-mmp} works in the analytic setting. 
\end{proof}

\subsection{Special termination}\label{subsec-special-termi}

Let $\pi \colon X \to Y$ be a contraction from a normal analytic variety $X$ to a Stein space $Y$, and let $W \subset Y$ be a connected compact subset such that $\pi$ and $W$ satisfy (P). 
Let $(X,\Delta)$ be a dlt pair. 
Suppose that $X$ is $\mathbb{Q}$-factorial over $W$. 
Let 
$$(X_{0},\Delta_{0})\dashrightarrow (X_{1},\Delta_{1})\dashrightarrow \cdots \dashrightarrow (X_{n},\Delta_{n}) \dashrightarrow \cdots$$
be a sequence of steps of a $(K_{X}+\Delta)$-MMP over $Y$ around $W$ with scaling of an effective $\mathbb{R}$-divisor $A$. 
We quickly recall the argument of special termination by Fujino \cite{fujino-sp-ter} (see also \cite[Remark 2.21]{has-finite}). 

By shrinking $Y$ around $W$, we may assume that $K_{X}+\Delta$ is globally $\mathbb{R}$-Cartier and the number of components of $K_{X}$ and $\Delta$ are finite. 
Let $\Delta=\sum_{j=1}^{n}r_{j}D_{j}$ be the prime decomposition. 
Let $S$ be an lc center of $(X,\Delta)$, and let $(S,\Delta_{S})$ be the dlt pair defined by adjunction $K_{S}+\Delta_{S}=(K_{X}+\Delta)|_{S}$. 
By \cite[Corollary 3.10]{shokurov-flip}, the coefficients of $\Delta_{S}$ belong to the set
$$\mathcal{I}:=\left\{\frac{m-1}{m}+\sum_{j=1}^{n}\frac{r_{j}k_{j}}{m}\;\middle|\;m \in \mathbb{Z}_{>0},\,k_{j}\in \mathbb{Z}_{\geq0} \right\}.$$ 
Define $d_{\mathcal{I}}(S,\Delta_{S})$ to be
$$d_{\mathcal{I}}(S,\Delta_{S}):=\sum_{\alpha \in \mathcal{I}}\#\left\{E\,\middle|\, a(E,S,\Delta_{S})<-\alpha\;{\rm and}\; c_{S}(E)\not\subset \lfloor \Delta_{S}\rfloor\right\}$$
Then $d_{\mathcal{I}}(S,\Delta_{S})<\infty$. 

Let $\{Y_{i}\}_{i \geq 0}$ and $\{\phi_{i}\colon X_{i} \dashrightarrow X'_{i}\}_{i\geq 0}$ be the data of the log MMP. 
Suppose that for every $i \geq 0$, the biholomorphic locus of the bimeromorphic map $X_{0}\times_{Y_{0}}Y_{i}\dashrightarrow X_{i}$ intersects general points of $S\times_{Y_{0}}Y_{i}$. 
Let $S_{i}$ be the corresponding lc center of $(X_{i},\Delta_{i})$, and let $(S_{i},\Delta_{S_{i}})$ be the dlt pair defined by adjunction. 
Suppose that for any $i$ and lc center $C_{i}$ of $(S_{i},\Delta_{S_{i}})$, the restriction of $\phi_{i}$ to $C_{i}$ is biholomorphic to its image. 
Note that any lc center of $(S_{i},\Delta_{S_{i}})$ is an lc center of $(X_{i},\Delta_{i})$ of dimension less than ${\rm dim}\,S$. 
By Lemma \ref{lem--basic-2}, $\phi_{i}$ is an biholomorphism on a Zariski open subset of $X_{i}$ containing all lc centers of $(S_{i},\Delta_{S_{i}})$. 
By the argument in \cite[Proof of Proposition 4.2.14]{fujino-sp-ter}, we have
$$d_{\mathcal{I}}(S_{i},\Delta_{S_{i}}) \geq d_{\mathcal{I}}(S_{i+1},\Delta_{S_{i+1}})$$
and the strict inequality holds if the induced bimeromorphic map $S_{i}\times_{Y_{i}}Y_{i+1}\dashrightarrow S_{i+1}$ extracts a divisor. 
Therefore, there exists an index $i_{0}$ such that $S_{i}\times_{Y_{i}}Y_{i+1}\dashrightarrow S_{i+1}$ is a bimeromorphic contraction for all $i \geq i_{0}$. 
Then the strict transforms of $A_{i}|_{S_{i}\times_{Y_{i}}Y_{i+1}}$ and $\Delta_{S_{i}}|_{S_{i}\times_{Y_{i}}Y_{i+1}}$ to $S_{i+1}$ are $A_{i+1}$ and $\Delta_{S_{i+1}}$, respectively (cf.~\cite[Lemma 4.2.15]{fujino-sp-ter}). 
Let $S'_{i}$ be a small $\mathbb{Q}$-factorialization of $S_{i}$ over a neighborhood of $W$ (\cite[Theorem 1.24]{fujino-analytic-bchm}). 
By considering the non-increasing sequence $\{\rho(S'_{i}/Y;W_{i})\}_{i \geq 0}$ and replacing $i_{0}$, we may assume that $S_{i}\times_{Y_{i}}Y_{i+1}\dashrightarrow S_{i+1}$ is small for all $i \geq i_{0}$. 

As the first conclusion, we get the following statement. 

\begin{thm}\label{thm--sp-ter-1}
Let $\pi \colon X \to Y$ be a contraction from a normal analytic variety $X$ to a Stein space $Y$, and let $W \subset Y$ be a connected compact subset such that $\pi$ and $W$ satisfy (P). 
Let $(X,\Delta)$ be a dlt pair. 
Suppose that $X$ is $\mathbb{Q}$-factorial over $W$. 
Let 
$$(X_{0},\Delta_{0})\dashrightarrow (X_{1},\Delta_{1})\dashrightarrow \cdots \dashrightarrow (X_{n},\Delta_{n}) \dashrightarrow \cdots$$
be a sequence of steps of a $(K_{X}+\Delta)$-MMP over $Y$ around $W$ with scaling of an effective $\mathbb{R}$-divisor $A$. 
Let $\{Y_{i}\}_{i \geq 0}$ and $\{\phi_{i}\colon X_{i} \dashrightarrow X'_{i}\}_{i\geq 0}$ be the data of the log MMP. 
Let $S$ be an lc center of $(X,\Delta)$ and $(S,\Delta_{S})$ the dlt pair defined by adjunction $K_{S}+\Delta_{S}=(K_{X}+\Delta)|_{S}$. 
Suppose that for every $i \geq 0$, after shrinking $Y$ to $Y_{i}$ so that the $i$ steps of the log MMP from the beginning are represented by a bimeromorphic contraction $X_{0}\dashrightarrow X_{i}$ over $Y$, then the isomorphic locus of $X_{0}\dashrightarrow X_{i}$ intersects general points of $S$. 
Let $S_{i}$ be the corresponding lc center of $(X_{i},\Delta_{i})$, and let $(S_{i},\Delta_{S_{i}})$ be the dlt pair defined by adjunction. 
We put $A_{S_{i}}:=A_{i}|_{S_{i}}$ and $A_{S_{i+1}}:=A_{i+1}|_{S_{i+1}}$. 
Suppose the following condition:
\begin{itemize}
\item
For any $i$ and lc center $C_{i}$ of $(X_{i},\Delta_{i})$ whose dimension is less than ${\rm dim}\,S$, the restriction of $\phi_{i}$ to $C_{i}$ is biholomorphic to its image. 
\end{itemize}
Then there exists $i_{0}$ satisfying the following; For every $i \geq i_{0}$, after shrinking $Y$ to $Y_{i+1}$ so that the $(i+1)$-th step of the log MMP is represented by a bimeromorphic contraction $X_{i}\dashrightarrow X_{i+1}$ over $Y$, then the induced meromorphic map $S_{i} \dashrightarrow S_{i+1}$ is a small bimeromorphic contraction and the strict transform of $\Delta_{S_{i}}$ {\rm (}resp.~$A_{S_{i}}${\rm )} to $S_{i+1}$ is $\Delta_{S_{i+1}}$ {\rm (}resp.~$A_{S_{i+1}}${\rm )}. 
\end{thm}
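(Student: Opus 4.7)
The plan is to formalize the argument already sketched in the paragraphs immediately preceding the statement, tracking two invariants along the MMP: the discrepancy-counting function $d_{\mathcal{I}}(S_i,\Delta_{S_i})$ introduced above and the relative Picard number of a small $\mathbb{Q}$-factorialization of $S_i$. First, shrinking $Y$ around $W$ using (P2) and (P3), I would arrange that $K_X$ and $\Delta$ are globally $\mathbb{R}$-Cartier with only finitely many components, so that Shokurov's coefficient restriction (\cite[Corollary 3.10]{shokurov-flip}) places the coefficients of $\Delta_{S_i}$ in the fixed countable set $\mathcal{I}$ for every $i$. Combining this with Lemma \ref{lem--finite-lccenter} and Theorem \ref{thm--dlt-restriction} gives that $d_{\mathcal{I}}(S_i,\Delta_{S_i})$ is a well-defined non-negative integer, invariant under further shrinking of $Y$ around $W$ keeping the relevant lc centers.

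Second, for each $i$ the lc centers of $(S_i,\Delta_{S_i})$ coincide with the lc centers of $(X_i,\Delta_i)$ strictly contained in $S_i$, so they have dimension less than $\dim S$. By the hypothesis of the theorem, $\phi_i$ restricts biholomorphically to each such center, and then Lemma \ref{lem--basic-2}\,(\ref{lem--basic-2-(ii)}) upgrades this to a biholomorphism of $\phi_i$ on a Zariski open neighborhood containing every lc center of $(S_i,\Delta_{S_i})$. Reproducing the discrepancy comparison in \cite[Proof of Proposition 4.2.14]{fujino-sp-ter} in this setting yields $d_{\mathcal{I}}(S_i,\Delta_{S_i}) \geq d_{\mathcal{I}}(S_{i+1},\Delta_{S_{i+1}})$, with strict inequality whenever the induced meromorphic map $S_i\times_{Y_i}Y_{i+1}\dashrightarrow S_{i+1}$ extracts a divisor. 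Finiteness of $d_{\mathcal{I}}$ then furnishes an index $i_1$ past which each $S_i\times_{Y_i}Y_{i+1}\dashrightarrow S_{i+1}$ is a bimeromorphic contraction.

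Third, I would pass to a small $\mathbb{Q}$-factorialization $S'_i\to S_i$ over a neighborhood of $W$ by \cite[Theorem 1.24]{fujino-analytic-bchm}. Since bimeromorphic contractions between $\mathbb{Q}$-factorial varieties over $W$ are non-increasing on the relative Picard number, and strictly decreasing exactly when a divisor is contracted, the sequence $\rho(S'_i/Y;W)$ stabilizes: there exists $i_0\geq i_1$ such that $S_i\dashrightarrow S_{i+1}$ is small for all $i\geq i_0$. Here property (P4) ensures that $\rho(S'_i/Y;W)$ is finite, which is essential for the stabilization argument. Once smallness is established, the statements about strict transforms follow from adjunction $(K_{X_i}+\Delta_i)|_{S_i}=K_{S_i}+\Delta_{S_i}$ being preserved under pushforward by small bimeromorphic maps, together with the definition $A_{i+1}=(\phi_i)_*A_i|_{X_{i+1}}$, as in \cite[Lemma 4.2.15]{fujino-sp-ter}.

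The main technical obstacle is bookkeeping for the Stein neighborhoods $Y_i\supset W$: at each step we must be free to shrink $Y$ around $W$, but we need the invariants $d_{\mathcal{I}}(S_i,\Delta_{S_i})$ and $\rho(S'_i/Y;W)$ to be unaffected so that the monotonicity/finiteness arguments go through. This is precisely what Theorem \ref{thm--dlt-restriction} and the property (P4) are designed to provide, via the reductions in Remark \ref{rem--mmp-reduction-basic}; applying them consistently at every stage and at every lc center whose image meets $W$ is the only delicate point.
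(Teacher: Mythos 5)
Your proposal is correct and follows essentially the same route as the paper: the paper's proof is precisely the discussion preceding the statement in Subsection \ref{subsec-special-termi}, namely Shokurov's coefficient control placing ${\rm coeff}(\Delta_{S_i})$ in $\mathcal{I}$ so that $d_{\mathcal{I}}(S_i,\Delta_{S_i})$ is finite, Lemma \ref{lem--basic-2} to make $\phi_i$ biholomorphic near all lc centers of $(S_i,\Delta_{S_i})$, the monotonicity of $d_{\mathcal{I}}$ from \cite[Proof of Proposition 4.2.14]{fujino-sp-ter} to force eventual bimeromorphic contractions with the strict-transform statement from \cite[Lemma 4.2.15]{fujino-sp-ter}, and finally the small $\mathbb{Q}$-factorialization $S'_i$ of \cite[Theorem 1.24]{fujino-analytic-bchm} together with the non-increasing finite sequence $\rho(S'_i/Y;W)$ to conclude smallness for $i\geq i_0$. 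Your handling of the shrinking of $Y$ via Theorem \ref{thm--dlt-restriction}, (P4), and Remark \ref{rem--mmp-reduction-basic} matches the reductions the paper performs implicitly.
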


By shifting the log MMP, we assume the conclusion of Theorem \ref{thm--sp-ter-1}. 
By shrinking $Y$ around $W$, we may assume that the first step of the MMP is represented by a bimeromorphic contraction over $Y$. 
Over $Y$, we have diagrams
$$
\xymatrix{
(X_{0},\Delta_{0})\ar@{-->}[rr]^{\phi_{0}}\ar[dr]_-{f_{0}}&&(X_{1},\Delta_{1}),\ar[dl]^-{f'_{0}}\\
&Z_{0}
}\qquad 
\xymatrix{
(S_{0},\Delta_{S_{0}})\ar@{-->}[rr]^{\phi_{S_{0}}}\ar[dr]_-{f_{S_{0}}}&&(S_{1},\Delta_{S_{1}}),\ar[dl]^-{f'_{S_{0}}}\\
&Z_{0}
}
$$
where $f_{0}$ is the $(K_{X_{0}}+\Delta_{0})$-negative extremal contraction, $f_{S_{0}}:=f_{0}|_{S_{0}}$, $f'_{S_{0}}:=f'_{0}|_{S_{1}}$, and $\phi_{S_{0}}:=\phi_{0}|_{S_{0}}$.  
By construction, $-(K_{S_{0}}+\Delta_{S_{0}})$ and $K_{S_{1}}+\Delta_{S_{1}}$ are ample over $Z_{0}$ and $\phi_{S_{0}*}\Delta_{S_{0}}=\Delta_{S_{1}}$. 
By \cite[Theorem 1.21]{fujino-analytic-bchm} and shrinking $Y$ around $W$, we get a dlt blow-up 
$g_{0} \colon (T_{0},\Gamma_{0}) \to (S_{0},\Delta_{S_{0}})$
 of $(S_{0},\Delta_{S_{0}})$ such that $T_{0}$ is $\mathbb{Q}$-factorial over $W$. 
Let $H_{Z_{0}}$ be a Cartier divisor on $Z_{0}$ which is ample over $Y$, and let $H_{S_{0}} \geq 0$ be a $\mathbb{Q}$-Cartier divisor on $S_{0}$ such that $H_{S_{0}} \sim_{\mathbb{R}} (3\cdot {\rm dim}\,X) f_{S_{0}}^{*}H_{Z_{0}}$. 
We put $H_{S_{1}}=\phi_{S_{0}*}H_{S_{0}}$. 
By choosing $H_{S_{0}}$ appropriately, we may assume that $(S_{0},\Delta_{S_{0}}+H_{S_{0}})$ and $(S_{1},\Delta_{S_{1}}+H_{S_{1}})$ are dlt pairs. 
Shrinking $Y$ around $W$, we may assume that $K_{S_{1}}+\Delta_{S_{1}}+H_{S_{1}}$ is ample over $Y$, and $(S_{1},\Delta_{S_{1}}+H_{S_{1}})$ is a weak lc model of both $(S_{0},\Delta_{S_{0}}+H_{S_{0}})$ and $(T_{0},\Gamma_{0}+g_{0}^{*}H_{S_{0}})$. 

We put $H_{T_{0}}:=g_{0}^{*}H_{S_{0}}$. 
We run a $(K_{T_{0}}+\Gamma_{0}+H_{T_{0}})$-MMP over $Y$ around $W$ with scaling of an ample divisor. 
By Lemma \ref{lem--mmp-ample-scaling-nefthreshold} and Theorem \ref{thm--termination-mmp} and shrinking $Y$ around $W$, the $(K_{T_{0}}+\Gamma_{0}+H_{T_{0}})$-MMP terminates with a log minimal model $(T_{k_{1}},\Gamma_{k_{1}}+H_{T_{k_{1}}})$ over $Y$ around $W$. 
By Lemma \ref{lem--two-logminmodel}, $K_{T_{k_{1}}}+\Gamma_{k_{1}}+H_{T_{k_{1}}}$ is semi-ample over $Y$ around $W$. 
Then $K_{T_{k_{1}}}+\Gamma_{k_{1}}+H_{T_{k_{1}}}$ defines a contraction over $Y$, and the target of the contraction is biholomorphic to $S_{1}$ because $K_{S_{1}}+\Delta_{S_{1}}+H_{S_{1}}$ is ample over $Y$. 
Let $g_{1} \colon T_{k_{1}} \to S_{1}$ be the contraction. 
Then $K_{T_{k_{1}}}+\Gamma_{k_{1}}+H_{T_{k_{1}}}=g_{1}^{*}(K_{S_{1}}+\Delta_{S_{1}}+H_{S_{1}})$ and $H_{T_{k_{1}}}=g_{1}^{*}H_{S_{1}}$. 
Thus we have $K_{T_{k_{1}}}+\Gamma_{k_{1}}=g_{1}^{*}(K_{S_{1}}+\Delta_{S_{1}})$. 
Moreover, $(T_{k_{1}},\Gamma_{k_{1}})$ is dlt and $T_{k_{1}}$ is $\mathbb{Q}$-factorial over $W$. 
In this way, we get a diagram
$$
\xymatrix{
 (T_{0},\Gamma_{0})\ar[d]_{g_{0}}\ar@{-->}[rr] &&(T_{k_{1}},\Gamma_{k_{1}})\ar[d]^{g_{1}}\\
(S_{0},\Delta_{S_{0}})\ar@{-->}[rr]^{\phi_{S_{0}}}\ar[dr]_-{f_{S_{0}}}&&(S_{1},\Delta_{S_{1}})\ar[dl]^-{f'_{S_{0}}}\\
&Z_{0},
}
$$
such that $T_{0}\dashrightarrow T_{k_{1}}$ is a sequence of steps of a $(K_{T_{0}}+\Gamma_{0}+H_{T_{0}})$-MMP over $Y$ around $W$ with scaling of an ample divisor. 

We set $A_{T_{0}}:=g_{0}^{*}A_{S_{0}}$. 
We check that $(T_{0},\Gamma_{0}) \dashrightarrow (T_{k_{1}},\Gamma_{k_{1}})$ is a sequence of steps of a $(K_{T_{0}}+\Gamma_{0})$-MMP over $Y$ around $W$ with scaling of $A_{T_{0}}$. 
By applying Lemma \ref{lem--relative-mmp} to $T_{0} \to Z_{0} \to Y$, we see that the $(K_{T_{0}}+\Gamma_{0}+H_{T_{0}})$-MMP is also a $(K_{T_{0}}+\Gamma_{0})$-MMP over $Y$ around $W$. 
Put
$$\lambda_{0}:={\rm inf}\{t \in \mathbb{R}_{\geq 0}\,|\, \text{$K_{X_{0}}+\Delta_{0}+tA_{0}$ {\rm is nef over} $W$}\}.$$
Then $K_{X_{0}}+\Delta_{0}+\lambda_{0}A_{0}$ is nef over $W$, and we get $K_{X_{0}}+\Delta_{0}+\lambda_{0}A_{0} \sim_{\mathbb{R},Z_{0}}0$ after shrinking $Y$ around $W$. 
Then $K_{T_{0}}+\Gamma_{0}+\lambda_{0}A_{T_{0}}$ is nef over $W$ and $K_{T_{0}}+\Gamma_{0}+\lambda_{0}A_{T_{0}} \sim_{\mathbb{R},Z_{0}}0$. 
From these facts, we see that $(T_{0},\Gamma_{0}) \dashrightarrow (T_{k_{1}},\Gamma_{k_{1}})$ is a sequence of steps of a $(K_{T_{0}}+\Gamma_{0})$-MMP over $Y$ around $W$ with scaling of $A_{T_{0}}$. 

Repeating the above discussion, we get 
$$
\xymatrix{
(T_{0},\Gamma_{0})\ar[d]_{g_{0}}\ar@{-->}[r] &(T_{k_{1}},\Gamma_{k_{1}})\ar[d]_{g_{1}}\ar@{-->}[r]& \cdots\ar@{-->}[r]&  (T_{k_{i}},\Gamma_{k_{i}})\ar[d]^{g_{i}}\ar@{-->}[r]& \cdots\\
(S_{0},\Delta_{S_{0}})&(S_{1},\Delta_{S_{1}})& \cdots&  (S_{i},\Delta_{S_{i}})& \cdots
}
$$
such that 
\begin{itemize}
\item
the sequence of horizontal arrows is a sequence of steps of a $(K_{T_{0}}+\Gamma_{0})$-MMP over $Y$ around $W$ with scaling of $A_{T_{0}}$, and 
\item
each $g_{i} \colon T_{k_{i}} \to S_{i}$ is a dlt blow-up of $(S_{i},\Delta_{S_{i}})$ and $T_{k_{i}}$ are $\mathbb{Q}$-factorial over $W$. 
\end{itemize}
Note that $(T_{k_{i}},\Gamma_{k_{i}}) \dashrightarrow (T_{k_{i+1}},\Gamma_{k_{i+1}})$ is not necessarily one step of a $(K_{T_{k_{i}}}+\Gamma_{k_{i}})$-MMP. 

For each $i \geq 0$, we define
\begin{equation*}
\begin{split}
\lambda_{i}:=&{\rm inf}\{t \in \mathbb{R}_{\geq 0}\,|\, \text{$K_{X_{i}}+\Delta_{i}+tA_{i}$ {\rm is nef over} $W$}\},\qquad{\rm and}\\
\mu_{i}:=&{\rm inf}\{t \in \mathbb{R}_{\geq 0}\,|\, \text{$K_{T_{k_{i}}}+\Gamma_{k_{i}}+tA_{T_{k_{i}}}$ {\rm is nef over} $W$}\}.
\end{split}
\end{equation*}
Then we have $\lambda_{i} \geq \mu_{i}$ for all $i$. 
Therefore, if ${\rm lim}_{i \to \infty}\lambda_{i}=0$, then ${\rm lim}_{i \to \infty}\mu_{i}=0$. 
If we assume in addition that $(S_{0},\Delta_{S_{0}})$ has a log minimal model over $Y$ around $W$, then so does $(T_{0},\Gamma_{0})$, and therefore Theorem \ref{thm--termination-mmp} implies that the $(K_{T_{0}}+\Gamma_{0})$-MMP terminates. 
This implies that $K_{S_{i}}+\Delta_{S_{i}}$ are nef over $W$ for all $i \gg 0$. 
Then, for every $i \gg0$, after shrinking $Y$ around $W$ with respect to $i$, the induced bimeromorphic contraction 
$$(S_{i},\Delta_{S_{i}}) \dashrightarrow (S_{i+1},\Delta_{S_{i+1}})$$
is a biholomorphism over $Y$. 
By Theorem \ref{lem--basic-2}, the $(i+1)$-th step of the $(K_{X}+\Delta)$-MMP over $Y$ around $W$ does not modify a Zariski open neighborhood of $S_{i}$ for all $i \gg i_{0}$.  

By the above argument, we obtain the following statement. 

\begin{thm}\label{thm--sp-ter-2}
Let $\pi \colon X \to Y$ be a contraction from a normal analytic variety $X$ to a Stein space $Y$, and let $W \subset Y$ be a connected compact subset such that $\pi$ and $W$ satisfy (P). 
Let $(X,\Delta)$ be a dlt pair. 
Suppose that $X$ is $\mathbb{Q}$-factorial over $W$. 
Let 
$$(X_{0},\Delta_{0})\dashrightarrow (X_{1},\Delta_{1})\dashrightarrow \cdots \dashrightarrow (X_{n},\Delta_{n}) \dashrightarrow \cdots$$
be a sequence of steps of a $(K_{X}+\Delta)$-MMP over $Y$ around $W$ with scaling of an effective $\mathbb{R}$-divisor $A$. 
Let $\{Y_{i}\}_{i \geq 0}$ and $\{\phi_{i}\colon X_{i} \dashrightarrow X'_{i}\}_{i\geq 0}$ be the data of the log MMP. 
Let $S$ be an lc center of $(X,\Delta)$ and $(S,\Delta_{S})$ the dlt pair defined by adjunction $K_{S}+\Delta_{S}=(K_{X}+\Delta)|_{S}$. 
Suppose that for every $i \geq 0$, after we shrink $Y$ to $Y_{i}$ so that the $(K_{X_{0}}+\Delta_{0})$-MMP $(X_{0},\Delta_{0}) \dashrightarrow (X_{i},\Delta_{i})$ is represented by a bimeromorphic contraction $X_{0}\dashrightarrow X_{i}$ over $Y$, the isomorphic locus of $X_{0}\dashrightarrow X_{i}$ intersects general points of $S$. 
Let $S_{i}$ be the corresponding lc center of $(X_{i},\Delta_{i})$, and let $(S_{i},\Delta_{S_{i}})$ be the dlt pair defined by adjunction. 
Suppose the following conditions: 
\begin{itemize}
\item
For any $i$ and lc center $C_{i}$ of $(X_{i},\Delta_{i})$ whose dimension is less than ${\rm dim}\,S$, the restriction of $\phi_{i}$ to $C_{i}$ is biholomorphic to its image, and
\item 
if we put $\lambda_{i}:={\rm inf}\{t \in \mathbb{R}_{\geq 0}\,|\, \text{$K_{X_{i}}+\Delta_{i}+tA_{i}$ {\rm is nef over} $W$}\}$, then ${\rm lim}_{i \to \infty}\lambda_{i}=0$.
\end{itemize}
Then there exists an $i_{0}$ satisfying the following; If $(S_{i_{0}},\Delta_{i_{0}})$ has a log minimal model over $Y$ around $W$ after shrinking $Y$, then the $(i+1)$-th step of the $(K_{X}+\Delta)$-MMP does not modify a Zariski open neighborhood of $S_{i}$ for all $i \gg i_{0}$.  
\end{thm}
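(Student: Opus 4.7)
The plan is to reduce the special termination statement to ordinary termination on a family of dlt blow-ups of the sequence $(S_i,\Delta_{S_i})$. First, I would apply Theorem \ref{thm--sp-ter-1} to produce an index beyond which each induced map $\phi_{S_i} \colon (S_i,\Delta_{S_i}) \dashrightarrow (S_{i+1},\Delta_{S_{i+1}})$ is a small bimeromorphic contraction with matched boundaries and with the strict transform of $A_{S_i}$ equal to $A_{S_{i+1}}$; after shifting the indices, I assume this holds from $i=0$.

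Next, I would take a dlt blow-up $g_0 \colon (T_0,\Gamma_0) \to (S_0,\Delta_{S_0})$ with $T_0$ $\mathbb{Q}$-factorial over $W$, and inductively construct dlt blow-ups $g_i \colon (T_{k_i},\Gamma_{k_i}) \to (S_i,\Delta_{S_i})$ linked by a single sequence of steps of an MMP. Pulling back a sufficiently positive divisor $H_{T_{k_i}} = g_i^*H_{S_i}$ with $H_{S_i} \sim_{\mathbb{R},\,Y} (3 \cdot {\rm dim}\,X) f_{S_i}^* H_{Z_i}$, I would run a $(K_{T_{k_i}}+\Gamma_{k_i}+H_{T_{k_i}})$-MMP over $Y$ around $W$ with scaling of an ample divisor. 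The perturbed pair is klt with big boundary, so Lemma \ref{lem--mmp-ample-scaling-nefthreshold} and Theorem \ref{thm--termination-mmp} force termination with a log minimal model, and the resulting ample contraction is biholomorphic to $S_{i+1}$ because $K_{S_{i+1}}+\Delta_{S_{i+1}}+H_{S_{i+1}}$ is ample over $Y$. By Lemma \ref{lem--relative-mmp}, the concatenation of these MMPs is a single sequence of steps of a $(K_{T_0}+\Gamma_0)$-MMP over $Y$ around $W$ with scaling of $A_{T_0} := g_0^* A_{S_0}$.

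Write $\mu_i$ for the nef thresholds of this lifted MMP. After shrinking $Y$ so that $K_{X_i}+\Delta_i+\lambda_i A_i$ is numerically trivial over the $(i+1)$-th extremal contraction, the relation $K_{T_{k_i}}+\Gamma_{k_i}+\lambda_i A_{T_{k_i}} = g_i^*(K_{S_i}+\Delta_{S_i}+\lambda_i A_{S_i})$ gives $\mu_i \le \lambda_i$, hence ${\rm lim}_{i \to \infty} \mu_i = 0$ by hypothesis. If $(S_{i_0},\Delta_{S_{i_0}})$ admits a log minimal model over $Y$ around $W$, then so does $(T_0,\Gamma_0)$ by Lemma \ref{lem--exist-model-birat-I}. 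Applying Theorem \ref{thm--termination-mmp} to the lifted MMP then produces some $n$ with $\mu_n = 0$, so $K_{T_{k_i}}+\Gamma_{k_i}$, and hence $K_{S_i}+\Delta_{S_i}$, is nef over $W$ for all $i \gg i_0$.

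Finally, since $\phi_{S_i}$ is a small bimeromorphic contraction with matched boundaries and with $K+\Delta$ nef on both sides, Corollary \ref{cor--negativity-veryexc-nef} applied to a common resolution in both directions shows that $\phi_{S_i}$ is crepant, hence a biholomorphism after shrinking $Y$. Lemma \ref{lem--basic-2}~(ii) then produces a Zariski open neighborhood of $S_i$ on which $\phi_i$ is biholomorphic onto its image, which is the desired conclusion. The main delicate point I expect is the uniform bookkeeping of the shrinkings of $Y$ around $W$ throughout the inductive construction of the $(T_{k_i},\Gamma_{k_i})$: the framework of Remark \ref{rem--mmp-reduction-basic}, in which shrinking preserves the combinatorics of lc centers, should allow these shrinkings to be absorbed into the $Y_i$ already tracked by the original MMP without damaging the matching between $\phi_i$ and $\phi_{S_i}$ guaranteed by Theorem \ref{thm--sp-ter-1}.
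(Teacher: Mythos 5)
Your proposal is correct and follows essentially the same route as the paper's own argument in the special termination subsection: reduction via Theorem \ref{thm--sp-ter-1}, lifting each $(S_i,\Delta_{S_i})$ to a $\mathbb{Q}$-factorial dlt model $T_{k_i}$ by a dlt blow-up, running the auxiliary $(K_{T_{k_i}}+\Gamma_{k_i}+H_{T_{k_i}})$-MMP with ample scaling and identifying its ample model with $S_{i+1}$, gluing everything by Lemma \ref{lem--relative-mmp} into one $(K_{T_0}+\Gamma_0)$-MMP with scaling of $A_{T_0}$, comparing nef thresholds $\mu_i\leq\lambda_i$, terminating via the assumed log minimal model of $(S_{i_0},\Delta_{S_{i_0}})$ and Theorem \ref{thm--termination-mmp}, and concluding with Lemma \ref{lem--basic-2}~(\ref{lem--basic-2-(ii)}). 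Two justifications should be phrased as in the paper: the termination of the auxiliary MMP does not come from the lifted pair being ``klt with big boundary'' (it is dlt, generally not klt), but from the fact that $(S_{i+1},\Delta_{S_{i+1}}+H_{S_{i+1}})$ is a weak lc model of it, which is exactly the hypothesis of Theorem \ref{thm--termination-mmp}, while Lemma \ref{lem--mmp-ample-scaling-nefthreshold} only needs the existence of some klt pair on $T_{k_i}$; and the biholomorphy of $\phi_{S_i}$ for $i\gg 0$ is not a consequence of crepancy alone (small crepant maps such as flops need not be biholomorphisms) but uses that $-(K_{S_i}+\Delta_{S_i})$ and $K_{S_{i+1}}+\Delta_{S_{i+1}}$ are ample over $Z_i$ together with the nefness over $W$ just established.
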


\subsection{Lift of MMP}\label{subsection--lift-mmp}

In this subsection, we discuss the lift of MMP (\cite[Remark 2.9]{birkar-flip}). 

Let $\pi \colon X \to Y$ be a contraction from a normal analytic variety $X$ to a Stein space $Y$, and let $W \subset Y$ be a connected compact subset such that $\pi$ and $W$ satisfy (P). 
Let $(X,\Delta)$ be an lc pair, and let 
$$(X_{0},\Delta_{0})\dashrightarrow (X_{1},\Delta_{1})\dashrightarrow \cdots \dashrightarrow (X_{n},\Delta_{n}) \dashrightarrow \cdots$$
be a sequence of steps of a $(K_{X}+\Delta)$-MMP over $Y$ around $W$ with scaling of an effective $\mathbb{R}$-divisor $A$. 

By shrinking $Y$ around $W$, we may assume that the first step of the MMP is represented by a bimeromorphic contraction over $Y$. 
Over $Y$, we have a diagram
$$
\xymatrix{
(X_{0},\Delta_{0})\ar@{-->}[rr]^{\phi_{0}}\ar[dr]_-{f_{0}}&&(X_{1},\Delta_{1})\ar[dl]^-{f'_{0}}\\
&Z_{0}
}
$$
where $f_{0}$ is the $(K_{X_{0}}+\Delta_{0})$-negative extremal contraction. 
Note that $K_{X_{1}}+\Delta_{1}$ is ample over $Z_{0}$. 
By \cite[Theorem 1.21]{fujino-analytic-bchm} and shrinking $Y$ around $W$, we get a dlt blow-up 
$g_{0} \colon (\widetilde{X}_{0},\widetilde{\Delta}_{0}) \to (X_{0},\Delta_{0})$
 of $(X_{0},\Delta_{0})$ such that $\widetilde{X}_{0}$ is $\mathbb{Q}$-factorial over $W$. 
Let $H_{Z_{0}}$ be a Cartier divisor on $Z_{0}$ which is ample over $Y$, and let $H_{0} \geq 0$ be a $\mathbb{Q}$-Cartier divisor on $X_{0}$ such that $H_{0} \sim_{\mathbb{R}} (3\cdot {\rm dim}\,X) f_{0}^{*}H_{Z_{0}}$. 
We put $H_{{1}}=\phi_{0*}H_{0}$. 
By choosing $H_{0}$ appropriately, we may assume that $(X_{0},\Delta_{0}+H_{0})$ and $(X_{1},\Delta_{1}+H_{1})$ are lc pairs. 
Shrinking $Y$ around $W$, we may assume that $K_{X_{1}}+\Delta_{1}+H_{1}$ is ample over $Y$, and $(X_{1},\Delta_{1}+H_{1})$ is a weak lc model of both $(X_{0},\Delta_{0}+H_{0})$ and $(\widetilde{X}_{0},\widetilde{\Delta}_{0}+g_{0}^{*}H_{0})$. 

Put $\widetilde{H}_{0}:=g_{0}^{*}H_{0}$. 
We run a $(K_{\widetilde{X}_{0}}+\widetilde{\Delta}_{0}+\widetilde{H}_{0})$-MMP over $Y$ around $W$ with scaling of an ample divisor. 
By Theorem \ref{lem--mmp-ample-scaling-nefthreshold} and Theorem \ref{thm--termination-mmp} and shrinking $Y$ around $W$, the $(K_{\widetilde{X}_{0}}+\widetilde{\Delta}_{0}+\widetilde{H}_{0})$-MMP terminates with a log minimal model $(\widetilde{X}_{k_{1}},\widetilde{\Delta}_{k_{1}}+\widetilde{H}_{k_{1}})$ over $Y$ around $W$. 
By Lemma \ref{lem--two-logminmodel}, $K_{\widetilde{X}_{k_{1}}}+\widetilde{\Delta}_{k_{1}}+\widetilde{H}_{k_{1}}$ is semi-ample over $Y$ around $W$. 
Therefore, the divisor defines a contraction over $Y$, and the target of the contraction is biholomorphic to $X_{1}$ because $K_{X_{1}}+\Delta_{1}+H_{1}$ is ample over $Y$. 
Let $g_{1} \colon \widetilde{X}_{k_{1}} \to X_{1}$ be the contraction. 
Then $K_{\widetilde{X}_{k_{1}}}+\widetilde{\Delta}_{k_{1}}+\widetilde{H}_{k_{1}}=g_{1}^{*}(K_{X_{1}}+\Delta_{1}+H_{1})$ and $\widetilde{H}_{k_{1}}=g_{1}^{*}H_{1}$. 
Thus we have $K_{\widetilde{X}_{k_{1}}}+\widetilde{\Delta}_{k_{1}}=g_{1}^{*}(K_{X_{1}}+\Delta_{1})$. 
Moreover, $(\widetilde{X}_{k_{1}},\widetilde{\Delta}_{k_{1}})$ is dlt and $\widetilde{X}_{k_{1}}$ is $\mathbb{Q}$-factorial over $W$. 
In this way, we get a diagram
$$
\xymatrix{
 (\widetilde{X}_{0},\widetilde{\Delta}_{0})\ar[d]_{g_{0}}\ar@{-->}[rr] &&(\widetilde{X}_{k_{1}},\widetilde{\Delta}_{k_{1}})\ar[d]^{g_{1}}\\
(X_{0},\Delta_{0})\ar@{-->}[rr]^{\phi_{0}}\ar[dr]_-{f_{0}}&&(X_{1},\Delta_{1})\ar[dl]^-{f'_{0}}\\
&Z_{0},
}
$$
such that $\widetilde{X}_{0}\dashrightarrow \widetilde{X}_{k_{1}}$ is a sequence of steps of a $(K_{\widetilde{X}_{0}}+\widetilde{\Delta}_{0}+\widetilde{H}_{0})$-MMP over $Y$ around $W$ with scaling of an ample divisor. 

We set $\widetilde{A}_{0}:=g_{0}^{*}A_{0}$. 
We check that $(\widetilde{X}_{0},\widetilde{\Delta}_{0}) \dashrightarrow (\widetilde{X}_{k_{1}},\widetilde{\Delta}_{k_{1}})$ is a sequence of steps of a $(K_{\widetilde{X}_{0}}+\widetilde{\Delta}_{0})$-MMP over $Y$ around $W$ with scaling of $\widetilde{A}_{0}$. 
By applying Lemma \ref{lem--relative-mmp} to $\widetilde{X}_{0} \to Z_{0} \to Y$, we see that the $(K_{\widetilde{X}_{0}}+\widetilde{\Delta}_{0}+\widetilde{H}_{0})$-MMP is also a $(K_{\widetilde{X}_{0}}+\widetilde{\Delta}_{0})$-MMP over $Y$ around $W$. 
Put
$$\lambda_{0}:={\rm inf}\{t \in \mathbb{R}_{\geq 0}\,|\, \text{$K_{X_{0}}+\Delta_{0}+tA_{0}$ {\rm is nef over} $W$}\}.$$
Then $K_{X_{0}}+\Delta_{0}+\lambda_{0}A_{0}$ is nef over $W$, and we get $K_{X_{0}}+\Delta_{0}+\lambda_{0}A_{0} \sim_{\mathbb{R},Z_{0}}0$ after shrinking $Y$ around $W$. 
This fact implies that $(\widetilde{X}_{0},\widetilde{\Delta}_{0}) \dashrightarrow (\widetilde{X}_{k_{1}},\widetilde{\Delta}_{k_{1}})$ is a sequence of steps of a $(K_{\widetilde{X}_{0}}+\widetilde{\Delta}_{0})$-MMP over $Y$ around $W$ with scaling of $\widetilde{A}_{0}$. 

Repeating the above discussion, we get 
$$
\xymatrix{
(\widetilde{X}_{0},\widetilde{\Delta}_{0})\ar[d]_{g_{0}}\ar@{-->}[r] &(\widetilde{X}_{k_{1}},\widetilde{\Delta}_{k_{1}})\ar[d]_{g_{1}}\ar@{-->}[r]& \cdots\ar@{-->}[r]&  (\widetilde{X}_{k_{i}},\widetilde{\Delta}_{k_{i}})\ar[d]^{g_{i}}\ar@{-->}[r]& \cdots\\
(X_{0},\Delta_{0})\ar@{-->}[r]&(X_{1},\Delta_{1})\ar@{-->}[r]& \cdots\ar@{-->}[r]&  (X_{i},\Delta_{i})\ar@{-->}[r]& \cdots
}
$$
such that 
\begin{itemize}
\item
the sequence of upper horizontal arrows is a sequence of steps of a $(K_{\widetilde{X}_{0}}+\widetilde{\Delta}_{0})$-MMP over $Y$ around $W$ with scaling of $\widetilde{A}_{0}$, and 
\item
each $g_{i} \colon \widetilde{X}_{k_{i}} \to X_{i}$ is a dlt blow-up of $(X_{i},\Delta_{i})$ and $\widetilde{X}_{k_{i}}$ are $\mathbb{Q}$-factorial over $W$. 
\end{itemize}
Note that the arrows are not necessarily a bimeromorphic maps (see Definition \ref{defn--mmp}) and $(\widetilde{X}_{k_{i}},\Delta_{k_{i}}) \dashrightarrow (\widetilde{X}_{k_{i+1}},\Delta_{k_{i+1}})$ is not necessarily one step of a $(K_{\widetilde{X}_{k_{i}}}+\Delta_{k_{i}})$-MMP. 
We call it the {\em lift of the $(K_{X}+\Delta)$-MMP}.

\section{Asymptotic vanishing order and Nakayama--Zariski decomposition}\label{sec4}

In this section, we study relations between the Nakayama--Zariski decomposition and existence of log minimal models for lc pairs. 
For the statements in the algebraic case, see \cite{bhzariski} and \cite[Subsection 2.4]{has-finite}.

\subsection{Definitions and basic properties}

In this subsection, we collect the definition and basic properties of Nakayama--Zariski decomposition.  
We freely use the results in \cite[III, \S 4]{nakayama}. 
For the algebraic case, see \cite{liuxie-relative-nakayama}. 

\begin{defn}\label{defn--asymvanorder}
Let $\pi \colon X \to Y$ be a projective morphism from a non-singular analytic variety $X$ to an analytic space $Y$. 
Let $D$ be a $\pi$-big $\mathbb{R}$-Cartier divisor on $X$. 
Let $P$ be a prime divisor on $X$. 
We define $m_{P}$ to be $\infty$ if $\pi_{*}\mathcal{O}_{X}(\lfloor D\rfloor)=0$, and otherwise
$$m_{P}(D):={\rm max}\set{n \in \mathbb{Z}_{\geq 0}|\text{$\pi_{*}\mathcal{O}_{X}(\lfloor D\rfloor-nP)\hookrightarrow \pi_{*}\mathcal{O}_{X}(\lfloor D\rfloor)$ is an isomorphism}}.$$
We set $\sigma_{P}(D;X/Y)_{\mathbb{Z}}:=m_{P}(D)+{\rm coeff}_{P}(\{D\})$, and we define 
$$\sigma_{P}(D;X/Y):=\underset{m \to \infty}{\rm liminf}\frac{1}{m}\sigma_{P}(mD;X/Y)_{\mathbb{Z}}.$$
\end{defn}

\begin{rem}
By the definition of $\pi$-bigness (\cite[Definition 2.46]{fujino-analytic-bchm}) and the standard argument of divisors (cf.~\cite[II, 3.17. Corollary]{nakayama}), there exists $m_{0} \in \mathbb{Z}_{>0}$ such that $\pi_{*}\mathcal{O}_{X}(\lfloor mD\rfloor) \neq 0$ for all $m \geq m_{0}$. 
Thus the equality
$$\sigma_{P}(D;X/Y):=\underset{m \to \infty}{\rm lim}\frac{1}{m}\sigma_{P}(mD;X/Y)_{\mathbb{Z}}$$
holds. 
\end{rem}

\begin{lem}\label{lem--asymvanorder-basic}
Let $\pi \colon X \to Y$ be a projective morphism from a non-singular analytic variety $X$ to an analytic space $Y$. 
Let $D$ be a $\pi$-big $\mathbb{R}$-Cartier divisor on $X$ and let $P$ be a prime divisor on $X$. 
If $Y$ is Stein, then the equalities
$$\sigma_{P}(D;X/Y)={\rm inf}\set{{\rm coeff}_{P}(E)| E \in |D/Y|_{\mathbb{R}}}={\rm inf}\set{{\rm coeff}_{P}(E)| E \in |D|_{\mathbb{R}}}$$
hold.
\end{lem}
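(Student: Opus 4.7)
The plan is to establish the two inequalities $\inf|D|_{\mathbb{R}} \leq \sigma_{P}(D;X/Y)$ and $\sigma_{P}(D;X/Y) \leq \inf|D/Y|_{\mathbb{R}}$; combined with the obvious $\inf|D/Y|_{\mathbb{R}} \leq \inf|D|_{\mathbb{R}}$ coming from $|D|_{\mathbb{R}} \subset |D/Y|_{\mathbb{R}}$, these three inequalities force equality throughout.

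For $\inf|D|_{\mathbb{R}} \leq \sigma_{P}(D;X/Y)$, I would first translate the sheaf-theoretic definition of $m_{P}$ into a global-section statement. Specifically, for any integer Cartier divisor $D'$ on $X$ with $\pi_{*}\mathcal{O}_{X}(D') \neq 0$, I claim that under the Stein hypothesis
\[
m_{P}(D') = \min\bigl\{{\rm coeff}_{P}(E') : E' \geq 0,\ E' \sim D' \text{ globally on }X\bigr\}.
\]
Indeed, from the short exact sequence $0 \to \pi_{*}\mathcal{O}_{X}(D'-nP) \to \pi_{*}\mathcal{O}_{X}(D') \to \mathcal{Q}_{n} \to 0$, Cartan's Theorem B on the Stein space $Y$ gives $H^{1}(Y, \pi_{*}\mathcal{O}_{X}(D'-nP)) = 0$, so $H^{0}(Y,\mathcal{Q}_{n})$ is the quotient $H^{0}(X,\mathcal{O}_{X}(D'))/H^{0}(X,\mathcal{O}_{X}(D'-nP))$; Cartan's Theorem A then yields $\mathcal{Q}_{n} = 0 \Leftrightarrow H^{0}(Y, \mathcal{Q}_{n}) = 0$, which is precisely the claim. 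Since $D$ is $\pi$-big, $\pi_{*}\mathcal{O}_{X}(\lfloor mD\rfloor) \neq 0$ for all large $m$, so picking $s_{m}$ realizing the minimum for $D' = \lfloor mD\rfloor$ produces an effective $\mathbb{R}$-divisor $E_m := (\mathrm{div}(s_{m}) + \lfloor mD\rfloor + \{mD\})/m \in |D|_{\mathbb{R}}$ with $P$-coefficient equal to $\sigma_{P}(mD;X/Y)_{\mathbb{Z}}/m$, and passing to the liminf in $m$ gives the inequality.

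For $\sigma_{P}(D;X/Y) \leq \inf|D/Y|_{\mathbb{R}}$, given $E \in |D/Y|_{\mathbb{R}}$ with $D - E \sim_{\mathbb{R}} \pi^{*}G$ for some $\mathbb{R}$-Cartier $G$ on $Y$, I would construct for each large $m$ a nonzero stalk section of $\pi_{*}\mathcal{O}_{X}(\lfloor mD\rfloor)$ at a general point $y_{0} \in \pi(P)$ whose associated effective integer divisor has $P$-coefficient bounded by $m\cdot{\rm coeff}_{P}(E) + O(1)$; the first-part argument shows $m_{P}(\lfloor mD\rfloor)$ agrees with the minimum over such stalk sections, so this produces $m_{P}(\lfloor mD\rfloor) \leq m\cdot{\rm coeff}_{P}(E) + O(1)$ and the claim follows by dividing by $m$ and taking $\liminf$. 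After shrinking $Y$ to a small Stein neighborhood $V$ of $y_{0}$ on which $G|_{V}$ becomes $\mathbb{R}$-principal and $D$, $E$ decompose into finitely many Cartier components, a convex-geometric argument modeled on the proof of Proposition \ref{prop--r-base-locus} produces finitely many $\mathbb{Q}$-Cartier divisors $D^{(l)}$ on $\pi^{-1}(V)$ with $D = \sum_{l} t_{l} D^{(l)}$ a convex combination and $D^{(l)} \sim_{\mathbb{Q},V} E^{(l)}$ for effective $\mathbb{Q}$-divisors $E^{(l)}$; taking integer weights approximating $m t_{l}$ and clearing denominators produces the required integer stalk section, with rounding absorbed into the $O(1)$ term.

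The main obstacle lies in the second inequality: the rational and convex approximations must be organized so that the cumulative error from rounding to integers, from the fractional part $\{mD\}$, and from the pullback contribution of $\pi^{*}G$ (which affects the $P$-coefficient precisely when $\pi(P)$ contains a divisor in $G$) remains bounded uniformly in $m$. The $\pi$-bigness of $D$ provides the finiteness needed for the polytope argument after shrinking $Y$, and the Stein hypothesis is crucial in ensuring that local (stalk) sections suffice to compute $m_{P}(\lfloor mD\rfloor)$, which is precisely what allows the relative linear equivalence $\sim_{\mathbb{R},Y}$ to control the global invariant $\sigma_{P}(D;X/Y)$.
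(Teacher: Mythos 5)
Your first inequality ($\inf\{{\rm coeff}_{P}(E): E\in|D|_{\mathbb{R}}\}\leq\sigma_{P}(D;X/Y)$) is essentially the paper's argument: Cartan's Theorems A and B over the Stein base convert $m_{P}$ into a minimum of coefficients over global sections, exactly as in the paper, and the passage to $|D|_{\mathbb{R}}$ is correct. The divergence, and the gap, is in the hard inequality $\sigma_{P}(D;X/Y)\leq{\rm coeff}_{P}(E)$ for $E\in|D/Y|_{\mathbb{R}}$. Your localization idea is sound in principle (a stalk section over a small $V$ meeting $\pi(P)$ does bound $m_{P}(\lfloor mD\rfloor)$ from above, because failure of the sheaf isomorphism at one stalk suffices; and over a small Stein $V$ the relative equivalence $\sim_{\mathbb{R},Y}$ becomes absolute $\mathbb{R}$-linear equivalence on $\pi^{-1}(V)$, killing the $\pi^{*}G$ contribution). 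But the quantitative step you assert --- ``taking integer weights approximating $mt_{l}$ and clearing denominators produces the required integer stalk section, with rounding absorbed into the $O(1)$ term'' --- does not go through as stated. Writing $\lfloor mD\rfloor=\sum_{l}n_{l}D^{(l)}+R_{m}$ with $n_{l}$ integral multiples clearing denominators, the error $R_{m}$ is integral with coefficients bounded independently of $m$, but it is in general \emph{not effective} (it picks up negative contributions along components of $D$ with negative or irrational coefficients and from $\{mD\}$), so $\sum_{l}n_{l}E^{(l)}+R_{m}$ need not be a divisor of a section of $\mathcal{O}(\lfloor mD\rfloor)$. On $\pi^{-1}(V)$, which is not Stein, you cannot repair the sign by multiplying by a holomorphic function: every holomorphic function there has vertical divisor, so negative horizontal components of $R_{m}$ cannot be absorbed by principal divisors. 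Fixing this forces you to invoke the positivity of $D$ (an $\epsilon$-ample part coming from $\pi$-bigness) to soak up the bounded error, i.e.\ to reinvent a perturbation argument.

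The paper avoids all of this by working with $\sigma_{P}$ directly rather than with integral sections: it first shows $\sigma_{P}(A;X/Y)=0$ for $\pi$-ample $A$ (reducing to finitely many components via \cite[III, 4.1]{nakayama}), records homogeneity and the subadditivity $\sigma_{P}(D'+B)\leq\sigma_{P}(D')+{\rm coeff}_{P}(B)$, and then, choosing $G\geq0$ with $D-G$ $\pi$-ample, writes $(1+\epsilon)D=\bigl(D-E+\epsilon(D-G)\bigr)+\bigl(E+\epsilon G\bigr)$, where the first summand is $\pi$-ample because $D-E\sim_{\mathbb{R},Y}0$; this gives $(1+\epsilon)\sigma_{P}(D;X/Y)\leq{\rm coeff}_{P}(E)+\epsilon\,{\rm coeff}_{P}(G)$ and one lets $\epsilon\to0$. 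So the relative equivalence is handled globally by the vanishing of $\sigma_{P}$ on $\pi$-ample divisors, with no localization, no convex-geometric rational approximation, and no rounding. If you want to keep your route, the honest repair is to perturb $D$ by a small ample divisor before rounding --- at which point your argument collapses into the paper's.
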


\begin{proof}
By Cartan's Theorem A (\cite[Theorem 2.6 (1)]{fujino-analytic-bchm}), for any $m \in \mathbb{Z}_{>0}$ such that $\pi_{*}\mathcal{O}_{X}(\lfloor mD\rfloor) \neq 0$, the equalities
\begin{equation*}
\begin{split}
\sigma_{P}(mD;X/Y)_{\mathbb{Z}}=&{\rm inf}\{{\rm coeff}_{P}(E)\,|\, E\geq0,\, E  \sim \lfloor mD\rfloor\}+{\rm coeff}_{P}(\{mD\})\\
=&{\rm inf}\{{\rm coeff}_{P}(E')\,|\, E'\geq0,\, E' \sim mD\}
\end{split}
\end{equation*}
hold (see also the discussion in \cite[III,  \S 4.a]{nakayama}). 
From this, we have
\begin{equation*}
\begin{split}
\sigma_{P}(D;X/Y) =&{\rm inf}\{{\rm coeff}_{P}(E)\,|\, E\geq0,\, E \sim_{\mathbb{Q}} D\}.
\end{split}
\end{equation*}
This implies that the following properties hold:
\begin{itemize}
\item
For any $t \in \mathbb{Q}_{>0}$, we have $\sigma_{P}(tD;X/Y)=t\sigma_{P}(D;X/Y)$. 
\item
If $D=D'+B$, where $D'$ is a $\pi$-big $\mathbb{R}$-Cartier divisor and $B$ is effective, then 
$$\sigma_{P}(D;X/Y) \leq \sigma_{P}(D';X/Y)+{\rm coeff}_{P}(B).$$
\item
If $D=D_{1}+D_{2}$, where $D_{1}$ and $D_{2}$ are $\pi$-big $\mathbb{R}$-Cartier divisors, then
$$\sigma_{P}(D;X/Y) \leq \sigma_{P}(D_{1};X/Y)+\sigma_{P}(D_{2};X/Y).$$
\end{itemize}

We will check that $\sigma_{P}(D;X/Y)=0$ when $D$ is $\pi$-ample. 
When $D$ is $\pi$-ample, we can write $D=\sum_{i=1}^{l}r_{i}A_{i}$ for some $r_{i}\in \mathbb{R}_{>0}$ and $\pi$-ample Cartier divisors $A_{i}$ on $X$. 
By the third property, it is sufficient to check that $\sigma_{P}(r_{i}A_{i};X/Y)=0$ holds for every $1\leq i \leq l$. 
By \cite[III, 4.1. Lemma]{nakayama} and shrinking $Y$ around a point of $\pi(P)$, we may assume that every $A_{i}$ has only finitely many components. 
Then the equality $\sigma_{P}(r_{i}A_{i};X/Y)=0$ follows from the standard argument. 
In this way, we have $\sigma_{P}(D;X/Y)=0$ when $D$ is $\pi$-ample. 

Since $\sigma_{P}(D;X/Y) ={\rm inf}\{{\rm coeff}_{P}(E)\,|\, E\geq0,\, E \sim_{\mathbb{Q}} D\}$, it is easy to see that
$$\sigma_{P}(D;X/Y) \geq {\rm inf}\{{\rm coeff}_{P}(E)\,|\, E \in |D|_{\mathbb{R}}\}.
$$
We also have
$${\rm inf}\{{\rm coeff}_{P}(E)\,|\, E \in |D|_{\mathbb{R}}\} \geq{\rm inf}\{{\rm coeff}_{P}(E)\,|\, E \in |D/Y|_{\mathbb{R}}\}.$$
Pick $G \geq 0$ such that $D-G$ is $\pi$-ample. 
For any $E \in |D/Y|_{\mathbb{R}}$ and $\epsilon \in \mathbb{Q}_{>0}$, the divisor $D-E+\epsilon(D-G)$ is $\pi$-ample, and therefore
we have
\begin{equation*}
\begin{split}
(1+\epsilon)\sigma_{P}(D;X/Y) =& \sigma_{P}((1+\epsilon)D;X/Y)\\
=&\sigma_{P}(D-E+\epsilon(D-G)+(E+\epsilon G);X/Y)\\
\leq & \sigma_{P}((D-E+\epsilon(D-G));X/Y)+ {\rm coeff}_{P}(E)+\epsilon\cdot {\rm coeff}_{P}(G)\\
=&{\rm coeff}_{P}(E)+\epsilon\cdot {\rm coeff}_{P}(G).
\end{split}
\end{equation*}
Taking the limit $\epsilon \to 0$, we have
$\sigma_{P}(D;X/Y) \leq  {\rm coeff}_{P}(E)$ for any $E \in |D/Y|_{\mathbb{R}}$. 
Hence
$$ {\rm inf}\{{\rm coeff}_{P}(E)\,|\, E \in |D/Y|_{\mathbb{R}}\} \geq \sigma_{P}(D;X/Y).$$
Therefore Lemma \ref{lem--asymvanorder-basic} holds. 
\end{proof}

\begin{rem}\label{rem--quasiproj}
In Lemma \ref{lem--asymvanorder-basic}, the Steinness of base space is crucial. 
Even in the algebraic case \cite[Section 3]{liuxie-relative-nakayama}, the quasi-projectivity of the base variety is crucial. 
For example, consider a complete but not projective variety $Y$ with the trivial Picard group (see, for example, \cite[Example 4.2.13]{toric-varieties}). 
Let $X \to Y$ be a blow-up at a smooth point, and let $E$ be the exceptional divisor. 
Then $-E$ is clearly big over $Y$, but there is no effective $\mathbb{R}$-divisor $E'$ such that $E' \sim_{\mathbb{R},\,Y}-E$ because $Y$ has the trivial Picard group. 
In this case, the invariant 
$${\rm inf}\set{{\rm coeff}_{P}(E')| E' \in |-E/Y|_{\mathbb{R}}}$$
cannot be defined since $|-E/Y|_{\mathbb{R}}$ is empty. 
\end{rem}

\begin{lem}\label{lem--asymvanorder-basic2}
Let $\pi \colon X \to Y$ be a projective morphism from a non-singular analytic variety $X$ to an analytic space $Y$. 
Let $D$ be a $\pi$-big $\mathbb{R}$-Cartier divisor on $X$, $P$ a prime divisor on $X$,  and $A$ a $\pi$-big $\mathbb{R}$-divisor on $X$. 
If $Y$ is Stein, then the equality
$$\sigma_{P}(D;X/Y)=\underset{\epsilon \to 0+}{\rm lim}\sigma_{P}(D+\epsilon A;X/Y)$$
holds.
\end{lem}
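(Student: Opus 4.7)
The plan is to sandwich $\sigma_{P}(D+\epsilon A;X/Y)$ between two quantities both tending to $\sigma_{P}(D;X/Y)$. Note first that $D+\epsilon A$ is $\pi$-big for every $\epsilon\geq 0$ (since $D$ is $\pi$-big and $\epsilon A$ is $\pi$-pseudo-effective), and that $\sigma_{P}(A;X/Y)$ is finite by Lemma \ref{lem--asymvanorder-basic}, so all quantities below are well-defined.

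For the easy direction, I would apply the subadditivity and homogeneity of $\sigma_{P}$ established in the proof of Lemma \ref{lem--asymvanorder-basic} to the decomposition $D+\epsilon A=D+\epsilon A$ to obtain
$$\sigma_{P}(D+\epsilon A;X/Y)\leq \sigma_{P}(D;X/Y)+\epsilon\cdot\sigma_{P}(A;X/Y).$$
Taking $\limsup$ as $\epsilon\to 0+$ yields $\limsup_{\epsilon\to 0+}\sigma_{P}(D+\epsilon A;X/Y)\leq \sigma_{P}(D;X/Y)$.

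For the reverse direction, I would use $\pi$-bigness of $D$ to write $D\sim_{\mathbb{R}} H+F$ with $H$ a $\pi$-ample $\mathbb{R}$-divisor and $F\geq 0$. Fix a small $\delta>0$. By openness of $\pi$-ampleness over the Stein base $Y$, the divisor $\delta H-\epsilon A$ is $\pi$-ample for all sufficiently small $\epsilon>0$; hence $\delta D-\epsilon A\sim_{\mathbb{R}} (\delta H-\epsilon A)+\delta F$ is $\pi$-big. The identity $(1+\delta)D\sim_{\mathbb{R}} (D+\epsilon A)+(\delta D-\epsilon A)$ is then a decomposition into two $\pi$-big divisors, and subadditivity plus homogeneity give
$$(1+\delta)\sigma_{P}(D;X/Y)\leq \sigma_{P}(D+\epsilon A;X/Y)+\sigma_{P}(\delta D-\epsilon A;X/Y).$$
Combining the effective-perturbation inequality from Lemma \ref{lem--asymvanorder-basic} with the vanishing of $\sigma_{P}$ on $\pi$-ample divisors yields
$$\sigma_{P}(\delta D-\epsilon A;X/Y)\leq \sigma_{P}(\delta H-\epsilon A;X/Y)+\delta\cdot\mathrm{coeff}_{P}(F)=\delta\cdot\mathrm{coeff}_{P}(F).$$
Letting first $\epsilon\to 0+$ and then $\delta\to 0+$ in the resulting inequality gives $\sigma_{P}(D;X/Y)\leq \liminf_{\epsilon\to 0+}\sigma_{P}(D+\epsilon A;X/Y)$, and combining with the upper bound gives the asserted equality (and in particular the limit exists).

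The main obstacle is verifying that $\delta H-\epsilon A$ remains $\pi$-ample for all sufficiently small $\epsilon$ in the analytic setting, i.e., the openness of the relative ample cone for a projective morphism over a Stein base. This is expected to hold by analogy with the algebraic case (since ampleness can be tested on fibers over a Stein compact, and we are free to shrink $Y$), but it is the one non-formal point; the remainder of the argument is a formal manipulation of the inf-formula from Lemma \ref{lem--asymvanorder-basic}.
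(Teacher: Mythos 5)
The first half of your argument (the bound $\limsup_{\epsilon \to 0+}\sigma_{P}(D+\epsilon A;X/Y)\leq \sigma_{P}(D;X/Y)$ via subadditivity) is exactly the paper's. The gap is in the reverse inequality: the claim that $\delta H-\epsilon A$ is $\pi$-ample for all sufficiently small $\epsilon>0$ is not a harmless analogue of the algebraic case, and it is false in general here, because $Y$ is a non-compact Stein space and $A$ may have infinitely many components whose negativity is unbounded towards infinity in $Y$. For instance, let $Y=\mathbb{C}$, let $\pi\colon X\to Y$ be obtained from $\mathbb{C}\times\mathbb{P}^{2}$ by blowing up one point in the fiber over each $n\in\mathbb{Z}_{>0}$ (a projective morphism, since locally over $Y$ only finitely many blow-ups occur), with exceptional divisors $E_{n}$, let $H$ be $\pi$-ample and $A:=H-\sum_{n}nE_{n}$. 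Then $A$ is $\pi$-big: adding ${\rm div}(\pi^{*}f)$ for an entire $f$ vanishing to order $n$ at each $n$ gives $A\sim H+\sum_{n}n\widetilde{F}_{n}$ with $\widetilde{F}_{n}$ the strict transforms of the fibers, i.e.\ ample plus effective. But for a line $C_{n}\subset E_{n}$ one has $(\delta H-\epsilon A)\cdot C_{n}=(\delta-\epsilon)(H\cdot C_{n})-\epsilon n<0$ for $n\gg0$, so $\delta H-\epsilon A$ is not nef, hence not ample, over $Y$ for any fixed $\epsilon>0$. This matches how the paper handles such perturbations elsewhere: in Corollary \ref{cor--negativity-veryexc-nef}, ampleness of $D+\epsilon A$ is only obtained over a Stein neighborhood $U_{\epsilon}$ of a compact set, depending on $\epsilon$, via \cite[Lemma 4.10]{fujino-analytic-bchm}. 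Nor are you ``free to shrink $Y$'' in this lemma: the statement concerns sections over all of $Y$, the invariance of $\sigma_{P}$ under shrinking is Theorem \ref{thm--asymvanorder-pseudoeff-basic} (6), proved later and resting (through Definition \ref{defn--asymvanorder-pseudoeff}) on the present lemma, and the elementary restriction inequality $\sigma_{P_{j}}(D|_{X_{U}};X_{U}/U)\leq \sigma_{P}(D;X/Y)$ goes in the wrong direction for the lower bound you need.

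The repair is to use effectivity instead of ampleness, which is what the paper does: over a Stein base, $\pi$-bigness of $D$ yields $r\in\mathbb{R}_{>0}$ and an effective $E$ with $rD-A\sim_{\mathbb{R}}E$ (\cite[Lemma 2.53]{fujino-analytic-bchm}), and then for every $\epsilon>0$,
$$\sigma_{P}(D;X/Y)\leq(1+r\epsilon)\sigma_{P}(D;X/Y)=\sigma_{P}\bigl(D+\epsilon A+\epsilon(rD-A);X/Y\bigr)\leq\sigma_{P}(D+\epsilon A;X/Y)+\epsilon\,{\rm coeff}_{P}(E),$$
using only the homogeneity and effective-perturbation inequalities from the proof of Lemma \ref{lem--asymvanorder-basic}; letting $\epsilon\to0+$ gives the missing inequality. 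Your decomposition $(1+\delta)D=(D+\epsilon A)+(\delta D-\epsilon A)$ becomes correct once ``$\delta H-\epsilon A$ is $\pi$-ample'' is replaced by ``$\delta D-\epsilon A\sim_{\mathbb{R}}(\delta-r\epsilon)D+\epsilon E\geq$ a big plus effective divisor for $\epsilon\leq\delta/r$'', at which point it coincides with the paper's computation and no openness of the relative ample cone is needed.
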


\begin{proof}
By Lemma \ref{lem--asymvanorder-basic}, for any $\epsilon \in \mathbb{R}_{>0}$ we have
$$\sigma_{P}(D+\epsilon A;X/Y) \leq \sigma_{P}(D;X/Y)+\epsilon \sigma_{P}(A;X/Y).$$
By taking the limit, we have
$$\underset{\epsilon \to 0+}{\rm lim}\sigma_{P}(D+\epsilon A;X/Y) \leq \sigma_{P}(D;X/Y).$$
On the other hand, take $r \in \mathbb{R}_{>0}$ such that $rD-A \sim_{\mathbb{R}}E$ for some effective $\mathbb{R}$-divisor $E$ on $X$ (\cite[Lemma 2.53]{fujino-analytic-bchm}). 
Then, for any $\epsilon \in \mathbb{R}_{>0}$ we have
\begin{equation*}
\begin{split}
\sigma_{P}(D;X/Y) \leq(1+r \epsilon) \sigma_{P}(D;X/Y)=&\sigma_{P}(D+\epsilon A+\epsilon(rD-A);X/Y)\\
\leq & \sigma_{P}(D+\epsilon A;X/Y)+ \epsilon \cdot {\rm coeff}_{P}(E). 
\end{split}
\end{equation*}
From this, we have
$$\sigma_{P}(D;X/Y) \leq \underset{\epsilon \to 0+}{\rm lim}\sigma_{P}(D+\epsilon A;X/Y).$$
Therefore Lemma \ref{lem--asymvanorder-basic2} holds true. 
\end{proof}

\begin{defn}[Asymptotic vanishing order, {\cite[III, \S 4]{nakayama}}]\label{defn--asymvanorder-pseudoeff}
Let $\pi \colon X \to Y$ be a projective morphism from a normal analytic variety $X$ to a Stein space $Y$. 
Let $D$ be a $\pi$-pseudo-effective $\mathbb{R}$-Cartier divisor on $X$, and let $P$ be a prime divisor over $X$. 
Then the {\em asymptotic vanishing order of $P$ over $Y$}, denoted by $\sigma_{P}(D;X/Y)$, is defined as follows: 
We take a resolution $f \colon X' \to X$ of $X$ such that $P$ appears as a prime divisor on $X'$, and we take a $(\pi \circ f)$-ample $\mathbb{R}$-divisor $A'$ on $X'$. 
Then we define
$$\sigma_{P}(D;X/Y):=\underset{\epsilon \to 0+}{\rm lim}\sigma_{P}(f^{*}D+\epsilon A';X'/Y).$$
As the algebraic case, we can check that $\sigma_{P}(D;X/Y)$ does not depend on $f \colon X' \to X$ and $A'$. 
However, we may have $\sigma_{P}(D;X/Y) = \infty$. 
\end{defn}

\begin{thm}\label{thm--asymvanorder-pseudoeff-basic}
Let $\pi \colon X \to Y$ be a projective morphism from a normal analytic variety $X$ to a Stein space $Y$,  
and let $P$ be a prime divisor over $X$. 
Then the following properties hold.
\begin{enumerate}[(1)]
\item\label{thm--asymvanorder-pseudoeff-basic-(1)}
For a $\pi$-pseudo-effective $\mathbb{R}$-Cartier divisor $D$ on $X$ and a positive real number $r$, we have 
$$\sigma_{P}(rD;X/Y)=r\sigma_{P}(D;X/Y).$$  
\item\label{thm--asymvanorder-pseudoeff-basic-(2)}
Let $W \subset Y$ be a subset. 
Suppose that $P$ is a prime divisor on $X$ and $\pi(P) \cap W \neq \emptyset$. 
For any $\mathbb{R}$-Cartier divisor $D$ on $X$, if $D$ is a limit of movable divisors over $W$ then 
$$\sigma_{P}(D;X/Y)=0.$$ 
\item\label{thm--asymvanorder-pseudoeff-basic-(3)}
For two $\pi$-pseudo-effective $\mathbb{R}$-Cartier divisors $D_{1}$ and $D_{2}$ on $X$, we have 
$$\sigma_{P}(D_{1}+D_{2};X/Y) \leq \sigma_{P}(D_{1};X/Y)+\sigma_{P}(D_{2};X/Y).$$
\item\label{thm--asymvanorder-pseudoeff-basic-(4)}
For two $\pi$-pseudo-effective $\mathbb{R}$-Cartier divisors $D_{1}$ and $D_{2}$ on $X$, we have 
$$\sigma_{P}(D_{1};X/Y) = \underset{\epsilon \to 0+}{\rm lim}\sigma_{P}(D_{1}+\epsilon D_{2};X/Y).$$
\item\label{thm--asymvanorder-pseudoeff-basic-(5)}
Let $f \colon X' \to X$ be a projective bimeromorphism from a normal analytic variety $X'$. 
For a $\pi$-pseudo-effective $\mathbb{R}$-Cartier divisor $D$ on $X$ and an effective $f$-exceptional $\mathbb{R}$-Cartier divisor $E$ on $X'$, we have
$$\sigma_{P}(f^{*}D+E;X'/Y)=\sigma_{P}(D;X/Y)+{\rm ord}_{P}(E).$$
\item\label{thm--asymvanorder-pseudoeff-basic-(6)}
Suppose that $P$ is a prime divisor on $X$. 
Let $U \subset Y$ be a Stein open subset such that $U \cap \pi(P) \neq \emptyset$. 
We put $X_{U}:=\pi^{-1}(U)$ and let $P|_{X_{U}}=\bigcup_{j}P_{j}$ be the irreducible decomposition. 
For any $\pi$-pseudo-effective $\mathbb{R}$-Cartier divisor $D$ on $X$, all $P_{j}$ satisfy 
$$\sigma_{P}(D;X/Y)=\sigma_{P_{j}}(D|_{X_{U}};X_{U}/U).$$
\end{enumerate}
\end{thm}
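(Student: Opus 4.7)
The overall plan is to handle the six properties in the order (1), (3), (4), (6), (5), (2), reducing each to the $\pi$-big case already addressed in Lemmas~\ref{lem--asymvanorder-basic} and~\ref{lem--asymvanorder-basic2}. For (1), I would reduce to the big case: after choosing a resolution $f\colon X' \to X$ extracting $P$ and a $(\pi\circ f)$-ample $\mathbb{R}$-divisor $A'$, scaling is immediate from the formula
\[
\sigma_{P}(D;X/Y)=\lim_{\epsilon \to 0+}\sigma_{P}(f^{*}D+\epsilon A';X'/Y)
\]
together with Lemma~\ref{lem--asymvanorder-basic}, which identifies $\sigma_{P}$ with an infimum of coefficients in $|\cdot/Y|_{\mathbb{R}}$. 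Property~(3) then follows because $E_{1}+E_{2}\in |f^{*}(D_{1}+D_{2})+2\epsilon A'/Y|_{\mathbb{R}}$ whenever $E_{i}$ lies in the corresponding space for $D_{i}$; letting $\epsilon\to 0+$ and using~(1) gives subadditivity. For~(4), the big-case continuity of Lemma~\ref{lem--asymvanorder-basic2} combined with Definition~\ref{defn--asymvanorder-pseudoeff} and a diagonal argument in $\epsilon$ and the auxiliary ample divisor yields the pseudo-effective version.

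For~(6), I would first assume $P$ is a prime divisor on $X$ (otherwise pass to a resolution extracting $P$ and apply the statement there). After further restricting to the $\pi$-big case, Lemma~\ref{lem--asymvanorder-basic} reduces the equality to comparing $\inf\{{\rm coeff}_{P}(E)\mid E\in|D/Y|_{\mathbb{R}}\}$ with $\inf\{{\rm coeff}_{P_{j}}(E')\mid E'\in|D|_{X_{U}}/U|_{\mathbb{R}}\}$. The inequality $\geq$ is by restriction, which preserves the coefficient of $P$ along each $P_{j}$. For $\leq$, given $E'\in|D|_{X_{U}}/U|_{\mathbb{R}}$ realising a coefficient close to the infimum, one uses that $U$ is Stein, Proposition~\ref{prop--r-base-locus}, and shrinking $Y$ to extend $E'$ to a global representative in $|D/Y|_{\mathbb{R}}$ with the same coefficient at $P$ (this is the point where the Stein hypothesis is indispensable, cf.~Remark~\ref{rem--quasiproj}). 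The pseudo-effective case then follows by passing to the $\epsilon\to 0+$ limit after choosing $A'$ to have components compatible with $U$.

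For~(5), take a log resolution $g\colon X'' \to X'$ on which $P$ appears, and an ample divisor $A''$ on $X''$. Then
\[
g^{*}(f^{*}D+E)+\epsilon A''=(f\circ g)^{*}D+g^{*}E+\epsilon A''
\]
and $g^{*}E$ is effective and $(f\circ g)$-exceptional. By Lemma~\ref{lem--asymvanorder-basic} the asymptotic vanishing order on the right splits as $\sigma_{P}((f\circ g)^{*}D+\epsilon A'';X''/Y)+{\rm ord}_{P}(g^{*}E)$: one direction is by picking a representative for the $D$-part and adding $g^{*}E$; the reverse requires showing that in the infimum over $|(f\circ g)^{*}D+g^{*}E+\epsilon A''/Y|_{\mathbb{R}}$, the exceptional component along $P$ cannot be reduced below ${\rm ord}_{P}(g^{*}E)$, which is a negativity-type statement following from Corollary~\ref{cor--negativity-veryexc-nef} applied to the difference of two representatives. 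Letting $\epsilon\to 0+$ and noting ${\rm ord}_{P}(g^{*}E)={\rm ord}_{P}(E)$ gives~(5).

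Property~(2) is then a corollary: after resolving to put $P$ on $X$, pick a Stein open $U\supset W$ with $\pi(P_{j})\cap U\neq\emptyset$ for some component $P_{j}$ of $P|_{X_{U}}$, and apply~(6) to reduce to showing $\sigma_{P_{j}}(D|_{X_{U}};X_{U}/U)=0$. Write $D=\lim_{i}D_{i}$ with $D_{i}$ movable over open neighborhoods $V_{i}\supset W$. By~(4) it suffices to bound $\sigma_{P_{j}}(D_{i}|_{X_{U_{i}}};X_{U_{i}}/U_{i})$ on $U_{i}:=U\cap V_{i}$, and by definition of movable together with Proposition~\ref{prop--r-base-locus} and Lemma~\ref{lem--asymvanorder-basic}, $P_{j}$ is not contained in the stable base locus, so this vanishing order is zero. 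I expect the main obstacle to be~(6): making sure the Stein extension of local relative representatives to global ones preserves the coefficient at $P$ exactly, which requires carefully combining Lemma~\ref{lem--base-locus-1}, Proposition~\ref{prop--r-base-locus}, and the Stein structure of $U$.
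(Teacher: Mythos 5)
Your handling of (1), (3), (4) and (5) is essentially the standard algebraic argument, which is all the paper does for these items (it simply says they "follow as in the algebraic case"); in particular your route to (5) via the negativity lemma (Corollary \ref{cor--negativity-veryexc-nef}) applied to the difference of two representatives is a legitimate way to get $\sigma_{P}(f^{*}D+E)\geq \sigma_{P}(f^{*}D)+{\rm ord}_{P}(E)$. The item that actually requires an analytic argument, and the only one the paper proves in detail, is (6) -- and that is where your proposal has a genuine gap.

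Your key step for (6) is to "extend $E'$ to a global representative in $|D/Y|_{\mathbb{R}}$ with the same coefficient at $P$", citing the Steinness of $U$ and Proposition \ref{prop--r-base-locus}. This step is not available. Cartan's Theorem A gives generation of the stalks of the coherent sheaf $\pi_{*}\mathcal{O}_{X}(\lfloor lD\rfloor)$ by global sections over the Stein space $Y$; it does not allow you to extend a section over $U$ (equivalently, a member of $|D|_{X_{U}}/U|_{\mathbb{R}}$) to all of $X$, let alone with a prescribed coefficient along $P$. Proposition \ref{prop--r-base-locus} only controls stable base loci set-theoretically, so it cannot give the coefficient comparison that $\sigma_{P}$ requires. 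Moreover, the Steinness that matters here is that of $Y$ (global generation of $\pi_{*}\mathcal{O}_{X}(\lfloor lD\rfloor)$), not of $U$, and invoking "shrinking $Y$" inside the proof of (6) is circular, since (6) is precisely the assertion that $\sigma_{P}$ does not change when $Y$ is replaced by a Stein open subset. The paper's actual argument is: reduce to $D$ $\pi$-big and $X$ non-singular via Definition \ref{defn--asymvanorder-pseudoeff} and a resolution, and prove $m_{P}(lD)=m_{P_{j}}(lD|_{X_{U}})$ for $l\gg0$. By Cartan's Theorem A both quantities are infima of coefficients of effective divisors in the integral linear systems $|\lfloor lD\rfloor|$ and $|\lfloor lD|_{X_{U}}\rfloor|$; the inequality $m_{P}\geq m_{P_{j}}$ is restriction, and for the reverse one takes $E'\sim \lfloor lD|_{X_{U}}\rfloor$ computing $m_{P_{j}}$, uses global generation of $\pi_{*}\mathcal{O}_{X}(\lfloor lD\rfloor)$ over the Stein $Y$ at a point of $\pi(P_{j})\cap U$ to express the corresponding local section, near that point, in terms of global sections multiplied by holomorphic functions pulled back from the base, and concludes that some global effective $E\sim\lfloor lD\rfloor$ satisfies ${\rm coeff}_{P}(E)\leq {\rm coeff}_{P_{j}}(E')$ -- an inequality, which is all that is needed, rather than an exact-coefficient extension. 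A minor further point: in your proof of (2) you pick a Stein open $U\supset W$, but $W$ is an arbitrary subset and such a neighborhood need not exist; this is easily avoided, since one can argue directly over $Y$ with Proposition \ref{prop--r-base-locus} and the fact that $\pi(P)\cap W\neq\emptyset$.
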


\begin{proof}
The properties (\ref{thm--asymvanorder-pseudoeff-basic-(1)})--(\ref{thm--asymvanorder-pseudoeff-basic-(5)}) easily follow as in the algebraic case. 
Thus we only need to check (\ref{thm--asymvanorder-pseudoeff-basic-(6)}). 
By Definition \ref{defn--asymvanorder-pseudoeff}, we may assume that $D$ is $\pi$-big. 
By replacing $X$ with a resolution of $X$, we may assume that $X$ is non-singular. 
By Definition \ref{defn--asymvanorder}, it is sufficient to prove that the equality $m_{P}(lD)=m_{P_{j}}(lD|_{X_{U}})$ holds for all $j$ and $l\gg0$. 
By Cartan's Theorem A (\cite[Theorem 2.6 (1)]{fujino-analytic-bchm}), for any $l \in \mathbb{Z}_{>0}$ such that $\pi_{*}\mathcal{O}_{X}(\lfloor lD\rfloor) \neq 0$, the equalities
\begin{equation*}
\begin{split}
m_{P}(lD)=&{\rm inf}\{{\rm coeff}_{P}(E)\,|\, E\geq0,\, E  \sim \lfloor lD\rfloor\}, \qquad {\rm and}\\
m_{P_{j}}(lD|_{X_{U}})=&{\rm inf}\{{\rm coeff}_{P_{j}}(E')\,|\, E'\geq0,\, E'  \sim \lfloor lD|_{X_{U}}\rfloor\}. 
\end{split}
\end{equation*}
We fix an index $j$. 
It is easy to check that
$$m_{P}(lD) \geq m_{P_{j}}(lD|_{X_{U}}).$$
We take an effective divisor $E' \sim \lfloor lD|_{X_{U}}\rfloor$ on $X_{U}$ such that ${\rm coeff}_{P_{j}}(E')=m_{P_{j}}(lD|_{X_{U}})$. 
By Cartan's Theorem A (\cite[Theorem 2.6 (1)]{fujino-analytic-bchm}), there is an open subset $V \subset U$, an effective divisor $E \sim \lfloor lD \rfloor$, and $\varphi \in \mathcal{O}_{Y}(V)$ such that $V \cap (\pi|_{X_{U}})(P_{j}) \neq \emptyset$ and if we put $X_{V}:=\pi^{-1}(V)$ then $E|_{X_{V}}+{\rm div}(\pi^{*}\varphi)=E'|_{X_{V}}$. 
Then ${\rm coeff}_{P}(E) \leq {\rm coeff}_{P_{j}}(E')$, and therefore we have
$$m_{P}(lD) \leq m_{P_{j}}(lD|_{X_{j}}).$$
Thus we have $m_{P}(lD) = m_{P_{j}}(lD|_{X_{j}})$. 
Therefore, (\ref{thm--asymvanorder-pseudoeff-basic-(6)}) holds.    
\end{proof}

\begin{lem}[{\cite[III, 4.2. Lemma]{nakayama}}]\label{lem--nakayama-III4.2}
Let $\pi \colon X \to Y$ be a projective morphism from a normal analytic variety $X$ to a Stein space $Y$. 
Let $D$ be a $\pi$-pseudo-effective $\mathbb{R}$-Cartier divisor on $X$ and let $P_{1},\,\cdots,\,P_{l}$ be mutually distinct prime divisors on $X$. 
Then the following properties hold. 
\begin{itemize}
\item
For any $1 \leq i \leq l$ and any real number $s_{i}$ such that $0 \leq s_{i} \leq \sigma_{P_{i}}(D; X/Y)$, we have
$$\sigma_{P_{i}}\Bigl( D-\sum_{j=1}^{l}s_{j}P_{j};X/Y\Bigr) = \sigma_{P_{i}}(D; X/Y) - s_{i}.$$ 
\item
If $\sigma_{P_{i}}(D; X/Y)>0$ for all $1 \leq i \leq l$, then
$$\sigma_{P_{i}}\Bigl(\sum_{j=1}^{l}r_{j}P_{j};X/Y\Bigr) = r_{i}$$
for any nonnegative real numbers $r_{1},\,\cdots,\, r_{l}$. 
\end{itemize}
\end{lem}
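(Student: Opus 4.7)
The plan is to establish the first bullet and then derive the second from it together with the subadditivity in Theorem \ref{thm--asymvanorder-pseudoeff-basic} (\ref{thm--asymvanorder-pseudoeff-basic-(3)}). For the first bullet I would reduce to the situation where $X$ is non-singular and $D$ is $\pi$-big. Non-singularity is arranged by passing to a log resolution $f\colon X' \to X$ along which every $P_{j}$ appears as a prime divisor; by Definition \ref{defn--asymvanorder-pseudoeff} and Theorem \ref{thm--asymvanorder-pseudoeff-basic} (\ref{thm--asymvanorder-pseudoeff-basic-(5)}) this leaves $\sigma_{P_{j}}$ unchanged. Bigness is arranged by replacing $D$ with $D+\delta A$ for a $\pi$-ample $\mathbb{R}$-divisor $A$ and letting $\delta \to 0+$ at the end; the relevant monotonicity $\sigma_{P}(D+\delta A;X/Y)\nearrow \sigma_{P}(D;X/Y)$ follows from Theorem \ref{thm--asymvanorder-pseudoeff-basic} (\ref{thm--asymvanorder-pseudoeff-basic-(3)}) together with the vanishing $\sigma_{P}(A;X/Y)=0$, which is visible from the proof of Lemma \ref{lem--asymvanorder-basic}.

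In the big, non-singular case the central device is Lemma \ref{lem--asymvanorder-basic}: given $\epsilon>0$, choose $E\geq 0$ with $E\sim_{\mathbb{R},\,Y} D$ and ${\rm coeff}_{P_{i}}(E)<\sigma_{P_{i}}(D;X/Y)+\epsilon$. Since every $j$ satisfies ${\rm coeff}_{P_{j}}(E)\geq \sigma_{P_{j}}(D;X/Y)\geq s_{j}$, the divisor $E-\sum_{j}s_{j}P_{j}$ is effective and represents the class of $D-\sum_{j}s_{j}P_{j}$; this shows pseudo-effectivity and gives the estimate
\[
\sigma_{P_{i}}\!\Bigl(D-\sum_{j}s_{j}P_{j};X/Y\Bigr) \leq {\rm coeff}_{P_{i}}(E)-s_{i} < \sigma_{P_{i}}(D;X/Y)-s_{i}+\epsilon,
\]
so one inequality falls out on letting $\epsilon\to 0+$. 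The reverse inequality follows from subadditivity applied to $D=(D-\sum_{j}s_{j}P_{j})+\sum_{j}s_{j}P_{j}$, once the bound $\sigma_{P_{i}}(\sum_{j}s_{j}P_{j};X/Y)\leq s_{i}$ is known; this bound in turn comes from Lemma \ref{lem--asymvanorder-basic} applied to the big divisor $\sum_{j}s_{j}P_{j}+\delta A$ followed by Theorem \ref{thm--asymvanorder-pseudoeff-basic} (\ref{thm--asymvanorder-pseudoeff-basic-(4)}) as $\delta\to 0+$.

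To transfer the identity from the big case to a general pseudo-effective $D$, I would apply the big case to $D+\delta A$ with the truncated weights $s_{j}^{(\delta)}:=\min\{s_{j},\sigma_{P_{j}}(D+\delta A;X/Y)\}$ (legitimate since $s_{j}^{(\delta)}\leq \sigma_{P_{j}}(D+\delta A;X/Y)$), and then let $\delta\to 0+$ using $s_{j}^{(\delta)}\to s_{j}$ and Theorem \ref{thm--asymvanorder-pseudoeff-basic} (\ref{thm--asymvanorder-pseudoeff-basic-(4)}). The main obstacle is precisely this passage: one must control $\sigma_{P_{i}}$ of a divisor that acquires the extra effective terms $\delta A +\sum_{j}(s_{j}-s_{j}^{(\delta)})P_{j}$, and absorb them via subadditivity together with the ample vanishing --- this is where the analytic setting could cause trouble if Lemma \ref{lem--asymvanorder-basic2} were not yet available.

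For the second bullet, choose $c>0$ large enough that $r_{j}\leq c\,\sigma_{P_{j}}(D;X/Y)=\sigma_{P_{j}}(cD;X/Y)$ for every $j$; this is where the hypothesis $\sigma_{P_{j}}(D;X/Y)>0$ enters. Applying the first bullet to $cD$ with $s_{j}=r_{j}$ yields both $\sigma_{P_{i}}(cD-\sum_{j}r_{j}P_{j};X/Y)=c\,\sigma_{P_{i}}(D;X/Y)-r_{i}$ and the pseudo-effectivity of $cD-\sum_{j}r_{j}P_{j}$. Subadditivity applied to $cD=(cD-\sum_{j}r_{j}P_{j})+\sum_{j}r_{j}P_{j}$ then forces $\sigma_{P_{i}}(\sum_{j}r_{j}P_{j};X/Y)\geq r_{i}$, while the opposite inequality is Lemma \ref{lem--asymvanorder-basic} applied to $\sum_{j}r_{j}P_{j}+\delta A$ combined with Theorem \ref{thm--asymvanorder-pseudoeff-basic} (\ref{thm--asymvanorder-pseudoeff-basic-(4)}).
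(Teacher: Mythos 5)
The paper itself gives no argument for this lemma: it is quoted directly from Nakayama [III, 4.2~Lemma], so what you have written is a reconstruction of Nakayama's proof rather than a parallel to anything in the text. Your treatment of the big, non-singular case is correct and is the standard one: Lemma \ref{lem--asymvanorder-basic} produces $E\sim_{\mathbb{R},Y}D$ with ${\rm coeff}_{P_i}(E)<\sigma_{P_i}(D;X/Y)+\epsilon$ while every representative has ${\rm coeff}_{P_j}\geq\sigma_{P_j}(D;X/Y)\geq s_j$, giving effectivity of $E-\sum_j s_jP_j$ and the upper bound, and subadditivity together with $\sigma_{P_i}(\sum_j s_jP_j;X/Y)\leq s_i$ (Lemma \ref{lem--nagativepart-basic}) gives the lower bound; the deduction of the second bullet from the first is also fine. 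The reduction to non-singular $X$ is stated too quickly, since $f^{*}P_j$ is the strict transform plus an effective exceptional divisor, so the statement does not literally pull back; this is repairable via Theorem \ref{thm--asymvanorder-pseudoeff-basic} (\ref{thm--asymvanorder-pseudoeff-basic-(5)}) applied to $f^{*}D-\sum_j s_j\widetilde{P}_j=f^{*}\bigl(D-\sum_j s_jP_j\bigr)+\sum_j s_j(f^{*}P_j-\widetilde{P}_j)$, or avoided by pushing near-optimal representatives down from a resolution, but it deserves a line.

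The genuine gap is in the passage from the big case to a general pseudo-effective $D$, exactly the step you flag. First, the statement implicitly requires $D-\sum_j s_jP_j$ to be $\pi$-pseudo-effective (otherwise the left-hand side is undefined by Definition \ref{defn--asymvanorder-pseudoeff}); your scheme produces pseudo-effective approximants $D+\delta A-\sum_j s_j^{(\delta)}P_j$ but never explains why the limiting divisor is pseudo-effective --- in Nakayama's projective setting this is the closedness of the pseudo-effective cone in a finite-dimensional $N^{1}$, which is not available verbatim over a non-compact Stein base. Second, the tools you invoke to absorb the error $\delta A+\sum_j(s_j-s_j^{(\delta)})P_j$, namely subadditivity and $\sigma_{P}(A;X/Y)=0$ (or Lemma \ref{lem--asymvanorder-basic2}, which is only the continuity statement for big divisors), do not suffice: subadditivity bounds $\sigma_{P_i}$ of a difference only from below, whereas what is needed is an upper bound for $\sigma_{P_i}$ of a divisor of the shape (pseudo-effective) minus (small effective), and $\sigma_{P_i}(A;X/Y)=0$ says nothing about $A-c\sum_jP_j$. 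What closes both points is the absorption statement that for the fixed $\pi$-ample $A$ and the finitely many $P_j$ there exists $c>0$ with $A-c\sum_jP_j$ $\pi$-big; then the error can be rescaled into it, its $\sigma_{P_i}$ tends to $0$ with $\delta$, and both the pseudo-effectivity and the upper bound follow. In the present analytic setting this absorption follows from \cite[Lemma 2.53]{fujino-analytic-bchm}, the same ingredient used in the proof of Lemma \ref{lem--asymvanorder-basic2}; until it (or an equivalent closedness/semicontinuity statement, as in Nakayama) is supplied, the general case of the first bullet, and hence the second bullet --- which applies the first bullet to the merely pseudo-effective divisor $cD$ --- is not proved.
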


\begin{thm}\label{thm--finite-negativepart}
Let $\pi\colon X \to Y$ be a projective morphism from a normal analytic variety $X$ to a Stein space $Y$.  
Let $W \subset Y$ be a compact subset. 
Fix a resolution $\widetilde{X} \to X$ of $X$. 
Let $D$ be a $\pi$-pseudo-effective $\mathbb{R}$-Cartier divisor on $X$ and let $l$ be (possibly infinite) the number of prime divisors $P$ on $X$ satisfying $\pi(P) \cap W \neq \emptyset$ and $\sigma_{P}(D; X/Y) >0$. 
Then the inequality $l \leq \rho (\widetilde{X}/Y;W)$ holds. 
In particular, if $W \cap Z$ has only finitely many connected components for any analytic subset $Z \subset Y$ which is defined over an open neighborhood of $W$ (see also the condition $({\rm P}4)$ in \cite{fujino-analytic-bchm}), then $l<\infty$. 
\end{thm}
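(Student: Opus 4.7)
The plan is to show that, for any finite collection $P_1,\dots,P_s$ of distinct prime divisors on $X$ satisfying $\pi(P_i)\cap W\neq\emptyset$ and $\sigma_{P_i}(D;X/Y)>0$, the strict transforms $\widetilde{P}_1,\dots,\widetilde{P}_s$ on $\widetilde{X}$ have $\mathbb{R}$-linearly independent classes in $N^1(\widetilde{X}/Y;W)$. This bound $s\leq\rho(\widetilde{X}/Y;W)$ is uniform in finite subsets, so it will give $l\leq\rho(\widetilde{X}/Y;W)$; the ``in particular'' statement then follows from the finiteness of the relative Picard number under (P4) established in \cite{fujino-analytic-conethm}. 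Writing $f\colon\widetilde{X}\to X$ for the fixed resolution, Theorem \ref{thm--asymvanorder-pseudoeff-basic}(\ref{thm--asymvanorder-pseudoeff-basic-(5)}) applied with $E=0$ gives $\sigma_{\widetilde{P}_i}(f^{*}D;\widetilde{X}/Y)=\sigma_{P_i}(D;X/Y)>0$ for every $i$.

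To prove the linear independence, suppose $\sum_{i=1}^{s} a_i\widetilde{P}_i\equiv_{W}0$ with $(a_1,\dots,a_s)\neq 0$, and split $a_i=a_i^{+}-a_i^{-}$ with $a_i^{\pm}\geq 0$ and $a_i^{+}a_i^{-}=0$. The effective $\mathbb{R}$-divisors $E:=\sum_i a_i^{+}\widetilde{P}_i$ and $F:=\sum_i a_i^{-}\widetilde{P}_i$ have disjoint supports and satisfy $E\equiv_{W}F$. The second bullet of Lemma \ref{lem--nakayama-III4.2}, applied with $D:=f^{*}D$ and the divisors $\widetilde{P}_1,\dots,\widetilde{P}_s$ (whose $\sigma$-values are positive by the previous paragraph), yields
\[
\sigma_{\widetilde{P}_i}(E;\widetilde{X}/Y)=a_i^{+},\qquad \sigma_{\widetilde{P}_i}(F;\widetilde{X}/Y)=a_i^{-} \qquad (i=1,\dots,s).
\]
Granted the numerical invariance $\sigma_{\widetilde{P}_i}(E;\widetilde{X}/Y)=\sigma_{\widetilde{P}_i}(F;\widetilde{X}/Y)$, we conclude $a_i^{+}=a_i^{-}$, and hence every $a_i=0$, contradicting the assumption.

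The principal obstacle is precisely this numerical invariance of $\sigma_Q$ under $\equiv_{W}$, for prime $Q$ on $\widetilde{X}$ whose image meets $W$. I plan to deduce it by a perturbation argument: fix a $(\pi\circ f)$-ample $\mathbb{R}$-divisor $H$ on $\widetilde{X}$, so that for each $\epsilon>0$ the divisors $E+\epsilon H$ and $F+\epsilon H$ are $(\pi\circ f)$-big and numerically equivalent over $W$. Using (P3), I would shrink $Y$ to a Stein open neighborhood $U\supset W$ and apply the analytic counterpart of Nakayama's result that $\sigma_{\bullet}$ factors through numerical classes (cf.~\cite[III, \S 1]{nakayama}) to the restrictions on $\widetilde{X}_U/U$, obtaining equality of the corresponding $\sigma$-values there, with Lemma \ref{lem--asymvanorder-basic} supplying the effective characterisation of $\sigma$ in the big case. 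Transporting the equality back to $\widetilde{X}/Y$ via Theorem \ref{thm--asymvanorder-pseudoeff-basic}(\ref{thm--asymvanorder-pseudoeff-basic-(6)}) and letting $\epsilon\to 0{+}$ while invoking Lemma \ref{lem--asymvanorder-basic2} together with Definition \ref{defn--asymvanorder-pseudoeff} produces the desired equality $\sigma_{\widetilde{P}_i}(E;\widetilde{X}/Y)=\sigma_{\widetilde{P}_i}(F;\widetilde{X}/Y)$, which completes the argument.
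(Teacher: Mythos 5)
Your overall scheme is the same as the paper's: assume a nontrivial relation $\sum_i a_i\widetilde{P}_i\equiv_{W}0$ among divisors with positive asymptotic vanishing order, split it into its positive and negative parts $E$ and $F$, and use the second bullet of Lemma \ref{lem--nakayama-III4.2} to read off $\sigma_{\widetilde{P}_i}(E;\widetilde{X}/Y)=a_i^{+}$ and $\sigma_{\widetilde{P}_i}(F;\widetilde{X}/Y)=a_i^{-}$. The gap is exactly the step you flag yourself: the ``numerical invariance'' $\sigma_{\widetilde{P}_i}(E;\widetilde{X}/Y)=\sigma_{\widetilde{P}_i}(F;\widetilde{X}/Y)$ for $E\equiv_{W}F$, and the route you sketch for it does not work as stated. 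The relation $E\equiv_{W}F$ only controls intersections with curves in fibres over points of $W$; after shrinking to a Stein neighbourhood $U\supset W$ the restrictions are in general still not numerically equivalent over $U$, so even a hypothetical analytic analogue of Nakayama's statement that $\sigma$ factors through relative numerical classes over $U$ would not apply --- and such an analogue is in any case not available in the paper, since Lemma \ref{lem--asymvanorder-basic} characterises $\sigma$ only through $\mathbb{R}$-linear equivalence (absolute or over $Y$), not through numerical equivalence. Moreover, by Theorem \ref{thm--asymvanorder-pseudoeff-basic} (\ref{thm--asymvanorder-pseudoeff-basic-(6)}) the value of $\sigma$ is unchanged under shrinking the base, so no choice of $U$ can convert ``numerically trivial over $W$'' into ``numerically trivial over the base''. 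As written, the key invariance is therefore neither proved nor reducible to the results you cite.

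The fix is short and is precisely what the paper does. Since $E-F\equiv_{W}0$, for any relatively ample $H$ and any $\epsilon>0$ the divisor $E-F+\epsilon H$ is ample over some Stein neighbourhood of $W$ (\cite[Lemma 4.10]{fujino-analytic-bchm}); hence $E-F$ is a limit of movable divisors over $W$, and Theorem \ref{thm--asymvanorder-pseudoeff-basic} (\ref{thm--asymvanorder-pseudoeff-basic-(2)}) gives $\sigma_{\widetilde{P}_i}(E-F;\widetilde{X}/Y)=0$ for every $i$. Subadditivity, Theorem \ref{thm--asymvanorder-pseudoeff-basic} (\ref{thm--asymvanorder-pseudoeff-basic-(3)}), then yields $\sigma_{\widetilde{P}_i}(E;\widetilde{X}/Y)\leq\sigma_{\widetilde{P}_i}(F;\widetilde{X}/Y)+\sigma_{\widetilde{P}_i}(E-F;\widetilde{X}/Y)=\sigma_{\widetilde{P}_i}(F;\widetilde{X}/Y)$, and symmetrically, which is the invariance you need; the paper runs this with a single divisor, namely $0<\sigma_{P_1}(Q;X/Y)\leq\sigma_{P_1}(Q';X/Y)+\sigma_{P_1}(Q-Q';X/Y)=0$. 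With that replacement your argument coincides with the paper's proof, including the reduction to $\widetilde{X}$ via Theorem \ref{thm--asymvanorder-pseudoeff-basic} (\ref{thm--asymvanorder-pseudoeff-basic-(5)}) and the deduction of $l<\infty$ from the finiteness of $\rho(\widetilde{X}/Y;W)$ under (P4).
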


\begin{proof}
By replacing $X$ with $\widetilde{X}$, we may assume that $X$ is non-singular. 
We may assume $\rho (\widetilde{X}/Y;W) < \infty$ because otherwise there is nothing to prove. 
If $l >\rho (\widetilde{X}/Y;W)$, then there are prime divisors $P_{1},\,\cdots,\, P_{l}$ on $X$ and non-zero elements $a_{1},\,\cdots,\,a_{l} \in \mathbb{R}$ such that $\pi(P_{i})\cap W \neq \emptyset$, $\sigma_{P_{i}}(D; X/Y) >0$ for all $1\leq i \leq l$, and $\sum_{i=1}^{l}a_{i}P_{i}$ is numerically trivial over $W$. 
We put 
$$Q:=\sum_{a_{i}>0}a_{i}P_{i} \qquad {\rm and} \qquad Q':=-\sum_{a_{i}<0}a_{i}P_{i}.$$
Then neither $Q$ nor $Q'$ is a zero divisor and $Q \equiv_{W}Q'$. 
By Theorem \ref{thm--asymvanorder-pseudoeff-basic} (\ref{thm--asymvanorder-pseudoeff-basic-(2)}), we have $\sigma_{P_{1}}(Q-Q';X/Y)=0$. 
By Theorem \ref{thm--asymvanorder-pseudoeff-basic} (\ref{thm--asymvanorder-pseudoeff-basic-(3)}) and Lemma \ref{lem--nakayama-III4.2}, we have
\begin{equation*}
\begin{split}
0<&\sigma_{P_{1}}(Q;X/Y)=\sigma_{P_{1}}(Q'+(Q-Q');X/Y)\\
\leq &\sigma_{P_{1}}(Q';X/Y)+\sigma_{P_{1}}(Q-Q';X/Y)=0+0=0.
\end{split}
\end{equation*}
Thus we get a contradiction. 
From this, Theorem \ref{thm--finite-negativepart} follows. 
\end{proof}

\begin{defn}[cf.~{\cite[III,  \S 4.a]{nakayama}}]\label{defn--nakayama-zariski-decomp-negative}
Let $\pi \colon X \to Y$ be a projective morphism from a normal analytic variety $X$ to a Stein space $Y$, and
let $W \subset Y$ be a subset. 
Let $D$ be a $\pi$-pseudo-effective $\mathbb{R}$-Cartier divisor on $X$. 
Then $N_{\sigma}(D; X/Y, W)$ is defined by the formal sum 
$$N_{\sigma}(D; X/Y, W):=\sum_{\substack{P\,:\, {\rm prime\,divisor\, on\,}X\\W \cap \pi(P) \neq \emptyset}}\sigma_{P}(D;X/Y)P.$$
\end{defn}

\begin{lem}\label{lem--nagativepart-basic}
Let $\pi \colon X \to Y$ be a projective morphism from a normal analytic variety $X$ to a Stein space $Y$, and
let $W \subset Y$ be a subset. 
Let $D$ be a $\pi$-pseudo-effective $\mathbb{R}$-Cartier divisor on $X$. 
Then the following properties hold.
\begin{itemize}
\item
If $D \sim_{\mathbb{R},Y}E$ for some effective $\mathbb{R}$-divisor on $X$, then we have 
$$\sigma_{P}(D;X/Y) \leq {\rm coeff}_{P}(E)$$
for all prime divisors $P$ on $X$. 
In particular, $N_{\sigma}(D; X/Y, W)$ is an $\mathbb{R}$-divisor on $X$ and $N_{\sigma}(D; X/Y, W) \leq E$. 
\item
If $W$ is compact and $W \cap Z$ has only finitely many connected components for any analytic subset $Z$ which is defined over an open neighborhood of $W$ (see also the condition $({\rm P}4)$ in \cite{fujino-analytic-bchm}), then $N_{\sigma}(D; X/Y, W)$ has only finitely many components. 
\item
Let $U \subset Y$ be a Stein open subset containing $W$, and we put $X_{U}=\pi^{-1}(U)$. 
If $N_{\sigma}(D; X/Y, W)$ is well defined as an $\mathbb{R}$-divisor, then there exists a Zariski open subset $U'$ of $U$ such that $U' \supset W$ and putting $X_{U'}=\pi^{-1}(U')$ then
$$N_{\sigma}(D; X/Y, W)|_{X_{U'}}=N_{\sigma}(D|_{X_{U}}; X_{U}/U, W)|_{X_{U'}}$$
as $\mathbb{R}$-divisor on $X_{U}$. 
\end{itemize}
\end{lem}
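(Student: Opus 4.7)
The plan is to treat the three assertions in order, reducing each to the big-divisor case handled in Lemma \ref{lem--asymvanorder-basic} via Definition \ref{defn--asymvanorder-pseudoeff}.

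For the first assertion, I would fix a prime divisor $P$ on $X$, choose a resolution $f\colon X'\to X$, and pick a $(\pi\circ f)$-ample $\mathbb{R}$-divisor $A'$ on $X'$. Letting $P'\subset X'$ be the strict transform of $P$, Definition \ref{defn--asymvanorder-pseudoeff} gives
$$\sigma_{P}(D;X/Y)=\lim_{\epsilon\to 0+}\sigma_{P'}(f^{*}D+\epsilon A';X'/Y).$$
Since $D\sim_{\mathbb{R},Y}E$, one has $f^{*}D+\epsilon A'\sim_{\mathbb{R},Y}f^{*}E+\epsilon A'$, and the latter divisor lies in $|f^{*}D+\epsilon A'/Y|_{\mathbb{R}}$, with $f^{*}D+\epsilon A'$ big over $Y$. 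Lemma \ref{lem--asymvanorder-basic} then yields
$$\sigma_{P'}(f^{*}D+\epsilon A';X'/Y)\leq {\rm coeff}_{P'}(f^{*}E+\epsilon A')={\rm coeff}_{P}(E)+\epsilon\cdot {\rm coeff}_{P'}(A'),$$
using that $P'$ is non-exceptional over $X$. Letting $\epsilon\to 0$ proves the inequality, and the ``in particular'' clause follows because the support of $N_{\sigma}(D;X/Y,W)$ is contained in that of $E$.

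The second assertion is then an immediate consequence of the final sentence of Theorem \ref{thm--finite-negativepart}: condition (P4) is precisely its hypothesis, so the number of components of $N_{\sigma}(D;X/Y,W)$ is bounded by $\rho(\widetilde{X}/Y;W)$ and hence is finite.

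For the third assertion, I would write $N_{\sigma}(D;X/Y,W)=\sum_{i=1}^{l}\sigma_{P_{i}}(D;X/Y)P_{i}$ and, for each $i$, consider the irreducible decomposition of $P_{i}\cap X_{U}$ in $X_{U}$. The analytic irreducible decomposition is locally finite and $P_{i}\cap\pi^{-1}(W)$ is compact, so only finitely many ``good'' components $P_{i,1},\dots,P_{i,k_{i}}$ meet $\pi^{-1}(W)$; let $S_{i}$ be the union of the remaining ``bad'' components, a closed analytic subset of $X_{U}$ disjoint from $\pi^{-1}(W)$. Properness of $\pi|_{X_{U}}\colon X_{U}\to U$ makes $Z:=\bigcup_{i=1}^{l}\pi(S_{i})$ closed analytic in $U$ and disjoint from $W$, so $U':=U\setminus Z$ is a Zariski open neighborhood of $W$ in $U$ on which none of the bad components survive. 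To conclude the equality, I would identify the components of $N_{\sigma}(D|_{X_{U}};X_{U}/U,W)$ using Theorem \ref{thm--asymvanorder-pseudoeff-basic} (\ref{thm--asymvanorder-pseudoeff-basic-(6)}): any prime divisor $Q$ on $X_{U}$ with $\pi(Q)\cap W\neq\emptyset$ and $\sigma_{Q}(D|_{X_{U}};X_{U}/U)>0$ has closure $\bar{Q}$ in $X$ which is a prime divisor with $\sigma_{\bar{Q}}(D;X/Y)>0$, forcing $\bar{Q}=P_{i}$ and $Q=P_{i,j}$ for a good index $j$; conversely every good $P_{i,j}$ appears as such a component with coefficient $\sigma_{P_{i}}(D;X/Y)$. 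Restricting to $X_{U'}$ eliminates the bad contributions on both sides and yields the equality. The hard part will be the construction of $U'$, which requires simultaneously the local finiteness of the analytic irreducible decomposition of each $P_{i}\cap X_{U}$ and the properness of $\pi|_{X_{U}}$ in order to package the union of ``bad'' images as a closed analytic subset of $U$ disjoint from $W$.
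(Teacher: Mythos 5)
Your proposal is correct and matches the paper's own (much terser) proof: the first item from Definition \ref{defn--asymvanorder-pseudoeff} together with Lemma \ref{lem--asymvanorder-basic}, the second from Theorem \ref{thm--finite-negativepart}, and the third from Theorem \ref{thm--asymvanorder-pseudoeff-basic} (\ref{thm--asymvanorder-pseudoeff-basic-(6)}) plus discarding, over a closed analytic subset of $U$ disjoint from $W$ (Remmert's proper mapping theorem and local finiteness, exactly as you construct $U'$), the components of $P_{i}|_{X_{U}}$ whose images miss $W$. The one step you assert without justification --- that $\sigma_{Q}(D|_{X_{U}};X_{U}/U)>0$ forces $Q$ to be a component of the restriction of a prime divisor on $X$ (closures of divisors on $\pi^{-1}(U)$ need not be analytic in $X$) --- is precisely the ``fact'' the paper also invokes without proof; it follows by bounding $\sigma_{Q}\bigl((D+\epsilon A)|_{X_{U}};X_{U}/U\bigr)$ by the coefficient of $Q$ in the restriction of an effective representative of $D+\epsilon A$ chosen on $X$, which vanishes for such $Q$, so this is the real subtlety of the third item rather than the construction of $U'$.
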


\begin{proof}
The first property follows from Definition \ref{defn--asymvanorder-pseudoeff}. 
The second property follows from Theorem \ref{thm--finite-negativepart}. 
The third property follows from Theorem \ref{thm--asymvanorder-pseudoeff-basic} (\ref{thm--asymvanorder-pseudoeff-basic-(6)}) and the fact that $\sigma_{Q}(D|_{X_{U}}; X_{U}/U)=0$ for any prime divisor $Q$ on $X_{U}$ that is not a component of the restriction $P|_{U}$ of any prime divisor $P$ on $X$. 
\end{proof}

\begin{defn}[Nakayama--Zariski decomposition]\label{defn--nakayama-zariski-positive-negative}
Let $\pi \colon X \to Y$ be a projective morphism from a normal analytic variety $X$ to a Stein space $Y$, and
let $W \subset Y$ be a subset. 
Let $D$ be a $\pi$-pseudo-effective $\mathbb{R}$-Cartier divisor on $X$. 
When $N_{\sigma}(D; X/Y, W)$ is well defined as an $\mathbb{R}$-divisor on $X$, we call it the {\em negative part of Nakayama--Zariski decomposition over $W$}.
We set
$$P_{\sigma}(D; X/Y, W):=D-N_{\sigma}(D; X/Y, W),$$
and we call it the {\em positive part of Nakayama--Zariski decomposition over $W$}.
\end{defn}

\begin{thm}\label{thm--nefpositivepart}
Let $\pi \colon X \to Y$ be a projective morphism from a normal analytic variety $X$ to a Stein space $Y$, and
let $W \subset Y$ be a subset. 
Let $D$ be a $\pi$-pseudo-effective $\mathbb{R}$-Cartier divisor on $X$. 
Let $f \colon X' \to X$ be a projective bimeromorphism from a normal variety $X'$. 
If $N_{\sigma}(D; X/Y, W)$ is well defined as an $\mathbb{R}$-divisor on $X$ and $P_{\sigma}(D; X/Y, W)$ is nef over $W$, then $N_{\sigma}(D; X/Y, W)$ is $\mathbb{R}$-Carter and the relation
$$N_{\sigma}(f^{*}D; X'/Y, W)=f^{*}N_{\sigma}(D; X/Y, W)$$
holds. 
In particular, $N_{\sigma}(f^{*}D; X'/Y, W)$ is well defined as an $\mathbb{R}$-Cartier divisor on $X'$, the relation $f^{*}P_{\sigma}(D; X/Y, W)=P_{\sigma}(f^{*}D; X'/Y, W)$ holds, and $P_{\sigma}(f^{*}D; X'/Y, W)$ is nef over $W$.  
\end{thm}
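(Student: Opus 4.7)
The first assertion, that $N_{\sigma}(D;X/Y,W)$ is $\mathbb{R}$-Cartier, is immediate: since $P_{\sigma}(D;X/Y,W)$ is nef over $W$ by hypothesis, it is a fortiori $\mathbb{R}$-Cartier, hence $N_{\sigma}=D-P_{\sigma}$ is $\mathbb{R}$-Cartier as the difference of two $\mathbb{R}$-Cartier divisors. Consequently $f^{*}N_{\sigma}$ and $f^{*}P_{\sigma}$ are well-defined $\mathbb{R}$-Cartier divisors on $X'$, with $f^{*}N_{\sigma}$ effective, $f^{*}P_{\sigma}$ nef over $W$, and $f^{*}D=f^{*}P_{\sigma}+f^{*}N_{\sigma}$. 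The ``in particular'' clauses all follow from the central identity $N_{\sigma}(f^{*}D;X'/Y,W)=f^{*}N_{\sigma}$, so I will focus on proving this equality of $\mathbb{R}$-divisors on $X'$, i.e., $\sigma_{Q}(f^{*}D;X'/Y)={\rm coeff}_{Q}(f^{*}N_{\sigma})$ for every prime divisor $Q$ on $X'$ with $(\pi\circ f)(Q)\cap W\neq\emptyset$.

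For the upper bound $\sigma_{Q}(f^{*}D;X'/Y)\leq{\rm coeff}_{Q}(f^{*}N_{\sigma})$ I plan to use subadditivity. Since $f^{*}P_{\sigma}$ is nef over $W$, applying \cite[Lemma~4.10]{fujino-analytic-bchm} and shrinking $Y$ around $W$ ensures that $f^{*}P_{\sigma}+\epsilon A$ is ample over $Y$ for any $\epsilon>0$ and any $(\pi\circ f)$-ample $A$. In particular $f^{*}P_{\sigma}$ is pseudo-effective over $Y$ and a limit of movable divisors over $W$, so Theorem~\ref{thm--asymvanorder-pseudoeff-basic}(\ref{thm--asymvanorder-pseudoeff-basic-(2)}) yields $\sigma_{Q}(f^{*}P_{\sigma};X'/Y)=0$. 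Combined with the subadditivity of Theorem~\ref{thm--asymvanorder-pseudoeff-basic}(\ref{thm--asymvanorder-pseudoeff-basic-(3)}) and the bound $\sigma_{Q}(f^{*}N_{\sigma};X'/Y)\leq{\rm coeff}_{Q}(f^{*}N_{\sigma})$ (which arises by taking $f^{*}N_{\sigma}$ as its own effective representative through Lemma~\ref{lem--asymvanorder-basic} after an ample perturbation and then passing to the limit via Theorem~\ref{thm--asymvanorder-pseudoeff-basic}(\ref{thm--asymvanorder-pseudoeff-basic-(4)})), this gives the upper bound.

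For the reverse inequality the non-$f$-exceptional case is immediate from Theorem~\ref{thm--asymvanorder-pseudoeff-basic}(\ref{thm--asymvanorder-pseudoeff-basic-(5)}) applied with trivial $f$-exceptional divisor: when $Q$ is not $f$-exceptional, $\sigma_{Q}(f^{*}D;X'/Y)=\sigma_{f(Q)}(D;X/Y)={\rm coeff}_{f(Q)}(N_{\sigma})={\rm coeff}_{Q}(f^{*}N_{\sigma})$. For $f$-exceptional $Q$, set $E:=f^{*}N_{\sigma}-N_{\sigma}(f^{*}D;X'/Y,W)\geq 0$; by the previous step $E$ is $f$-exceptional. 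The identity $P_{\sigma}(f^{*}D;X'/Y,W)=f^{*}P_{\sigma}+E$ together with the nefness of $f^{*}P_{\sigma}$ over $W$ will be leveraged to force $E=0$ through a negativity-lemma argument. Specifically, I will invoke Corollary~\ref{cor--negativity-veryexc-nef} (or Corollary~\ref{cor--negativity-veryexc-2}) applied to the bimeromorphism $f\colon X'\to X$, combined with the Nakayama-style observation that $P_{\sigma}(f^{*}D;X'/Y,W)$ is a limit of divisors movable over $W$---namely the positive parts $P_{\sigma}(f^{*}(D+\epsilon A);X'/Y,W)$ attached to the big divisors $f^{*}(D+\epsilon A)$, which converge to $P_{\sigma}(f^{*}D;X'/Y,W)$ as $\epsilon\to 0^{+}$ by Theorem~\ref{thm--asymvanorder-pseudoeff-basic}(\ref{thm--asymvanorder-pseudoeff-basic-(4)}).

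The principal obstacle is this last step: forcing the effective $f$-exceptional divisor $E$ to vanish via the negativity lemma. The delicate point is translating the nefness of $f^{*}P_{\sigma}$ over $W\subset Y$ into the intersection hypothesis demanded by the negativity lemma (which naturally concerns curves contracted by $f\colon X'\to X$ over subsets of $X$), and simultaneously handling the possibility that $f^{*}N_{\sigma}$ contains $f$-exceptional components whose image on $Y$ lies outside $W$---such components do not enter $N_{\sigma}(f^{*}D;X'/Y,W)$ by definition, so they must be shown to vanish by a direct local analysis near $\pi^{-1}(W)$ parallel to the main argument.
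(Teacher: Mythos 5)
Your plan is correct and follows essentially the same route as the paper: the upper bound $N_{\sigma}(f^{*}D;X'/Y,W)\leq f^{*}N_{\sigma}(D;X/Y,W)$ via subadditivity, the vanishing of $\sigma_{Q}$ on the pulled-back nef part, and the coefficient bound on the effective part, and then the reverse inequality by applying Corollary \ref{cor--negativity-veryexc-2} to the $f$-exceptional difference, using that $P_{\sigma}(f^{*}D;X'/Y,W)$ is a limit of divisors movable over $W$. The ``principal obstacle'' you flag is resolved in the paper exactly by this negativity-lemma step (after reducing to a resolution dominating $X'$), together with the observation that every component of the difference has image on $Y$ meeting $W$, so the conclusion on a neighborhood of $\pi^{-1}(W)$ suffices.
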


\begin{proof}
Since $D$ and $P_{\sigma}(D; X/Y, W)$ are $\mathbb{R}$-Cartier, $N_{\sigma}(D; X/Y, W)$ is $\mathbb{R}$-Cartier. 
For any prime divisor $Q'$ on $X'$, we have
\begin{equation*}
\begin{split}
\sigma_{Q'}(f^{*}D;X'/Y)=&\sigma_{Q'}(f^{*}P_{\sigma}(D; X/Y, W)+f^{*}N_{\sigma}(D; X/Y, W);X'/Y)\\
\leq &\sigma_{Q'}(f^{*}P_{\sigma}(D; X/Y, W);X'/Y)+\sigma_{Q'}(f^{*}N_{\sigma}(D; X/Y, W);X'/Y)\\
\leq &0+{\rm coeff}_{Q'}(f^{*}N_{\sigma}(D; X/Y, W)),
\end{split}
\end{equation*}
where the final inequality follows from Theorem \ref {thm--asymvanorder-pseudoeff-basic} (\ref{thm--asymvanorder-pseudoeff-basic-(2)}) and Lemma \ref{lem--nagativepart-basic}. 
This shows that $N_{\sigma}(f^{*}D; X'/Y, W)$ is well defined as an $\mathbb{R}$-divisor on $X'$ and we have
$$N_{\sigma}(f^{*}D; X'/Y, W) \leq f^{*}N_{\sigma}(D; X/Y, W).$$
To prove the converse relation, we may replace $f \colon X' \to X$ by $f \circ g \colon X'' \to X' \to X$ for some projective bimeromorphism $g \colon X'' \to X'$ because 
$$g_{*}N_{\sigma}(g^{*}f^{*}D; X''/Y, W)=N_{\sigma}(f^{*}D; X'/Y, W)$$
holds by definition. 
Therefore, by replacing $f$ with a resolution of $X$, we may assume that $X'$ is non-singular. 
We put
$$E'=N_{\sigma}(f^{*}D; X'/Y, W) - f^{*}N_{\sigma}(D; X/Y, W).$$
Then $E'$ is $f$-exceptional.
We also have
\begin{equation*}
\begin{split}
E' \sim_{\mathbb{R},\,X}&N_{\sigma}(f^{*}D; X'/Y, W) - f^{*}D
=-P_{\sigma}(f^{*}D; X'/Y, W),
\end{split}
\end{equation*}
and $P_{\sigma}(f^{*}D; X'/Y, W)$ is a limit of movable divisors over $W$. 
By applying the negativity lemma (Corollary \ref{cor--negativity-veryexc-2}) to $f \colon X' \to X$, $\pi^{-1}(W)$, and $E$, there is $U \supset \pi^{-1}(W)$ an open subset of $X$ such that $E'|_{f^{-1}(U)} \geq 0$. 
By construction, any component $E'_{i}$ of $E'$ satisfies $(\pi \circ f)(E'_{i}) \cap W \neq \emptyset$. 
Therefore $E'\geq 0$, from which we have 
$$N_{\sigma}(f^{*}D; X'/Y, W) \geq f^{*}N_{\sigma}(D; X/Y, W).$$
In this way, we see that Theorem \ref{thm--nefpositivepart} holds. 
\end{proof}

\begin{lem}[cf.~{\cite[Lemma 2.4]{has-finite}}]\label{lem--decomp-supp-inv}
Let $\pi \colon X \to Y$ be a projective morphism from a normal analytic variety $X$ to a Stein space $Y$, and let $W \subset Y$ be a subset. 
Let $D$ and $D'$ be $\pi$-pseudo-effective $\mathbb{R}$-Cartier divisors on $X$ such that $N_{\sigma}(D; X/Y, W)$ and $N_{\sigma}(D'; X/Y, W)$ are well defined as $\mathbb{R}$-divisors on $X$. 
Then there exists a positive  real number $t_{0}$ such that $N_{\sigma}(D+tD'; X/Y, W)$ is well defined as $\mathbb{R}$-divisors on $X$ and the support of $N_{\sigma}(D+tD'; X/Y, W)$ is independent of $t \in (0,t_{0}]$. 
\end{lem}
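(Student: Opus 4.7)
The plan is to reduce the lemma to studying a finite family of convex functions of a single real variable.

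First, by the subadditivity and positive homogeneity of $\sigma_{P}(\,\cdot\,;X/Y)$ (Theorem~\ref{thm--asymvanorder-pseudoeff-basic}~(\ref{thm--asymvanorder-pseudoeff-basic-(1)}) and (\ref{thm--asymvanorder-pseudoeff-basic-(3)})), for any prime divisor $P$ on $X$ and any $t\geq 0$ we have
$$\sigma_{P}(D+tD';X/Y)\leq\sigma_{P}(D;X/Y)+t\sigma_{P}(D';X/Y).$$
Hence, if $\pi(P)\cap W\neq\emptyset$ and $\sigma_{P}(D+tD';X/Y)>0$, then $P$ must appear in the support of $N_{\sigma}(D;X/Y,W)+N_{\sigma}(D';X/Y,W)$. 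Since the latter has only finitely many components, say $P_{1},\dots,P_{n}$, this already shows that $N_{\sigma}(D+tD';X/Y,W)$ is well defined as an $\mathbb{R}$-divisor for every $t\geq 0$ and that its support is contained in $\{P_{1},\dots,P_{n}\}$.

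Next, for each $1\leq i\leq n$, I would introduce $f_{i}\colon[0,\infty)\to[0,\infty)$ defined by $f_{i}(t):=\sigma_{P_{i}}(D+tD';X/Y)$. Using the identity $D+(st_{1}+(1-s)t_{2})D'=s(D+t_{1}D')+(1-s)(D+t_{2}D')$ together with parts (\ref{thm--asymvanorder-pseudoeff-basic-(1)}) and (\ref{thm--asymvanorder-pseudoeff-basic-(3)}) of Theorem~\ref{thm--asymvanorder-pseudoeff-basic}, each $f_{i}$ is convex on $[0,\infty)$, while part (\ref{thm--asymvanorder-pseudoeff-basic-(4)}) applied with $D_{1}=D$ and $D_{2}=D'$ gives $\lim_{t\to 0+}f_{i}(t)=f_{i}(0)$, so $f_{i}$ is continuous on $[0,\infty)$.

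The main point is the following dichotomy, which I would prove separately for each $i$: there exists $\delta_{i}>0$ such that on $(0,\delta_{i})$ either $f_{i}$ is identically zero or $f_{i}$ is strictly positive. When $f_{i}(0)>0$, continuity at $0$ yields a $\delta_{i}$ of the second kind. When $f_{i}(0)=0$, convexity forces $t\mapsto f_{i}(t)/t$ to be nondecreasing on $(0,\infty)$; hence if there exist $t_{n}\to 0^{+}$ with $f_{i}(t_{n})>0$ then this monotonicity propagates positivity to all of $(0,\infty)$, while otherwise $f_{i}$ vanishes on some interval $(0,\delta_{i})$. Taking $t_{0}:=\min_{1\leq i\leq n}\delta_{i}>0$, the set $\{i\,|\,f_{i}(t)>0\}$ is independent of $t\in(0,t_{0}]$, and hence so is the support of $N_{\sigma}(D+tD';X/Y,W)$. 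The only slightly subtle step is this last dichotomy, which hinges on the observation that convexity together with $f_{i}(0)=0$ yields monotonicity of $f_{i}(t)/t$; everything else is a direct consequence of the formal properties of $\sigma_{P}$ recorded in Theorem~\ref{thm--asymvanorder-pseudoeff-basic}.
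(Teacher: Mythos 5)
Your proposal is correct and is essentially the argument the paper relies on (the proof in the text simply defers to \cite[Lemma 2.4]{has-finite}): subadditivity and homogeneity of $\sigma_{P}$ confine ${\rm Supp}\,N_{\sigma}(D+tD'; X/Y, W)$ to the components of $N_{\sigma}(D; X/Y, W)+N_{\sigma}(D'; X/Y, W)$ for every $t\geq 0$, and the convexity plus right-continuity dichotomy for $t\mapsto\sigma_{P_{i}}(D+tD';X/Y)$ is exactly the expected mechanism. Two small points to make explicit: the finiteness of the set $\{P_{1},\dots,P_{n}\}$ is not literally contained in ``well defined as an $\mathbb{R}$-divisor'' (here $W$ is an arbitrary subset and analytic divisors are only locally finite sums), but is supplied by Theorem \ref{thm--finite-negativepart} and Lemma \ref{lem--nagativepart-basic} in the situations where the lemma is applied ($W$ compact satisfying (P4)); and since your dichotomy is established on the open intervals $(0,\delta_{i})$, take $t_{0}$ strictly smaller than $\min_{i}\delta_{i}$ so that the endpoint $t=t_{0}$ is covered.
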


\begin{proof}
The argument in \cite[Proof of Lemma 2.4]{has-finite} works with no changes. 
\end{proof}

\begin{lem}[cf.~{\cite[Lemma 2.6]{has-finite}}]\label{lem--dlt-lccenter-discrepancy}
Let $Y$ be a Stein space and $W \subset Y$ a compact subset. 
Let $(X,\Delta)$ and $(X',\Delta')$ be dlt pairs with projective morphisms $\pi \colon X \to Y$ and $\pi' \colon X' \to Y$, and let $S$ and $S'$ be lc centers of $(X,\Delta)$ and $(X',\Delta')$ respectively. 
Let $f \colon X \dashrightarrow X'$ be a bimeromorphic map over $Y$ such that $f$ is a biholomorphism on a Zariski open subset intersecting $S$ and $f|_{S}$ induces a bimeromorphic map $f_{S} \colon S \dashrightarrow S'$ over $Y$. 
Suppose that $K_{X}+\Delta$ is $\pi$-pseudo-effective.  
Suppose in addition that
\begin{itemize}
\item
the inequality $a(D',X',\Delta') \leq a(D',X,\Delta)$ holds for all prime divisors $D'$ on $X'$ such that $\pi'(D')\cap W \neq \emptyset$, and 
\item
$\sigma_{P}(K_{X}+\Delta;X/Y)=0$ for all prime divisor $P$ over $X$ such that $a(P,X,\Delta)<0$, the center of $P$ on $X$ intersects $S$, and the image of $P$ on $Y$ intersects $W$. 
\end{itemize}
Let $(S,\Delta_{S})$ and $(S',\Delta_{S'})$ be the dlt pairs defined by adjunctions $K_{S}+\Delta_{S}=(K_{X}+\Delta)|_{S}$ and $K_{S'}+\Delta_{S'}=(K_{X'}+\Delta')|_{S'}$, respectively. 
Then the inequality
$$a(Q',S',\Delta_{S'}) \leq a(Q',S,\Delta_{S})$$
holds for any prime divisor $Q'$ on $S'$ such that $\pi'(Q')\cap W \neq \emptyset$. 
\end{lem}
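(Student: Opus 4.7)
The plan is to follow the argument of \cite[Lemma 2.6]{has-finite} adapted to the analytic setting using the analytic negativity lemma (Corollary~\ref{cor--negativity-veryexc-2}) and the asymptotic vanishing order machinery developed in this section.

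First, I take a common log resolution $h\colon V\to X$ and $h'\colon V\to X'$ of $(X,\Delta)$ and $(X',\Delta')$, chosen so that every $h$-exceptional (respectively $h'$-exceptional) prime divisor has log discrepancy greater than $-1$ with respect to $(X,\Delta)$ (respectively $(X',\Delta')$); this is possible since both pairs are dlt. As $f$ is biholomorphic on a Zariski open subset meeting $S$ and $f|_S$ is bimeromorphic onto $S'$, the strict transforms of $S$ and $S'$ in $V$ coincide as a single prime divisor $T\subset V$, and $h|_T\colon T\to S$, $h'|_T\colon T\to S'$ are projective bimeromorphisms. For a prime divisor $Q'$ on $S'$ with $\pi'(Q')\cap W\neq\emptyset$, let $R\subset T$ be its strict transform via $h'|_T$; then $a(Q',S',\Delta_{S'})=a(R,S',\Delta_{S'})$ and $a(Q',S,\Delta_S)=a(R,S,\Delta_S)$. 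Writing the standard discrepancy equations on $V$ and applying adjunction along $T$, the desired inequality $a(Q',S',\Delta_{S'})\leq a(Q',S,\Delta_S)$ reduces to
\begin{equation*}
{\rm coeff}_R(D|_T)\leq 0, \qquad \text{where} \quad D:=h^*(K_X+\Delta)-(h')^*(K_{X'}+\Delta').
\end{equation*}

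Next I decompose $D=D^+-D^-$ with $D^+,D^-\geq 0$ having no common components. The first hypothesis forces every component $P$ of $D^+$ with $(\pi\circ h)(P)\cap W\neq\emptyset$ to be $h'$-exceptional. A component $P$ of $D^+$ contributing positively to ${\rm coeff}_R(D^+|_T)$ additionally satisfies $R\subset P\cap T$, whence $h'(P)=Q'$ (a codimension-$2$ irreducible subvariety of $X'$) and the center of $P$ on $X$ meets $S$. The choice of $h'$ gives $a(P,X',\Delta')>-1$, and since ${\rm coeff}_P(D)>0$ we have $a(P,X,\Delta)<a(P,X',\Delta')$. When $a(P,X,\Delta)<0$, the second hypothesis applies and yields $\sigma_P(K_X+\Delta;X/Y)=0$, hence $\sigma_P(h^*(K_X+\Delta);V/Y)=0$ by Theorem~\ref{thm--asymvanorder-pseudoeff-basic}(\ref{thm--asymvanorder-pseudoeff-basic-(5)}).

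The main obstacle is twofold: first, to rule out the case $a(P,X,\Delta)\geq 0$ for such components $P$, which is handled by choosing $h$ minimally so that $h$-exceptional divisors lying over the smooth locus of $(X,\Delta)$ do not intersect $T$; second, to upgrade the pointwise vanishing of $\sigma_P$ to the coefficient bound ${\rm coeff}_P(D^+)=0$. For the latter, I rewrite $h^*(K_X+\Delta)+D^-=(h')^*(K_{X'}+\Delta')+D^+$ and apply subadditivity (Theorem~\ref{thm--asymvanorder-pseudoeff-basic}(\ref{thm--asymvanorder-pseudoeff-basic-(3)})) together with $\sigma_P(D^-;V/Y)=0$ for $P\notin\operatorname{Supp}D^-$ to conclude that $D^+$ is a limit of movable divisors over a neighborhood of $(\pi\circ h)^{-1}(W)$. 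Since the relevant components of $D^+$ are $h'$-exceptional, $D^+$ is very exceptional for $h'$ in the sense of Definition~\ref{defn--veryexcepdiv}, and the analytic negativity lemma (Corollary~\ref{cor--negativity-veryexc-2}) forces $D^+$ to vanish in that neighborhood. Restricting to $T$ and using $D^-\geq 0$ gives ${\rm coeff}_R(D|_T)\leq 0$, completing the proof.
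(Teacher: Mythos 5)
Your reduction is the same as the paper's up to a point: take a common resolution, write $D=h^{*}(K_{X}+\Delta)-h'^{*}(K_{X'}+\Delta')=D^{+}-D^{-}$, observe that the first hypothesis makes every component of $D^{+}$ whose image meets $W$ exceptional over $X'$, and reduce the claim to $\operatorname{coeff}_{R}(D|_{T})\leq 0$. But from there your argument inverts the logic of the second hypothesis and aims at a statement that is not true. Since $D^{+}$ is effective and $h'$-exceptional near the fiber over $W$, and $K_{X'}+\Delta'$ is $\pi'$-pseudo-effective there (push forward $h^{*}(K_{X}+\Delta)$ by $h'$), Theorem \ref{thm--asymvanorder-pseudoeff-basic} (\ref{thm--asymvanorder-pseudoeff-basic-(3)}) and (\ref{thm--asymvanorder-pseudoeff-basic-(5)}) give
$$\sigma_{P}(K_{X}+\Delta;X/Y)\;\geq\;\sigma_{P}(K_{X'}+\Delta';X'/Y)+\operatorname{coeff}_{P}(D^{+})\;>\;0$$
for \emph{every} component $P$ of $D^{+}$ whose image on $Y$ meets $W$. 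So your plan to arrange $a(P,X,\Delta)<0$ for the components meeting $T$, invoke the hypothesis to get $\sigma_{P}(K_{X}+\Delta;X/Y)=0$, and conclude $D^{+}=0$ near the $W$-fiber can never be carried out: the hypothesis must be used the other way around, namely $\sigma_{P}>0$ forces $a(P,X,\Delta)\geq 0$, hence $a(P,X',\Delta')>0$, for the components of $D^{+}$ meeting $T$; one then shows (by the adjunction argument of \cite[Lemma 2.5]{has-finite}) that $D^{+}|_{T}$ is exceptional over $S'$, which is what gives $\operatorname{coeff}_{R}(D^{+}|_{T})=0$. Note $D^{+}$ itself need not vanish, so your stronger goal is false in general; and the intermediate step ``subadditivity plus $\sigma_{P}(D^{-})=0$ implies $D^{+}$ is a limit of movable divisors over a neighborhood of the $W$-fiber'' is a non sequitur, so Corollary \ref{cor--negativity-veryexc-2} cannot be brought to bear as you propose. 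Your proposed fix for the first obstacle (a ``minimal'' choice of $h$ so that $h$-exceptional divisors over the smooth locus avoid $T$) also does not address the real issue, since components of $D^{+}$ need not be $h$-exceptional at all: a prime divisor on $X$ contracted by $f$, with $a(P,X,\Delta)=0$ and $a(P,X',\Delta')>0$, is a perfectly possible component of $D^{+}$ meeting $T$.

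Two further points of setup are not justified. You assume a common log resolution can be chosen so that all $h$-exceptional divisors have discrepancy $>-1$ for $(X,\Delta)$ and simultaneously all $h'$-exceptional divisors have discrepancy $>-1$ for $(X',\Delta')$; dlt only guarantees one such resolution for one pair, and a resolution dominating both pairs and the graph of $f$ will in general extract divisors of discrepancy $-1$. More importantly, you treat $T$ as a prime divisor of $V$ and speak of ``adjunction along $T$'' and of $h'(P)=Q'$ being of codimension two; this tacitly assumes $S$ and $S'$ are divisors, whereas the lemma is needed (and used in the special termination argument) for lc centers of arbitrary codimension. The paper instead works with a subvariety $T\subset V$ mapping bimeromorphically onto $S$ and $S'$, restricts the identity $h^{*}(K_{X}+\Delta)=h'^{*}(K_{X'}+\Delta')+D^{+}-D^{-}$ to $T$ (after checking $T\not\subset\operatorname{Supp}D^{\pm}$ near the $W$-fiber), and concludes via the exceptionality of $D^{+}|_{T}$ over $S'$.
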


\begin{proof}
We may apply \cite[Proof of Lemma 2.6]{has-finite} to our situation. 
We only outline the proof. 

Since the problem is local (see also Theorem \ref{thm--asymvanorder-pseudoeff-basic} (\ref{thm--asymvanorder-pseudoeff-basic-(6)})), we may freely shrink $Y$ around $W$ without loss of generality. 
By shrinking $Y$ around $W$, we may assume that there is a common log resolution $g \colon \overline{X} \to X$ and $g' \colon \overline{X} \to X'$ of $f \colon X \dashrightarrow X'$ and a subvariety $T \subset \overline{X}$ such that $g$ and $g'$ induce bimeromorphisms $g_{T}\colon T \to S$ and $g'_{T} \colon T \to S'$, respectively. 
We may write
$$g^{*}(K_{X}+\Delta)=g'^{*}(K_{X'}+\Delta')+M-N$$
with $M \geq 0$ and $N \geq 0$ having no common components. 
By the first condition of Lemma \ref{lem--dlt-lccenter-discrepancy} and the argument in \cite[Proof of Lemma 2.6]{has-finite}, $M$ is $g'$-exceptional over the inverse image of an open neighborhood of $W$, and furthermore we have ${\rm Supp}\,M \not\supset T$ and ${\rm Supp}\,N \not\supset T$ over the inverse image of an open neighborhood of $W$. 
From these facts, we see that $K_{X'}+\Delta'$ is $\pi$-pseudo-effective. 
By shrinking $Y$ around $W$ we may write
$$g^{*}_{T}(K_{S}+\Delta_{S})=g'^{*}_{T}(K_{S'}+\Delta_{S'})+M|_{T}-N|_{T},$$
where $M_{T} \geq0$ and $N|_{T} \geq 0$ have no common components. 
By the same argument as in \cite[Proof of Lemma 2.6]{has-finite} using Theorem \ref{thm--asymvanorder-pseudoeff-basic}, we have
$$\sigma_{P}(K_{X}+\Delta;X/Y)>0$$
for any component $P$ of $M$ that intersects $T$. 
By the second condition of Lemma \ref{lem--dlt-lccenter-discrepancy}, we have 
$$a(P,X',\Delta')>a(P,X,\Delta) \geq 0$$
for any component $P$ of $M$ such that $P \cap T \neq \emptyset$ and $(\pi \circ g)(P) \cap W \neq \emptyset$. 
By shrinking $Y$ around $W$ and the standard argument as in the proof of \cite[Lemma 2.5]{has-finite}, it follows that $M|_{T}$ is exceptional over $S'$. 
Then, for any prime divisor $Q'$ on $S'$, we have ${\rm coeff}_{g'^{-1}_{T}Q'}(M|_{T})=0$. 
In particular, 
$$a(Q',S',\Delta_{S'}) \leq a(Q',S,\Delta_{S})$$
holds for any prime divisor $Q'$ on $S'$ such that $\pi'(Q')\cap W \neq \emptyset$. 
\end{proof}

\subsection{Criterion for existence of log minimal model} 

In this subsection, we study the existence of log minimal models for lc pairs using the asymptotic vanishing order and the negative part of Nakayama--Zariski decomposition.

\begin{thm}[cf.~{\cite[Theorem 1.1]{bhzariski}}, {\cite[Theorem 4.18]{tsakanikas-phdthesis}}]\label{thm--birkarhu}
Let $\pi \colon X \to Y$ be a contraction from a normal analytic variety $X$ to a Stein space $Y$, and let $W \subset Y$ be a connected compact subset such that $\pi$ and $W$ satisfy (P). 
Let $(X,\Delta)$ be an lc pair. 
Then the following conditions are equivalent. 
\begin{itemize}
\item
After shrinking $Y$ around $W$, the lc pair $(X,\Delta)$ has a log minimal model over $Y$ around $W$.
\item
After shrinking $Y$ around $W$, there exists a resolution $f \colon \widetilde{X} \to X$ of $X$ such that $N_{\sigma}(f^{*}(K_{X}+\Delta); \widetilde{X}/Y, W)$ is well defined as an $\mathbb{R}$-divisor on $\widetilde{X}$ and the divisor $P_{\sigma}(f^{*}(K_{X}+\Delta); \widetilde{X}/Y, W)$ is a finite $\mathbb{R}_{>0}$-linear combination of $\mathbb{Q}$-Cartier divisors on $\widetilde{X}$ that are nef over $W$. 
\end{itemize}
\end{thm}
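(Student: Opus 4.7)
The plan is to prove both implications by adapting the Birkar--Hu argument \cite{bhzariski} to the analytic setting, using the Nakayama--Zariski theory developed in Subsection 4.1 and the log MMP results of Section \ref{sec3}.

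For (1) $\Rightarrow$ (2), let $(X',\Delta')$ be a log minimal model of $(X,\Delta)$ over $Y$ around $W$. After shrinking $Y$ around $W$, take a common log resolution $g\colon\widetilde X\to X$ and $g'\colon\widetilde X\to X'$ of the induced bimeromorphic map $X\dashrightarrow X'$, and write
$$g^{*}(K_{X}+\Delta)\;=\;g'^{*}(K_{X'}+\Delta')+F.$$
The discrepancy inequalities in the definition of log minimal model, combined with the lc hypothesis on $(X,\Delta)$, force $F$ to be effective and $g'$-exceptional by a direct computation exactly as in Step \ref{step1-thm--termination-mmp} of the proof of Theorem \ref{thm--termination-mmp}. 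Since $K_{X'}+\Delta'$ is nef over $W$, Theorem \ref{thm--nefpositivepart} yields $N_{\sigma}(g'^{*}(K_{X'}+\Delta');\widetilde X/Y,W)=0$, and the subadditivity of $\sigma_{Q}$ (Theorem \ref{thm--asymvanorder-pseudoeff-basic}~(\ref{thm--asymvanorder-pseudoeff-basic-(3)})) together with Lemma \ref{lem--nakayama-III4.2} identify $F$ with $N_{\sigma}(g^{*}(K_{X}+\Delta);\widetilde X/Y,W)$ and $g'^{*}(K_{X'}+\Delta')$ with $P_{\sigma}$. To realise $P_{\sigma}$ as the required $\mathbb{R}_{>0}$-combination of nef $\mathbb{Q}$-Cartier divisors, take a $\mathbb{Q}$-factorial dlt blow-up $h\colon(\widehat X,\widehat\Delta)\to(X',\Delta')$ via \cite[Theorem 1.21]{fujino-analytic-bchm}; after shrinking $Y$ further so that $\widehat\Delta$ lies in a finite-dimensional space of $\mathbb{R}$-divisors, the finiteness of minimal models \cite[Theorem E]{fujino-analytic-bchm} supplies a Shokurov-type polytope neighborhood of $\widehat\Delta$ within which every boundary yields a log minimal model. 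Choose $\mathbb{Q}$-boundaries $\widehat\Delta_{i}$ in this neighborhood with $\widehat\Delta=\sum r_{i}\widehat\Delta_{i}$, $r_{i}>0$, $\sum r_{i}=1$; pulling back via a common blow-up $\widetilde X\to\widehat X$ then yields the desired decomposition.

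For (2) $\Rightarrow$ (1), Lemma \ref{lem--exist-model-birat-I} reduces the problem to showing that a log smooth model of $(X,\Delta)$ admits a log minimal model over $Y$ around $W$, and we may therefore assume $(X,\Delta)$ is log smooth and $f=\mathrm{id}$, so that
$$K_{X}+\Delta=P+N, \qquad P=\sum r_{i}D_{i},$$
with each $D_{i}$ $\mathbb{Q}$-Cartier and nef over $W$, and $N\geq 0$. Fix a general $\pi$-ample $\mathbb{R}$-divisor $H$ with $K_{X}+\Delta+H$ nef over $W$ and $(X,\Delta+H)$ lc, and run a $(K_{X}+\Delta)$-MMP over $Y$ around $W$ with scaling of $H$. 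Since $(X,0)$ is klt, Lemma \ref{lem--mmp-ample-scaling-nefthreshold} gives $\lim_{i\to\infty}\lambda_{i}=0$, so by Theorem \ref{thm--termination-mmp} it suffices to exhibit a weak lc model of $(X,\Delta)$ over $Y$ around $W$. We produce one by a klt-big perturbation: by Lemma \ref{lem--dlt-perturbation-coefficients} there exist a small $\eta>0$ and a klt pair $(X,B)$ with $K_{X}+B\sim_{\mathbb{R}}K_{X}+\Delta+\eta H$; then $K_{X}+B=(P+\eta H)+N$, where $P+\eta H$ is $\pi$-ample and hence $\pi$-big, so by \cite[Theorem 1.1]{fujino-analytic-bchm} the klt pair $(X,B)$ has a good minimal model $(X^{\dagger},B^{\dagger})$ over $Y$ around $W$. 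Let $\Delta^{\dagger}$ be the corresponding boundary on $X^{\dagger}$; a direct discrepancy comparison using the positivity $\sigma_{Q}(K_{X}+\Delta;X/Y)>0$ on every component of $N$, together with Lemma \ref{lem--dlt-lccenter-discrepancy}, shows that $(X^{\dagger},\Delta^{\dagger})$ is a weak lc model of $(X,\Delta)$ over $Y$ around $W$.

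The crux of the argument lies in the last step of (2) $\Rightarrow$ (1): the good minimal model of the klt pair $(X,B)$ does not, a priori, control the discrepancies of the original lc pair $(X,\Delta)$, so one must verify that it descends to a weak lc model of $(X,\Delta)$ by carefully controlling discrepancies along every prime divisor contracted by the bimeromorphic map $X\dashrightarrow X^{\dagger}$. This is precisely where the nef positive part $P_{\sigma}$ of the Nakayama--Zariski decomposition supplies the positivity of $\sigma$-values needed to force the required inequality $a(Q,X,\Delta)\leq a(Q,X^{\dagger},\Delta^{\dagger})$ on every such $Q$.
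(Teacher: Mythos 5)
Your first implication is essentially the paper's argument (the paper obtains the decomposition $K_{X'}+\Delta'=\sum r_iD_i'$ directly from the Shokurov polytope \cite[Theorem 14.3 (3)]{fujino-analytic-conethm} rather than via a dlt blow-up and finiteness of models, but the content is the same). The problem is the second implication, where your argument has a genuine gap.

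You reduce to the log smooth case with $K_X+\Delta=P+N$, $P=P_\sigma$ nef over $W$, and then try to produce a weak lc model by taking a klt pair $(X,B)$ with $K_X+B\sim_{\mathbb{R}}K_X+\Delta+\eta H$ for a single fixed $\eta>0$ and passing to its good minimal model $(X^\dagger,B^\dagger)$. But $(X^\dagger,\Delta^\dagger)$ is in general \emph{not} a weak lc model of $(X,\Delta)$, for two reasons. First, the $(K_X+B)$-MMP contracts exactly the components of $N_\sigma(K_X+\Delta+\eta H)$ (Lemma \ref{lem--asymvanorder-discrepancy}), which for a fixed $\eta$ can be a proper subset of the components of $N=N_\sigma(K_X+\Delta)$, so a nonzero negative part survives on $X^\dagger$ and $K_{X^\dagger}+\Delta^\dagger$ need not be nef over $W$. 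Second, and more fundamentally, even if you arrange (say via Lemma \ref{lem--decomp-supp-inv}) that all of $N$ is contracted, so that $K_{X^\dagger}+\Delta^\dagger\sim_{\mathbb{R}}P^\dagger$, nothing guarantees that the pushforward $P^\dagger$ of the nef divisor $P$ is still nef over $W$: the steps of the $(K_X+B)$-MMP are only $(K_X+B)$-negative and may intersect $P$ positively, and nefness is not preserved under pushforward by such contractions. Your concluding paragraph addresses only the discrepancy inequalities (and the appeal to Lemma \ref{lem--dlt-lccenter-discrepancy}, which concerns adjunction to lc centers, is not relevant here); it never addresses the nefness half of the definition of a weak lc model, which is where hypothesis (2) must actually be used. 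In addition, your opening move "run a $(K_X+\Delta)$-MMP with scaling of $H$" presupposes that each step (in particular each lc flip) exists in this analytic setting, which is not available at this point of the paper and is part of what the main theorems are establishing.

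This is precisely what the paper's proof (following Birkar--Hu and Tsakanikas) is designed to handle: writing $M=P_\sigma(K_X+\Delta)=\sum\gamma_iM_i$ with $M_i$ Cartier and nef over $W$, one chooses $\alpha$ with $\alpha\gamma_i>2\cdot\dim X$ and runs a $(K_X+\Delta+\alpha M)$-MMP with scaling of an ample divisor. The length bound on extremal rays forces every contracted ray to be $M_i$-trivial, so the strict transform of $M$ stays nef over $W$ and the MMP is simultaneously a $(K_X+\Delta)$-MMP; combined with $N_\sigma(K_X+\Delta+\alpha M)=N_\sigma(K_X+\Delta)$ and $P_\sigma(K_X+\Delta+\alpha M)=(\alpha+1)M$ (Lemma \ref{lem--nakayama-III4.2}), once the divisor becomes a limit of movable divisors the whole negative part has been contracted and $K_{\overline X}+\overline\Delta$ equals the nef strict transform of $M$, i.e.\ one lands directly on a log minimal model. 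Some device of this kind, making the nef positive part trivial on every contracted ray, is what your proposal is missing and cannot be replaced by a single ample klt perturbation.
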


\begin{proof}
Assume the first condition of Theorem \ref{thm--birkarhu}. 
After shrinking $Y$ around $W$, there exists a log minimal model $(X',\Delta')$ of $(X,\Delta)$ over $Y$ around $W$. 
By the argument using Shokurov's polytope \cite[Theorem 14.3 (3)]{fujino-analytic-conethm}, we can write
$$K_{X'}+\Delta'=\sum_{i=1}^{l}r_{i}D'_{i},$$ 
where $r_{i}$ are positive real number and $D'_{i}$ are $\mathbb{Q}$-Cartier divisors on $X'$ that are nef over $W$. 
Let $g \colon X'' \to X$ be a resolution of $X$ which resolves the indeterminacy of $X \dashrightarrow X'$, and let $g' \colon X'' \to X'$ be the induced bimeromorphism. 
After shrinking $Y$ around $W$, we can write
$$g^{*}(K_{X}+\Delta)=g'^{*}(K_{X'}+\Delta')+E''$$
such that $E''$ is an effective $g'$-exceptional $\mathbb{R}$-divisor on $X''$. 
By Theorem \ref{thm--asymvanorder-pseudoeff-basic} (\ref{thm--asymvanorder-pseudoeff-basic-(2)}) (\ref{thm--asymvanorder-pseudoeff-basic-(6)}), $N_{\sigma}(g^{*}(K_{X}+\Delta); X''/Y, W)$ is well defined and we can find an open subset $U \subset Y$ containing $W$ such that
\begin{equation*}
N_{\sigma}(g^{*}(K_{X}+\Delta); X''/Y, W)|_{(\pi \circ g)^{-1}(U)}=E''|_{(\pi \circ g)^{-1}(U)}.
\end{equation*}
Then 
$$P_{\sigma}(g^{*}(K_{X}+\Delta); X''/Y, W)|_{(\pi \circ g)^{-1}(U)}=g'^{*}(K_{X'}+\Delta')|_{(\pi \circ g)^{-1}(U)}=\sum_{i=1}^{l}r_{i}g'^{*}D'_{i}|_{(\pi \circ g)^{-1}(U)}.$$
Thus $N_{\sigma}(g^{*}(K_{X}+\Delta); X''/Y, W)$ is well defined as an $\mathbb{R}$-divisor on $X''$ and the divisor $P_{\sigma}(g^{*}(K_{X}+\Delta); X''/Y, W)$ is a finite $\mathbb{R}_{>0}$-linear combination of $\mathbb{Q}$-Cartier divisors on $X''$ that are nef over $W$. 
From this, we see that the first condition of Theorem \ref{thm--birkarhu} implies the second condition of Theorem \ref{thm--birkarhu}. 

Conversely, suppose that the second condition of Theorem \ref{thm--birkarhu} holds. 
After shrinking $Y$ around $W$, we get a resolution $f \colon \widetilde{X} \to X$ of $X$ such that $N_{\sigma}(f^{*}(K_{X}+\Delta); \widetilde{X}/Y, W)$ is well defined as an $\mathbb{R}$-divisor on $\widetilde{X}$ and we can write
$$P_{\sigma}(f^{*}(K_{X}+\Delta); \widetilde{X}/Y, W)= \sum_{i=1}^{k}\gamma_{i}M_{i},$$
where $\gamma_{i}$ are positive real numbers and $M_{i}$ are $\mathbb{Q}$-Cartier divisors on $\widetilde{X}$ that are nef over $W$. 
By shrinking $Y$ around $W$, we get a bimeromorphism $f' \colon \widetilde{X}' \to \widetilde{X}$ such that $f \circ f' \colon \widetilde{X}' \to X$ is a log resolution of $(X,\Delta)$. 
By Theorem \ref{thm--nefpositivepart} and replacing $f$ with $f \circ f'$, we may assume that $f \colon X' \to X$ is a log resolution of $(X,\Delta)$. 
By shrinking $Y$ around $W$ again, we may assume that there are only finitely many $f$-exceptional prime divisors and $M_{i}$ are globally $\mathbb{Q}$-Cartier. 
By replacing $\gamma_{i}$ with some multiple, we may assume that $M_{i}$ are Cartier. 

For the simplicity of notations, from now on we drop $\widetilde{X}/Y, W$ and $\widetilde{X}'/Y, W$ in the negative part and the positive part of the Nakayama--Zariski decomposition. 

Let $\widetilde{\Delta}$ be the sum of $f^{-1}_{*}\Delta$ and the reduced $f$-exceptional divisor. 
Then we can write 
$$K_{\widetilde{X}}+\widetilde{\Delta}=f^{*}(K_{X}+\Delta)+E,$$
where $E$ is effective and $f$-exceptional. 
By Theorem \ref{thm--asymvanorder-pseudoeff-basic} (\ref{thm--asymvanorder-pseudoeff-basic-(5)}), we have
$$N_{\sigma}(K_{\widetilde{X}}+\widetilde{\Delta})=N_{\sigma}(f^{*}(K_{X}+\Delta))+E.$$
Therefore, $N_{\sigma}(K_{\widetilde{X}}+\widetilde{\Delta})$ is well defined as an $\mathbb{R}$-divisor on $\widetilde{X}$, and this shows
$$P_{\sigma}(K_{\widetilde{X}}+\widetilde{\Delta})=P_{\sigma}(f^{*}(K_{X}+\Delta))=\sum_{i=1}^{k}\gamma_{i} M_{i}.$$
Thus $P_{\sigma}(K_{\widetilde{X}}+\widetilde{\Delta})$  is a finite $\mathbb{R}_{>0}$-linear combination of $\mathbb{Q}$-Cartier divisors on $\widetilde{X}$ that are nef over $W$. 
Moreover, by Lemma \ref{lem--exist-model-birat-I}, we may replace $(X,\Delta)$ with $(\widetilde{X},\widetilde{\Delta})$. 
From these facts, by replacing $(X,\Delta)$ with $(\widetilde{X},\widetilde{\Delta})$, we may assume that 
\begin{itemize}
\item
$(X,\Delta)$ is log smooth, 
\item
$N_{\sigma}(K_{X}+\Delta)$ is well defined as an $\mathbb{R}$-divisor on $X$, and
\item
$P_{\sigma}(K_{X}+\Delta)=\sum_{i=1}^{k}\gamma_{i}M_{i}$ for some Cartier divisors $M_{i}$ on $X$ that are nef over $W$. 
\end{itemize}

We follow the argument in \cite[Section 4.2]{tsakanikas-phdthesis}.  
We put $M:=P_{\sigma}(K_{X}+\Delta)$. 
Let $\alpha$ be a positive real number such that $\alpha \gamma_{i}>2 \cdot {\rm dim}\,X$ for all $i$. 
We run a $(K_{X}+\Delta+\alpha M)$-MMP over $Y$ around $W$ with scaling of an ample divisor. 
Note that we may run the MMP since $M$ is nef over $W$. 
By the same argument as in \cite[Proof of Lemma 13.7]{fujino-analytic-bchm}, after shrinking $Y$ around $W$ we get a bimeromorphic contraction
$$(X,\Delta+\alpha M)\overset{\phi}{\dashrightarrow} (\overline{X},\overline{\Delta}+\alpha \overline{M})$$
over $Y$ such that $K_{\overline{X}}+\overline{\Delta}+\alpha \overline{M}$ is the limit of movable divisors over a neighborhood of $W$. 
Then
$$N_{\sigma}(K_{\overline{X}}+\overline{\Delta}+\alpha \overline{M})=0.$$
By Lemma \ref{lem--nakayama-III4.2}, we have
$$N_{\sigma}(K_{X}+\Delta+\alpha M)=N_{\sigma}((\alpha+1)(K_{X}+\Delta)-\alpha N_{\sigma}(K_{X}+\Delta))=N_{\sigma}(K_{X}+\Delta).$$
This implies 
$$P_{\sigma}(K_{X}+\Delta+\alpha M)=K_{X}+\Delta+\alpha M-N_{\sigma}(K_{X}+\Delta+\alpha M)=(\alpha+1)M.$$ 
Therefore
\begin{equation*}
\begin{split}
K_{X}+\Delta+\alpha M=(\alpha+1)M+N_{\sigma}(K_{X}+\Delta+\alpha M).
\end{split}
\end{equation*}
By the definition of $\alpha$, the strict transform of $(\alpha+1)M$ is trivial with respect to the extremal contraction in each step of the $(K_{X}+\Delta+\alpha M)$-MMP. 
This shows that $(\alpha+1)\phi_{*}M$ is nef over $W$ and $\phi \colon X \dashrightarrow \overline{X}$ is a sequence of steps of a $(K_{X}+\Delta)$-MMP over $Y$ around $W$. 
By the same argument as in \cite[Proof of Lemma 4.1]{bhzariski}, we have
 $$P_{\sigma}(K_{\overline{X}}+\overline{\Delta}+\alpha \overline{M})=\phi_{*}P_{\sigma}(K_{X}+\Delta+\alpha M)=(\alpha+1)\phi_{*}M.$$
From these relations, we have
$$K_{\overline{X}}+\overline{\Delta}+\alpha \overline{M}=P_{\sigma}(K_{\overline{X}}+\overline{\Delta}+\alpha \overline{M})+N_{\sigma}(K_{\overline{X}}+\overline{\Delta}+\alpha \overline{M})=(\alpha+1)\phi_{*}M,$$
and therefore $K_{\overline{X}}+\overline{\Delta}=\phi_{*}M$ is nef over $W$. 
Then 
$$(X,\Delta)\overset{\phi}{\dashrightarrow} (\overline{X},\overline{\Delta})$$
is a sequence of steps of a $(K_{X}+\Delta)$-MMP over $Y$ around $W$ to a log minimal model. 
In particular, $(X,\Delta)$ has a log minimal model over $Y$ around $W$. 

By the above discussion, it follows that the second condition of Theorem \ref{thm--birkarhu} implies the first condition of Theorem \ref{thm--birkarhu}. 
Therefore, Theorem \ref{thm--birkarhu} holds true.  
\end{proof}

By the same argument, we can prove the following.

\begin{thm}[cf.~{\cite[Theorem 2.23]{has-finite}}]\label{thm--birkarhu-semiample}
Let $\pi \colon X \to Y$ be a contraction from a normal analytic variety $X$ to a Stein space $Y$, and let $W \subset Y$ be a connected compact subset such that $\pi$ and $W$ satisfy (P). 
Let $(X,\Delta)$ be an lc pair. 
Then the following conditions are equivalent. 
\begin{itemize}
\item
After shrinking $Y$ around $W$, the lc pair $(X,\Delta)$ has a good minimal model over $Y$ around $W$.
\item
After shrinking $Y$ around $W$, there exists a resolution $f \colon \widetilde{X} \to X$ of $X$ such that $N_{\sigma}(f^{*}(K_{X}+\Delta); \widetilde{X}/Y, W)$ is well defined as an $\mathbb{R}$-divisor on $\widetilde{X}$ and the divisor $P_{\sigma}(f^{*}(K_{X}+\Delta); \widetilde{X}/Y, W)$ is semi-ample over a neighborhood of $W$. 
\end{itemize}
\end{thm}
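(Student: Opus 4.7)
The strategy is to adapt the proof of Theorem \ref{thm--birkarhu} above, strengthening both implications so that the role of ``nef'' is replaced by ``semi-ample''.

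For the direction from the first condition to the second, I would start with a good minimal model $(X',\Delta')$ of $(X,\Delta)$ over $Y$ around $W$, take a common log resolution $g\colon X''\to X$ and $g'\colon X''\to X'$, and write, after shrinking $Y$ around $W$,
$$g^{*}(K_{X}+\Delta)=g'^{*}(K_{X'}+\Delta')+E''$$
for some effective $g'$-exceptional $\mathbb{R}$-divisor $E''$. Since $K_{X'}+\Delta'$ is semi-ample over a neighborhood of $W$, the pullback $g'^{*}(K_{X'}+\Delta')$ is semi-ample as well. Using Theorem \ref{thm--asymvanorder-pseudoeff-basic}\,(\ref{thm--asymvanorder-pseudoeff-basic-(2)})(\ref{thm--asymvanorder-pseudoeff-basic-(6)}) (to see that $\sigma_{P}$ vanishes along the semi-ample summand and is computed locally on each relatively compact subset), together with Lemma \ref{lem--nagativepart-basic}, I would conclude after shrinking $Y$ around $W$ that
$$N_{\sigma}\bigl(g^{*}(K_{X}+\Delta);X''/Y,W\bigr)=E''\quad \text{and}\quad P_{\sigma}\bigl(g^{*}(K_{X}+\Delta);X''/Y,W\bigr)=g'^{*}(K_{X'}+\Delta'),$$
which yields the required semi-ample $P_{\sigma}$.

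For the converse, I would begin by reducing to a very tractable setup using Lemma \ref{lem--exist-model-birat-I} and Theorem \ref{thm--nefpositivepart}: replacing $\widetilde{X}$ by a further log resolution of $(X,\Delta)$ and replacing $(X,\Delta)$ by the corresponding log smooth model $(\widetilde{X},\widetilde{\Delta})$, I may assume that $(X,\Delta)$ is log smooth, that $N_{\sigma}:=N_{\sigma}(K_{X}+\Delta;X/Y,W)$ is a well-defined $\mathbb{R}$-divisor on $X$, and that $M:=P_{\sigma}(K_{X}+\Delta;X/Y,W)$ is semi-ample over a neighborhood of $W$. After shrinking $Y$ around $W$, the divisor $M$ defines a projective contraction $h\colon X\to Z$ over $Y$ together with a $\pi'$-ample $\mathbb{Q}$-Cartier divisor $A$ on $Z$ such that $M\sim_{\mathbb{Q},Y}h^{*}A$. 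Then, exactly as in the final part of the proof of Theorem \ref{thm--birkarhu}, I choose $\alpha\in \mathbb{R}_{>0}$ sufficiently large (so that $\alpha$ exceeds $2\cdot\dim X$ after the usual length-of-extremal-ray scaling) and run a $(K_{X}+\Delta+\alpha M)$-MMP over $Y$ around $W$ with scaling of an ample divisor. Using Lemma \ref{lem--nakayama-III4.2}, I would verify that $N_{\sigma}(K_{X}+\Delta+\alpha M;X/Y,W)=N_{\sigma}$ and $P_{\sigma}(K_{X}+\Delta+\alpha M;X/Y,W)=(\alpha+1)M$, so the MMP produces a bimeromorphic contraction $\phi\colon X\dashrightarrow\bar{X}$ terminating with $K_{\bar{X}}+\bar{\Delta}=\phi_{*}M$ nef over $W$; this gives a log minimal model.

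To upgrade this log minimal model to a good minimal model, I would invoke Lemma \ref{lem--relative-mmp} relative to $h\colon X\to Z$: after replacing $\alpha$ by a larger number if necessary, one may take $\alpha h^{*}A$ in place of $\alpha M$ (possible because $M\sim_{\mathbb{Q},Y}h^{*}A$ and $(X,\Delta+\alpha M)$ is lc for the choices above), so the entire $(K_{X}+\Delta+\alpha M)$-MMP over $Y$ around $W$ is automatically a sequence of steps of an MMP over $Z$. Consequently the bimeromorphic contraction $\phi\colon X\dashrightarrow\bar{X}$ factors through a projective morphism $\bar{h}\colon\bar{X}\to Z$ with $\phi_{*}M\sim_{\mathbb{Q},Y}\bar{h}^{*}A$, so $\phi_{*}M=K_{\bar{X}}+\bar{\Delta}$ is semi-ample over a neighborhood of $W$. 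This exhibits $(\bar{X},\bar{\Delta})$ as a good minimal model of $(X,\Delta)$ over $Y$ around $W$, completing the converse direction.

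The main obstacle will be the last paragraph: one must carry the semi-ampleness of $M$ through the MMP, and the clean way to do this is to realize the MMP as relative over the base $Z$ of the Iitaka-type contraction defined by $M$. Verifying the hypotheses of Lemma \ref{lem--relative-mmp} (effectivity, the inequality $r>2\cdot\dim X$, and the lc assumption) for a suitable choice of replacement of $\alpha M$ by $\alpha h^{*}A$ after shrinking $Y$ around $W$ is the delicate bookkeeping, but once this is set up, the termination and the conclusion follow just as in the proof of Theorem \ref{thm--birkarhu}.
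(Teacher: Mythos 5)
Your proposal is correct and follows essentially the route the paper intends: the paper proves this theorem simply "by the same argument" as Theorem \ref{thm--birkarhu}, and your plan is exactly that adaptation, with the first implication replacing "nef combination" by the pullback of the semi-ample $K_{X'}+\Delta'$ and the converse rerunning the $(K_{X}+\Delta+\alpha M)$-MMP. Your extra step of invoking Lemma \ref{lem--relative-mmp} over the contraction $h\colon X\to Z$ defined by $M$ (after replacing $\alpha M$ by an $\mathbb{R}$-linearly equivalent effective divisor pulled back from $Z$) is a legitimate way to carry semi-ampleness of $\phi_{*}M$ through the MMP, which the paper leaves implicit.
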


\begin{thm}[cf.~{\cite[Lemma 2.25]{has-finite}}]\label{thm--minmodel-asymvanorder-boundary}
Let $\pi \colon X \to Y$ be a projective morphism from a normal analytic variety $X$ to a Stein space $Y$, and let $W \subset Y$ be a compact subset such that $\pi$ and $W$ satisfy (P). 
Let $(X,\Delta)$ be an lc pair and let $(X',\Delta')$ be an lc pair together with a projective bimeromorphic morphism $f \colon X' \to X$. 
Suppose that $K_{X}+\Delta$ is $\pi$-pseudo-effective and the relation 
$$0 \leq a(D,X',\Delta')-a(D,X,\Delta) \leq \sigma_{D}(K_{X}+\Delta;X/Y)$$
holds for all prime divisors $D$ on $X'$ such that $(\pi \circ f)(D) \cap W \neq \emptyset$. 
Then the following conditions are equivalent.
\begin{itemize}
\item
After shrinking $Y$ around $W$, the lc pair $(X,\Delta)$ has a log minimal model over $Y$ around $W$.  
\item
After shrinking $Y$ around $W$, the lc pair $(X',\Delta')$ has a log minimal model over $Y$ around $W$.  
\end{itemize}
\end{thm}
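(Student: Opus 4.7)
The plan is to translate both conditions into the Nakayama--Zariski criterion of Theorem~\ref{thm--birkarhu} and to show, on a common log resolution of $X$ and $X'$, that the positive parts of the Nakayama--Zariski decompositions of the pullbacks of $K_X+\Delta$ and $K_{X'}+\Delta'$ coincide. To begin, I will set $F:=f^{*}(K_{X}+\Delta)-(K_{X'}+\Delta')$, so that ${\rm coeff}_{D}(F)=a(D,X',\Delta')-a(D,X,\Delta)$ for every prime divisor $D$ on $X'$. The lower bound in the hypothesis, after shrinking $Y$ around $W$ to exclude the finitely many components of $F$ whose image on $Y$ misses $W$, gives $F\geq 0$. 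I then choose a log resolution $g\colon \widetilde{X}\to X'$ such that $h:=f\circ g\colon \widetilde{X}\to X$ is simultaneously a log resolution of $(X,\Delta)$, and I decompose $g^{*}F=F^{\mathrm{str}}+F^{\mathrm{exc}}$ into its non-$g$-exceptional and $g$-exceptional parts. The key device is the intermediate divisor
$$A:=h^{*}(K_{X}+\Delta)-F^{\mathrm{str}}=g^{*}(K_{X'}+\Delta')+F^{\mathrm{exc}}.$$

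The main technical claim is the identity, for every prime divisor $Q$ on $\widetilde{X}$ whose image on $Y$ meets $W$,
$$\sigma_{Q}\bigl(g^{*}(K_{X'}+\Delta');\widetilde{X}/Y\bigr)=\sigma_{Q}\bigl(h^{*}(K_{X}+\Delta);\widetilde{X}/Y\bigr)-{\rm coeff}_{Q}(g^{*}F). \qquad (\ast)$$
For every $P$ in the support of $F^{\mathrm{str}}$, necessarily the strict transform of a prime divisor $D$ on $X'$, the upper bound in the hypothesis combined with Theorem~\ref{thm--asymvanorder-pseudoeff-basic}(\ref{thm--asymvanorder-pseudoeff-basic-(5)}) applied to $h$ with trivial exceptional part gives ${\rm coeff}_{P}(F^{\mathrm{str}})={\rm coeff}_{D}(F)\leq \sigma_{D}(K_{X}+\Delta;X/Y)=\sigma_{P}(h^{*}(K_{X}+\Delta);\widetilde{X}/Y)$. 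Therefore Lemma~\ref{lem--nakayama-III4.2} applies to $A=h^{*}(K_{X}+\Delta)-F^{\mathrm{str}}$ and yields both the pseudo-effectivity of $A$ (and hence, by pushforward along $g$, of $K_{X'}+\Delta'$) and the equality $\sigma_{Q}(A;\widetilde{X}/Y)=\sigma_{Q}(h^{*}(K_{X}+\Delta);\widetilde{X}/Y)-{\rm coeff}_{Q}(F^{\mathrm{str}})$. A second application of Theorem~\ref{thm--asymvanorder-pseudoeff-basic}(\ref{thm--asymvanorder-pseudoeff-basic-(5)}), now to $g$ with the $g$-exceptional effective divisor $F^{\mathrm{exc}}$, gives $\sigma_{Q}(A;\widetilde{X}/Y)=\sigma_{Q}(g^{*}(K_{X'}+\Delta');\widetilde{X}/Y)+{\rm coeff}_{Q}(F^{\mathrm{exc}})$, and subtracting the two identities produces $(\ast)$.

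Summing $(\ast)$ over prime divisors $Q$ with image meeting $W$, and using Lemma~\ref{lem--nagativepart-basic} to arrange that every component of $g^{*}F$ has image meeting $W$, yields
$$N_{\sigma}(g^{*}(K_{X'}+\Delta');\widetilde{X}/Y,W)=N_{\sigma}(h^{*}(K_{X}+\Delta);\widetilde{X}/Y,W)-g^{*}F,$$
and hence $P_{\sigma}(g^{*}(K_{X'}+\Delta');\widetilde{X}/Y,W)=P_{\sigma}(h^{*}(K_{X}+\Delta);\widetilde{X}/Y,W)$. Theorem~\ref{thm--birkarhu} then gives the equivalence at once: one positive part is a finite $\mathbb{R}_{>0}$-linear combination of $\mathbb{Q}$-Cartier divisors nef over $W$ if and only if the other is. The main obstacle is the identity $(\ast)$: the hypothesis bounds only the coefficients of $F$ at prime divisors on $X'$ and provides no information on the $g$-exceptional contribution $F^{\mathrm{exc}}$ to $g^{*}F$. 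The introduction of the intermediate divisor $A$ is essential, for it allows Theorem~\ref{thm--asymvanorder-pseudoeff-basic}(\ref{thm--asymvanorder-pseudoeff-basic-(5)}) to absorb $F^{\mathrm{exc}}$ automatically, so that only the estimate on $F^{\mathrm{str}}$ supplied by the hypothesis is needed to invoke Lemma~\ref{lem--nakayama-III4.2}.
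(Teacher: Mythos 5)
Your proposal is correct in substance and runs on the same engine as the paper's proof: the $\sigma$-calculus of Theorem \ref{thm--asymvanorder-pseudoeff-basic} and Lemma \ref{lem--nakayama-III4.2} feeding into the criterion of Theorem \ref{thm--birkarhu}. The difference is organizational. The paper first replaces $(X,\Delta)$ and $(X',\Delta')$ by the log smooth pairs $(\widetilde{X},\widetilde{\Delta})$ and $(\widetilde{X},\widetilde{\Delta}')$ on a common log resolution (strict transform plus reduced exceptional divisor), checks via Theorem \ref{thm--asymvanorder-pseudoeff-basic} (\ref{thm--asymvanorder-pseudoeff-basic-(5)}) that the discrepancy/$\sigma$ hypothesis survives, transfers existence of log minimal models through Lemma \ref{lem--exist-model-birat-I}, and then delegates the core computation to the argument of \cite[Lemma 2.25]{has-finite}; you skip the boundary replacement and prove directly, via the intermediate divisor $A$, that the positive parts of the Nakayama--Zariski decompositions of $h^{*}(K_{X}+\Delta)$ and $g^{*}(K_{X'}+\Delta')$ coincide, so the equivalence drops out of Theorem \ref{thm--birkarhu} in one stroke. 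Your route is more self-contained and yields the slightly stronger statement $P_{\sigma}(g^{*}(K_{X'}+\Delta'))=P_{\sigma}(h^{*}(K_{X}+\Delta))$, at the cost of three details you should make explicit: (1) Lemma \ref{lem--nakayama-III4.2} gives the formula only for the listed divisors, so to obtain $(\ast)$ at an arbitrary $Q$ you must add $Q$ to the list with $s_{Q}=0$ (which the lemma permits); (2) the second condition of Theorem \ref{thm--birkarhu} is existential in the resolution, so in the direction starting from a log minimal model of one pair you need to observe that $(\ast)$ persists on any sufficiently high common resolution (re-run the argument there, or pull back using Theorem \ref{thm--nefpositivepart}); (3) the components of $F$ and of $g^{*}F$ whose image on $Y$ misses $W$ are eliminated by the standard shrinking of $Y$ around the compact set $W$ (their union maps to an analytic set disjoint from $W$), not by Lemma \ref{lem--nagativepart-basic}, which only concerns restriction of $N_{\sigma}$ to open subsets.
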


\begin{proof}

As in the discussion in Remark \ref{rem--mmp-reduction-basic}, we may assume that $\pi$ is a contraction and $W$ is connected. 
By Theorem \ref{thm--asymvanorder-pseudoeff-basic} (\ref{thm--asymvanorder-pseudoeff-basic-(6)}), we may freely shrink $Y$ around $W$ if necessary without loosing the irreducibility of both $X$ and $X$'. 
By shrinking $Y$ around $W$, we can take a log resolution $g \colon \widetilde{X} \to X'$ of $(X',\Delta')$ such that $f \circ g \colon \widetilde{X} \to X$ is a log resolution of $(X,\Delta)$. 
Let $\widetilde{\Delta}'$ be the sum of $g^{-1}_{*}\Delta'$ and the reduced $g$-exceptional divisor on $\widetilde{X}$, and let $\widetilde{\Delta}$ be the sum of $(f \circ g)^{-1}_{*}\Delta$ and the reduced $(f \circ g)$-exceptional divisor on $\widetilde{X}$. 
By the same argument as in \cite[Lemma 3.6]{has-iitakafibration}, by simple computation of the asymptotic vanishing orders using Theorem \ref{thm--asymvanorder-pseudoeff-basic} (\ref{thm--asymvanorder-pseudoeff-basic-(5)}), we have
$$0 \leq a(\widetilde{D},\widetilde{X},\widetilde{\Delta}')-a(\widetilde{D},\widetilde{X},\widetilde{\Delta}) \leq \sigma_{D}(K_{\widetilde{X}}+\widetilde{\Delta};\widetilde{X}/Y)$$
for every prime divisor $\widetilde{D}$ on $\widetilde{X}$ whose image on $Y$ intersects $W$. 
By Lemma \ref{lem--exist-model-birat-I}, the lc pair $(X,\Delta)$ (resp.~$(X',\Delta')$) has a log minimal model over $Y$ around $W$ if and only if the lc pair $(\widetilde{X},\widetilde{\Delta})$ (resp.~$(\widetilde{X},\widetilde{\Delta}')$) has a log minimal model over $Y$ around $W$. 
Replacing $(X,\Delta)$ and $(X',\Delta')$ by $(\widetilde{X},\widetilde{\Delta})$ and $(\widetilde{X},\widetilde{\Delta}')$ respectively, we may assume $X=X'$ and that $(X,\Delta)$ and $(X',\Delta')$ are log smooth. 
Then \cite[Proof of Lemma 2.25]{has-finite} works with minor changes because we can use Theorem \ref{thm--birkarhu} instead of \cite[Theorem 2.23]{has-finite}. 
\end{proof}

\begin{thm}[{cf.~\cite[Lemma 2.15]{has-mmp}}]\label{thm--exist-model-birat-II}
Let $\pi \colon X \to Y$ be a projective morphism from a normal analytic variety $X$ to a Stein space $Y$, and let $W \subset Y$ be a compact subset such that $\pi$ and $W$ satisfy (P). 
Let $(X,\Delta)$ be an lc pair and let $(X',\Delta')$ be an lc pair together with a projective bimeromorphic morphism $f \colon X' \to X$. 
Suppose that we can write
$$K_{X'}+\Delta'=f^{*}(K_{X}+\Delta)+E$$
for some effective $f$-exceptional $\mathbb{R}$-divisor $E$ on $X'$. 
Then the following conditions are equivalent.
\begin{itemize}
\item
After shrinking $Y$ around $W$, the lc pair $(X,\Delta)$ has a log minimal model over $Y$ around $W$.  
\item
After shrinking $Y$ around $W$, the lc pair $(X',\Delta')$ has a log minimal model over $Y$ around $W$.  
\end{itemize}
\end{thm}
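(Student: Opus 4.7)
The plan is to reduce the statement to Theorem~\ref{thm--minmodel-asymvanorder-boundary} applied with the identity morphism on a common log resolution. After shrinking $Y$ around $W$, choose a log resolution $h'\colon \widetilde{X}\to X'$ of $(X',\Delta')$ such that the composition $h:=f\circ h'\colon \widetilde{X}\to X$ is also a log resolution of $(X,\Delta)$. Let $\widetilde{\Delta}$ (resp.\ $\widetilde{\Delta}'$) denote the strict transform of $\Delta$ (resp.\ $\Delta'$) plus the reduced $h$-exceptional (resp.\ $h'$-exceptional) divisor on $\widetilde{X}$. Then $(\widetilde{X},\widetilde{\Delta})$ is a log birational model of $(X,\Delta)$ via $h^{-1}$, and $(\widetilde{X},\widetilde{\Delta}')$ is a log birational model of $(X',\Delta')$ via $(h')^{-1}$. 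By Lemma~\ref{lem--exist-model-birat-I} applied to both $h$ and $h'$, the assertion of the theorem reduces to showing that $(\widetilde{X},\widetilde{\Delta})$ has a log minimal model over $Y$ around $W$ if and only if $(\widetilde{X},\widetilde{\Delta}')$ does.

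To establish this on the common space, I would apply Theorem~\ref{thm--minmodel-asymvanorder-boundary} with the identity map $\widetilde{X}\to\widetilde{X}$. Writing $F:=K_{\widetilde{X}}+\widetilde{\Delta}-h^{*}(K_{X}+\Delta)$ and $F':=K_{\widetilde{X}}+\widetilde{\Delta}'-(h')^{*}(K_{X'}+\Delta')$ (both effective and respectively $h$-exceptional and $h'$-exceptional), the hypothesis $K_{X'}+\Delta'=f^{*}(K_{X}+\Delta)+E$ yields $\widetilde{\Delta}'-\widetilde{\Delta}=(h')^{*}E+F'-F$. A straightforward coefficient-by-coefficient comparison shows that $\widetilde{\Delta}$ and $\widetilde{\Delta}'$ agree on every prime divisor of $\widetilde{X}$ except on those $D$ that are $h$-exceptional but not $h'$-exceptional; for such a $D$, the pushforward $h'_{*}D$ is an $f$-exceptional prime divisor on $X'$ and one computes
$$a(D,\widetilde{X},\widetilde{\Delta}')-a(D,\widetilde{X},\widetilde{\Delta})=1+a(D,X,\Delta)-{\rm coeff}_{h'_{*}D}(E).$$
The lower bound $\geq 0$ follows from ${\rm coeff}_{h'_{*}D}(\Delta')\leq 1$, i.e.\ from the log canonicity of $(X',\Delta')$. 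For the upper bound, observe that ${\rm coeff}_{D}(F)=1+a(D,X,\Delta)$ for $h$-exceptional $D$, and Theorem~\ref{thm--asymvanorder-pseudoeff-basic}\,(\ref{thm--asymvanorder-pseudoeff-basic-(5)}) yields
$$\sigma_{D}(K_{\widetilde{X}}+\widetilde{\Delta};\widetilde{X}/Y)=\sigma_{D}(K_{X}+\Delta;X/Y)+{\rm coeff}_{D}(F)\geq 1+a(D,X,\Delta)\geq a(D,\widetilde{X},\widetilde{\Delta}')-a(D,\widetilde{X},\widetilde{\Delta}),$$
so the hypothesis of Theorem~\ref{thm--minmodel-asymvanorder-boundary} is satisfied.

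The pseudo-effectivity of $K_{\widetilde{X}}+\widetilde{\Delta}$ required to invoke Theorem~\ref{thm--minmodel-asymvanorder-boundary} is automatic from the assumed existence of a log minimal model of $(X,\Delta)$; in the reverse direction, a log minimal model of $(X',\Delta')$ makes $K_{X'}+\Delta'$ $(\pi\circ f)$-pseudo-effective, and the bimeromorphic pushforward $f_{*}(K_{X'}+\Delta')=K_{X}+\Delta$ is then $\pi$-pseudo-effective as well, so the same argument applies. The main obstacle will be the bookkeeping in the coefficient comparison: one must keep careful track of the three categories of prime divisors on $\widetilde{X}$ (non-$h$-exceptional; $h$- and $h'$-exceptional; $h$-exceptional but not $h'$-exceptional) and how they match with the $f$-exceptional divisors of $X'$, and dispose of divisors whose image on $Y$ does not meet $W$ via routine shrinking of $Y$. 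Once this is settled, the desired equivalence follows by combining the two applications of Lemma~\ref{lem--exist-model-birat-I} with Theorem~\ref{thm--minmodel-asymvanorder-boundary}.
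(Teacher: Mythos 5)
Your proposal is correct and is essentially the proof the paper intends: the paper simply cites the algebraic argument of \cite[Proof of Lemma 3.7]{has-iitakafibration}, which is exactly this reduction to a common log resolution, comparison of the two boundaries $\widetilde{\Delta}$ and $\widetilde{\Delta}'$ via Lemma \ref{lem--exist-model-birat-I}, and application of Theorem \ref{thm--minmodel-asymvanorder-boundary} through the discrepancy/asymptotic-vanishing-order inequality you verify. Your coefficient computation on the three types of divisors and the use of Theorem \ref{thm--asymvanorder-pseudoeff-basic} (\ref{thm--asymvanorder-pseudoeff-basic-(5)}) check out, so no gap remains beyond the routine shrinking you already flag.
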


\begin{proof}
The argument in the algebraic case (see \cite[Proof of Lemma 3.7]{has-iitakafibration}) works with no changes since we may apply Theorem \ref{thm--minmodel-asymvanorder-boundary}. 
\end{proof}

\begin{lem}[cf.~{\cite[Remark 2.9]{has-finite}}]\label{lem--asymvanorder-discrepancy}
Let $\pi \colon X \to Y$ be a projective morphism from a normal analytic variety $X$ to a Stein space $Y$, and let $W \subset Y$ be a subset. 
Let $(X,\Delta)$ be an lc pair. 
Let $(X',\Delta')$ be a weak lc model of $(X,\Delta)$ over $Y$ around $W$. 
We denote the bimeromorphic map $X \dashrightarrow X'$ by $\phi$. 
Then the following properties hold. 
\begin{itemize}
\item
For any prime divisor $P$ over $X$ whose image on $Y$ intersects $W$, we have
$$\sigma_{P}(K_{X}+\Delta;X/Y)=a(P,X',\Delta')-a(P,X,\Delta).$$
\item
Suppose that $(X',\Delta')$ be a log minimal model of $(X,\Delta)$ over $Y$ around $W$ and $\phi$ is a sequence of steps of a $(K_{X}+\Delta)$-MMP over $Y$ around $W$. 
Let $D$ be a prime divisor on $X$ such that $\pi(D) \cap W \neq \emptyset$. 
Then $D$ is contracted by $\phi$ if and only if $\sigma_{D}(K_{X}+\Delta; X/Y)>0$.  
\end{itemize}
\end{lem}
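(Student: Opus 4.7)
The plan is to reduce the whole statement to a coefficient computation on a common log resolution, using the identification of the Nakayama--Zariski vanishing order with a discrepancy difference via the negativity lemma and the formal properties of $\sigma_{P}$ gathered in Theorem \ref{thm--asymvanorder-pseudoeff-basic}.

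For the first bullet, fix a prime divisor $P$ over $X$ whose image on $Y$ meets $W$. First, I would take a projective bimeromorphic morphism $g\colon V\to X$ from a normal analytic variety $V$ such that $P$ appears as a prime divisor on $V$ and the induced map $g'\colon V\dashrightarrow X'$ is a morphism; by replacing $g$ with a further resolution if needed, we may assume $V$ is smooth and $(V,g^{-1}_{*}\Delta+\Exc(g))$ is log smooth. Writing
$$E:=g^{*}(K_{X}+\Delta)-g'^{*}(K_{X'}+\Delta'),$$
the remark following Definition \ref{defn--models} (an application of Corollary \ref{cor--negativity-veryexc-nef} to the weak lc model $(X',\Delta')$) shows that $\mathrm{coeff}_{Q}(E)=a(Q,X',\Delta')-a(Q,X,\Delta)\geq 0$ for every prime divisor $Q$ on $V$ whose image on $Y$ meets $W$; after shrinking $Y$ around $W$ to discard components not meeting $\pi^{-1}(W)$, we have $E\geq 0$. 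I would next check that $E$ is actually $g'$-exceptional: for a non-$g'$-exceptional prime divisor $Q=g'^{-1}_{*}Q'$, either $Q'$ is the strict transform of a prime divisor on $X$ (then the coefficients of $Q'$ in $\Delta'$ and of $Q$'s image in $\Delta$ agree, forcing $\mathrm{coeff}_{Q}(E)=0$) or $Q'$ is $\phi$-exceptional in the sense of Definition \ref{defn--models} (so $\mathrm{coeff}_{Q'}(\Delta')=1$, hence $a(Q,X',\Delta')=-1$, and the lc inequality $a(Q,X,\Delta)\geq -1$ combined with $\mathrm{coeff}_{Q}(E)\geq 0$ forces equality).

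With this in hand, the computation is short. By Theorem \ref{thm--asymvanorder-pseudoeff-basic}\,(\ref{thm--asymvanorder-pseudoeff-basic-(5)}) applied to $g$ with the zero exceptional divisor,
$$\sigma_{P}(K_{X}+\Delta;X/Y)=\sigma_{P}(g^{*}(K_{X}+\Delta);V/Y)=\sigma_{P}(g'^{*}(K_{X'}+\Delta')+E;V/Y),$$
and applying the same property to $g'$ and the $g'$-exceptional divisor $E$ yields
$$\sigma_{P}(g'^{*}(K_{X'}+\Delta')+E;V/Y)=\sigma_{P}(K_{X'}+\Delta';X'/Y)+\mathrm{coeff}_{P}(E).$$
To close the argument, I would verify $\sigma_{P}(K_{X'}+\Delta';X'/Y)=0$: unwinding Definition \ref{defn--asymvanorder-pseudoeff} with the resolution $g'\colon V\to X'$ and a $(\pi\circ g')$-ample divisor $A'$, the divisor $g'^{*}(K_{X'}+\Delta')+\epsilon A'$ is nef over $W$ (nef$+$ample) and in particular a limit of movable divisors over $W$, so Theorem \ref{thm--asymvanorder-pseudoeff-basic}\,(\ref{thm--asymvanorder-pseudoeff-basic-(2)}) gives $\sigma_{P}=0$ for every $\epsilon>0$; taking $\epsilon\to 0$ gives the claim. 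Combining the two displays,
$$\sigma_{P}(K_{X}+\Delta;X/Y)=\mathrm{coeff}_{P}(E)=a(P,X',\Delta')-a(P,X,\Delta).$$

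For the second bullet I would use the first. If $D$ is contracted by $\phi$, then $D$ is exceptional over $X'$ and the strict inequality clause in the definition of a log minimal model (Definition \ref{defn--models}) forces $a(D,X,\Delta)<a(D,X',\Delta')$, whence $\sigma_{D}(K_{X}+\Delta;X/Y)>0$ by the first bullet. Conversely, if $D$ is not contracted, its strict transform $D'$ on $X'$ is well defined and the MMP steps preserve strict transforms together with their coefficients in the boundary, so $\mathrm{coeff}_{D'}(\Delta')=\mathrm{coeff}_{D}(\Delta)$ and hence $a(D,X',\Delta')=-\mathrm{coeff}_{D'}(\Delta')=-\mathrm{coeff}_{D}(\Delta)=a(D,X,\Delta)$; the first bullet then yields $\sigma_{D}(K_{X}+\Delta;X/Y)=0$, completing the contrapositive.

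The only step that is not purely formal is the verification that $E$ is $g'$-exceptional (so that Theorem \ref{thm--asymvanorder-pseudoeff-basic}\,(\ref{thm--asymvanorder-pseudoeff-basic-(5)}) is applicable in the second equality), which requires distinguishing the two kinds of non-$g'$-exceptional prime divisors and using the log birational model conventions; once this is isolated, the rest is bookkeeping with the properties of $\sigma_{P}$.
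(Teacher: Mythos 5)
Your proposal is correct and follows essentially the same route as the paper's proof: a common resolution of $\phi$, effectivity and $g'$-exceptionality of $E$ obtained from the negativity lemma and the log birational model conventions after shrinking $Y$ around $W$, two applications of Theorem \ref{thm--asymvanorder-pseudoeff-basic} (\ref{thm--asymvanorder-pseudoeff-basic-(5)}) combined with (\ref{thm--asymvanorder-pseudoeff-basic-(2)}) to kill $\sigma_{P}(K_{X'}+\Delta';X'/Y)$, and the same deduction of the second bullet from the first via the strictness clause in the definition of a log minimal model. The only difference is that you make explicit two points the paper leaves implicit (the case analysis showing $E$ is $g'$-exceptional and the vanishing of $\sigma_{P}$ on the nef model), which is harmless.
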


\begin{proof}
Let $f \colon X'' \to X$ be a bimeromorphism which resolves the indeterminacy of $\phi \colon X \dashrightarrow X'$, and let $f' \colon X'' \to X'$ be the induced bimeromorphism. 
By shrinking $Y$ around $W$, we may assume that the divisor
$$E:=f^{*}(K_{X}+\Delta)-f'^{*}(K_{X'}+\Delta')$$
is $f'$-exceptional. 
Then 
$${\rm coeff}_{P}(E)=a(P,X',\Delta')-a(P,X,\Delta).$$
On the other hand, by using Theorem \ref{thm--asymvanorder-pseudoeff-basic}, we obtain
\begin{equation*}
\begin{split}
\sigma_{P}(K_{X}+\Delta;X/Y)=&\sigma_{P}(f^{*}(K_{X}+\Delta);X''/Y)\\
=&\sigma_{P}(f'^{*}(K_{X'}+\Delta')+E;X''/Y)\\
=&\sigma_{P}(f'^{*}(K_{X'}+\Delta');X''/Y)+{\rm coeff}_{P}(E)\\
=&{\rm coeff}_{P}(E). 
\end{split}
\end{equation*}
Therefore, the first statement holds true. 

Next, suppose that $(X',\Delta')$ be a log minimal model of $(X,\Delta)$ over $Y$ around $W$ and $\phi$ is a sequence of steps of a $(K_{X}+\Delta)$-MMP over $Y$ around $W$. 
By the first statement, $\sigma_{D}(K_{X}+\Delta; X/Y)>0$ if and only if $a(D,X,\Delta) \neq a(D,X',\Delta')$, and the latter condition is equivalent to that $D$ is contracted by $X \dashrightarrow X'$ the sequence of steps of the $(K_{X}+\Delta)$-MMP. 
Therefore, the second statement holds. 
\end{proof}

\begin{lem}[cf.~{\cite[Lemma 2.26]{has-finite}}]\label{lem--bir-relation}
Let $\pi \colon X \to Y$ be a projective morphism from a normal analytic variety $X$ to a Stein space $Y$, and let $W \subset Y$ be a compact subset such that $\pi$ and $W$ satisfy (P). 
Let $\pi' \colon X' \to Y$ be a projective morphism from a normal analytic variety $X'$. 
Let $(X,\Delta)$ and $(X',\Delta')$ be lc pairs and let $X \dashrightarrow X'$ be a bimeromorphic map over $Y$. 
Suppose that
\begin{itemize}
\item
$a(P,X,\Delta) \leq a(P,X',\Delta')$ for all prime divisors $P$ on $X$ whose image on $Y$ intersects $W$, and 
\item
$a(P',X,\Delta) \geq a(P',X',\Delta')$ for all prime divisors $P'$ on $X'$ whose image on $Y$ intersects $W$. 
\end{itemize}
Then the following conditions are equivalent. 
\begin{itemize}
\item
After shrinking $Y$ around $W$, the lc pair $(X,\Delta)$ has a log minimal model over $Y$ around $W$.  
\item
After shrinking $Y$ around $W$, the lc pair $(X',\Delta')$ has a log minimal model over $Y$ around $W$.  
\end{itemize}
\end{lem}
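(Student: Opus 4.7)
By symmetry it suffices to show that if $(X,\Delta)$ admits a log minimal model over $Y$ around $W$ after shrinking $Y$, then so does $(X',\Delta')$. The plan is to realize both pairs as log smooth models on a common resolution of $X\dashrightarrow X'$ and interpolate between them through an auxiliary boundary given by componentwise minima, after which Theorem~\ref{thm--minmodel-asymvanorder-boundary} will supply the needed equivalences.

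After the reductions of Remark~\ref{rem--mmp-reduction-basic} and shrinking $Y$ around $W$, I would fix a common log resolution $g\colon\widetilde{X}\to X$ and $g'\colon\widetilde{X}\to X'$ of the bimeromorphic map $X\dashrightarrow X'$ which simultaneously log resolves $(X,\Delta)$ and $(X',\Delta')$. Let $\widetilde{\Delta}$ be the sum of $g^{-1}_{*}\Delta$ and the reduced $g$-exceptional divisor, and define $\widetilde{\Delta}'$ analogously via $g'$. These provide log smooth models of $(X,\Delta)$ and $(X',\Delta')$ in the sense of Definition~\ref{defnlogsmoothmodel}, so by Theorem~\ref{thm--exist-model-birat-II} the existence of a log minimal model for $(X,\Delta)$ (resp.~for $(X',\Delta')$) is equivalent to that for $(\widetilde{X},\widetilde{\Delta})$ (resp.~for $(\widetilde{X},\widetilde{\Delta}')$). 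I would then introduce the interpolating boundary
$$\widetilde{\Delta}^{\min}:=\sum_{D}\min\bigl\{{\rm coeff}_{D}(\widetilde{\Delta}),\,{\rm coeff}_{D}(\widetilde{\Delta}')\bigr\}\,D,$$
a log smooth boundary satisfying $\widetilde{\Delta}^{\min}\leq\widetilde{\Delta}$ and $\widetilde{\Delta}^{\min}\leq\widetilde{\Delta}'$.

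The next step is a case analysis for prime divisors $D\subset\widetilde{X}$ with $(\pi\circ g)(D)\cap W\neq\emptyset$, according to exceptionality over $g$ and $g'$. If $D$ is non-exceptional on both sides, then $g(D)$ and $g'(D)$ are prime divisors on $X$ and $X'$ representing the same valuation $D$, so the two hypotheses applied in opposite directions force $a(D,X,\Delta)=a(D,X',\Delta')$; hence $\widetilde{\Delta}$ and $\widetilde{\Delta}'$ already agree at $D$. If $D$ is $g'$-exceptional but not $g$-exceptional, then writing $P:=g(D)$ the first hypothesis yields $a(D,X,\Delta)\leq a(D,X',\Delta')$, and one has $\widetilde{\Delta}^{\min}=\widetilde{\Delta}$ at $D$ while ${\rm coeff}_{D}(\widetilde{\Delta}')=1$. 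Symmetrically, if $D$ is $g$-exceptional but not $g'$-exceptional, then writing $P':=g'(D)$ the second hypothesis yields $a(D,X,\Delta)\geq a(D,X',\Delta')$, and $\widetilde{\Delta}^{\min}=\widetilde{\Delta}'$ at $D$ while ${\rm coeff}_{D}(\widetilde{\Delta})=1$. On divisors exceptional over both, $\widetilde{\Delta}$, $\widetilde{\Delta}'$, and $\widetilde{\Delta}^{\min}$ all have coefficient $1$.

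Finally, I would apply Theorem~\ref{thm--minmodel-asymvanorder-boundary} with $f={\rm id}_{\widetilde{X}}$ twice, once comparing $(\widetilde{X},\widetilde{\Delta})$ with $(\widetilde{X},\widetilde{\Delta}^{\min})$ and once comparing $(\widetilde{X},\widetilde{\Delta}')$ with $(\widetilde{X},\widetilde{\Delta}^{\min})$. For the first, the only non-trivial case is a $g$-exceptional, non-$g'$-exceptional divisor $D$. Writing $K_{\widetilde{X}}+\widetilde{\Delta}=g^{*}(K_{X}+\Delta)+E$ with $E$ the effective $g$-exceptional divisor carrying coefficient $1+a(D,X,\Delta)$ along $D$, Theorem~\ref{thm--asymvanorder-pseudoeff-basic}~(\ref{thm--asymvanorder-pseudoeff-basic-(5)}) gives
$$\sigma_{D}(K_{\widetilde{X}}+\widetilde{\Delta};\widetilde{X}/Y)=\sigma_{D}(K_{X}+\Delta;X/Y)+1+a(D,X,\Delta),$$
so the required inequality $a(D,\widetilde{X},\widetilde{\Delta}^{\min})-a(D,\widetilde{X},\widetilde{\Delta})\leq\sigma_{D}(K_{\widetilde{X}}+\widetilde{\Delta};\widetilde{X}/Y)$ reduces to $a(D,X',\Delta')-a(D,X,\Delta)\leq\sigma_{D}(K_{X}+\Delta;X/Y)$, whose left hand side is non-positive by the second hypothesis while the right hand side is non-negative. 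The second application of Theorem~\ref{thm--minmodel-asymvanorder-boundary} is completely symmetric, using the first hypothesis. The main obstacle will be verifying this asymptotic vanishing order inequality with the correct sign conventions for the exceptional divisors on each side; beyond this, the pseudo-effectivity hypothesis of Theorem~\ref{thm--minmodel-asymvanorder-boundary} propagates through the chain, since the assumed log minimal model of $(X,\Delta)$ forces $K_{X}+\Delta$, hence $K_{\widetilde{X}}+\widetilde{\Delta}$, to be pseudo-effective, and then $\widetilde{\Delta}'\geq\widetilde{\Delta}^{\min}$ transfers pseudo-effectivity to $K_{\widetilde{X}}+\widetilde{\Delta}'$ after the first equivalence. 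Combining the two equivalences yields the lemma.
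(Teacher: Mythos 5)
Your proposal is correct and follows essentially the same route as the paper's proof, which defers to \cite[Lemma 2.26]{has-finite}: reduce both pairs to log smooth models on a common log resolution via Theorem \ref{thm--exist-model-birat-II}, then compare them through the asymptotic-vanishing-order criterion of Theorem \ref{thm--minmodel-asymvanorder-boundary}, with the two discrepancy hypotheses and Theorem \ref{thm--asymvanorder-pseudoeff-basic} (\ref{thm--asymvanorder-pseudoeff-basic-(5)}) supplying exactly the inequalities you verify. Your interpolating boundary $\widetilde{\Delta}^{\min}$ and the pseudo-effectivity bookkeeping (consistent with the paper's own use of ``weak lc model implies pseudo-effective'' in the proof of Theorem \ref{thm--termination-mmp}) simply write out the details of that cited argument.
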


\begin{proof}
By shrinking $Y$ around $W$, we may assume that $a(P,X,\Delta) \leq a(P,X',\Delta')$ for all prime divisors $P$ on $X$ and
$a(P',X,\Delta) \geq a(P',X',\Delta')$ for all prime divisors $P'$ on $X'$. 
The argument of \cite[Proof of Lemma 2.26]{has-finite} works with no changes because we can use Theorem \ref{thm--exist-model-birat-II}. 
\end{proof}


\section{Log MMP for log abundant lc pairs}\label{sec5}

In this section we study log MMP with scaling that contains log abundant lc pairs.

\subsection{Auxiliary results}

In this subsection, we collect some results used to prove the main results of this paper.

\begin{lem}\label{lem--mmpscaling-effective}
Let $\pi\colon X \to Y$ be a contraction from a normal complex variety $X$ to a Stein space $Y$, and let $W \subset Y$ be a connected compact subset such that $\pi$ and $W$ satisfy (P). 
Let $S$ be a subvariety of $X$ and let $(X,\Delta)$ be an lc pair. 
Let
$$(X_{0}:=X,\Delta_{0}:=\Delta) \dashrightarrow (X_{1},\Delta_{1}) \dashrightarrow\cdots \dashrightarrow (X_{i},\Delta_{i})\dashrightarrow \cdots$$ 
be a sequence of steps of a $(K_{X}+\Delta)$-MMP over $Y$ around $W$ with scaling of an effective $\mathbb{R}$-divisor $A$. 
Let $A_{i}$ be the birational transform of $A$ on $X_{i}$. 
We define 
$$\lambda_{i}={\rm inf}\set{\mu \in \mathbb{R}_{\geq0} \!|\! K_{X_{i}}+\Delta_{i}+\mu A_{i}\text{\rm \, is nef over }W}.$$ 
Suppose that each step of the $(K_{X}+\Delta)$-MMP is an biholomorphism on a neighborhood of $S$ and ${\rm lim}_{i\to\infty}\lambda_{i}=0$.  
Then, for any prime divisor $P$ over $X$ whose images on $X$ and $Y$ intersect $S$ and $W$ respectively, we have $\sigma_{P}(K_{X}+\Delta;X/Y)=0$. 
\end{lem}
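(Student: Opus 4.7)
The plan is to show $\sigma_{P}(K_{X}+\Delta+\lambda_{i}A;X/Y)=0$ for every $i\geq 0$ and then pass to the limit $i\to\infty$ using Theorem \ref{thm--asymvanorder-pseudoeff-basic} (\ref{thm--asymvanorder-pseudoeff-basic-(4)}). Throughout, after shrinking $Y$ around $W$ if necessary I may assume that $K_{X}+\Delta$ is $\pi$-pseudo-effective (it is the limit of the $\pi$-pseudo-effective divisors $K_{X}+\Delta+\lambda_{i}A$ pulled back along a common resolution), so that all asymptotic vanishing orders considered below are well defined.

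First, by the definition of a step of an MMP with scaling (Definition \ref{defn--mmp} (\ref{defn--mmp-(1)})) together with Theorem \ref{thm--mmp-nefthreshold}, for each $i$ I can, after shrinking $Y$ around $W$, represent the composition $\phi_{i}\colon X\dashrightarrow X_{i}$ as a bimeromorphic contraction over $Y$ such that $K_{X_{i}}+\Delta_{i}+\lambda_{i}A_{i}$ is nef over $W$. Since $\phi_{i}$ is a bimeromorphic contraction, no prime divisor on $X_{i}$ is $\phi_{i}^{-1}$-exceptional, so $\Delta_{i}+\lambda_{i}A_{i}=\phi_{i*}(\Delta+\lambda_{i}A)$ in the sense of Definition \ref{defn--models}; the discrepancy inequality for exceptional divisors is standard in the MMP. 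Thus $(X_{i},\Delta_{i}+\lambda_{i}A_{i})$ is a weak lc model of $(X,\Delta+\lambda_{i}A)$ over (the possibly shrunken) $Y$ around $W$.

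Now fix a prime divisor $P$ over $X$ with $c_{X}(P)\cap S\neq\emptyset$ and $\pi(c_{X}(P))\cap W\neq\emptyset$. By hypothesis, $\phi_{i}$ is a biholomorphism on an open neighborhood $U\subset X$ of $S$, so $\phi_{i*}\Delta$ and $\lambda_{i}\phi_{i*}A$ coincide with $\Delta_{i}$ and $\lambda_{i}A_{i}$ on $\phi_{i}(U)$. Since $c_{X}(P)\cap U\neq\emptyset$, the discrepancies along $P$ are preserved:
\begin{equation*}
a(P,X_{i},\Delta_{i}+\lambda_{i}A_{i})=a(P,X,\Delta+\lambda_{i}A).
\end{equation*}
Applying Lemma \ref{lem--asymvanorder-discrepancy} to the weak lc model $(X_{i},\Delta_{i}+\lambda_{i}A_{i})$ of $(X,\Delta+\lambda_{i}A)$ (combined with Theorem \ref{thm--asymvanorder-pseudoeff-basic} (\ref{thm--asymvanorder-pseudoeff-basic-(6)}) to transfer the conclusion from the shrunken base back to $Y$) yields
\begin{equation*}
\sigma_{P}(K_{X}+\Delta+\lambda_{i}A;X/Y)=a(P,X_{i},\Delta_{i}+\lambda_{i}A_{i})-a(P,X,\Delta+\lambda_{i}A)=0.
\end{equation*}

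Finally, since $A$ is effective and hence $\pi$-pseudo-effective, Theorem \ref{thm--asymvanorder-pseudoeff-basic} (\ref{thm--asymvanorder-pseudoeff-basic-(4)}) combined with $\lambda_{i}\to 0$ gives
\begin{equation*}
\sigma_{P}(K_{X}+\Delta;X/Y)=\lim_{\epsilon\to 0+}\sigma_{P}(K_{X}+\Delta+\epsilon A;X/Y)=\lim_{i\to\infty}\sigma_{P}(K_{X}+\Delta+\lambda_{i}A;X/Y)=0,
\end{equation*}
as desired. The main obstacle is identifying $(X_{i},\Delta_{i}+\lambda_{i}A_{i})$ as a weak lc model of $(X,\Delta+\lambda_{i}A)$ over $Y$ around $W$ together with a careful bookkeeping of the shrinking of $Y$ around $W$ at each index $i$; both points are controlled by the basic MMP lemmas established earlier in Section \ref{sec3} and by the shrinking-invariance of the asymptotic vanishing order in Theorem \ref{thm--asymvanorder-pseudoeff-basic} (\ref{thm--asymvanorder-pseudoeff-basic-(6)}).
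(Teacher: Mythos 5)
Your argument is correct and is essentially the paper's own proof: both reduce to showing $\sigma_{P}(K_{X}+\Delta+\lambda_{i}A;X/Y)=0$ for each $i$, using that $K_{X_{i}}+\Delta_{i}+\lambda_{i}A_{i}$ is nef over $W$ while the discrepancy along $P$ is unchanged because the MMP is biholomorphic near $S$, and then let $i\to\infty$; the paper simply inlines the common-resolution computation (which is exactly the proof of Lemma \ref{lem--asymvanorder-discrepancy}, via Theorem \ref{thm--asymvanorder-pseudoeff-basic}) instead of citing that lemma. The only point to watch is that Definition \ref{defn--models} and Lemma \ref{lem--asymvanorder-discrepancy} are stated for lc pairs, whereas $(X,\Delta+\lambda_{i}A)$ is not assumed lc in this lemma; since $\phi_{i}$ is a bimeromorphic contraction the underlying computation goes through verbatim, but strictly speaking one should carry it out directly (as the paper does) rather than invoke the weak lc model lemma outside its stated hypotheses.
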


\begin{proof}
Since ${\rm lim}_{i\to\infty}\lambda_{i}=0$, it is sufficient to prove $\sigma_{P}(K_{X}+\Delta+\lambda_{i}A;X/Y)=0$ for all $i \geq 0$. 
Let $c_{X}(P)$ be the image of $P$ on $X$.
By Theorem \ref{thm--asymvanorder-pseudoeff-basic} (\ref{thm--asymvanorder-pseudoeff-basic-(6)}), we may freely shrink $Y$. 
By shrinking $Y$ around $W$, we may assume that $i$ steps of the $(K_{X}+\Delta)$-MMP over $Y$ around $W$
$$(X_{0}:=X,\Delta_{0}:=\Delta) \dashrightarrow (X_{1},\Delta_{1}) \dashrightarrow\cdots \dashrightarrow (X_{i},\Delta_{i})$$ 
is represented by a bimeromorphic contraction $X \dashrightarrow X_{i}$ over $Y$. 
Let $f \colon X' \to X$ and $f_{i} \colon X' \to X_{i}$ be a resolution of the indeterminacy of $X \dashrightarrow X_{i}$ such that $P$ appears as a prime divisor on $X'$. 
By shrinking $Y$ around $W$, we may write
$$f^{*}(K_{X}+\Delta+\lambda_{i}A)=f_{i}^{*}(K_{X_{i}}+\Delta_{i}+\lambda_{i}A_{i})+E_{i},$$
where $E_{i}$ is an effective $f_{i}$-exceptional $\mathbb{R}$-divisor on $X'$. 
By Theorem \ref{thm--asymvanorder-pseudoeff-basic} and the facts that the $(K_{X}+\Delta)$-MMP is an biholomorphism on a neighborhood of $S$ and that the images of $P$ on $X$ and $Y$ intersect $S$ and $W$ respectively, we have
\begin{equation*}
\begin{split}
\sigma_{P}(K_{X}+\Delta+\lambda_{i}A;X/Y)=&\sigma_{P}(f^{*}(K_{X}+\Delta+\lambda_{i}A);X'/Y)\\
=&\sigma_{P}(f_{i}^{*}(K_{X_{i}}+\Delta_{i}+\lambda_{i}A_{i})+E_{i};X'/Y)\\
=&\sigma_{P}(f_{i}^{*}(K_{X_{i}}+\Delta_{i}+\lambda_{i}A_{i});X'/Y)+{\rm coeff}_{P}(E_{i})=0.
\end{split}
\end{equation*}
Since ${\rm lim}_{i\to\infty}\lambda_{i}=0$, and thus $\sigma_{P}(K_{X}+\Delta;X/Y)=0$. 
\end{proof}

\begin{lem}[cf.~{\cite[Lemma 2.10]{has-trivial}}]\label{thm--speciallogresol}
Let $\pi \colon X \to Y$ be a projective morphism from a normal analytic variety to an analytic space $Y$, and let $W \subset Y$ be a compact subset. 
Let $(X,\Delta)$ be an lc pair and let $D$ be a reduced divisor on $X$. 
Then, after shrinking $Y$ around $W$, there exists a log smooth model $f \colon (\overline{X},\overline{\Delta}) \to (X,\Delta)$ of $(X,\Delta)$ satisfying the following.
\begin{enumerate}[(i)]
\item\label{thm--speciallogresol-(i)}
$\overline{\Delta}=\overline{\Delta}'+\overline{\Delta}''$, where $\overline{\Delta}' \geq 0$ and $\overline{\Delta}''$ is zero or a reduced divisor, 
\item\label{thm--speciallogresol-(ii)}
$(\pi \circ f)({\rm Supp}\,\overline{\Delta}'') \subsetneq Y$, 
\item\label{thm--speciallogresol-(iii)}
any lc center of $(\overline{X},\overline{\Delta}-t \overline{\Delta}'')$ dominates $Y$ for all $t \in (0,1]$, and
\item\label{thm--speciallogresol-(iv)}
the exceptional locus ${\rm Ex}(f)$ of $f$ is pure codimension one and ${\rm Ex}(f) \cup f_{*}^{-1}(\Delta+D)$  has simple normal crossing support. 
\end{enumerate}
\end{lem}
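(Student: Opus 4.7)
The plan is to first take a log resolution of $(X,\Delta+D)$ and decompose its boundary into horizontal and vertical parts, and then perform a finite sequence of further blow-ups along vertical lc centers to achieve condition (iii).

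First, by Theorem \ref{thm--resolution} and the existence of log resolutions in the analytic setting, after shrinking $Y$ around $W$ we may take a projective bimeromorphism $f_{0}\colon X_{0}\to X$ from a non-singular variety such that ${\rm Ex}(f_{0})$ is pure codimension one and ${\rm Ex}(f_{0})\cup f_{0*}^{-1}(\Delta+D)$ has simple normal crossing support. Setting $\overline{\Delta}_{0}:=f_{0*}^{-1}\Delta+E_{0}$, where $E_{0}$ is the reduced sum of $f_{0}$-exceptional prime divisors with $a(E,X,\Delta)>-1$ added to $\Gamma-\lfloor\Gamma\rfloor$ as required by Definition \ref{defnlogsmoothmodel}, we obtain a log smooth model of $(X,\Delta)$. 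Now let $\overline{\Delta}_{0}''$ be the reduced sum of those components of $\lfloor \overline{\Delta}_{0}\rfloor$ whose image on $Y$ under $\pi\circ f_{0}$ is a proper analytic subset of $Y$, and set $\overline{\Delta}_{0}':=\overline{\Delta}_{0}-\overline{\Delta}_{0}''$. Then (i), (ii), and (iv) are automatic by construction, and after shrinking $Y$ we may assume all components of $\overline{\Delta}_{0}''$ have image strictly contained in $Y$.

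The issue is that even though every component of $\lfloor\overline{\Delta}_{0}'\rfloor$ dominates $Y$, some irreducible component of an intersection of several such horizontal components may fail to dominate $Y$, and such a component is an lc center of $(X_{0},\overline{\Delta}_{0}-t\overline{\Delta}_{0}'')$ for every $t\in(0,1]$ because the log smooth structure implies the lc centers of this pair are precisely the irreducible components of intersections of divisors with coefficient $1$, and for $t>0$ components of $\overline{\Delta}_{0}''$ carry coefficient $1-t<1$. By Lemma \ref{lem--finite-lccenter} applied to the compact set $f_{0}^{-1}(W)$, after shrinking $Y$ around $W$ the set of lc centers of $(X_{0},\overline{\Delta}_{0}')$ meeting $f_{0}^{-1}(W)$ is finite, so the set $\mathcal{S}$ of such centers that fail to dominate $Y$ is finite. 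We then perform a sequence of blow-ups along elements of $\mathcal{S}$ by induction on codimension: blow up $X_{0}$ along an element $S\in\mathcal{S}$ of maximal codimension, producing a new exceptional divisor $E_{S}$ which is vertical over $Y$; add $E_{S}$ to $\overline{\Delta}''$ and keep other components unchanged. After this operation, $S$ no longer appears as an intersection of horizontal components (its strict transforms of the containing divisors are separated by $E_{S}$), while any previously non-vertical lc center remains non-vertical since we blew up along a vertical subvariety. Iterating this, we process the finitely many elements of $\mathcal{S}$ in decreasing order of codimension, and obtain the desired $f\colon(\overline{X},\overline{\Delta})\to(X,\Delta)$.

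The main obstacle is to verify that the blow-up process genuinely reduces the set of vertical lc centers and that no new such centers are created of equal or larger codimension. This is a standard but delicate combinatorial verification: since we always blow up deepest vertical strata first, the proper transforms of horizontal components of $\lfloor\overline{\Delta}_{0}'\rfloor$ intersect the exceptional divisor transversally in strata of strictly smaller codimension over $Y$, and these strata dominate $Y$ by the choice of $S$ together with a dimension count on the exceptional divisor $E_{S}\to S$. Combined with the Noetherian-type finiteness of \cite[Theorem 2.10]{fujino-analytic-bchm} applied to the shrunk $Y$ around $W$ and the finiteness of lc centers of log smooth pairs over a compact base, this guarantees termination in finitely many steps. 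The final simple normal crossing condition (iv) is preserved under blow-ups along smooth strata of the snc divisor, so the output satisfies all four conditions (i)--(iv).
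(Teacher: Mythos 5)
Your proposal follows essentially the same route as the paper: reduce to a log smooth pair via a log resolution of $(X,\Delta+D)$, use finiteness of lc centers near the compact set $W$ (Lemma \ref{lem--finite-lccenter}) after shrinking $Y$, and then kill the vertical lc centers of the horizontal part of the boundary by blowing them up, placing the resulting vertical exceptional divisors into $\overline{\Delta}''$. The paper does this in one pass, blowing up every lc center $S$ with $\pi(S)\subsetneq Y$ and taking a log resolution factoring through these blow-ups while deferring the verification to \cite[Lemma 2.10]{has-trivial}, whereas you iterate deepest-first; this is an organizational difference only, so the two arguments are essentially the same.
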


\begin{proof}
By Remark \ref{rem--mmp-reduction-basic}, we may assume that $W$ is connected. 
By shrinking $Y$ around $W$, we may assume that $\Delta+D$ has only finitely many components and $(X,\Delta)$ has only finitely many lc centers. 
By replacing $(X,\Delta)$ with a log smooth model and replacing $D$ accordingly, we may assume that $(X,\Delta+D)$ is log smooth. 
For each lc center $S$ of $(X,\Delta)$ such that $\pi(S) \subsetneq Y$, let $X_{S} \to X$ be the blow-up of $X$ along $S$. 
Let $f \colon \overline{X} \to X$ be a log resolution of $(X,\Delta+D)$ that factors through $X_{S} \to X$ for any lc center $S$ of $(X,\Delta)$ such that $\pi(S) \subsetneq Y$. 
We construct a log smooth model $f \colon (\overline{X},\overline{\Delta}) \to (X,\Delta)$ of $(X,\Delta)$, we define $\overline{\Delta}''$ to be the sum of all components of $\lfloor \overline{\Delta} \rfloor$ that do not dominate $Y$, and we set $\overline{\Delta}':=\overline{\Delta}-\overline{\Delta}''$. 
By the same argument as in \cite[Proof of Lemma 2.10]{has-trivial}, it is easy to see that $f \colon (\overline{X},\overline{\Delta}) \to (X,\Delta)$ is the desired log smooth model. 
\end{proof}

\begin{lem}[cf.~{\cite[Lemma 2.16]{hashizumehu}}, {\cite[Corollary 1.37 and Corollary 1.38]{kollar-mmp}}]\label{lem--extraction}
Let $\pi \colon X \to Y$ be a projective morphism from a normal analytic variety $X$ to a Stein space $Y$, and let $W \subset Y$ be a compact subset such that $\pi$ and $W$ satisfy (P). 
Let $(X,\Delta)$ be a dlt pair. 
Let $\mathcal{T}$ be a (possibly empty) finite set of prime divisors $D$ over $X$ such that the image of $D$ on $Y$ intersects $W$ and $-1< a(D,X,\Delta)<0$ for any $D \in \mathcal{T}$. 
For any open subset $U \subset Y$ containing $W$, we put $X_{U}=\pi^{-1}(U)$ and $\Delta_{U}=\Delta|_{X_{U}}$, and the restriction of any divisor $D \in \mathcal{T}$ to the inverse image of $U$ is denoted by $D_{U}$. 
Then there exist a Stein open subset $U \subset Y$ containing $W$, a projective bimeromorphism $f \colon \widetilde{X} \to X_{U}$ from a normal analytic variety $\widetilde{X}$, and $(\widetilde{X}, \widetilde{\Delta})$ a disjoint union of dlt pairs  satisfying the following.
\begin{itemize}
\item
$\widetilde{X}$ is $\mathbb{Q}$-factorial over $W$, 
\item
$K_{\widetilde{X}}+\widetilde{\Delta}=f^{*}(K_{X_{U}}+\Delta_{U})$, and
\item
the set of $f$-exceptional prime divisors on $\widetilde{X}$ is exactly the same as the set of the components of $D_{U}$ for the divisors $D \in \mathcal{T}$.
\end{itemize}
\end{lem}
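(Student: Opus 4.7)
The plan is to construct $(\widetilde X,\widetilde\Delta)$ as the output of a relative log MMP over $X_U$ starting from a log resolution of $X_U$ chosen to extract exactly the divisors in $\mathcal T$. After the standard reductions from Remark \ref{rem--mmp-reduction-basic} (assuming $W$ connected and $\pi$ a contraction) and invoking Theorem \ref{thm--dlt-restriction}, I may freely shrink $Y$ around $W$ while preserving the correspondence of $\mathcal T$-divisors and lc centers. By Theorem \ref{thm--resolution} and the dlt hypothesis, after further shrinking $Y$, I take a projective log resolution $g\colon V\to X_U$ of $(X_U,\Delta_U)$ that extracts every $D\in\mathcal T$ as a prime divisor $D_V\subset V$ and such that no other $g$-exceptional prime divisor is an lc place of $(X_U,\Delta_U)$. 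Denote by $\mathcal T_V=\{D_V\}_{D\in\mathcal T}$ these extracted divisors and by $E_1,\dots,E_r$ the remaining $g$-exceptional prime divisors, so $a(E_j,X,\Delta)>-1$ for every $j$.

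Define the auxiliary boundary
$$\Gamma_0\;:=\;g_*^{-1}\Delta_U\;-\;\sum_{D\in\mathcal T}a(D,X,\Delta)\,D_V\;+\;\sum_{j=1}^{r}E_j.$$
The hypothesis $-1<a(D,X,\Delta)<0$ for $D\in\mathcal T$ and dlt-ness of $(X,\Delta)$ ensure that the coefficients of $\Gamma_0$ lie in $[0,1]$, so $(V,\Gamma_0)$ is log smooth and hence dlt. A direct discrepancy computation yields
$$K_V+\Gamma_0\;=\;g^{*}(K_{X_U}+\Delta_U)\;+\;F_0,\qquad F_0:=\sum_{j=1}^{r}\bigl(1+a(E_j,X,\Delta)\bigr)E_j,$$
where $F_0$ is effective, $g$-exceptional, and supported precisely on $\bigcup_j E_j$ by the choice of $g$.

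Next I choose a general $g$-ample $\mathbb{Q}$-divisor $A$ on $V$ so that $(V,\Gamma_0+A)$ is klt and run a $(K_V+\Gamma_0)$-MMP over $X_U$ with scaling of $A$. Since $K_V+\Gamma_0\sim_{\mathbb{R},X_U}F_0\ge 0$ is the negative part of the Nakayama--Zariski decomposition while $g^{*}(K_{X_U}+\Delta_U)$ is a semi-ample positive part, Theorem \ref{thm--birkarhu-semiample} (combined with Fujino's analytic BCHM \cite[Theorem 1.1]{fujino-analytic-bchm} for the klt and big pair $(V,\Gamma_0+A)$ and with Lemma \ref{lem--relative-mmp} to convert between MMPs over $X_U$ and over $Y$) shows that, after shrinking $Y$ around $W$, this MMP terminates with a good minimal model $f\colon\widetilde X\to X_U$ of $(V,\Gamma_0)$ over $X_U$. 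Write $\widetilde\Gamma_0$ and $\widetilde F_0$ for the pushforwards to $\widetilde X$. Then $K_{\widetilde X}+\widetilde\Gamma_0=f^{*}(K_{X_U}+\Delta_U)+\widetilde F_0$ is nef over $X_U$ with $\widetilde F_0\ge 0$ and $f$-exceptional; Corollary \ref{cor--negativity-veryexc-nef} forces $\widetilde F_0=0$ after an additional shrinking of $Y$, so every $E_j$ is contracted by the MMP.

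Setting $\widetilde\Delta:=\widetilde\Gamma_0$ yields $K_{\widetilde X}+\widetilde\Delta=f^{*}(K_{X_U}+\Delta_U)$, with $f$-exceptional prime divisors exactly the strict transforms of $\mathcal T_V$; since the discrepancies of $(\widetilde X,\widetilde\Delta)$ match those of $(X,\Delta)$, the resulting pair is dlt, and $\widetilde X$ is $\mathbb{Q}$-factorial over $W$ by Theorem \ref{thm--mmp-R-Cartier} applied to each MMP step, starting from the $\mathbb{Q}$-factorial $V$. If $\pi^{-1}(U)$ has several connected components meeting $\pi^{-1}(W)$, Lemma \ref{lem--shrink-irreducible} lets me run this construction componentwise to realize $(\widetilde X,\widetilde\Delta)$ as a disjoint union of dlt pairs. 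The main obstacle is the MMP step: one has to convert the desired termination over $X_U$ into a finite sequence of Fujino-style MMP steps over $Y$, control the shrinking of $Y$ around $W$ across those finitely many steps via Remark \ref{rem--mmp-reduction-basic}, and verify that the negativity lemma applies to force the contraction of the full locus $\bigcup_j E_j$ on the nose.
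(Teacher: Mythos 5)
Your overall strategy coincides with the paper's: the authors' entire proof of Lemma \ref{lem--extraction} is the single sentence that the algebraic argument of \cite[Corollary 1.37 and Corollary 1.38]{kollar-mmp} (equivalently \cite[Lemma 2.16]{hashizumehu}) works unchanged, and what you write out is exactly that argument — a log resolution extracting $\mathcal{T}$, the crepant boundary $\Gamma_0$ with $K_V+\Gamma_0=g^*(K_{X_U}+\Delta_U)+F_0$, a relative MMP, and the negativity lemma to kill $F_0$. So there is no divergence in approach; the issues are in how you wire the termination step, and two of your claims are wrong as literally stated. First, $(V,\Gamma_0+A)$ can never be klt for any choice of $A\geq 0$: $\Gamma_0$ contains the divisors $E_j$ (and the strict transform of $\lfloor\Delta\rfloor$) with coefficient one, so adding a general $g$-ample divisor does not remove the non-klt places. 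The intended fix is available in the paper: either trade $K_V+\Gamma_0+(\text{ample part})$ for a klt boundary via Lemma \ref{lem--dlt-perturbation-coefficients} (after a relative Kodaira-type decomposition of the nef and big pullback), or avoid klt-ness altogether and use the dlt package, namely Lemma \ref{lem--mmp-ample-scaling-nefthreshold} together with Theorem \ref{thm--termination-mmp}. Second, Theorem \ref{thm--birkarhu-semiample} cannot be invoked ``over $X_U$'': its hypotheses require the base to be Stein with a compact subset satisfying (P), and $X_U=\pi^{-1}(U)$ is not Stein. In this paper an MMP over an intermediate space is only ever realized as an MMP over the Stein base $U$ around $W$ with the boundary augmented by $L\sim_{\mathbb{R}} r\,g^{*}H_{X_U}$, $r>2\dim X$, and then Lemma \ref{lem--relative-mmp} guarantees the steps stay over $X_U$ and are $(K_V+\Gamma_0)$-negative; you mention this lemma but your termination argument is not actually phrased in that framework.

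Two further points deserve explicit justification rather than assertion. Your choice of $g$ ``such that no other $g$-exceptional prime divisor is an lc place of $(X_U,\Delta_U)$'' is precisely where the dlt hypothesis enters (via a Szab\'o/thrifty-type resolution which is an isomorphism at the generic points of the strata of the snc locus, combined with the fact that divisors with center inside the non-snc locus of a dlt pair have discrepancy $>-1$); without this, some $E_j$ would receive coefficient zero in $F_0$ and would survive the MMP, violating the third bullet of the statement. And the claim that no $\mathcal{T}$-divisor is contracted needs the (standard, but nontrivial) observation that a divisor contracted by the MMP must lie in ${\rm Supp}\,N_{\sigma}(K_V+\Gamma_0;V/X_U)={\rm Supp}\,F_0$ (cf.\ Lemma \ref{lem--asymvanorder-discrepancy}), which excludes the $D_V$. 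With these repairs — all of which use tools already in the paper (Lemma \ref{lem--dlt-perturbation-coefficients}, Lemma \ref{lem--relative-mmp}, Lemma \ref{lem--mmp-ample-scaling-nefthreshold}, Theorem \ref{thm--termination-mmp}, Corollary \ref{cor--negativity-veryexc-nef}, Theorem \ref{thm--mmp-R-Cartier}) — your argument is the one the authors intend.
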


\begin{proof}
The argument in the algebraic case (see \cite[Corollary 1.37 and Corollary 1.38]{kollar-mmp}) works with no changes. 
\end{proof}

\begin{lem}[cf.~{\cite[Proposition 3.3]{has-mmp}}]\label{lem--mmp-genericCYfib}
Let $\pi \colon X \to Y$ be a projective surjective morphism from a normal analytic variety $X$ to a Stein space $Y$, and let $W \subset Y$ be a compact subset such that $\pi$ and $W$ satisfy (P). 
Let $(X,\Delta)$ be an lc pair. 
Suppose that there exists a contraction $\phi \colon X \to Z$ over $Y$, where $Z$ is a normal analytic variety that is projective over $Y$, satisfying the following.
\begin{itemize}
\item
$\kappa_{\sigma}(X/Z, K_{X}+\Delta)=0$, 
\item
$\kappa_{\sigma}(X/Y, K_{X}+\Delta)={\rm dim}\,Z-{\rm dim}\,Y$, and 
\item
any lc center of $(X,\Delta)$ dominates $Z$. 
\end{itemize}
Then, after shrinking $Y$ around $W$, the lc pair $(X,\Delta)$ has a good minimal model over $Y$ around $W$. 
\end{lem}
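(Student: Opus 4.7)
The plan is to verify the criterion of Theorem \ref{thm--birkarhu-semiample}, that is, to produce a resolution $f\colon \widetilde{X}\to X$ for which $P_{\sigma}(f^{*}(K_{X}+\Delta);\widetilde{X}/Y,W)$ is semi-ample over a neighborhood of $W$. First I would carry out the standard reductions of Remark \ref{rem--mmp-reduction-basic}: replace $\pi$ by its Stein factorization and shrink $Y$ so that $W$ is connected and $X$ is irreducible, then take a dlt blow-up so that $(X,\Delta)$ is dlt and $X$ is $\mathbb{Q}$-factorial over $W$. Lemma \ref{lem--exist-model-birat-I} and Theorem \ref{thm--exist-model-birat-II} guarantee that these reductions do not change the question, and the hypothesis that every lc center dominates $Z$ is preserved because a dlt blow-up extracts only divisors whose centers are lc centers of the original pair, which by hypothesis dominate $Z$.

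Next, the key step is to run a $(K_{X}+\Delta)$-MMP over $Z$ (not over $Y$) with scaling of an ample divisor. The assumption $\kappa_{\sigma}(X/Z,K_{X}+\Delta)=0$ combined with the fact that every lc center of $(X,\Delta)$ dominates $Z$ means that on an analytically sufficiently general fiber $F$ of $\phi$ the induced pair $(F,\Delta|_{F})$ is lc with $\kappa_{\sigma}(F,K_{F}+\Delta|_{F})=0$. Nakayama's characterization of pseudo-effective divisors of numerical dimension zero, together with the known abundance in this setting on the fiber, produces an effective $\mathbb{R}$-divisor $\mathbb{R}$-linearly equivalent to $K_{F}+\Delta|_{F}$. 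Applying the analytic analog of the existence theorem for relative minimal models in this fiberwise Calabi--Yau situation, via Subsection \ref{subsec--iitaka-fib} and the lift-of-MMP technique of Subsection \ref{subsection--lift-mmp} together with Theorem \ref{thm--termination-mmp} and Theorem \ref{thm--birkarhu-semiample}, one obtains, after shrinking $Y$ around $W$, a bimeromorphic contraction $X\dashrightarrow X'$ over $Z$ terminating with a good minimal model $(X',\Delta')$ of $(X,\Delta)$ over $Z$ around $\phi(W)$.

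With $K_{X'}+\Delta'\sim_{\mathbb{R},\,Z}\phi'^{*}L$ after further shrinking, where $\phi'\colon X'\to Z$ is the induced contraction and $L$ is an $\mathbb{R}$-Cartier divisor on $Z$, I would then invoke the hypothesis $\kappa_{\sigma}(X/Y,K_{X}+\Delta)=\dim Z-\dim Y$. Since $Z$ is projective over $Y$ and the Iitaka fibration of $K_{X}+\Delta$ over $Y$ factors through $Z$, the divisor $L$ is big over $Y$; combined with Fujino's relative base-point-free theorem in the analytic setting (\cite[Theorem 1.1]{fujino-analytic-bchm} and \cite[Theorem 9.1]{fujino-analytic-conethm}) applied to a suitable klt perturbation provided by Lemma \ref{lem--dlt-perturbation-coefficients}, $L$ is semi-ample over a neighborhood of $W$. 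Hence $K_{X'}+\Delta'$ is semi-ample over a neighborhood of $W$, and passing to a common log resolution of $X\dashrightarrow X'$ and applying Theorem \ref{thm--birkarhu-semiample} yields the desired good minimal model of $(X,\Delta)$ over $Y$ around $W$.

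The main obstacle will be the termination of, and abundance for, the $(K_{X}+\Delta)$-MMP over $Z$. In the algebraic case this is \cite[Proposition 3.3]{has-mmp}, but in the analytic setting one must carefully track the shrinking of $Y$ at each step, use the finiteness of lc centers (Lemma \ref{lem--finite-lccenter}) and the special termination machinery of Subsection \ref{subsec-special-termi}, and exploit that the fiberwise problem reduces to the algebraic case via Subsection \ref{subsec--iitaka-fib}. Once the relative good minimal model over $Z$ is produced, the passage from $Z$ to $Y$ is comparatively routine given the base-point-free theorems available in the analytic setting.
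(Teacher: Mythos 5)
Your overall target is the right one (verifying the criterion of Theorem \ref{thm--birkarhu-semiample}), but the two central steps of your argument have genuine gaps, and in both places the paper's proof does something different precisely to avoid them. First, you propose to run a $(K_{X}+\Delta)$-MMP \emph{over $Z$} and to obtain a good minimal model of $(X,\Delta)$ over $Z$ from a ``fiberwise Calabi--Yau'' existence theorem. Within this paper's framework an MMP over $Z$ around a compact set is not even defined, since $Z$ is not Stein and the preimage of $W$ in $Z$ need not be a Stein compact subset satisfying (P); more importantly, the termination and abundance you would need over $Z$ is essentially the analytic analog of the relatively trivial minimal model theorem, a statement at least as deep as the lemma itself, and you acknowledge rather than close this gap. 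The paper avoids it entirely: after a log resolution and the weak semistable reduction of \cite{eh-semistablereduction} (to make $\phi$ equi-dimensional, which you drop but which is needed later for very exceptionality), it runs a $(K_{X}+\Delta+H)$-MMP \emph{over $Y$} around $W$ with $H\sim_{\mathbb{R}}r\phi^{*}H_{Z}$, $r>2\dim X$, so that by Lemma \ref{lem--relative-mmp} every step stays over $Z$; by Lemma \ref{lem--mmp-ample-scaling-nefthreshold} after \emph{finitely many} steps $K_{X}+\Delta+H$ is a limit of movable divisors over a neighborhood of $W$, and then $\kappa_{\sigma}(X/Z,K_{X}+\Delta)=0$ together with the negativity lemma for very exceptional divisors (Lemma \ref{lem--negativity-veryexc}) gives $K_{X}+\Delta\sim_{\mathbb{R},Z}0$ directly --- no termination over $Z$ and no good minimal model over $Z$ is ever invoked.

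Second, your descent from $Z$ to $Y$ does not work as stated. Having $K_{X'}+\Delta'\sim_{\mathbb{R},Z}\phi'^{*}L$ with $L$ big over $Y$ is not enough: $L$ is not known to be nef over $W$, so there is nothing for a relative base-point-free theorem to act on, and Lemma \ref{lem--dlt-perturbation-coefficients} cannot supply the missing structure, since it perturbs a dlt pair plus an ample divisor into a klt pair on $X$ and does not endow an arbitrary big divisor on $Z$ with a klt log structure. The step you call ``comparatively routine'' is exactly where the paper uses the canonical bundle formula in the analytic setting (\cite[Theorem 21.4]{fujino-analytic-bchm}, together with the hypothesis that every lc center dominates $Z$) to write $K_{X}+\Delta\sim_{\mathbb{R}}\phi^{*}(K_{Z}+\Delta_{Z})$ with $(Z,\Delta_{Z})$ klt and $K_{Z}+\Delta_{Z}$ big over $Y$; only then can Fujino's analytic minimal model program for klt pairs produce a good minimal model $(Z',\Delta_{Z'})$ over $Y$ around $W$, after which one identifies $N_{\sigma}$ and $P_{\sigma}$ of $f^{*}(K_{X}+\Delta)$ on a suitable resolution and concludes by Theorem \ref{thm--birkarhu-semiample}. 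To repair your proposal you would need to add the equi-dimensional reduction, replace the MMP over $Z$ by the scaled MMP over $Y$ with the pulled-back ample divisor, and insert the canonical bundle formula in place of the base-point-free argument.
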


\begin{proof}
The argument in \cite[Proof of Proposition 3.3]{has-mmp} works with no changes because we can use the canonical bundle formula in the analytic setting \cite[Theorem 21.4]{fujino-analytic-bchm} and the weak semistable reduction in the analytic setting \cite{eh-semistablereduction}. 
So we only outline the proof. 

By Remark \ref{rem--mmp-reduction-basic}, we may assume that $\pi \colon X \to Y$ and $Z \to Y$ are contractions and $W$ is connected. 
By taking a log resolution of $(X,\Delta)$ and applying weak semistable reduction (\cite{eh-semistablereduction}), we may assume that $(X,0)$ is $\mathbb{Q}$-factorial klt and all fibers of $\phi$ have the same dimensions. 
Let $H_{Z}$ be a Cartier divisor on $Z$ which is ample over $Y$, and let $H \geq 0$ be an $\mathbb{R}$-divisor on $X$ such that $(X,\Delta+H)$ is an lc pair and $H \sim_{\mathbb{R}} r\phi^{*}H_{Z}$ for some $r>2\cdot{\rm dim}\,X$. 
We run a $(K_{X}+\Delta+H)$-MMP over $Y$ around $W$ with scaling of an ample divisor
$$(X_{0},\Delta_{0}+H_{0})\dashrightarrow (X_{1},\Delta_{1}+H_{1})\dashrightarrow \cdots \dashrightarrow (X_{i},\Delta_{i}+H_{i})\dashrightarrow \cdots.$$
By Lemma \ref{lem--relative-mmp}, the induced map $X_{i} \dashrightarrow Z$ is a morphism and the $(K_{X}+\Delta+H)$-MMP is a $(K_{X}+\Delta)$-MMP over $Y$ around $W$. 
By Lemma \ref{lem--mmp-ample-scaling-nefthreshold} and replacing $(X,\Delta)$ with $(X_{i},\Delta_{i})$ for some $i \gg 0$, we may assume that $K_{X}+\Delta+H$ is the limit of movable divisors over a neighborhood of $W$ (however, we lose the property of being equi-dimensional of $X \to Z$). 
Then $K_{X}+\Delta$ is the limit of movable divisors over a neighborhood of the inverse image of $W$ to $Z$. 
By our hypothesis $\kappa_{\sigma}(X/Z, K_{X}+\Delta)=0$ and the argument as in \cite[Proof of Proposition 3.3]{has-mmp}, we have $K_{X}+\Delta \sim_{\mathbb{R},\,Z}E$ for some effective $\mathbb{R}$-divisor $E$ on $X$ which is very exceptional over $Z$. 
By the negativity lemma for very exceptional divisors (Lemma \ref{lem--negativity-veryexc}) and shrinking $Y$ around $W$, we have $K_{X}+\Delta \sim_{\mathbb{R},Z}0$. 

By the third condition of Lemma \ref{lem--mmp-genericCYfib}, the canonical bundle formula in the analytic setting \cite[Theorem 21.4]{fujino-analytic-bchm} (and the analytic argument of \cite{fg-bundle}), an argument from convex geometry, and the argument of the perturbation of coefficients with the aid of $\kappa_{\sigma}(X/Y, K_{X}+\Delta)={\rm dim}\,Z-{\rm dim}\,Y$, there is a klt pair $(Z,\Delta_{Z})$ such that 
$$K_{X}+\Delta\sim_{\mathbb{R}}\phi^{*}(K_{Z}+\Delta_{Z}).$$ 
By $\kappa_{\sigma}(X/Y, K_{X}+\Delta)={\rm dim}\,Z-{\rm dim}\,Y$, we see that $K_{Z}+\Delta_{Z}$ is big over $Z$. 
By \cite{fujino-analytic-bchm} and shrinking $Y$ around $W$, the klt pair $(Z,\Delta_{Z})$ has a good minimal model $(Z',\Delta_{Z'})$ over $Y$ around $W$. 
Since $(Z,\Delta_{Z})$ is klt, the bimeromorphic map $Z \dashrightarrow Z'$ over $Y$  is a bimeromorphic contraction. 
Hence we can find a Zariski open subset $U' \subset Z'$ such that ${\rm codim}_{Z'}(Z'\setminus U')\geq 2$ and $Z'\dashrightarrow Z$ is a biholomorphism on $U'$.

Let $f\colon X' \to X$ be a log resolution of $(X,\Delta)$ such that the induced map $\phi' \colon X' \dashrightarrow Z'$ is a morphism. 
Let $(X',\Delta')$ be a log birational model of $(X,\Delta)$ as in Definition \ref{defn--models}. 
After shrinking $Y$ around $W$, we can write 
$$f^{*}(K_{X}+\Delta) \sim_{\mathbb{R}} \phi'^{*}(K_{Z'}+\Delta_{Z'})+F$$ 
for some effective $\mathbb{R}$-divisor $F$ on $X$ such that $\phi'({\rm Supp}F) \subset Z'\setminus U'$. 
Since $K_{Z'}+\Delta_{Z'}$ is semi-ample over $Z$, we have $N_{\sigma}(f^{*}(K_{X}+\Delta); X'/Y, W) \leq F$, and the negativity lemma for very exceptional divisors (Lemma \ref{lem--negativity-veryexc}) implies
$$-P_{\sigma}(f^{*}(K_{X}+\Delta); X'/Y, W)\sim_{\mathbb{R},\,Z'}-(F-N_{\sigma}(f^{*}(K_{X}+\Delta); X'/Y, W))\geq 0.$$
Therefore, $N_{\sigma}(f^{*}(K_{X}+\Delta); X'/Y, W) = F$, and the divisor 
$$P_{\sigma}(f^{*}(K_{X}+\Delta); X'/Y, W)\sim_{\mathbb{R}}\phi'^{*}(K_{Z'}+\Delta_{Z'})$$ is semi-ample over $Y$. 
By Theorem \ref{thm--birkarhu-semiample}, after shrinking $Y$ around $W$ the pair $(X,\Delta)$ has a good minimal model over $Y$ around $W$. 
\end{proof}

\begin{prop}[cf.~{\cite[Proposition 3.2]{has-finite}}]\label{prop--crepantmmp}
Let $\pi \colon X \to Y$ be a projective surjective morphism from a normal analytic variety $X$ to a Stein space $Y$, and let $W \subset Y$ be a connected compact subset such that $\pi$ and $W$ satisfy (P). 
Let $(X,\Delta)$ be an lc pair such that $K_{X}+\Delta$ is $\pi$-pseudo-effective and $\pi$-abundant. 
Then, after shrinking $Y$ around $W$, there exist a dlt blow-up $\widetilde{f} \colon (\widetilde{X}, \widetilde{\Delta}) \to (X,\Delta)$, where $\widetilde{X}$ is $\mathbb{Q}$-factorial over $W$, and effective $\mathbb{R}$-Cartier divisors $\widetilde{G}$ and $\widetilde{H}$ on $\widetilde{X}$ satisfying the following properties. 
\makeatletter 
\renewcommand{\p@enumii}{III-} 
\makeatother
\begin{enumerate}[(I)]
\item \label{prop--crepantmmp-(I)}
$K_{\widetilde{X}}+\widetilde{\Delta}\sim_{\mathbb{R}}\widetilde{G}+\widetilde{H}$, 
\item \label{prop--crepantmmp-(II)}
${\rm Supp}\,\widetilde{G}\subset {\rm Supp}\,\llcorner \widetilde{\Delta} \lrcorner$, and
\item \label{prop--crepantmmp-(III)}
there exists a positive real number $t_{0}$ such that for any $t\in(0,t_{0}]$, the following properties hold: 
\begin{enumerate}[({III-}a)]
\item \label{prop--crepantmmp-(III-a)}
The pair $(\widetilde{X},\widetilde{\Delta}+t\widetilde{H})$ is dlt, $N_{\sigma}(K_{\widetilde{X}}+\widetilde{\Delta}+t\widetilde{H}; \widetilde{X}/Y,W)$ is well defined as an $\mathbb{R}$-divisor on $\widetilde{X}$, and the support of $N_{\sigma}(K_{\widetilde{X}}+\widetilde{\Delta}+t\widetilde{H}; \widetilde{X}/Y,W)$ does not depend on $t$, and
\item \label{prop--crepantmmp-(III-b)}
after shrinking $Y$ around $W$, the pair $(\widetilde{X},\widetilde{\Delta}-t\widetilde{G})$ has a good minimal model over $Y$ around $W$. 
\end{enumerate}
\end{enumerate}
\end{prop}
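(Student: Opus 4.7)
The plan is to follow the strategy of \cite[Proposition 3.2]{has-finite}, adapted via the analytic toolkit developed in Section \ref{sec4}. By Remark \ref{rem--mmp-reduction-basic} I may assume $\pi$ is a contraction and $W$ is connected. Since $K_{X}+\Delta$ is $\pi$-pseudo-effective and $\pi$-abundant, there is an effective $\mathbb{R}$-divisor $D\sim_{\mathbb{R},Y}K_{X}+\Delta$, and I take an Iitaka fibration $\phi\colon X\dashrightarrow V$ over $Y$ associated to $K_{X}+\Delta$ as in Subsection \ref{subsec--iitaka-fib}. Its construction provides, on a suitable log resolution factoring through $\phi$, a decomposition $\widetilde{f}^{*}D\sim_{\mathbb{R},Y}\widetilde{F}+\widetilde{\phi}^{*}H_{V}/k$ for some $k\in\mathbb{Z}_{>0}$, where $\widetilde{F}$ is an effective $\mathbb{R}$-divisor vertical over $V$ and $H_{V}$ is very ample on $V$ over $Y$, and by Lemma \ref{lem--iitakafib} the numerical dimension of $K_{\widetilde{X}}+\widetilde{\Delta}$ over $V$ is zero.

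I would first construct $\widetilde{f}\colon\widetilde{X}\to X$ by taking a log resolution of $(X,\Delta+D)$ that simultaneously resolves the ideal defining the Iitaka fibration data, so that $\widetilde{\phi}\colon\widetilde{X}\to V$ becomes a morphism and every component of $\widetilde{F}$ is $\widetilde{f}$-exceptional. Applying the constructive blow-up procedure of Lemma \ref{thm--speciallogresol} against lc centers not dominating $V$ (in place of $Y$), I obtain a log smooth model $(\widetilde{X},\widetilde{\Delta})$ of $(X,\Delta)$ with $\widetilde{\Delta}=\widetilde{\Delta}'+\widetilde{\Delta}''$ where $\widetilde{\Delta}''$ is the reduced divisor collecting all components of $\lfloor\widetilde{\Delta}\rfloor$ (including those of $\widetilde{F}$) that do not dominate $V$, and every lc center of $(\widetilde{X},\widetilde{\Delta}-t\widetilde{\Delta}'')$ dominates $V$ for $t\in(0,1]$. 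A further small $\mathbb{Q}$-factorialization via \cite[Theorem 1.21]{fujino-analytic-bchm} ensures that $\widetilde{X}$ is $\mathbb{Q}$-factorial over $W$.

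For the decomposition, writing $K_{\widetilde{X}}+\widetilde{\Delta}=\widetilde{f}^{*}(K_{X}+\Delta)+\widetilde{E}$ with $\widetilde{E}\geq 0$ and $\widetilde{f}$-exceptional, the identity $\widetilde{f}^{*}D\sim_{\mathbb{R},Y}\widetilde{F}+\widetilde{\phi}^{*}H_{V}/k$ yields $K_{\widetilde{X}}+\widetilde{\Delta}\sim_{\mathbb{R},Y}\widetilde{F}+\widetilde{E}+\widetilde{\phi}^{*}H_{V}/k$. I set $\widetilde{G}:=\widetilde{F}+\widetilde{E}$, which is supported in $\lfloor\widetilde{\Delta}\rfloor$ and so gives (II), and I take $\widetilde{H}$ to be a sufficiently general effective $\mathbb{R}$-Cartier representative of $\widetilde{\phi}^{*}H_{V}/k$ after absorbing the residual $\pi$-pullback into $\widetilde{H}$, which is possible after shrinking $Y$ around $W$; this yields (I). Property (III-a) then follows from Lemma \ref{lem--decomp-supp-inv} for the stability of the $N_{\sigma}$-support and from the genericity of $\widetilde{H}$, which keeps $(\widetilde{X},\widetilde{\Delta}+t\widetilde{H})$ dlt for small $t$. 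For (III-b), the pair $(\widetilde{X},\widetilde{\Delta}-t\widetilde{G})$ is dlt with all lc centers dominating $V$ (since subtracting $t\widetilde{G}\subset\lfloor\widetilde{\Delta}\rfloor$ lowers the coefficients of the relevant components strictly below one, reducing to the lc-center situation from Lemma \ref{thm--speciallogresol}(iii) with $V$ in place of $Y$); the identity $K_{\widetilde{X}}+\widetilde{\Delta}-t\widetilde{G}\sim_{\mathbb{R}}(1-t)\widetilde{G}+\widetilde{H}$ combined with $\widetilde{G}$ being vertical over $V$ and $\widetilde{H}$ being numerically the pullback of an ample divisor gives $\kappa_{\sigma}(\widetilde{X}/V,K_{\widetilde{X}}+\widetilde{\Delta}-t\widetilde{G})=0$ and $\kappa_{\sigma}(\widetilde{X}/Y,K_{\widetilde{X}}+\widetilde{\Delta}-t\widetilde{G})=\dim V-\dim Y$, so Lemma \ref{lem--mmp-genericCYfib} yields a good minimal model of $(\widetilde{X},\widetilde{\Delta}-t\widetilde{G})$ over $Y$ around $W$.

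The principal obstacle is arranging $\widetilde{H}$ to be effective while maintaining $\mathrm{Supp}(\widetilde{G})\subset\lfloor\widetilde{\Delta}\rfloor$: one must carefully control the passage from $\sim_{\mathbb{R},Y}$ to $\sim_{\mathbb{R}}$ in the analytic setting, which requires iterated shrinking of $Y$ around $W$ and possibly absorbing residual pullback terms into either $\widetilde{G}$ or a different representative of $\widetilde{H}$, while respecting property (P). A secondary difficulty is the adaptation of Lemma \ref{thm--speciallogresol} to the target $V$ rather than $Y$; its proof is constructive by blow-ups against non-dominating lc centers and transfers verbatim with $V$ replacing $Y$, but one must ensure compatibility with the other simultaneous requirements (resolving $\widetilde{F}$, and achieving $\mathbb{Q}$-factoriality over $W$), which I would handle by performing all of these blow-ups and dlt blow-ups in a single step.
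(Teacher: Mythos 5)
The place where your proposal breaks down is the construction of $\widetilde{G}$. You set $\widetilde{G}:=\widetilde{F}+\widetilde{E}$, where $\widetilde{F}$ is the fixed part in the Iitaka-fibration decomposition $\widetilde{f}^{*}D\sim_{\mathbb{R},Y}\widetilde{F}+\widetilde{\phi}^{*}H_{V}/k$. Neither of the two properties you ascribe to $\widetilde{F}$ holds in general: it need not be vertical over $V$ (on an analytically sufficiently general fiber of $\widetilde{X}\to V$ the divisor $K_{X}+\Delta$ has invariant Iitaka dimension $0$, not trivial $\mathbb{R}$-linear class, so the fixed part typically has horizontal components), and its components need not lie in $\lfloor\widetilde{\Delta}\rfloor$: they are strict transforms of components of the auxiliary divisor $D\sim_{\mathbb{R},Y}K_{X}+\Delta$, which are unrelated to $\Delta$ and are not $\widetilde{f}$-exceptional either (resolving the base ideal only frees the moving part; the divisorial base locus survives as a strict transform). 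So condition (\ref{prop--crepantmmp-(II)}) is not achieved by your $\widetilde{G}$, and it cannot be repaired by enlarging the boundary, since $\widetilde{\Delta}$ must remain a crepant (dlt blow-up) boundary of $(X,\Delta)$. This is not a cosmetic defect: (\ref{prop--crepantmmp-(II)}) is exactly what is used afterwards to obtain (\ref{prop--specialmmp-(5)}) of Proposition \ref{prop--specialmmp}, hence the special termination in Theorem \ref{thm--mmp-ind1}. The same misplacement also undercuts your argument for (\ref{prop--crepantmmp-(III-b)}): to apply Lemma \ref{lem--mmp-genericCYfib} to $(\widetilde{X},\widetilde{\Delta}-t\widetilde{G})$ you need all lc centers of that pair to dominate $V$, which via Lemma \ref{thm--speciallogresol} (\ref{thm--speciallogresol-(iii)}) requires ${\rm Supp}\,\widetilde{G}$ to contain the whole non-dominating reduced part $\widetilde{\Delta}''$; a vertical component of $\lfloor\widetilde{\Delta}\rfloor$ need not lie in the stable base locus of $K_{X}+\Delta$, so your $\widetilde{G}$ need not contain it.

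For comparison, the paper's own proof is a deferral: it invokes \cite[Proof of Proposition 3.2]{has-finite} verbatim, with Lemma \ref{thm--speciallogresol} and Lemma \ref{lem--mmp-genericCYfib} as the analytic substitutes for the algebraic ingredients. In that argument the roles are, in essence, the reverse of yours: $\widetilde{G}$ is manufactured out of the boundary part $\widetilde{\Delta}''$ supplied by Lemma \ref{thm--speciallogresol} (which is what secures (\ref{prop--crepantmmp-(II)}) and makes Lemma \ref{lem--mmp-genericCYfib} applicable to $(\widetilde{X},\widetilde{\Delta}-t\widetilde{G})$), while the negative/fixed part of $K_{\widetilde{X}}+\widetilde{\Delta}$ is absorbed into the effective divisor $\widetilde{H}$, whose interaction with the subsequent MMP is controlled only through the $N_{\sigma}$-statement (\ref{prop--crepantmmp-(III-a)}); the Iitaka fibration enters only to give the morphism to $V$ and the vanishing $\kappa_{\sigma}(\widetilde{X}/V,K_{\widetilde{X}}+\widetilde{\Delta})=0$ from Lemma \ref{lem--iitakafib}, not the decomposition (\ref{prop--crepantmmp-(I)}) itself. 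Your concern about passing from $\sim_{\mathbb{R},Y}$ to $\sim_{\mathbb{R}}$ after shrinking is legitimate but secondary; the essential gap is the choice of $\widetilde{G}$.
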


\begin{proof}
By Lemma \ref{thm--speciallogresol} and Lemma \ref{lem--mmp-genericCYfib}, we can apply \cite[Proof of Proposition 3.2]{has-finite} to the situation of Proposition \ref{prop--crepantmmp}. 
Note that we only need to deal with finite sequences of log MMP, and thus \cite[Proof of Proposition 3.2]{has-finite} works with no changes. 
\end{proof}

\begin{prop}[cf.~{\cite[Proposition 3.3]{has-finite}}]\label{prop--specialmmp}
Let $\pi \colon X \to Y$ be a projective surjective morphism from a normal analytic variety $X$ to a Stein space $Y$, and let $W \subset Y$ be a connected compact subset such that $\pi$ and $W$ satisfy (P). 
Let $(X,\Delta)$ be a dlt pair such that $X$ is $\mathbb{Q}$-factorial over $W$ and there are effective $\mathbb{R}$-Cartier divisors $G$ and $H$ on $X$ satisfying (\ref{prop--crepantmmp-(I)}), (\ref{prop--crepantmmp-(II)}), (\ref{prop--crepantmmp-(III)}), (\ref{prop--crepantmmp-(III-a)}), and (\ref{prop--crepantmmp-(III-b)}) in Proposition \ref{prop--crepantmmp}. 
Then, after shrinking $Y$ around $W$, there exist a real number $\lambda_{0}>0$, a bimeromorphic contraction $X \dashrightarrow X_{1}$ over $Y$ with the strict transforms $\Delta_{1}$ and $H_{1}$ of $\Delta$ and $H$ on $X_{1}$ respectively, and a sequence of steps of a $(K_{X_{1}}+\Delta_{1})$-MMP over $Y$ around $W$
$$X_{1}\dashrightarrow X_{2} \dashrightarrow  \cdots \dashrightarrow X_{i} \dashrightarrow \cdots$$
with the strict transforms $\Delta_{i}$ and $H_{i}$ of $\Delta$ and $H$ on $X_{i}$ respectively, which satisfy the following properties. 
\begin{enumerate}[(1)]
\item \label{prop--specialmmp-(1)}
The pair $(X,\Delta+\lambda_{0}H)$ is dlt and all lc centers of $(X,\Delta+\lambda_{0}H)$ are lc centers of $(X,\Delta)$, 
\item \label{prop--specialmmp-(2)}
$X \dashrightarrow X_{1}$ is a sequence of steps of a $(K_{X}+\Delta+\lambda_{0}H)$-MMP over $Y$ around $W$ that is represented by a bimeromorphic contraction over $Y$ and $(X_{1},\Delta_{1}+\lambda_{0}H_{1})$ is a good minimal model of $(X,\Delta+\lambda_{0}H)$ over $Y$, 
\item \label{prop--specialmmp-(3)}
$X_{1}\dashrightarrow \cdots \dashrightarrow X_{i}\dashrightarrow \cdots$ is a sequence of steps of a $(K_{X_{1}}+\Delta_{1})$-MMP over $Y$ around $W$ with scaling of $\lambda_{0}H_{1}$ such that if we define
$$\lambda_{i}={\rm inf}\set{\mu \in \mathbb{R}_{\geq0} \!|\! K_{X_{i}}+\Delta_{i}+\mu H_{i}\text{\rm \;is nef over $W$}}$$
for each $i\geq 1$, then we have ${\rm lim}_{i \to \infty}\lambda_{i}=0$, 
\item \label{prop--specialmmp-(4)}
for all $i\geq1$ and all positive real numbers $u\in [\lambda_{i},\lambda_{i-1}]$, after shrinking $Y$ around $W$, the pair $(X_{i},\Delta_{i}+u H_{i})$ is a good minimal model of $(X,\Delta+u H)$ and $(X_{1},\Delta_{1}+u H_{1})$ over $Y$ around $W$, and 
\item \label{prop--specialmmp-(5)}
for any $i\geq1$ and curve $C_{i}$ in the fiber of $X_{i} \to Y$ over $W$, if $C_{i}$ is contracted by the extremal contraction in the $i$-th step of the $(K_{X_{1}}+\Delta_{1})$-MMP, then $C_{i} \subset {\rm Supp}\,\llcorner \Delta_{i}\lrcorner$. 
\end{enumerate}
\end{prop}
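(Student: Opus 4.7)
The plan is to choose $\lambda_0>0$ small and split the construction into two consecutive MMPs: first a $(K_X+\Delta+\lambda_0 H)$-MMP producing $X_1$, then a $(K_{X_1}+\Delta_1)$-MMP with scaling of $\lambda_0 H_1$. The central algebraic reduction is the identity
$$
K_X+\Delta+\lambda H \;\sim_{\mathbb{R}}\; (1+\lambda)\left(K_X+\Delta - \tfrac{\lambda}{1+\lambda}\,G\right),
$$
obtained from $H \sim_{\mathbb{R}} K_X+\Delta - G$ via condition (\ref{prop--crepantmmp-(I)}). I would pick $\lambda_0 \in (0,t_0]$ small enough that $\lambda_0/(1+\lambda_0) \le t_0$ and $(X,\Delta+\lambda_0 H)$ is dlt with the same lc centers as $(X,\Delta)$, which secures (\ref{prop--specialmmp-(1)}). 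By (\ref{prop--crepantmmp-(III-b)}), $(X,\Delta-sG)$ has a good minimal model for every $s\in(0,t_0]$, so the displayed identity gives good minimal models for $(X,\Delta+\lambda H)$ over $Y$ around $W$ for every $\lambda \in (0,\lambda_0]$.

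For (\ref{prop--specialmmp-(2)}), I would run a $(K_X+\Delta+\lambda_0 H)$-MMP over $Y$ around $W$ with scaling of a $\pi$-ample divisor. Since $X$ has only klt singularities (as $(X,\Delta)$ is dlt), Lemma \ref{lem--mmp-ample-scaling-nefthreshold} drives the scaling thresholds to $0$, and Theorem \ref{thm--termination-mmp}, combined with the good minimal model obtained above, forces termination after shrinking $Y$ around $W$. The output is the desired $(X_1,\Delta_1+\lambda_0 H_1)$, and one further shrinking of $Y$ turns $X\dashrightarrow X_1$ into a bimeromorphic contraction.

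Because $K_{X_1}+\Delta_1+\lambda_0 H_1$ is semi-ample near $W$, the $(K_{X_1}+\Delta_1)$-MMP with scaling of $\lambda_0 H_1$ is well defined. For (\ref{prop--specialmmp-(3)}) I would invoke Theorem \ref{thm--mmpsequence-2}: its hypothesis --- existence of a log minimal model of $(X_1,\Delta_1+\mu H_1)$ for every $\mu\in(0,\lambda_0]$ --- follows from the first paragraph together with Lemma \ref{lem--mmp-weak-lc-model}, since the identity shows that the sequence $X\dashrightarrow X_1$ is, up to positive $\mathbb{R}$-rescaling, a $(K_X+\Delta-s_0 G)$-MMP ($s_0=\lambda_0/(1+\lambda_0)$), from which weak lc models of $(X_1,\Delta_1-sG_1)$ are inherited from those of $(X,\Delta-sG)$ for $s$ near $s_0$, and then propagated throughout $(0,\lambda_0]$ using condition (\ref{prop--crepantmmp-(III-a)}) and Theorem \ref{thm--birkarhu-semiample}. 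For (\ref{prop--specialmmp-(4)}), for $u\in[\lambda_i,\lambda_{i-1}]$ each extremal contraction up to step $i$ is still $(K+\Delta+uH)$-trivial on contracted curves (Lemma \ref{lem--mmp-triviallyintersect}), so the MMP realizes $(X_i,\Delta_i+uH_i)$ as a weak lc model of both $(X,\Delta+uH)$ and $(X_1,\Delta_1+uH_1)$ via Lemma \ref{lem--mmp-weak-lc-model}; Lemma \ref{lem--two-logminmodel} then upgrades this to a good minimal model by transferring semi-ampleness of $K+\Delta+uH$ from any good minimal model of $(X,\Delta+uH)$.

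Condition (\ref{prop--specialmmp-(5)}) is a short numerical check: for a curve $C_i$ over $W$ contracted by the $i$-th extremal contraction, $(K_{X_i}+\Delta_i)\cdot C_i<0$ and $H_i\cdot C_i>0$, the latter because $\lambda_i>0$ is precisely the threshold at which $K_{X_i}+\Delta_i+\lambda H_i$ ceases to be nef. The strict transform of (\ref{prop--crepantmmp-(I)}) gives $K_{X_i}+\Delta_i\sim_{\mathbb{R}} G_i+H_i$, so $G_i\cdot C_i<0$, forcing $C_i\subset {\rm Supp}\,G_i$; by (\ref{prop--crepantmmp-(II)}) and compatibility of dlt boundaries with MMP steps, ${\rm Supp}\,G_i\subset {\rm Supp}\,\llcorner \Delta_i\lrcorner$. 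The hard part will be the uniform-in-$\mu$ existence step feeding Theorem \ref{thm--mmpsequence-2}: one must extend the good minimal models of $(X,\Delta+\mu H)$ to all $\mu\in(0,\lambda_0]$ compatibly on $X_1$, and this is exactly where (\ref{prop--crepantmmp-(III-a)}) is indispensable through Theorem \ref{thm--birkarhu-semiample}.
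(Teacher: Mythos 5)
Your proposal follows essentially the same route as the paper's proof: the rescaling identity coming from (\ref{prop--crepantmmp-(I)}), condition (\ref{prop--crepantmmp-(III-b)}) together with Theorem \ref{thm--termination-mmp} and Lemma \ref{lem--mmp-ample-scaling-nefthreshold} to produce good minimal models $(X'_{t},\Delta'_{t}+tH'_{t})$ of $(X,\Delta+tH)$ and the contraction $X\dashrightarrow X_{1}$, Theorem \ref{thm--mmpsequence-2} to generate the scaled MMP with $\lambda_{i}\to 0$ for (\ref{prop--specialmmp-(3)}), and the intersection computation with $G_{i}+H_{i}$ plus (\ref{prop--crepantmmp-(II)}) for (\ref{prop--specialmmp-(5)}). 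The one point where you are vaguer than the paper is exactly the uniform-in-$\mu$ transfer to $X_{1}$ that you flag as the hard part: the paper settles it by combining (\ref{prop--crepantmmp-(III-a)}) with Lemma \ref{lem--asymvanorder-discrepancy} to see that the induced maps $X_{1}\dashrightarrow X'_{t}$ are small for all $t\in(0,\lambda_{0}]$, so each $(X'_{t},\Delta'_{t}+tH'_{t})$ is already a good minimal model of $(X_{1},\Delta_{1}+tH_{1})$ (rather than going through Theorem \ref{thm--birkarhu-semiample}); also, in (\ref{prop--specialmmp-(4)}) the citation of Lemma \ref{lem--mmp-triviallyintersect} is not quite the right tool—the standard nef-threshold inequalities of the scaling already give the weak lc model property, as in the algebraic reference.
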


\begin{proof}
The argument in the algebraic case \cite[Proof of Proposition 3.3]{has-finite} works with minor changes. 
We only write how to construct $X \dashrightarrow X_{1}$ and the sequence of steps of the log MMP $X_{1}\dashrightarrow  \cdots \dashrightarrow X_{i} \dashrightarrow \cdots$. 

If we fix $\lambda_{0} \in (0, t_{0})$, where $t_{0}$ is as in (\ref{prop--crepantmmp-(III)}) in Proposition \ref{prop--crepantmmp}, then $(X,\Delta+\lambda_{0}H)$ satisfies (\ref{prop--specialmmp-(1)}) of Proposition \ref{prop--specialmmp}. 
By (\ref{prop--crepantmmp-(I)}) in Proposition \ref{prop--crepantmmp}, for any $s \in \mathbb{R}_{>0}$, we have
$$K_{X}+\Delta+sH \sim_{\mathbb{R},\,Y}(1+s)\left(K_{X}+\Delta-\frac{s}{1+s}G\right).$$
By Theorem \ref{thm--termination-mmp} and (\ref{prop--crepantmmp-(III-b)}) in Proposition \ref{prop--crepantmmp}, for any $t \in (0,\lambda_{0}]$, there exists a Stein open subset $U_{t} \subset Y$ containing $W$ such that putting $X_{U_{t}}:=\pi^{-1}(U_{t})$ then there exists a sequence of steps of a $(K_{X_{U_{t}}}+\Delta|_{X_{U_{t}}}+tH|_{X_{U_{t}}})$-MMP 
$$(X_{U_{t}},\Delta|_{X_{U_{t}}}+tH|_{X_{U_{t}}}) \dashrightarrow (X'_{t},\Delta'_{t}+tH'_{t})$$
over $U_{t}$ around $W$ to a good minimal model, which we may assume that the MMP is represented by a bimeromorphic contraction over $U_{t}$.
By shrinking $Y$ to $U_{\lambda_{0}}$ and putting $X_{1}:=X'_{\lambda_{0}}$, $\Delta_{1}:=\Delta'_{\lambda_{0}}$, and $H_{1}:=H'_{\lambda_{0}}$, we get a bimeromorphic contraction over $Y$
$$(X,\Delta+\lambda_{0}H) \dashrightarrow (X_{1},\Delta_{1}+\lambda_{0}H_{1}),$$
where $(X_{1},\Delta_{1}+\lambda_{0}H_{1})$ is a good minimal model of $(X,\Delta+\lambda_{0}H)$ over $Y$ around $W$. 
By construction, $X \dashrightarrow X_{1}$ satisfies (\ref{prop--specialmmp-(2)}) of Proposition \ref{prop--specialmmp}. 
Let $\pi_{1}\colon X_{1} \to Y$ be the structure morphism. 
By (\ref{prop--crepantmmp-(III-a)}) in Proposition \ref{prop--crepantmmp} and Lemma \ref{lem--asymvanorder-discrepancy}, replacing the open sets $U_{t}\subset Y$ if necessary, we may assume that the induced bimeromorphic map $\pi_{1}^{-1}(U_{t})\dashrightarrow X'_{t}$ is small for all $t \in (0,\lambda_{0}]$. 
Then $(X'_{t},\Delta'_{t}+tH'_{t})$ is a good minimal model of $(\pi_{1}^{-1}(U_{t}), (\Delta_{1}+tH_{1})|_{\pi_{1}^{-1}(U_{t})})$ over $U_{t}$ around $W$. 
By Theorem \ref{thm--mmpsequence-2}, we get a sequence of steps of a $(K_{X_{1}}+\Delta_{1})$-MMP over $Y$ around $W$ with scaling of $\lambda_{0}H_{1}$ 
$$(X_{1},\Delta_{1})\dashrightarrow \cdots \dashrightarrow (X_{i},\Delta_{i}) \dashrightarrow \cdots$$
such that if we define
$$\lambda_{i}={\rm inf}\set{\mu \in \mathbb{R}_{\geq0} \!|\! K_{X_{i}}+\Delta_{i}+\mu H_{i}\text{\rm \;is nef over $W$}}$$
for each $i\geq 1$, then we have ${\rm lim}_{i \to \infty}\lambda_{i}=0$. 
By the same argument as in the algebraic case \cite[Proof of Proposition 3.3]{has-finite}, we can check that the MMP satisfies (\ref{prop--specialmmp-(3)})--(\ref{prop--specialmmp-(5)}) of Proposition \ref{prop--specialmmp}. 
We note that the property (\ref{prop--crepantmmp-(II)}) in Proposition \ref{prop--crepantmmp} is used to prove (\ref{prop--specialmmp-(5)}) of Proposition \ref{prop--specialmmp}. 
For details, see \cite[Proof of Proposition 3.3]{has-finite}. 
\end{proof}

\begin{thm}[cf.~{\cite[Theorem 3.4]{has-finite}}]\label{thm--mmp-ind1}
Let $\pi \colon X \to Y$ be a projective morphism from a normal analytic variety $X$ to a Stein space $Y$, and let $W \subset Y$ be a compact subset such that $\pi$ and $W$ satisfy (P). 
Let $(X,\Delta)$ be a dlt pair. 
Suppose that
\begin{itemize}
\item
$K_{X}+\Delta$ is $\pi$-pseudo-effective and $\pi$-abundant,
\item 
for any lc center $S$ of $(X,\Delta)$, the restriction $(K_{X}+\Delta)|_{S}$ is nef over $W$, and
\item
$\sigma_{P}(K_{X}+\Delta;X/Y)=0$ for all prime divisor $P$ over $X$ such that $a(P,X,\Delta)<0$, the center of $P$ on $X$ intersects an lc center of $(X,\Delta)$, and the image of $P$ on $Y$ intersects $W$. 
\end{itemize}
Then, after shrinking $Y$ around $W$, the dlt pair $(X,\Delta)$ has a log minimal model over $Y$ around $W$.  
\end{thm}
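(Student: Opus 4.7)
The plan is to follow the strategy of \cite[Theorem 3.4]{has-finite}, combining the crepant structural result in Proposition \ref{prop--crepantmmp}, the special MMP construction in Proposition \ref{prop--specialmmp}, and special termination in the form of Theorem \ref{thm--sp-ter-2}. The overall goal is to produce a sequence of steps of a $(K_X+\Delta)$-MMP over $Y$ around $W$ that terminates after finitely many steps, whose output is the desired log minimal model.

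First, I would apply Proposition \ref{prop--crepantmmp} to pass to a dlt blow-up $\widetilde{f}\colon(\widetilde{X},\widetilde{\Delta})\to(X,\Delta)$ with $\widetilde{X}$ being $\mathbb{Q}$-factorial over $W$, together with effective $\mathbb{R}$-Cartier divisors $\widetilde{G},\widetilde{H}$ satisfying properties (\ref{prop--crepantmmp-(I)})--(\ref{prop--crepantmmp-(III)}) of that proposition. Because $\widetilde{f}$ is crepant, Theorem \ref{thm--asymvanorder-pseudoeff-basic} (\ref{thm--asymvanorder-pseudoeff-basic-(5)}) transfers the three hypotheses of Theorem \ref{thm--mmp-ind1} to $(\widetilde{X},\widetilde{\Delta})$, and Theorem \ref{thm--exist-model-birat-II} shows that the existence of a log minimal model of $(\widetilde{X},\widetilde{\Delta})$ is equivalent to that of $(X,\Delta)$. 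Thus, replacing $(X,\Delta)$ by $(\widetilde{X},\widetilde{\Delta})$, I may assume that $X$ is $\mathbb{Q}$-factorial over $W$ and that the divisors $G,H$ of Proposition \ref{prop--crepantmmp} are available on $X$.

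Second, I would apply Proposition \ref{prop--specialmmp} to obtain, after shrinking $Y$, a $\lambda_0>0$, a bimeromorphic contraction $(X,\Delta+\lambda_0 H)\dashrightarrow(X_1,\Delta_1+\lambda_0 H_1)$ producing a good minimal model of $(X,\Delta+\lambda_0 H)$, and then a sequence of steps of a $(K_{X_1}+\Delta_1)$-MMP over $Y$ around $W$ with scaling of $\lambda_0 H_1$,
\[(X_1,\Delta_1)\dashrightarrow(X_2,\Delta_2)\dashrightarrow\cdots\dashrightarrow(X_i,\Delta_i)\dashrightarrow\cdots,\]
with nef thresholds $\lambda_i\to 0$ and the crucial property (\ref{prop--specialmmp-(5)}): every curve over $W$ contracted at the $i$-th step is contained in ${\rm Supp}\,\lfloor\Delta_i\rfloor$, hence in some lc center of $(X_i,\Delta_i)$. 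Lemma \ref{lem--basic-2} then implies that each step is biholomorphic on a Zariski open neighborhood of any lc center that is itself untouched by the contraction; so any stabilization phenomenon for lc centers will translate into stabilization of the ambient MMP.

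Third, I would run special termination by induction on the dimension of lc centers, using Theorem \ref{thm--sp-ter-2}. For each lc center $S$ of $(X,\Delta)$ whose strict transform $S_i$ in $(X_i,\Delta_i)$ survives, the hypothesis $(K_X+\Delta)|_S$ nef over $W$ together with Lemma \ref{lem--dlt-lccenter-discrepancy} and the vanishing $\sigma_P(K_X+\Delta;X/Y)=0$ for prime divisors $P$ with negative discrepancy whose centers meet lc centers ensures that the restricted dlt pair $(S_i,\Delta_{S_i})$ is itself (at the starting index) already a log minimal model over $Y$ around $W$ after shrinking. Combined with $\lambda_i\to 0$ from Proposition \ref{prop--specialmmp}(\ref{prop--specialmmp-(3)}) and Lemma \ref{lem--mmpscaling-effective} (which preserves the $\sigma_P$ vanishing hypothesis along lc centers under the MMP), Theorem \ref{thm--sp-ter-2} yields, inductively on $\dim S$, an index $i_0$ beyond which no step of the MMP modifies a Zariski open neighborhood of $S_i$. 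Since by property (\ref{prop--specialmmp-(5)}) every contracted curve lies in an lc center, once all lc centers (of all dimensions) have stabilized the MMP can have no further steps; hence it terminates, and the terminal model $(X_m,\Delta_m)$ is a log minimal model of $(X,\Delta)$ over $Y$ around $W$.

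The hardest part will be the inductive application of special termination in the analytic setting: one must verify that, after shrinking $Y$ repeatedly to accommodate stabilization of finitely many lower-dimensional lc centers, the restriction of the MMP to each higher-dimensional lc center remains a sequence of small bimeromorphic contractions compatible with adjunction (so that the strict transforms of $\Delta_{S_i}$ and of the scaling divisor $H_i|_{S_i}$ behave as required), and that the hypothesis of Theorem \ref{thm--sp-ter-2} on the existence of a log minimal model at the relevant index $i_0$ for $(S_{i_0},\Delta_{S_{i_0}})$ is automatic from $(K_X+\Delta)|_S$ being nef over $W$ via Lemma \ref{lem--dlt-lccenter-discrepancy}. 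This is where the full strength of the asymptotic vanishing order machinery developed in Section \ref{sec4} is exploited, and where the analytic complications addressed in Remark \ref{rem--mmp-reduction-basic} (the need to shrink $Y$ in each step while preserving the combinatorics of lc centers over $W$) are most delicate.
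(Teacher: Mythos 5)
Your overall architecture matches the paper's: reduce via Proposition \ref{prop--crepantmmp} and Proposition \ref{prop--specialmmp} to a special MMP with scaling whose contracted curves lie in $\lfloor\Delta_i\rfloor$, and then conclude termination by special termination (Theorem \ref{thm--sp-ter-1}, Theorem \ref{thm--sp-ter-2}) applied inductively to lc centers. However, there is a genuine gap at the decisive step. Theorem \ref{thm--sp-ter-2} requires that the restricted pair $(S_{i_0},\Delta_{S_{i_0}})$ on the \emph{later} model $X_{i_0}$ has a log minimal model over $Y$ around $W$, and you assert this is ``automatic from $(K_X+\Delta)|_S$ being nef over $W$ via Lemma \ref{lem--dlt-lccenter-discrepancy}.'' It is not. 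Lemma \ref{lem--dlt-lccenter-discrepancy} only yields the one-sided inequality $a(Q',S_m,\Delta_{S_m})\leq a(Q',S,\Delta_S)$ for divisors $Q'$ on $S_m$, whereas transferring the existence of a log minimal model from $(S,\Delta_S)$ (which is nef over $W$, hence its own minimal model) to $(S_m,\Delta_{S_m})$ via Lemma \ref{lem--bir-relation} would also require $a(\bar D,S,\Delta_S)\leq a(\bar D,S_m,\Delta_{S_m})$ for divisors $\bar D$ on $S$ — and this fails in general: the MMP can create divisors $\bar D$ on $S$ with $a(\bar D,S_m,\Delta_{S_m})<a(\bar D,S,\Delta_S)$.

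The paper's proof handles exactly this obstruction, and your sketch omits it. One must first use the scaling structure (property (\ref{prop--specialmmp-(4)}) of Proposition \ref{prop--specialmmp}, that $(X_i,\Delta_i+uH_i)$ is a good minimal model of $(X,\Delta+uH)$ for $u\in[\lambda_i,\lambda_{i-1}]$) together with $\lambda_i\to 0$ to obtain, in the limit, the discrepancy \emph{equality} $a(\widetilde D,S_m,\Delta_{S_m})=a(\widetilde D,S,\Delta_S)$ for all prime divisors $\widetilde D$ on $S_m$ whose image meets $W$; then, for the finitely many ``bad'' divisors on $S$ where strict inequality holds, use Lemma \ref{lem--extraction} to build a $\mathbb{Q}$-factorial (over $W$) extraction $\psi\colon T\to S_m$ extracting precisely these divisors, equip $T$ with the boundary $\Psi=-\sum_D a(D,S,\Delta_S)D$, verify via the nefness hypothesis and Lemma \ref{lem--bir-relation} that $(T,\Psi)$ has a log minimal model, and finally descend from $(T,\Psi)$ to $(S_m,\Delta_{S_m})$ using Theorem \ref{thm--minmodel-asymvanorder-boundary}, which requires checking $0\leq a(D,T,\Psi)-a(D,S_m,\Delta_{S_m})\leq\sigma_D(K_{S_m}+\Delta_{S_m};S_m/Y)$ by another limiting argument with Lemma \ref{lem--asymvanorder-discrepancy}. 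Without this intermediate construction your induction cannot be closed, so the proposal as written does not constitute a proof, even though the surrounding framework is the right one.
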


\begin{proof}
By Remark \ref{rem--mmp-reduction-basic}, we may assume that $\pi$ is a contraction and $W$ is connected. 
We closely follow \cite[Proof of Theorem 3.4]{has-finite}. 
We divide the proof into several steps. 

\begin{step3}\label{thm--mmp-ind1-step1}
In this step we will reduce Theorem \ref{thm--mmp-ind1} to a special termination of a log MMP over $Y$ around $W$ with scaling. 
We follow \cite[Step 1 in the proof of Theorem 3.4]{has-finite}. 

By Proposition \ref{prop--crepantmmp} and shrinking $Y$ around $W$, we get a dlt blow-up $(\widetilde{X},\widetilde{\Delta}) \to (X,\Delta)$ and effective $\mathbb{R}$-Cartier divisors $\widetilde{G}$ and $\widetilde{H}$ on $\widetilde{X}$ such that $\widetilde{X}$ is $\mathbb{Q}$-factorial over $W$ and the properties (\ref{prop--crepantmmp-(I)}), (\ref{prop--crepantmmp-(II)}), (\ref{prop--crepantmmp-(III)}), (\ref{prop--crepantmmp-(III-a)}), and (\ref{prop--crepantmmp-(III-b)}) in Proposition \ref{prop--crepantmmp} hold. 
Then it is easy to check that we may replace $(X,\Delta)$ by $(\widetilde{X},\widetilde{\Delta})$. 
Therefore, replacing $(X,\Delta)$ by $(\widetilde{X},\widetilde{\Delta})$, we may assume that $X$ is $\mathbb{Q}$-factorial over $W$ and there exist effective $\mathbb{R}$-Cartier divisors $G$ and $H$ on $X$ satisfying (\ref{prop--crepantmmp-(I)}), (\ref{prop--crepantmmp-(II)}), (\ref{prop--crepantmmp-(III)}), (\ref{prop--crepantmmp-(III-a)}), and (\ref{prop--crepantmmp-(III-b)}) in Proposition \ref{prop--crepantmmp}. 
By Proposition \ref{prop--specialmmp} and shrinking $Y$ around $W$, we obtain a real number $\lambda_{0} >0$, a bimeromorphic contraction 
$$X \dashrightarrow X_{1}$$
over $Y$ with the strict transforms $\Delta_{1}$ and $H_{1}$ of $\Delta$ and $H$ on $X_{1}$ respectively, and a sequence of steps of a $(K_{X_{1}}+\Delta_{1})$-MMP over $Y$ around $W$
$$X_{1}\dashrightarrow X_{2} \dashrightarrow  \cdots \dashrightarrow X_{i} \dashrightarrow \cdots$$
with the strict transforms $\Delta_{i}$ and $H_{i}$ of $\Delta$ and $H$ on $X_{i}$ respectively, which satisfy the following properties. 
\begin{enumerate}[(1)]
\item \label{thm--mmp-ind1-(1)}
The pair $(X,\Delta+\lambda_{0}H)$ is dlt and all lc centers of $(X,\Delta+\lambda_{0}H)$ are lc centers of $(X,\Delta)$, 
\item \label{thm--mmp-ind1-(2)}
$X \dashrightarrow X_{1}$ is a sequence of steps of a $(K_{X}+\Delta+\lambda_{0}H)$-MMP over $Y$ around $W$ that is represented by a bimeromorphic contraction over $Y$ and $(X_{1},\Delta_{1}+\lambda_{0}H_{1})$ is a good minimal model of $(X,\Delta+\lambda_{0}H)$ over $Y$, 
\item \label{thm--mmp-ind1-(3)}
$X_{1}\dashrightarrow \cdots \dashrightarrow X_{i}\dashrightarrow \cdots$ is a sequence of steps of a $(K_{X_{1}}+\Delta_{1})$-MMP over $Y$ around $W$ with scaling of $\lambda_{0}H_{1}$ such that if we define
$$\lambda_{i}={\rm inf}\set{\mu \in \mathbb{R}_{\geq0} \!|\! K_{X_{i}}+\Delta_{i}+\mu H_{i}\text{\rm \;is nef over $W$}}$$
for each $i\geq 1$, then we have ${\rm lim}_{i \to \infty}\lambda_{i}=0$, 
\item \label{thm--mmp-ind1-(4)}
for all $i\geq1$ and all positive real numbers $u\in [\lambda_{i},\lambda_{i-1}]$, after shrinking $Y$ around $W$, the pair $(X_{i},\Delta_{i}+u H_{i})$ is a good minimal model of $(X,\Delta+u H)$ and $(X_{1},\Delta_{1}+u H_{1})$ over $Y$ around $W$, and 
\item \label{thm--mmp-ind1-(5)}
for any $i\geq1$ and curve $C_{i}$ in the fiber of $X_{i} \to Y$ over $W$, if $C_{i}$ is contracted by the extremal contraction in the $i$-th step of the $(K_{X_{1}}+\Delta_{1})$-MMP, then $C_{i} \subset {\rm Supp}\,\llcorner \Delta_{i}\lrcorner$. 
\end{enumerate}
By the argument in the algebraic case (\cite[Step1 in the proof of Theorem 3.4]{has-finite}), we see that Theorem \ref{thm--mmp-ind1} follows from the termination of the $(K_{X_{1}}+\Delta_{1})$-MMP over $Y$ around $W$. 
By (\ref{thm--mmp-ind1-(5)}), we only need to prove that the $(K_{X_{1}}+\Delta_{1})$-MMP terminates on a neighborhood of $\lfloor \Delta_{1} \rfloor$. 
In the rest of the proof, we will carry out the argument of the special termination in Subsection \ref{subsec-special-termi}. 

Let $\{Y_{i}\}_{i \geq 1}$ and $\{\phi_{i}\colon X_{i} \dashrightarrow X'_{i}\}_{i\geq 1}$ be the data of the $(K_{X_{1}}+\Delta_{1})$-MMP. 
There exists $m>0$ such that for any lc center $S_{m}$ of $(X_{m},\Delta_{m})$ and  any $i \geq m$, the bimeromorphic map $X_{m}\times _{Y} Y_{i}\dashrightarrow X_{i}$ induces a bimeromorphic map $S_{m}\dashrightarrow S_{i}$ to an lc center $S_{i}$ of $(X_{i},\Delta_{i})$. 
For any $i\geq m$ and any lc center $S_{i}$ of $(X_{i},\Delta_{i})$, we define $\mathbb{R}$-divisors $\Delta_{S_{i}}$ and $H_{S_{i}}$ on $S_{i}$ by 
$$K_{S_{i}}+\Delta_{S_{i}}=(K_{X_{i}}+\Delta_{i})|_{S_{i}} \qquad {\rm and} \qquad H_{S_{i}}=H_{i}|_{S_{i}},$$
 respectively. Similarly, on every lc center $S$ of $(X,\Delta)$, we define $\mathbb{R}$-divisors $\Delta_{S}$ and $H_{S}$ by 
 $$K_{S}+\Delta_{S}=(K_{X}+\Delta)|_{S} \qquad {\rm and} \qquad H_{S}=H|_{S},$$ respectively. Then $H_{S}$ and $H_{S_{i}}$ are effective $\mathbb{R}$-Cartier divisors, and $(S_{i},\Delta_{S_{i}})$ is dlt for all $i\geq m$ and all $S_{i}$. 

From now on, we prove that for any integer $d \geq0$, there is $m_{d}\geq m$ such that  after $m_{d}$ steps, the non-biholomorphic locus of the $(K_{X_{1}}+\Delta_{1})$-MMP over $Y$ around $W$ does not intersect any $d$-dimensional lc center of $(X_{m_{d}},\Delta_{m_{d}})$ whose image on $Y$ intersects $W$. 
The case $d={\rm dim}\,X-1$ of this statement and (\ref{thm--mmp-ind1-(5)}) imply the termination of the $(K_{X_{1}}+\Delta_{1})$-MMP over $Y$ around $W$. 
Therefore, to prove Theorem \ref{thm--mmp-ind1} it is sufficient to prove the statement.  We will prove the statement by induction on $d$. 
In the case where $d=0$, we can take $m$ as $m_{0}$. 
We assume the existence of $m_{d-1}$ in the statement. 
By replacing $m$ with $m_{d-1}$, we may assume $m_{d-1}=m$. 
By Theorem \ref{thm--sp-ter-1} and replacing $m$ again, we may assume that for any $d$-dimensional lc center $S_{m}$ of $(X_{m},\Delta_{m})$ whose image on $Y$ intersects $W$, the following condition holds: For every $i \geq m$, if we shrink $Y$ to $Y_{i}$ so that the $(K_{X_{1}}+\Delta_{1})$-MMP $(X_{1},\Delta_{1}) \dashrightarrow (X_{i},\Delta_{i})$ is represented by a bimeromorphic contraction $X_{1}\dashrightarrow X_{i}$ over $Y$, then
the induced bimeromorphic map $S_{m}\dashrightarrow S_{i}$ is small and the strict transform of $\Delta_{S_{m}}$ (resp.~$H_{S_{m}}$) on $S_{i}$ is equal to $\Delta_{S_{i}}$ (resp.~$H_{S_{i}}$). 
Since ${\rm lim}_{i \to \infty}\lambda_{i}=0$, by Theorem \ref{thm--sp-ter-2}, to prove the existence of $m_{d}$ it is sufficient to prove that $(S_{m},\Delta_{S_{m}})$ has a log minimal model over $Y$ around $W$. 
\end{step3}

\begin{step3}\label{thm--mmp-ind1-step2}
In the rest of the proof, all indices $i$ are assumed to be greater than or equal to $m$, unless otherwise stated. 
We will show that for any $d$-dimensional lc center $S_{m}$ of $(X_{m},\Delta_{m})$  whose image on $Y$ intersects $W$, the pair $(S_{m},\Delta_{S_{m}})$ has a log minimal model, as required. 

In this step we apply \cite[Step 2 in the proof of Theorem 3.4]{has-finite} to our setting. 
Fix a $d$-dimensional lc center $S_{m}$ of $(X_{m},\Delta_{m})$ whose image on $Y$ intersects $W$. 
By replacing $Y$ with $Y_{m}$, we may assume that the $(K_{X_{1}}+\Delta_{1})$-MMP $(X_{1},\Delta_{1}) \dashrightarrow (X_{m},\Delta_{m})$ is represented by a bimeromorphic contraction $X_{1} \dashrightarrow X_{m}$ over $Y$. 
Then $X \dashrightarrow X_{m}$ is also a bimeromorphic contraction over $Y$. 
By construction of $X\dashrightarrow X_{m}$ in (\ref{thm--mmp-ind1-(1)})--(\ref{thm--mmp-ind1-(3)}) in Step \ref{thm--mmp-ind1-step1}, we can find an lc center $S$ of $(X,\Delta)$ such that the map $X\dashrightarrow X_{m}$ induces a birmeromorphic map $S\dashrightarrow S_{m}$. 
Using the lc center $S$, in the rest of this step, we shrink $Y$ appropriately and we establish the following variety, divisor, and inequalities. 

\begin{enumerate}[(a)]
\item \label{thm--mmp-ind1-step2-(a)}
A projective bimeromorphism $\psi\colon T\to S_{m}$ such that $T$ is $\mathbb{Q}$-factorial over $W$ and for any prime divisor $\bar{D}$ on $S$ whose image on $Y$ intersects $W$, if the strict inequality $a(\bar{D},S_{m}, \Delta_{S_{m}})<a(\bar{D},S,\Delta_{S})$ holds then $\bar{D}$ appears as a prime divisor on $T$, 
\item \label{thm--mmp-ind1-step2-(b)}
an effective $\mathbb{R}$-divisor $\Psi$ on $T$ defined by $\Psi=-\sum_{\substack {D}}a(D,S,\Delta_{S})D$, where $D$ runs over all prime divisors on $T$ whose image intersects $W$, 
\item \label{thm--mmp-ind1-step2-(c)}
 for every $i \geq m$, if we replace $Y$ by $Y_{i}$ so that the log MMP $(X_{1},\Delta_{1}) \dashrightarrow (X_{i},\Delta_{i})$ is represented by a bimeromorphic contraction $X_{1} \dashrightarrow X_{i}$ over $Y$, then
$$a(Q,S,\Delta_{S}+\lambda_{i}H_{S})\leq a(Q,S_{i},\Delta_{S_{i}}+\lambda_{i}H_{S_{i}})$$
holds for any prime divisors $Q$ over $S$ whose image on $Y$ intersects $W$, and 
\item \label{thm--mmp-ind1-step2-(d)}
$K_{T}+\Psi$ is $\mathbb{R}$-Cartier and the inequality $a(Q',S_{m},\Delta_{S_{m}})\leq a(Q',T,\Psi)$ holds for all prime divisors $Q'$ over $S_{m}$, in particular, the pair $(T,\Psi)$ is lc. 
\end{enumerate}
Because \cite[Step 2 in the proof of Theorem 3.4]{has-finite} works with minor changes, we only outline the proof. 

We first show (\ref{thm--mmp-ind1-step2-(c)}). 
We fix $i \geq m$ and we shrink $Y$ to $Y_{i}$ in this paragraph. 
By taking a resolution $\overline{X}\to X$ of $X$ which resolves the indeterminacy of $X\dashrightarrow X_{i}$, we can construct a common resolution $\overline{X}\to X$ and $\overline{X}\to {X_{i}}$ and a subvariety $\overline{S}\subset \overline{X}$ bimeromorphic to $S$ and $S_{i}$ such that the induced bimeromorphisms $\overline{S}\to S$ and $\overline{S}\to S_{i}$ form a common resolution of $S\dashrightarrow S_{i}$. 
By (\ref{thm--mmp-ind1-(4)}) in Step \ref{thm--mmp-ind1-step1}, comparing the coefficients of the pullbacks of $(K_{X}+\Delta+\lambda_{i}H)|_{S}$ and $(K_{X_{i}}+\Delta_{i}+\lambda_{i}H_{i})|_{S_{i}}$ to $\overline{S}$, we obtain
\begin{equation*}
a(Q,S,\Delta_{S}+\lambda_{i}H_{S})\leq a(Q,S_{i},\Delta_{S_{i}}+\lambda_{i}H_{S_{i}})
\end{equation*}
for all prime divisors $Q$ over $S$ whose image on $Y$ intersects $W$. 
We have proved (\ref{thm--mmp-ind1-step2-(c)}). 

Let $\widetilde{D}$ be a prime divisor on $S_{m}$ whose image on $Y$ intersects $W$. 
For every $i$, we put 
$$S_{m}^{(i)}:=S_{m}\times_{Y}Y_{i}, \quad S^{(i)}:=S\times_{Y}Y_{i}, \quad \Delta_{S_{m}}^{(i)}:=\Delta_{S_{m}}|_{S_{m}^{(i)}}, \quad {\rm and} \quad \Delta_{S}^{(i)}:=\Delta_{S}|_{S^{(i)}}.$$ 
Then there is an irreducible components $\widetilde{D}_{i}$ of $\widetilde{D}|_{S_{m}^{(i)}}$ such that the image of $\widetilde{D}_{i}$ on $Y$ intersects $W$ and
\begin{equation*}
\begin{split}
a(\widetilde{D}_{i}, S_{m}^{(i)},\Delta_{S_{m}}^{(i)})=&a(\widetilde{D},S_{m},\Delta_{S_{m}}),\quad {\rm and}\\ a(\widetilde{D}_{i}, S^{(i)},\Delta_{S}^{(i)}+\lambda_{i}H_{S}|_{S^{(i)}})=&a(\widetilde{D},S,\Delta_{S}+\lambda_{i}H_{S}). 
\end{split}
\end{equation*}
Since $(S_{m}^{(i)},\Delta_{S_{m}}^{(i)}) \dashrightarrow (S_{i},\Delta_{S_{i}})$ is small, we have 
$a(\widetilde{D}_{i}, S_{m}^{(i)},\Delta_{S_{m}}^{(i)})=a(\widetilde{D}_{i},S_{i},\Delta_{S_{i}}).$
From these facts and (\ref{thm--mmp-ind1-step2-(c)}), we have
\begin{equation*}
\begin{split}
a(\widetilde{D},S,\Delta_{S}+\lambda_{i}H_{S})=&a(\widetilde{D}_{i}, S^{(i)},\Delta_{S}^{(i)}+\lambda_{i}H_{S}|_{S^{(i)}})
\leq  a(\widetilde{D}_{i},S_{i},\Delta_{S_{i}}+\lambda_{i}H_{S_{i}})\\
\leq&  a(\widetilde{D}_{i},S_{i},\Delta_{S_{i}})=a(\widetilde{D}_{i}, S_{m}^{(i)},\Delta_{S_{m}}^{(i)})
=a(\widetilde{D},S_{m},\Delta_{S_{m}}).
\end{split}
\end{equation*}
On the other hand, by applying Lemma \ref{lem--dlt-lccenter-discrepancy} to $(X,\Delta) \dashrightarrow (X_{m},\Delta_{m})$, $S$, and $S_{m}$, we have
$a(\widetilde{D},S_{m},\Delta_{S_{m}}) \leq a(\widetilde{D},S,\Delta_{S}).$ 
Thus
$$a(\widetilde{D},S,\Delta_{S}+\lambda_{i}H_{S}) \leq a(\widetilde{D},S_{m},\Delta_{S_{m}}) \leq a(\widetilde{D},S,\Delta_{S})$$
for every $i$. 
Since ${\rm lim}_{i \to \infty}\lambda_{i}=0$ by (\ref{thm--mmp-ind1-(3)}) in Step \ref{thm--mmp-ind1-step1}, by considering the limit $i\to \infty$, the equality
\begin{equation*}\tag{$\star$}\label{thm--mmp-ind1-(star)}
a(\widetilde{D},S_{m},\Delta_{S_{m}})= a(\widetilde{D},S,\Delta_{S})
\end{equation*} 
holds for all prime divisors $\widetilde{D}$ on $S_{m}$ whose image on $Y$ intersects $W$. 
This argument shows that (\ref{thm--mmp-ind1-(star)}) holds true even if we shrink $Y$ to a Stein open subset containing $W$. 

In this paragraph we will show (\ref{thm--mmp-ind1-step2-(a)}). 
We set 
\begin{equation*} 
\mathcal{C}=\Set{ \bar{D} | \begin{array}{l}\!\!\text{$\bar{D}$ is a prime divisor on $S$ such that the image of $\bar{D}$ on $Y$}\\
\!\!\text{intersects $W$ and $a(\bar{D},S_{m}, \Delta_{S_{m}})<a(\bar{D},S,\Delta_{S})$} \end{array}\!\!}. \end{equation*} 
By (\ref{thm--mmp-ind1-(star)}), all elements of $\mathcal{C}$ are exceptional over $S_{m}$.  
By a basic property of discrepancies, we have $a(\bar{D},S_{m}, \Delta_{S_{m}}+\lambda_{m}H_{S_{m}}) \leq a(\bar{D},S_{m}, \Delta_{S_{m}})$. 
Combining this with (\ref{thm--mmp-ind1-step2-(c)}), we obtain
$$a(\bar{D},S,\Delta_{S}+\lambda_{m}H_{S}) \leq a(\bar{D},S_{m}, \Delta_{S_{m}})< a(\bar{D},S,\Delta_{S})\leq 0$$
for all $\bar{D}\in \mathcal{C}$. 
Since every element of $\mathcal{C}$ is a prime divisor on $S$, we see that all elements of $\mathcal{C}$ are components of $H_{S}$. 
Thus $ \mathcal{C}$ is a finite set, and furthermore, any $\bar{D}\in \mathcal{C}$ satisfies $-1<a(\bar{D},S,\Delta_{S}+\lambda_{m}H_{S})$. 
Thus, we obtain
$$-1<a(\bar{D},S_{m}, \Delta_{S_{m}})< 0$$ 
for all $\bar{D}\in \mathcal{C}$. 
By Lemma \ref{lem--extraction}, after shrinking $Y$ around $W$ and replacing $\mathcal{C}$ accordingly, we get a projective bimeromorphism $\psi\colon T\to S_{m}$ such that 
\begin{itemize}
\item
$T$ is $\mathbb{Q}$-factorial over $W$, 
\item
any element of $\mathcal{C}$ appears as a $\psi$-exceptional prime divisor on $T$, and 
\item
any 
$\psi$-exceptional prime divisor $\bar{E}$ on $T$ appears as a prime divisor on $S$ and it satisfies $a(\bar{E},S_{m}, \Delta_{S_{m}})<a(\bar{E},S,\Delta_{S})$. 
\end{itemize}
Then $\psi\colon T\to S_{m}$ is the desired bimeromorphism as in (\ref{thm--mmp-ind1-step2-(a)}). 
We note that these three properties hold even if we shrink $Y$ to a Stein open subset containing $W$. 

Next, we will show (\ref{thm--mmp-ind1-step2-(b)}). 
Let $D$ be a prime divisor on $T$ whose image on $Y$ intersects $W$. 
If $D$ is $\psi$-exceptional, then $a(D,S_{m}, \Delta_{S_{m}})<a(D,S,\Delta_{S})\leq 0$ by the definition of $\psi$. 
If $D$ is not $\psi$-exceptional, by (\ref{thm--mmp-ind1-(star)}) we obtain $a(D,S,\Delta_{S})= a(D,S_{m},\Delta_{S_{m}})\leq 0$. 
In any case, the relation
\begin{equation*}\tag{$\star$$\star$}\label{proof-thm--ind-1-(starstarstar)}
a(D,S_{m}, \Delta_{S_{m}})\leq a(D,S,\Delta_{S})\leq 0
\end{equation*} 
holds. 
By shrinking $Y$ around $W$, we may assume that only finitely many prime divisors $D$ on $T$ satisfy $a(D,S,\Delta_{S})<0$ because $a(D,S,\Delta_{S})=0$ when $D$ is a prime divisor on $S$ and not a component of $\Delta_{S}$. 
Therefore, we may define an $\mathbb{R}$-divisor $\Psi\geq0$ on $T$ by
$$\Psi=-\sum_{\substack {D}}a(D,S,\Delta_{S})D,$$
where $D$ runs over all prime divisors on $T$ whose image on $Y$ intersects $W$. 
This is the $\mathbb{R}$-divisor stated in (\ref{thm--mmp-ind1-step2-(b)}). 

Finally, we prove (\ref{thm--mmp-ind1-step2-(d)}). 
Since $T$ is $\mathbb{Q}$-factorial over $W$, there exist an Stein open subset $Y' \subset Y$ containing $W$ such that putting $S'$ and $T'$ as the inverse images of $Y'$ by $S \to Y$ and $T \to Y$ respectively, then $K_{T'}$ is $\mathbb{Q}$-Cartier and
$$\Psi|_{T'}=-\sum_{D}\left(\sum_{D|_{T'}=\bigcup_{j}Q_{D,\,j}}a(Q_{D,\,j},S',\Delta_{S}|_{S'})Q_{j,\,D}\right)$$ 
is $\mathbb{R}$-Cartier, where $D|_{T'}=\bigcup_{j}Q_{D,\,j}$ is the prime decomposition. 
Then the images of some prime divisors $Q_{D,\,j}$ on $Y'$ may not intersect $W$. 
However, by replacing $Y'$ with a smaller Stein open subset to remove such $Q_{D,\,j}$, we may assume that $\Psi|_{T'}$ is $\mathbb{R}$-Cartier. 
In particular, we may assume that $K_{T}+\Psi$ is $\mathbb{R}$-Cartier. 
Moreover we may assume that the image of any component of $K_{T}+\Psi-\psi^{*}(K_{S_{m}}+\Delta_{S_{m}})$ intersects $W$. 
By (\ref{proof-thm--ind-1-(starstarstar)}), we obtain $K_{T}+\Psi\leq \psi^{*}(K_{S_{m}}+\Delta_{S_{m}})$. 
From this, we have
\begin{equation*}
-1\leq a(Q',S_{m},\Delta_{S_{m}})\leq a(Q',T,\Psi) 
\end{equation*}
for any prime divisor $Q'$ over $S_{m}$. 
This shows (\ref{thm--mmp-ind1-step2-(d)}). 
\end{step3}

\begin{step3}\label{thm--mmp-ind1-step3}
In this step we follow \cite[Step 3 in the proof of Theorem 3.4]{has-finite}. 
We will prove that after shrinking $Y$ around $W$ the lc pair $(T,\Psi)$ has a log minimal model over $Y$ around $W$. 

To apply Lemma \ref{lem--bir-relation} to the bimeromorphic map $(S,\Delta_{S})\dashrightarrow (T,\Psi)$, we will prove the following claim. 

\hypertarget{thm--mmp-ind1-step3-claim}{\begin{claim*}
Let $\tilde{Q}$ be a prime divisor over $S$ whose image on $Y$ intersects $W$.
Then the following two statements hold:
\begin{itemize}
\item
If $\tilde{Q}$ is a prime divisor on $S$, then $a(\tilde{Q},S,\Delta_{S})\leq a(\tilde{Q},T,\Psi)$, and
\item
if $\tilde{Q}$ is a prime divisor on $T$, then $a(\tilde{Q},T,\Psi)\leq a(\tilde{Q},S,\Delta_{S})$. 
\end{itemize}
\end{claim*}}

\begin{proof}[Proof of Claim]
Let $\tilde{Q}$ be a prime divisor over $S$ whose image on $Y$ intersects $W$. 
When $\tilde{Q}$ appears as a prime divisor on $T$ or a prime divisor on $S$ which is not exceptional over $T$, by (\ref{thm--mmp-ind1-step2-(b)}) in Step \ref{thm--mmp-ind1-step2} we have $$a(\tilde{Q},T,\Psi)=-{\rm coeff}_{\tilde{Q}}(\Psi)=a(\tilde{Q},S,\Delta_{S}).$$ 
Therefore, the second assertion of the claim holds. 
To prove the first assertion of the claim, we may assume that $\tilde{Q}$ appears as a prime divisor on $S$ that is exceptional over $T$. 
In this case, (\ref{thm--mmp-ind1-step2-(a)}) in Step \ref{thm--mmp-ind1-step2} shows $a(\tilde{Q},S,\Delta_{S})\leq a(\tilde{Q},S_{m},\Delta_{S_{m}})$, then (\ref{thm--mmp-ind1-step2-(d)}) in Step \ref{thm--mmp-ind1-step2} shows
$$a(\tilde{Q},S,\Delta_{S})\leq a(\tilde{Q},S_{m},\Delta_{m})\leq a(\tilde{Q},T,\Psi).$$
Therefore, the first assertion of the claim also holds. 
In this way, the claim holds. 
\end{proof}
By the second condition of Theorem \ref{thm--mmp-ind1} and Lemma \ref{lem--bir-relation}, after shrinking $Y$ around $W$ the lc pair $(T,\Psi)$ has a log minimal model over $Y$ around $W$. 
\end{step3}

\begin{step3}\label{thm--mmp-ind1-step4}
In this step we follow \cite[Step 4 in the proof of Theorem 3.4]{has-finite}. 
We will prove that after shrinking $Y$ around $W$ the lc pair $(S_{m},\Delta_{S_{m}})$ has a log minimal model over $Y$ around $W$. 

For every $i$, we put 
\begin{equation*}
\begin{split}
S_{m}^{(i)}:=&S_{m}\times_{Y}Y_{i}, \quad S^{(i)}:=S\times_{Y}Y_{i}, \quad T^{(i)}:=T \times_{Y}Y_{i}, \\
 \Delta_{S_{m}}^{(i)}:=&\Delta_{S_{m}}|_{S_{m}^{(i)}}, \quad \Delta_{S}^{(i)}:=\Delta_{S}|_{S^{(i)}}, \quad
H_{S_{m}}^{(i)}:=H_{S_{m}}|_{S_{m}^{(i)}}, \quad {\rm and} \quad H_{S}^{(i)}:=H_{S}|_{S^{(i)}}
\end{split}
\end{equation*} 
For each $i$, the pair $(S_{i},\Delta_{S_{i}}+\lambda_{i}H_{S_{i}})$ is a weak lc model of $(S_{m}^{(i)}, \Delta_{S_{m}}^{(i)}+\lambda_{i}H_{S_{m}}^{(i)})$ because the bimeromorphic map $S_{m}^{(i)}\dashrightarrow S_{i}$ is small, $\Delta_{S_{i}}+\lambda_{i}H_{S_{i}}$ is equal to the strict transform of $\Delta_{S_{m}}^{(i)}+\lambda_{i}H_{S_{m}}^{(i)}$ on $S_{i}$, and the divisor $K_{S_{i}}+\Delta_{S_{i}}+\lambda_{i}H_{S_{i}}$ is nef over $W$. 
Pick any prime divisor $D$ on $T$ whose image on $Y$ intersects $W$. 
For every $i$, there is an irreducible components $D^{(i)}$ of $D|_{S_{m}^{(i)}}$ such that the image of $D^{(i)}$ on $Y$ intersects $W$ and
\begin{equation*}
\begin{split}
a(D^{(i)}, S_{m}^{(i)},\Delta_{S_{m}}^{(i)}+\lambda_{i}H_{S_{m}}^{(i)})=&a(D,S_{m},\Delta_{S_{m}}+\lambda_{i}H_{S_{m}}),\quad {\rm and}\\ a(D^{(i)}, S^{(i)},\Delta_{S}^{(i)}+\lambda_{i}H_{S}^{(i)})=&a(D,S,\Delta_{S}+\lambda_{i}H_{S}). 
\end{split}
\end{equation*}
By Lemma \ref{lem--asymvanorder-discrepancy} and (\ref{thm--mmp-ind1-step2-(c)}) in Step \ref{thm--mmp-ind1-step2}, we obtain
\begin{equation*}
\begin{split}
\sigma_{D^{(i)}}(K_{S_{m}^{(i)}}+\Delta_{S_{m}}^{(i)}+\lambda_{i}H_{S_{m}}^{(i)})=a(D^{(i)},S_{i},\Delta_{S_{i}}+\lambda_{i}H_{S_{i}} )-a(D^{(i)}, S_{m}^{(i)},\Delta_{S_{m}}^{(i)}+\lambda_{i}H_{S_{m}}^{(i)})&\\
\geq a(D^{(i)}, S^{(i)},\Delta_{S}^{(i)}+\lambda_{i}H_{S}^{(i)})-a(D^{(i)}, S_{m}^{(i)},\Delta_{S_{m}}^{(i)}+\lambda_{i}H_{S_{m}}^{(i)})&.
\end{split}
\end{equation*}
By these relations and Theorem \ref{thm--asymvanorder-pseudoeff-basic} (\ref{thm--asymvanorder-pseudoeff-basic-(6)}), we obtain
$$\sigma_{D}(K_{S_{m}}+\Delta_{S_{m}}+\lambda_{i}H_{S_{m}})\geq a(D,S, \Delta_{S}+\lambda_{i}H_{S})-a(D, S_{m}, \Delta_{S_{m}}+\lambda_{i}H_{S_{m}}).$$ 
By (\ref{thm--mmp-ind1-step2-(b)}) in Step \ref{thm--mmp-ind1-step2}, we have $a(D,S,\Delta_{S})=a(D,T,\Psi)$. 
By Theorem \ref{thm--asymvanorder-pseudoeff-basic} (\ref{thm--asymvanorder-pseudoeff-basic-(4)}), using the fact that ${\rm lim}_{i \to \infty}\lambda_{i}=0$ (see (\ref{thm--mmp-ind1-(3)}) in Step \ref{thm--mmp-ind1-step1}) and taking the limit $i\to \infty$, we obtain  
\begin{equation*}
\begin{split}
\sigma_{D}(K_{S_{m}}+\Delta_{S_{m}})\geq&\underset{i\to \infty}{\rm lim}\bigl(a(D,S, \Delta_{S}+\lambda_{i}H_{S})-a(D, S_{m}, \Delta_{S_{m}}+\lambda_{i}H_{S_{m}})\bigr)\\
=&a(D,T,\Psi)-a(D, S_{m}, \Delta_{S_{m}})\\
\geq & 0.
\end{split}
\end{equation*}
Here, the last inequality holds by (\ref{thm--mmp-ind1-step2-(d)}) in Step \ref{thm--mmp-ind1-step2}. 
By Theorem \ref{thm--minmodel-asymvanorder-boundary} and the existence of a log minimal model of $(T, \Psi)$ over a neighborhood of $W$ as discussed in Step \ref{thm--mmp-ind1-step3}, after shrinking $Y$ around $W$ the lc pair $(S_{m},\Delta_{S_{m}})$ has a log minimal model.  
\end{step3}

By Step \ref{thm--mmp-ind1-step1} and Step \ref{thm--mmp-ind1-step4}, we complete the argument of the special termination as in Subsection \ref{subsec-special-termi}. 
Therefore, after shrinking $Y$ around $W$, the dlt pair $(X,\Delta)$ has a log minimal model over $Y$ around $W$. 
We complete the proof. 
\end{proof}

\subsection{Proof of main results}

In this subsection we prove the main results of this paper. 

\begin{thm}[cf.~{\cite[Theorem 3.5]{has-finite}}]\label{thm--mmp-ind2}
Let $\pi \colon X \to Y$ be a projective morphism from a normal analytic variety $X$ to a Stein space $Y$, and let $W \subset Y$ be a compact subset such that $\pi$ and $W$ satisfy (P). 
Let $(X,\Delta)$ be an lc pair. 
Let $A$ be an effective $\mathbb{R}$-divisor on $X$ such that $(X,\Delta+A)$ is an lc pair and $K_{X}+\Delta+A$ is nef over $W$.
Then no infinite sequence of steps of a $(K_{X}+\Delta)$-MMP over $Y$ around $W$ with scaling of $A$
$$(X_{0}:=X,\Delta_{0}:=\Delta) \dashrightarrow (X_{1},\Delta_{1}) \dashrightarrow\cdots \dashrightarrow (X_{i},\Delta_{i})\dashrightarrow \cdots$$
satisfies the following properties.
\begin{itemize}
\item
If we define $\lambda_{i}={\rm inf}\set{\mu \in \mathbb{R}_{\geq0} \!|\! K_{X_{i}}+\Delta_{i}+\mu A_{i}\text{\rm \, is nef over }W}$, where $A_{i}$ is the strict transform of $A$ on $X_{i}$, then ${\rm lim}_{i\to\infty}\lambda_{i}=0$, and  
\item
there are infinitely many $i$ such that $(X_{i},\Delta_{i})$ is log abundant over $Y$ around $W$. 
\end{itemize}
\end{thm}

\begin{proof}
The argument in \cite[Proof of Theorem 3.5]{has-finite} works in our situation since we may use the special termination (Subsection \ref{subsec-special-termi}), the lift of MMP (Subsection \ref{subsection--lift-mmp}), and Theorem \ref{thm--mmp-ind1}. 
\end{proof}

\begin{thm}[cf.~{\cite[Theorem 1.5]{hashizumehu}}]\label{thm--mmp-ample+eff}
Let $\pi \colon X \to Y$ be a projective morphism from a normal analytic variety $X$ to a Stein space $Y$, and let $W \subset Y$ be a compact subset such that $\pi$ and $W$ satisfy (P). 
Let $(X,B+A)$ be an lc pair, where $B$ is an effective $\mathbb{R}$-divisor on $X$ and $A$ is an effective $\pi$-ample $\mathbb{R}$-divisor on $X$.  
Let $f \colon (\tilde{X},\tilde{B}) \to (X,B)$ be a dlt blow-up of $(X,B)$, and we put $\tilde{\Gamma}=\tilde{B}+f^{*}A$. 
Let $\tilde{H}$ be a $(\pi \circ f)$-ample $\mathbb{R}$-divisor on $\tilde{X}$ such that $(\tilde{X},\tilde{\Gamma}+\tilde{H})$ is lc and $K_{\tilde{X}}+\tilde{\Gamma}+\tilde{H}$ is nef over $W$. 
Then there exists a sequence of steps of a $(K_{\tilde{X}}+\tilde{\Gamma})$-MMP over $Y$ around $W$ with scaling of $\tilde{H}$
$$(\tilde{X}_{0}:=\tilde{X},\tilde{\Gamma}_{0}:=\tilde{\Gamma}) \dashrightarrow (\tilde{X}_{1},\tilde{\Gamma}_{1}) \dashrightarrow\cdots \dashrightarrow (\tilde{X}_{m},\tilde{\Gamma}_{m})$$
such that after shrinking $Y$ around $W$ the lc pair $(\tilde{X}_{m},\tilde{\Gamma}_{m})$ is a log minimal model or a Mori fiber space of $(X,\Delta+A)$ over $Y$ around $W$.  
\end{thm}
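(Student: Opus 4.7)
The plan is to force termination of the $(K_{\tilde X}+\tilde\Gamma)$-MMP over $Y$ around $W$ with scaling of $\tilde H$ by contradicting Theorem~\ref{thm--mmp-ind2}. To begin, using the cone and contraction theorems of \cite{fujino-analytic-conethm} together with the existence of flips established in previous sections, and using that $K_{\tilde X}+\tilde\Gamma+\tilde H$ is nef over $W$, one constructs inductively a sequence of steps of the desired MMP
$$(\tilde X_0,\tilde\Gamma_0)\dashrightarrow(\tilde X_1,\tilde\Gamma_1)\dashrightarrow\cdots .$$
If this sequence terminates at $(\tilde X_m,\tilde\Gamma_m)$, then either $K_{\tilde X_m}+\tilde\Gamma_m$ is nef over $W$, giving a log minimal model, or the cone theorem produces a $(K_{\tilde X_m}+\tilde\Gamma_m)$-negative extremal contraction of fiber type, giving a Mori fiber space. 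Via the crepant identity $K_{\tilde X}+\tilde\Gamma=f^{*}(K_X+B+A)$, either conclusion transports to the required model of $(X,B+A)$ over $Y$ around $W$.

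Now suppose the MMP is infinite; we verify the two hypotheses of Theorem~\ref{thm--mmp-ind2}. For $\lim_{i\to\infty}\lambda_i=0$: an infinite sequence of bimeromorphic steps forces $K_{\tilde X}+\tilde\Gamma$ to be $\pi$-pseudo-effective, because $\tilde\Gamma$ contains the $(\pi\circ f)$-big divisor $f^{*}A$ and a non-pseudo-effective MMP with such a big boundary must terminate with a Mori fiber contraction in finitely many steps, using the big-boundary termination arguments. Granted pseudo-effectivity, $(\tilde X,(1-\epsilon)\tilde B)$ is klt for any small $\epsilon>0$, so Lemma~\ref{lem--mmp-ample-scaling-nefthreshold} applied to the $(\pi\circ f)$-ample divisor $\tilde H$ gives $\lim\lambda_i=0$.

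For log abundance of every $(\tilde X_i,\tilde\Gamma_i)$ over $Y$ around $W$: the key point is that $K_{\tilde X_i}+\tilde\Gamma_i$ is the bimeromorphic transform of $f^{*}(K_X+B+A)$ and $A$ is $\pi$-ample on $X$. For any lc center $S_i$ of $(\tilde X_i,\tilde\Gamma_i)$ whose image meets $W$, the bimeromorphic correspondence identifies $S_i$ with an lc center $S$ of $(\tilde X,\tilde\Gamma)$, and $S$ maps under $f$ to an analytic subvariety $S'\subset X$. The ample class $A|_{S'}$ furnishes bigness of $(K_X+B+A)|_{S'^{\nu}}$ over $Y$, and standard comparisons of $\kappa_{\iota}$ and $\kappa_{\sigma}$ under pullback and bimeromorphism (as in Lemma~\ref{lem--iitakafib} and Theorem~\ref{thm--nefpositivepart}) show that $(K_{\tilde X_i}+\tilde\Gamma_i)|_{S_i^{\nu}}$ is abundant over $Y$, yielding log abundance of $(\tilde X_i,\tilde\Gamma_i)$.

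The two paragraphs above supply an infinite MMP satisfying both conditions of Theorem~\ref{thm--mmp-ind2}, a contradiction. Hence the MMP terminates in finitely many steps, and the first paragraph gives the conclusion. The principal obstacle in executing this plan is the verification of log abundance for all the intermediate pairs $(\tilde X_i,\tilde\Gamma_i)$: one must carefully track lc centers through flips and divisorial contractions and verify that the bigness coming from $A$ survives adjunction and normalization on $S_i^{\nu}$, particularly when $f|_S$ or a subsequent MMP step drops the dimension of lc centers, so that the restricted class genuinely retains its big part. A secondary subtlety is the invocation of Mori-fiber-type termination in the non-pseudo-effective case, which is itself reduced to the big-boundary analytic MMP machinery developed in this paper.
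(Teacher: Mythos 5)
Your overall skeleton is the same as the paper's: run the $(K_{\tilde X}+\tilde\Gamma)$-MMP over $Y$ around $W$ with scaling of $\tilde H$, verify the two hypotheses of Theorem \ref{thm--mmp-ind2} (namely $\lim_{i\to\infty}\lambda_i=0$ and log abundance of the pairs $(\tilde X_i,\tilde\Gamma_i)$), conclude termination, and then read off a log minimal model or a Mori fiber space of $(X,B+A)$ via $K_{\tilde X}+\tilde\Gamma=f^*(K_X+B+A)$. However, your verification of log abundance has a genuine gap. You claim that ampleness of $A$ ``furnishes bigness of $(K_X+B+A)|_{S'^{\nu}}$''; this is false in general, since adding an ample divisor to $K_X+B$ does not make the sum big on an lc center: for instance, if $K_X+B+A\sim_{\mathbb{R},\,Y}0$ (a situation explicitly allowed, and the one relevant to Theorem \ref{thm--mmp-negativetrivial}), the restriction to every lc center is relatively numerically trivial, hence far from big. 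If the restrictions were big, abundance would be trivial and no serious input would be needed; the actual content is that for an lc pair whose boundary contains (the pullback of) an ample divisor, $K+\Gamma$ restricted to the normalization of each lc center satisfies $\kappa_\iota=\kappa_\sigma$. This is precisely the nontrivial polarized non-vanishing/abundance result of \cite{has-nonvan-gpair}, which the paper invokes verbatim (``by the argument as in \cite{has-nonvan-gpair}''), and it is proved by quite different tools (adjunction, canonical bundle formula, MMP for klt pairs with big boundary), not by tracking a big part through restriction and normalization as you propose. Without this input your argument does not establish the second hypothesis of Theorem \ref{thm--mmp-ind2}.

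A secondary, repairable looseness is your treatment of $\lim\lambda_i=0$. The reduction to Lemma \ref{lem--mmp-ample-scaling-nefthreshold} in the pseudo-effective case is fine (a klt structure on $\tilde X$ exists, e.g.\ $(\tilde X,(1-\epsilon)\tilde B)$ after the $\mathbb{Q}$-factorial dlt blow-up), but in the non-pseudo-effective case the appeal to ``big-boundary termination arguments'' needs to be made precise: $(\tilde X,\tilde\Gamma)$ is only lc, so one must first note that each step is also a step of a $(K_{\tilde X}+\tilde\Gamma+tH)$-MMP for suitable $t>0$, perturb to a klt pair with big boundary (Lemma \ref{lem--dlt-perturbation-coefficients} together with the ampleness of $\tilde H$ and the bigness of $f^*A$ over $Y$), and then invoke the klt machinery of \cite{fujino-analytic-bchm} to terminate with a Mori fiber space. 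As written, this case is asserted rather than argued.
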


\begin{proof}
Let 
$$(\tilde{X}_{0}:=\tilde{X},\tilde{\Gamma}_{0}:=\tilde{\Gamma}) \dashrightarrow (\tilde{X}_{1},\tilde{\Gamma}_{1}) \dashrightarrow\cdots \dashrightarrow (\tilde{X}_{i},\tilde{\Gamma}_{i})\dashrightarrow \cdots$$
be a sequence of steps of a $(K_{\tilde{X}}+\tilde{\Gamma})$-MMP over $Y$ around $W$ with scaling of $\tilde{H}$. 
By the argument as in \cite{has-nonvan-gpair}, we see that all $(\tilde{X}_{i},\tilde{\Gamma}_{i})$ are log abundant over $Y$ around $W$. 
Moreover, if we define 
$$\lambda_{i}={\rm inf}\set{\mu \in \mathbb{R}_{\geq0} \!|\! K_{\tilde{X}_{i}}+\tilde{\Gamma}_{i}+\mu \tilde{H}_{i}\text{\rm \, is nef over }W},$$ 
where $\tilde{H}_{i}$ is the strict transform of $\tilde{H}$ on $\tilde{H}_{i}$, then ${\rm lim}_{i\to\infty}\lambda_{i}=0$. 
By Theorem \ref{thm--mmp-ind2}, the $(K_{\tilde{X}}+\tilde{\Gamma})$-MMP terminates. 
Therefore Theorem \ref{thm--mmp-ample+eff} holds. 
\end{proof}

\begin{thm}[cf.~{\cite[Theorem 1.1]{has-mmp}}, {\cite[Theorem 1.1]{birkar-flip}}, {\cite[Theorem 1.6]{haconxu-lcc}}]\label{thm--mmp-negativetrivial}
Let $\pi \colon X \to Y$ be a projective morphism from a normal analytic variety $X$ to a Stein space $Y$, and let $W \subset Y$ be a compact subset such that $\pi$ and $W$ satisfy (P). 
Let $(X,B)$ be a dlt pair and $A$ an effective $\mathbb{R}$-Cartier divisor on $X$ such that $(X,B+A)$ is lc and $K_{X}+B+A\sim_{\mathbb{R},\,Y}0$.  
Let $H$ be a $\pi$-ample $\mathbb{R}$-divisor on $X$ such that $(X,B+H)$ is lc and $K_{X}+B+H$ is nef over $W$. 
Then there exists a sequence of steps of a $(K_{X}+B)$-MMP over $Y$ around $W$ with scaling of $H$
$$(X_{0}:=X,B_{0}:=B) \dashrightarrow (X_{1},B_{1}) \dashrightarrow\cdots \dashrightarrow (X_{m},B_{m})$$
such that after shrinking $Y$ around $W$ the lc pair $(X_{m},B_{m})$ is a log minimal model or a Mori fiber space of $(X,B)$ over $Y$ around $W$.  
\end{thm}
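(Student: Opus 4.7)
The plan is to run a $(K_{X}+B)$-MMP over $Y$ around $W$ with scaling of $H$ and to apply the key technical result Theorem~\ref{thm--mmp-ind2} in order to force termination. The cone and contraction theorems together with the flip existence from \cite{fujino-analytic-bchm} produce such an MMP step by step; if at some step the extremal contraction is a Mori fiber contraction we stop and output a Mori fiber space of $(X,B)$, so we may assume every step is bimeromorphic. It then suffices to show that the sequence is finite, in which case the terminal pair $(X_{m},B_{m})$ has $K_{X_{m}}+B_{m}$ nef over $W$ and is a log minimal model.

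The first key point is that every $(X_{i},B_{i})$ in the MMP is log abundant over $Y$ around $W$. Since $A_{i}\ge 0$ denotes the strict transform of $A$ and the relation $K_{X_{i}}+B_{i}+A_{i}\sim_{\mathbb{R},Y}0$ is preserved along the MMP, every lc center $S$ of $(X_{i},B_{i})$ is also an lc center of $(X_{i},B_{i}+A_{i})$. Divisorial adjunction to the normalization $S^{\nu}\to S$ then yields $(K_{X_{i}}+B_{i})|_{S^{\nu}}\sim_{\mathbb{R},Y}-A_{i}|_{S^{\nu}}$. Being $\mathbb{R}$-linearly equivalent over $Y$ to an anti-effective divisor, this restriction is either not $\pi$-pseudo-effective (so $\kappa_{\iota}=\kappa_{\sigma}=-\infty$) or pseudo-effective and numerically $Y$-trivial (so $\kappa_{\iota}=\kappa_{\sigma}=0$); either way it is abundant over $Y$. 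Applied to $S=X_{i}$ and to each lc center, this gives the log abundance of $(X_{i},B_{i})$.

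The second key point is to arrange $\lim_{i\to\infty}\lambda_{i}=0$ whenever the MMP is infinite. When $K_{X}+B$ is $\pi$-pseudo-effective this follows at once from Lemma~\ref{lem--dlt-perturbation-coefficients} (to extract a klt structure) combined with Lemma~\ref{lem--mmp-ample-scaling-nefthreshold}. In general, we invoke Theorem~\ref{thm--mmpsequence-2} to construct an MMP with scaling of $H$ for which $\lim\lambda_{i}=0$; its hypothesis, namely that $(X,B+\mu H)$ admits a log minimal model over $Y$ around $W$ for every $\mu\in(0,1]$, follows from Theorem~\ref{thm--mmp-ample+eff} applied to $(X,B+\mu H)$ with $\mu H$ as the effective $\pi$-ample part. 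Combining log abundance from the previous paragraph with $\lim\lambda_{i}=0$, Theorem~\ref{thm--mmp-ind2} rules out an infinite MMP, and termination follows.

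The main obstacle is the case where $K_{X}+B$ is not $\pi$-pseudo-effective (equivalently, $A$ has a component dominating $Y$): then for $\mu$ below the pseudo-effective threshold of $K_{X}+B+tH$ over $Y$, the pair $(X,B+\mu H)$ may admit only a Mori fiber space and not a log minimal model, so Theorem~\ref{thm--mmp-ample+eff} does not directly furnish the input to Theorem~\ref{thm--mmpsequence-2}. The resolution, following \cite[Theorem 1.1]{has-mmp}, is that in this regime the MMP must reach a Mori fiber contraction at the first step $i$ where $\lambda_{i}$ drops below the pseudo-effective threshold, since the associated $(K_{X_{i}}+B_{i})$-negative extremal ray cannot be a bimeromorphic contraction when $K_{X_{i}}+B_{i}$ is non-pseudo-effective. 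Passing to a $\mathbb{Q}$-factorial dlt model via the lift of MMP from Subsection~\ref{subsection--lift-mmp} keeps the Shokurov-polytope analysis underlying Lemma~\ref{lem--mmp-triviallyintersect} available, which is what ultimately exhibits the correct extremal ray in the analytic setting.
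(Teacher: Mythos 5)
Your first two steps are essentially the paper's intended argument (the proof in the paper is the one-line reduction to \cite[Remark 3.7]{has-finite}): log abundance of every $(X_{i},B_{i})$ via $K_{X_{i}}+B_{i}\sim_{\mathbb{R},Y}-A_{i}$, and, when $K_{X}+B$ is $\pi$-pseudo-effective, $\lambda_{i}\to 0$ via Lemma \ref{lem--dlt-perturbation-coefficients} together with Lemma \ref{lem--mmp-ample-scaling-nefthreshold}, followed by Theorem \ref{thm--mmp-ind2}. The genuine gap is your treatment of the non-pseudo-effective case. First, the scenario you describe cannot occur: at every step $K_{X_{i}}+B_{i}+\lambda_{i}H_{i}$ is nef over $W$, hence (after shrinking $Y$ around $W$) pseudo-effective, and since the steps are $(K+B+tH)$-non-positive for $t\le\lambda_{i}$ this pulls back to pseudo-effectivity of $K_{X}+B+\lambda_{i}H$; so $\lambda_{i}$ never ``drops below the pseudo-effective threshold''. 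In this regime the $\lambda_{i}$ are bounded away from $0$, so what has to be proved is termination, not $\lambda_{i}\to 0$, and Theorem \ref{thm--mmp-ind2} is simply not applicable. Second, the mechanism you offer is false: non-pseudo-effectivity of $K_{X_{i}}+B_{i}$ does not force the ray selected by the scaling to be of fiber type. On $X=\mathrm{Bl}_{p}\mathbb{P}^{2}$ with $B=0$, $K_{X}$ is not pseudo-effective, yet for a suitable ample $H$ the nef threshold of $K_{X}$ with respect to $H$ is computed on the exceptional curve, and the first step of the $K_{X}$-MMP with scaling of $H$ is the divisorial contraction of the $(-1)$-curve. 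So the assertion that the associated negative extremal ray ``cannot be a bimeromorphic contraction'' has no justification, and with it this case of your proof collapses; your fallback via Theorem \ref{thm--mmpsequence-2} also fails for the reason you yourself note, since Theorem \ref{thm--mmp-ample+eff} only provides a minimal model or a Mori fiber space.

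What is actually needed here, and what \cite[Remark 3.7]{has-finite} and \cite{has-mmp} do, is the following dichotomy for the given MMP with scaling of $H$: if it were infinite with $\lambda_{\infty}:=\lim_{i}\lambda_{i}>0$ (which covers the non-pseudo-effective case), then every step is also a step of a $(K_{X}+B+\tfrac{\lambda_{\infty}}{2}H)$-MMP over $Y$ around $W$ with scaling of $H$; since $H$ is $\pi$-ample, Lemma \ref{lem--dlt-perturbation-coefficients} replaces $B+\tfrac{\lambda_{\infty}}{2}H$ by a klt boundary containing an ample divisor, and the finiteness of models for klt pairs with big boundary from \cite{fujino-analytic-bchm} (this is exactly the perturbation argument in the proof of Lemma \ref{lem--mmp-ample-scaling-nefthreshold}, which does not use pseudo-effectivity at that point) gives a contradiction. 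Hence any infinite such MMP satisfies $\lambda_{i}\to 0$, and then your log abundance computation together with Theorem \ref{thm--mmp-ind2} rules it out; the process therefore stops after finitely many steps, either with $\lambda_{m}=0$ (a log minimal model) or with a fiber-type contraction (a Mori fiber space). With this replacement for your final paragraph the proof coincides with the paper's.
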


\begin{proof}
The argument in \cite[Remark 3.7]{has-finite} works with no changes. 
\end{proof}



\end{document}